\documentclass[11pt]{article}
\usepackage{amsfonts}

\usepackage{graphics}
\usepackage{indentfirst}
\usepackage{cite}
\usepackage{latexsym}
\usepackage{amsmath,amsthm}
\usepackage{amssymb}
\usepackage[dvips]{epsfig}
\usepackage{amscd}
\usepackage{mathrsfs}

\usepackage{color}
\newtheorem{theorem}{Theorem}[section]
\newtheorem{remark}{Remark}[section]

\newtheorem{definition}{Definition}[section]
\newtheorem{lemma}[theorem]{Lemma}

\newtheorem{proposition}[theorem]{Proposition}

\newcommand{\n}{\rho}

\newcommand{\lm}{\lambda}

\renewcommand{\div}{ {\rm div }  }

\newcommand{\na}{\nabla }

\newcommand{\pa}{\partial}

\newcommand{\bt}{\begin{theorem}}
\newcommand{\bl}{\begin{lemma}}
\newcommand{\el}{\end{lemma}}
\newcommand{\et}{\end{theorem}}
\newcommand{\ga}{\gamma}

\newcommand{\OM}{\Omega}

\newcommand{\curl}{{\rm curl} }

\newcommand{\de}{\delta}
\newcommand{\ve}{\varepsilon}
\newcommand{\la}{\label}
\newcommand{\si}{\sigma}

\newcommand{\om}{\Omega}

\newcommand{\bn}{\begin{eqnarray}}
\newcommand{\en}{\end{eqnarray}}
\newcommand{\bnn}{\begin{eqnarray*}}
\newcommand{\enn}{\end{eqnarray*}}

\newcommand{\bnnn}{\begin{eqnarray*}}
\newcommand{\ennn}{\end{eqnarray*}}
\newcommand{\ben}{\begin{enumerate}}
\newcommand{\een}{\end{enumerate}}

\newcommand{\du}{\dot{u}}

\newcommand{\ba}{\begin{aligned}}
\newcommand{\ea}{\end{aligned}}
\newcommand{\be}{\begin{equation}}
\newcommand{\ee}{\end{equation}}

\def\p{\partial}
\def\norm[#1]#2{\|#2\|_{#1}}

\def\lam{\lambda}
\def\ep{\varepsilon}

\def\o{\omega}
\def\rr{\mathbb{R}^2}

\makeatletter      
\@addtoreset{equation}{section}
\makeatother       

\title{Global Existence and Incompressible Limit for the Two-Dimensional Compressible Navier-Stokes Equations in a Half-Space with Large Initial Data and Vacuum}

\date{}

\author{$\text{Qinghao L{\small EI}}^{a,b}, \text{Weirong L{\small{IANG}}}^{a,b}\thanks{Email addresses:  leiqinghao22@mails.ucas.ac.cn (Q. H. Lei), liangweirong20@mails.ucas.ac.cn (W. R. Liang) }$\\
a. School of Mathematical Sciences,\\ University of Chinese Academy of Sciences,
Beijing 100049, P. R. China;\\
b. Institute of Applied Mathematics,\\ Academy of Mathematics and Systems Science, \\
Chinese Academy of Sciences, Beijing 100190, P. R. China}

\begin{document}
\maketitle

\begin{abstract}
This paper concerns the barotropic compressible Navier-Stokes equations in a two-dimensional half-space subject to Navier-slip boundary conditions with vacuum or non-vacuum far-field density.
The global existence and large-time behavior of weak and strong solutions are established under the assumption that the bulk viscosity coefficient is sufficiently large.
It should be remarked that this result is obtained without any restrictions on the size of the initial data.
For strong solutions, we derive some a priori decay estimates for the spatial gradient of the velocity field that are uniform with respect to the bulk viscosity coefficient, which play a crucial role in establishing the time-uniform upper bound for the density.
Furthermore, we prove that, as the bulk viscosity coefficient tends to infinity, the solutions of the compressible Navier-Stokes equations converge to those of the inhomogeneous incompressible Navier-Stokes equations.
In particular, the incompressible limit for weak solutions holds without requiring the initial velocity to be divergence-free. \\
\par\textbf{Keywords:} Compressible Navier-Stokes equations; Global existence; Large-time behavior; Incompressible limit; Large initial data; Vacuum
\par\textbf{2020 Mathematics Subject Classification:} 35Q30, 35B65, 76N10.
\end{abstract}

\section{Introduction and main results}
We study the two-dimensional barotropic compressible
Navier-Stokes equations which read as follows:
\be\ba\la{ns}
\begin{cases}
  \rho_t + \div(\rho u) = 0,\\
  (\n u)_t + \div(\n u\otimes u) -\mu \Delta u - (\mu + \lm)\na\div u
    +\na P = 0,
\end{cases}
\ea\ee
where $t \ge 0$ is time, $x \in \OM \subset \rr$ is the spatial coordinate,
$\n=\n(x,t)$ and $u(x,t)=(u^1(x,t),u^2(x,t))$ represent the 
density and velocity of the compressible flow, respectively.
The pressure $P$ is given by
\be\la{i1}
P=R\n^\ga,
\ee
with constants $R>0,\ga>1$. Without loss of generality, we assume that
$R=1$. The shear viscosity coefficient $\mu$ and the bulk viscosity coefficient $\lam$ satisfy
the physical restrictions
\be\la{i2}
\mu>0,\quad \mu+\lam\geq 0.
\ee
For later use, we set 
\be\la{nu}
\nu \triangleq 2\mu + \lam,
\ee
which, together with (\ref{i2}), implies that
\be\la{numu}
\nu \geq \mu.
\ee
In this paper, we assume that $\OM$ is the two-dimensional half-space, namely,
\be\nonumber
\OM = \rr_+ = \{x \in \rr : x_2>0 \}.
\ee
The system is supplemented with the given initial data
\be\la{i3}
\n(x,0)=\n_0(x),\quad \n u(x,0)=\n_0u_0(x), \quad x \in \rr_+.
\ee
Let $\tilde{\rho}$ be a fixed non-negative constant.
We seek solutions $\left(\n(x,t),u(x,t)\right) $ to the problem (\ref{ns}) subject to the Navier-slip boundary conditions:
\be\la{bjtj1}\ba
u \cdot n = 0,\quad \curl u = -A u \cdot n^\bot \ \text{ on } \ \p \rr_+,
\ea\ee
and the far-field condition
\be\la{bjtj2}\ba
(\n,u)(x,t) \to (\tilde{\n},0) \ \text{ as } |x| \to \infty, \  t>0,
\ea\ee
where $A$ is a non-negative constant, $n = (0,-1)$ is the unit outward normal vector of the boundary, and $n^\bot = (1,0)$ is the unit tangential vector on the boundary.

There is a vast literature concerning the strong solvability for the multidimensional compressible Navier-Stokes system with constant viscosity coefficients.
The one-dimensional problem has been extensively studied; see \cite{H4,KS,S1,S2} and the references therein.
For the multi-dimensional problem, the local existence and uniqueness of classical solutions were proved by Nash \cite{N} and Serrin \cite{S} under the assumption that the density is away from vacuum.
When the initial density contains vacuum and may vanish in open sets, the local existence and uniqueness of strong and classical solutions were established in \cite{CCK,CK,CK2,SS,LLL} and the references therein.
The global classical solutions were first obtained by Matsumura-Nishida \cite{MN1} for initial data close to a non-vacuum equilibrium in some Sobolev space $H^s$.
Subsequently, Hoff \cite{H1,H2,H3} investigated the problem with discontinuous initial data and developed a new type of a priori estimates on the material derivative $\dot{u}$.
A major breakthrough in the theory of weak solutions was due to Lions \cite{L2}, who established the global existence of weak solutions provided that the initial energy is finite and the adiabatic exponent $\ga$ is suitably large.
The range of the adiabatic exponent $\ga$ was further relaxed by Feireisl-Novotn\'y-Petzeltov\'a \cite{FNP}.
Danchin \cite{D} proved the global existence and uniqueness of strong solutions close to a stable equilibrium in critical Besov spaces for the scaling of the system.
Recently, for the initial density allowing vacuum and even compact support, Huang-Li-Xin\cite{HLX2} and Li-Xin\cite{LX2} established the global existence and uniqueness of classical solutions to the three-dimensional and two-dimensional Cauchy problems, respectively, under the assumption that the initial energy is sufficiently small.
Duan \cite{DQ} proved the global existence and uniqueness of classical solutions in a three-dimensional half-space with the boundary condition proposed by Navier, provided that the initial energy is sufficiently small.
Subsequently, Cai-Li \cite{CL} extended these results to general three-dimensional bounded domains with the velocity field subject to the slip boundary conditions.

More recently, in the two-dimensional whole space, Danchin-Mucha \cite{DM3} established the global existence of strong solutions in critical Besov spaces, provided the bulk viscosity is sufficiently large and the initial density $\n_0$ is sufficiently close to some positive constant.
For the two-dimensional periodic domain, Danchin-Mucha \cite{DM} obtained the global existence of weak solutions, under the assumptions that the bulk viscosity is sufficiently large and $\nu^{1/2} \|\div u_0\|_{L^2}$ satisfies scale restrictions.
They further proved that as the bulk viscosity tends to infinity, the weak solutions converge to those of the inhomogeneous incompressible Navier-Stokes equations.
Subsequently, Liao-Zodji \cite{LZ2} established the global existence for the two-dimensional Cauchy problem with non-vacuum far-field density, under similar assumptions on the bulk viscosity and $\nu^{1/2} \|\div u_0\|_{L^2}$.
They also obtained the incompressible limit as the bulk viscosity tends to infinity.
Lei-Xiong \cite{LeXi1,LeXi2,LeXi3} proved the global existence and the large-time behavior of weak, strong, and classical solutions, under the sole assumption that the bulk viscosity coefficient is sufficiently large, without any restrictions on the size of the initial data.
Their results cover the two-dimensional periodic domain, general bounded simply connected domains subject to Navier-slip boundary conditions, and the whole space with either vacuum or non-vacuum far-field density.
They also established the incompressible limit as the bulk viscosity tends to infinity.
For the incompressible limit of weak solutions in either the periodic domain or the whole space, the results in \cite{LeXi1,LeXi3} remain valid even without the assumption that the initial velocity field is divergence-free.
Later, Wang-Wu-Zhong \cite{WWZ} considered the problem in the two-dimensional half-space with the slip boundary conditions and established the global existence under the conditions that the bulk viscosity is sufficiently large and $\nu^{1/2} \|\div u_0\|_{L^2}$ satisfies scale restrictions, and obtained the incompressible limit as the bulk viscosity tends to infinity.
Recently, Lei \cite{Lei1} proved the global existence of axisymmetric solutions to the three-dimensional compressible Navier-Stokes equations for arbitrarily large axisymmetric initial data in a cylindrical domain excluding the symmetry axis, under the assumption that the bulk viscosity is sufficiently large.
The incompressible limit as the bulk viscosity tends to infinity was also established therein.
The main aim of this paper is to prove the global existence, large-time behavior, and incompressible limit of weak and strong solutions for the two-dimensional half-space problem subject to the Navier-slip boundary conditions with either vacuum or non-vacuum far-field density, provided that the bulk viscosity is sufficiently large.

Before stating the main results, we first explain the notations and conventions used throughout this paper.
We denote
\be\ba\nonumber
  \int f dx = \int_{\rr_+} fdx.
\ea\ee

For $R>0$, we set
\be\ba\nonumber
B_{R} \triangleq \{x \in \rr \mid |x|<R \}, \quad B^+_{R} \triangleq \{x \in \rr_+ \mid  |x|<R \}.
\ea\ee

For a positive integer $k$ and $1\leq r\leq \infty$, we denote the standard Lebesgue and Sobolev spaces as follows:
\be\ba\nonumber
\begin{cases}
L^r=L^r(\rr_+),\quad D_+^{k,r}=D^{k,r}(\rr_+)=\{v\in L^1_\mathrm{loc}(\rr_+) \mid \nabla ^k v\in L^r(\rr_+)\}, \\
D_+^1=D_+^{1,2},\quad W^{k,r} = W^{k,r}(\rr_+) , \quad H^k = W^{k,2}, \\
\tilde{H}^1 =\{v \in H^1(\rr_+) \mid v \cdot n=0, \curl v = -A v \cdot n^\bot \ \text{ on } \ \partial \rr_+ \}, \\
\tilde{D}_+^1 =\{v \in D_+^1 \mid v \cdot n=0, \curl v = -A v \cdot n^\bot \ \text{ on } \ \partial \rr_+ \}.
\end{cases}
\ea\ee
The material derivative and the transpose gradient are given by
\be\ba\nonumber
\frac{D}{Dt}f=\dot{f} \triangleq f_t + u\cdot\na f, \quad \na^{\bot} \triangleq (-\pa_2,\pa_1).
\ea\ee
The initial total energy is defined as follows:
\be\la{e0}\ba
E_0 \triangleq \int \left( \frac{1}{2} \rho_0 |u_0|^2 + H(\n_0) \right) dx,
\ea\ee
where $H(\n)$ denotes the potential energy density given by
\bnn
H(\n) \triangleq \n \int_{\tilde{\rho}}^{\n} \frac{P(s)-P(\tilde{\rho})}{s^2} ds.
\enn
It is easy to verify that
\be\la{qkjsn}\ba
\begin{cases}
H(\n) = \frac{1}{\ga-1} \n^\ga , & \quad \text{ if } \tilde{\n}=0, \\
\frac{1}{c(\hat{\n},\tilde{\n})} (\n - \tilde{\n})^2 \le H(\n)
\le c(\hat{\n},\tilde{\n}) (\n - \tilde{\n})^2, & \quad \text{ if } \tilde{\n} > 0, \  0 \le \n \le \hat{\n},
\end{cases}
\ea\ee
for some positive constant $c(\hat{\n},\tilde{\n})$.

The effective viscous flux $G$ and the vorticity $\o$ are defined as:
\be\ba\la{gw}
G \triangleq (2\mu + \lam)\div u - ( P-P(\tilde{\n}) ), \quad
\o \triangleq \na^\bot \cdot u = \pa_1 u^2 - \pa_2 u^1.
\ea\ee

Finally, we give the definition of weak and strong solutions to (\ref{ns}).
\begin{definition}
If $(\n,u)$ satisfies \eqref{ns} in the sense of distribution, then we call $(\n,u)$ a weak solution.
Moreover, for a weak solution if
all derivatives involved in \eqref{ns} are regular distributions and equations \eqref{ns} hold almost everywhere in $\rr_+ \times (0,T)$, then $(\n,u)$ is called a strong solution.
\end{definition}

For $\tilde{\n} = 0$, it is obvious that the total mass of sufficiently smooth solutions to (\ref{ns}) is conserved in time, that is, for all $t>0$,
\be\nonumber\ba
\int_{\rr_+} \n dx = \int_{\rr_+} \n_0 dx.
\ea\ee
Without loss of generality, when $\tilde{\n}=0$, we assume that 
\be\la{rho0}\ba
\int_{\rr_+} \n_0 dx =1,
\ea\ee
which implies that there exists a positive constant $N_0$ such that
\be\la{rho00}\ba
\int_{B^+_{N_0}} \n_0 dx \ge \frac{1}{2} \int_{\rr_+} \n_0 dx = \frac{1}{2}.
\ea\ee

The first main result concerns the global existence and large-time behavior of weak solutions.
\begin{theorem}\la{th0}
Assume the initial data $(\n_0,u_0)$ satisfy
\be \la{wsol1}\ba
\n_0 \ge 0, \quad \n_0 |u_0|^2 \in L^1,
\ea\ee
and for some $a>1$,
\be\la{wsol01}\ba
\begin{cases}
{\bar{x}}^a \rho_0 \in L^1, \ \rho_0 \in L^\infty, \ u_0 \in \tilde{D}_+^1,  \quad &\mathrm{ if \  }  \tilde{\n}=0,\\
\n_0-\tilde{\rho} \in L^2 \cap L^\infty, \ u_0 \in \tilde{H}^1, \quad &\mathrm{ if \  }  \tilde{\n}>0,
\end{cases}
\ea\ee
where
\be\la{wsol02}\ba
{\bar{x}}\triangleq (e+|x|^2)^{\frac{1}{2}} \log ^2 (e+|x|^2).
\ea\ee

\noindent\textbf{\textup{(1) Vacuum far-field density ($\tilde{\n}=0$):}}
Let $A = 0$.
There exists a positive constant $\nu_1$ depending only on
$N_0$, $\ga$, $\mu$, $a$, $E_0$, $\|{\bar{x}}^a \n_0\|_{L^1}$, $\|\n_0\|_{L^\infty}$, and $\| \na u_0\|_{L^2}$ such that if $\nu \ge \nu_1$, then the problem \eqref{ns}--\eqref{bjtj2} with $\tilde{\n}=0$ admits at least one global weak solution $(\n,u)$ in $\rr_+ \times (0,\infty)$ satisfying
\be\la{wsol2}\ba
0\le \n(x,t) \leq 2 \left( 1 + \| \n_0 \|_{L^\infty} \right),
\quad \mathrm{for\ any\ }(x,t)\in \rr_+ \times[0,\infty),
\ea\ee
and for any $1 \le p <\infty$,
\be\la{wsol3}\ba
\begin{cases}
\rho\in L^\infty(\rr_+ \times (0,T)) \cap C([0,T];L^p), {\bar{x}}^a \rho \in L^\infty(0,T;L^1), \\
\na u\in L^\infty(0,T;L^2), \sqrt{t}u_t \in L^2(0,T;L^2), \sqrt{t} \na u \in L^\infty(0,T;L^p).
\end{cases}
\ea\ee
Moreover, there exists a positive constant $N_1$ depending on
$N_0$, $\| {\bar{x}}^a \rho_0 \|_{L^1}$, and $E_0$ such that
\be\la{wsol30}\ba
\inf_{0 \le t \le T} \int_{ B^+_{N_1 (1+t)} }\rho (x,t) dx \ge \frac{1}{4}.
\ea\ee
Furthermore, for $t \ge 1$, $(\n,u)$ satisfies the following decay rates:
\be\la{wsol300}\ba
\begin{cases}
\| \nabla u(\cdot,t) \|_{L^{p}} \leq C(p) t^{-1+\frac{1}{p}}, \ & \textnormal{ for } p \in [2,\infty), \\
\| P(\cdot,t) \|_{L^{r}} \leq C(r) (\nu^{-1} t)^{-1+\frac{1}{r}}, \ & \textnormal{ for } r \in (1,\infty), \\
\| \sqrt{\n} \dot{u}(\cdot,t) \|_{L^2} \le C t^{-1},
\end{cases}
\ea\ee
where $C(z)$ depends on $z$ and $N_0$, $\ga$, $\mu$, $E_0$, $\|{\bar{x}}^a \n_0\|_{L^1}$, and $\|\n_0\|_{L^\infty}$.

\noindent\textbf{\textup{(2) Non-vacuum far-field density ($\tilde{\n}>0$):}}
There exists a positive constant $\nu_2$ depending only on
$\ga$, $\mu$, $A$, $E_0$, $\tilde{\n}$, $\|\n_0\|_{L^\infty}$, and $\| \na u_0\|_{L^2}$ such that if $\nu \ge \nu_2$, then the problem \eqref{ns}--\eqref{bjtj2} with $\tilde{\n}>0$ admits at least one global weak solution $(\n,u)$ in $\rr_+ \times (0,\infty)$ satisfying
\be\la{2wsol2}\ba
0\le \n(x,t) \leq 2 \left( 1 + \tilde{\n} + \| \n_0 \|_{L^\infty} \right),
\quad \mathrm{for\ any\ }(x,t)\in \rr_+ \times[0,\infty),
\ea\ee
and for any $0<T<\infty$, $1 \le p < \infty $, and $2 \le s <\infty$,
\be\la{wsol4}\ba
\begin{cases}
\rho-\tilde{\n} \in L^\infty(\rr_+ \times (0,T)) \cap C([0,T];L^s), \\ 
u\in L^\infty(0,T;H^1), \sqrt{t} u_t \in L^2(0,T;L^2), \sqrt{t} \na u \in L^\infty(0,T;L^p).
\end{cases}
\ea\ee
Furthermore, the following large-time behavior holds:
\be\la{cpwsol4}\ba
\lim_{t \to \infty} \left( \| \n(\cdot,t) - \tilde{\n} \|_{L^s} + \| \na u(\cdot,t) \|_{L^r} \right) = 0,
\ea\ee
for any $s \in (2,\infty)$ and $r \in [2,\infty)$.
\end{theorem}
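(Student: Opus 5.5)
We prove Theorem \ref{th0} by approximation and compactness. The key input is a family of a priori estimates for smooth solutions that are uniform in the regularization and in $\nu\ge\nu_1$ (resp. $\nu_2$). First we mollify the data, and in the case $\tilde\n=0$ we also lift the density, setting $\n_0^\de=\n_0*j_\de+\de e^{-|x|^2}$. The regularized velocity $u_0^\de$ keeps the Navier-slip conditions \eqref{bjtj1} and converges in $\tilde D_+^1$ (resp. $\tilde H^1$). A standard local theory for strictly positive density in the half-space, obtained by adapting \cite{CCK,LLL} with the slip boundary condition, gives a local strong solution. Globality then follows once we show, by a continuity argument, the bootstrap
$$
\sup_{t\in[0,T]}\|\n\|_{L^\infty}\le 2\big(1+\tilde\n+\|\n_0\|_{L^\infty}\big),
$$
with all constants independent of $T$ and $\de$.

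The estimates come in the following order.

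\textbf{(i) Energy.} The basic energy identity gives $\sup_t\int(\frac12\n|u|^2+H(\n))+\int_0^T(\mu\|\o\|_{L^2}^2+\nu\|\div u\|_{L^2}^2+A\int_{\p\rr_+}|u^1|^2)\le E_0$. For $A=0$ in the vacuum case, the boundary identity $\|\na u\|_{L^2}^2=\|\div u\|_{L^2}^2+\|\o\|_{L^2}^2$ holds on the half-space because $u^2=0$ on $x_2=0$.

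\textbf{(ii) Elliptic structure.} $G$ and $\o$ satisfy $\na G+\mu\na^\bot\o=\n\dot u$. The boundary condition gives $\o=-Au^1$ on $\p\rr_+$, so $\|\na G\|_{L^2}+\|\na\o\|_{L^2}\le C\|\n\dot u\|_{L^2}+C A\|\na u\|_{L^2}$. This follows by extending $\o+Au^1$ oddly across the boundary, as in \cite{WWZ}.

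\textbf{(iii) Hoff-type estimates.} We multiply the momentum equation by $\dot u$ and apply $\dot u_j(\p_t+\div(u\,\cdot))$. This gives $\|\na u\|_{L^2}$ and $\|\sqrt\n\dot u\|_{L^2}$ bounds, plus the time-weighted versions $t\|\sqrt\n\dot u\|_{L^2}^2$ and $t^2$-weighted decay versions. The crucial point is to track $\nu$: the terms with $\div u$ carry a factor $\nu$, which is absorbed using $\nu\|\div u\|^2_{L^2}$ and $\|G\|_{L^p}\le C\|G\|_{L^2}^{2/p}\|\na G\|_{L^2}^{1-2/p}$. We then obtain $\nu$-uniform bounds on $\int_0^T\|\na u\|_{L^4}^4$-type quantities together with smallness $\|\div u\|\lesssim \nu^{-1/2}$.

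In the vacuum case these steps need weighted estimates. These are the control \eqref{wsol30} of the mass in $B^+_{N_1(1+t)}$ and the $\bar x^a\n$ weight, which together with a Hardy-type inequality on $\rr_+$ (via even/odd reflection to $\rr^2$ as in \cite{LX2,LeXi3}) give $\|\n^\eta v\|_{L^r}\le C\|\sqrt\n v\|_{L^2}+C\|\na v\|_{L^2}$. They are derived exactly as in the whole-space case after reflection, since $u^2$ is odd and $u^1,\n$ even when $A=0$.

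\textbf{(iv) Upper bound of density (main obstacle).} Writing $\frac{D}{Dt}\n=-\frac{\n}{\nu}(G+P-P(\tilde\n))$, we get
$$
\n(t)\le \n_0\exp\Big(\nu^{-1}\int_0^t\|G\|_{L^\infty}\,ds\Big),
$$
where the $-\n P$ term has a favorable sign. We must show $\nu^{-1}\int_0^T\|G\|_{L^\infty}dt\le C\nu^{-\theta}$ uniformly in $T$. Short times are handled by $\|G\|_{L^\infty}\le C\|G\|_{L^2}^{1/2}\|\na G\|_{L^4}^{1/2}$-type bounds with $\nu$-uniform estimates. For $t\ge1$ we use the decay $\|\na G\|_{L^p}\le C\|\n\dot u\|_{L^p}$ with $t^{-1}$-type decay of $\|\sqrt\n\dot u\|_{L^2}$ and of $\|\na\dot u\|_{L^2}$ in weighted norms, which is integrable after interpolation. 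Closing this requires that the decay estimates of $\na u$ and $\sqrt\n\dot u$ are uniform in $\nu$; this is the step we expect to be delicate, particularly with boundary terms on $\p\rr_+$ of the form $\int_{\p}\,u\cdot\na n\cdot u\,G$. We plan to handle these by writing $u=u^\bot$-tangential and using $u\cdot n=0$ to convert them to interior integrals. Choosing $\nu_1,\nu_2$ large then gives the bound $2(1+\tilde\n+\|\n_0\|_{L^\infty})$ and closes the bootstrap.

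\textbf{(v) Passage to the limit and decay.} Letting $\de\to0$, Lions--Feireisl compactness (strong convergence of $\n$ via the effective viscous flux, made easier by the uniform $L^\infty$ bound and the $\sqrt t\na u\in L^\infty L^p$ bound) yields the weak solution with \eqref{wsol3}/\eqref{wsol4}. The decay rates \eqref{wsol300} pass to the limit by lower semicontinuity; the $P$ rate uses $\|P\|_{L^r}\le C\nu\|\div u\|_{L^r}+\|G\|_{L^r}$. The limit \eqref{cpwsol4} follows from $\int_1^\infty(\|\na u\|^2_{L^2}+\|\n-\tilde\n\|^{?}\ldots)$ integrability combined with uniform-in-time derivative bounds of these quantities, using the standard argument as in \cite{LeXi3}.
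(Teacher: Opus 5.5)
Your outline has the right architecture: effective flux and vorticity, Hoff-type $\dot u$ estimates, a bootstrap on $\|\rho\|_{L^\infty}$, then compactness. However, the two steps that make a large-data, $\nu$-uniform result possible are either missing or would fail.

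\textbf{Step (iii): the $\nu$-uniform estimates.} The dangerous term is not one carrying a factor $\div u$. The $\omega$--$G$ identity produces $\int G\,\nabla u^1\cdot\nabla^\perp u^2\,dx$; testing with $\dot u$ and writing $(\mu+\lambda)\div u=\frac{\mu+\lambda}{\nu}(G+P-P(\tilde\rho))$ produces the same term. The $\dot u$-estimate produces $\int G(\nabla u^1\cdot\nabla^\perp\dot u^2-\nabla u^2\cdot\nabla^\perp\dot u^1)\,dx$.

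Since $\|G\|_{L^2}$ is only $O(\nu^{1/2})$, a H\"older/Gagliardo--Nirenberg treatment either carries a positive power of $\|G\|_{L^2}$, or needs $\|\nabla u\|_{L^p}$ with $p>2$ (hence $\nabla^2u$, hence $\nabla P$). For example, $\|G\|_{L^4}\|\nabla u\|_{L^2}\|\nabla u\|_{L^4}$ leads to $\|G\|_{L^2}\|\nabla u\|_{L^2}^3$, and the Gr\"onwall factor then grows with $\nu$.

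The missing idea is the compensated-compactness bound of Lemma \ref{hm}. Reflect $h,f^1$ evenly and $g^2$ oddly; this is legitimate because $u\cdot n=\dot u\cdot n=0$ on the flat boundary. The $\mathcal{H}^1$--$\mathcal{BMO}$ duality of Coifman--Lions--Meyer--Semmes then gives $|\int h\,\nabla f^1\cdot\nabla^\perp g^2\,dx|\le C\|\nabla h\|_{L^2}\|\nabla f\|_{L^2}\|\nabla g\|_{L^2}$. Combined with $\|\nabla G\|_{L^2}\le C\|\sqrt\rho\dot u\|_{L^2}$ (plus $C\|\nabla u\|_{L^2}$ when $\tilde\rho>0$), the term becomes $C\|\sqrt\rho\dot u\|_{L^2}\|\nabla u\|_{L^2}^2$ up to lower-order terms. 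This closes with $\nu$-independent constants.

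Your time-uniform decay also lacks its real input:
\begin{itemize}
\item For $\tilde\rho=0$ it is the $T$-independent bound $\int_0^T\nu^{-1}\|P\|_{L^2}^2\,dt\le C$. This comes from $P=-G+\nu\div u$, the Neumann condition $\partial_nG=\rho\dot u\cdot n$, the estimate $\|G\|_{L^r}\le C\|\rho\dot u\|_{L^{2r/(r+2)}}$, and conservation of mass. The Neumann condition, not the reflection of $u$, is where $A=0$ is used.
\item For $\tilde\rho>0$ it is a bootstrap on $E(T)$ with a functional containing $-\int\div u\,(P-P(\tilde\rho))\,dx+\frac{C_1}{\nu^2}\int(P-P(\tilde\rho))^3\,dx$. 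This is needed because $\|P-P(\tilde\rho)\|_{L^2}$ is not time-integrable.
\end{itemize}

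\textbf{Step (iv): the density bound fails as stated.} You discard the pressure damping and require $\nu^{-1}\int_0^T\|G\|_{L^\infty}\,dt\le C\nu^{-\theta}$ uniformly in $T$, i.e.\ $\|G\|_{L^\infty}\in L^1(1,\infty)$. The estimates do not give this:
\begin{itemize}
\item For $\tilde\rho=0$ one has at best $\|G\|_{L^r}+\|\sqrt\rho\dot u\|_{L^2}\lesssim t^{-1}$. Meanwhile $\|\nabla G\|_{L^p}$ with $p>2$ costs the factor $(1+t)^4$ from \eqref{bkj501}. Interpolating to $L^\infty$ therefore yields no rate better than $t^{-1}$, which is not integrable.
\item For $\tilde\rho>0$ there is no rate at all.
\end{itemize}
What is available is an $L^4$-in-time bound: $\int_{\sigma(T)}^T\|G\|_{L^\infty}^4\,dt\le C\nu^{1/18}$ (respectively $C\nu^{1/2}$).

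The paper keeps the damping and applies Zlotnik's inequality (Lemma \ref{zli}) with $g(\rho)=-\nu^{-1}\rho(\rho^\gamma-\tilde\rho^\gamma)$ and $h(t)=-\nu^{-1}\int_0^t\rho G\,ds$. Young's inequality gives $h(t_2)-h(t_1)\le\nu^{-1}(t_2-t_1)+C\nu^{-1}\int_{\sigma(T)}^T\|G\|_{L^\infty}^4\,dt$. The linear growth is absorbed because $g(\zeta)\le-\nu^{-1}$ for $\zeta\ge 1+\tilde\rho$. On $[0,\sigma(T)]$, the $\sigma^2$-weighted $\dot u$ bounds give $|h|\le C\nu^{-5/8}$ directly.

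Two smaller points. Your formula $\rho\le\rho_0\exp(\cdots)$ is not justified for $\tilde\rho>0$, since $-\rho(P-P(\tilde\rho))>0$ where $\rho<\tilde\rho$. The curvature terms $u\cdot\nabla n\cdot u$ you worry about vanish identically here, since $\nabla n=0$.
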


Based on the $\nu$-uniform a priori estimates for the compressible Navier-Stokes equations (\ref{ns}) established in the proof of Theorem \ref{th0}, we can show that, as $\nu \to \infty$, the weak solutions obtained in Theorem \ref{th0} converge to a global weak solution of the following inhomogeneous incompressible Navier-Stokes equations:
\be\la{isol2}\ba
\begin{cases}
\n_t+\div(\n u)=0,\\
(\n u)_t+\div(\n u\otimes u) -\mu \Delta u + \na \pi =0, \\
\div u=0, \\
u \cdot n = 0, \quad \curl u=-A u \cdot n^\bot \quad \textnormal{ on } \partial \rr_+, \\
\n|_{t=0}=\n_0, \quad \n u|_{t=0}=\n_0 u_0.
\end{cases}
\ea\ee

For the case of vacuum far-field density, we can establish the following singular limit result.

\begin{theorem}\la{th01}
Let $\tilde{\n}=0$ and $A=0$.
Assume that the initial data $(\n_0,u_0)$ satisfy \eqref{wsol1} and $\eqref{wsol01}_1$.
Fix $\mu>0$ and the initial data $(\n_0,u_0)$, and let $\nu_1$ be the constant given in Theorem \ref{th0}.
For $\nu \ge \nu_1$, we denote by $(\n^{\nu},u^{\nu})$ the weak solution of \eqref{ns}--\eqref{bjtj2} established in Theorem \ref{th0}.
Then as $\nu \to \infty$, there exists a subsequence of $(\n^{\nu},u^{\nu})$ that converges to a solution $(\n,u)$ of \eqref{isol2}.
Furthermore, $(\n,u)$ satisfies for any $0<T<\infty$, $0<R<\infty$, $2<r<\infty$, and $1\le p <\infty$,
\be\la{isol3}\ba
\begin{cases}
\rho\in L^{\infty}(\rr_+ \times (0,T)) \cap C([0,T];L^p),\ {\bar{x}}^a \rho \in L^\infty(0,T;L^1), \\
u \in L^2(0,T;L^2(B^+_R)), \\
\na u,\ \sqrt{t} \sqrt{\n} \du,\ \sqrt{t} \na \pi,\ \sqrt{t} \na^2 u,\ t \na \du \in L^2(\rr_+ \times (0,T)), \\
\sqrt{t} \na u,\ t \sqrt{\n} \du,\ t \na \pi,\ t \na^2 u \in L^\infty(0,T;L^2), \\
\sqrt{t} \pi \in L^2(0,T;L^r), \ t \pi \in L^\infty(0,T;L^r).
\end{cases} 
\ea\ee
In addition, for any $0 < \tau <\infty$, we have
\be\la{isol4}\ba
\div u^{\nu} = O(\nu^{-1/2}) \  \textnormal{in} \  L^2(\rr_+ \times (0,\infty)) \cap L^\infty(\tau,\infty;L^2).
\ea\ee
If the initial data $(\n_0,u_0)$ further satisfy
\be\la{ws}\ba
\div u_0=0,
\ea\ee
then the limit $(\n,u)$ satisfies
\be\la{lws1}\ba
\begin{cases}
\rho\in L^{\infty}(\rr_+ \times (0,T)) \cap C([0,T];L^p), {\bar{x}}^a \rho \in L^\infty(0,T;L^1), \\
u\in L^\infty(B^+_R \times (0,T)),\ \sqrt{\n} \dot{u},\ \na^2 u,\ \na \pi,\ \sqrt{t} \na \dot{u} \in L^2(\rr_+ \times (0,T)), \\
\sqrt{\n} \dot{u},\ \sqrt{t} \na \pi,\ \sqrt{t} \na^2 u \in L^\infty(0,T;L^2) \cap L^2(0,T;L^s), \\
\pi, \ \sqrt{t} \pi \in L^\infty(0,T;L^r),
\end{cases} 
\ea\ee
for any $0<T<\infty$, $2<r<\infty$, $2\le s <\infty$, and $0<R<\infty$.

Moreover, it holds that
\be\la{lws2}\ba
\div u^{\nu} = O(\nu^{-1/2}) \  \textnormal{in} \  L^2(\rr_+ \times (0,\infty)) \cap L^\infty(0,\infty;L^2).
\ea\ee
\end{theorem}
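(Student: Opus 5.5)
The limit will be obtained by compactness from the $\nu$-uniform a priori bounds produced in the proof of Theorem \ref{th0}, case $\tilde{\n}=0$, $A=0$. Those bounds control, uniformly in $\nu\ge\nu_1$:
\begin{itemize}
\item $0\le\n^\nu\le 2(1+\|\n_0\|_{L^\infty})$ and $\|\bar x^a\n^\nu\|_{L^\infty(0,T;L^1)}$;
\item the energy $\sup_t\int(\frac12\n^\nu|u^\nu|^2+H(\n^\nu))\,dx$, together with $\int_0^\infty(\mu\|\o^\nu\|_{L^2}^2+\nu\|\div u^\nu\|_{L^2}^2)\,dt$;
\item the weighted Hoff-type quantities $\sup_t t\|\sqrt{\n^\nu}\du^\nu\|_{L^2}^2$ and $\int_0^T t\|\na\du^\nu\|_{L^2}^2\,dt$ (the latter with weight $t$ in the time integral);
\item in the divergence-free case, the unweighted versions $\sup_t\|\na u^\nu\|_{L^2}^2+\int_0^T\|\sqrt{\n^\nu}\du^\nu\|_{L^2}^2\,dt$, whose initial value involves $\nu\|\div u_0\|_{L^2}^2=0$.
\end{itemize}

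The estimate \eqref{isol4} follows at once. The energy inequality gives $\nu\int_0^\infty\|\div u^\nu\|_{L^2}^2\,dt\le CE_0$. For $t\ge\tau$, I would write $\div u^\nu=\nu^{-1}(G^\nu+P^\nu)$. Since $A=0$ and $\curl u=0$ on $\p\rr_+$, the system $\na G=\n\du+\mu\na^\perp\o$ gives $\|\na G\|_{L^2}\le C\|\n\du\|_{L^2}$ by a Neumann/Dirichlet splitting. By Gagliardo–Nirenberg, $G$ and $P$ are then bounded in $L^2$ for $t\ge\tau$, with constants independent of $\nu$. Combining this with $\nu\|\div u\|_{L^2}^2\lesssim\|\na u\|_{L^2}^2$-type bounds gives $\|\div u^\nu\|_{L^2}\le C\nu^{-1/2}$ for $t\ge\tau$. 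If $\div u_0=0$, the unweighted bound $\sup_t\nu\|\div u^\nu\|_{L^2}^2\le C$ comes from the unweighted $\na u$ estimate, because its initial term $\nu\|\div u_0\|_{L^2}^2$ vanishes. This yields \eqref{lws2}.

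For the passage to the limit, the uniform bounds give weak-$*$ limits: $\n^\nu\rightharpoonup\n$ in $L^\infty$, $u^\nu\rightharpoonup u$ with $\na u^\nu$ bounded in $L^2(\rr_+\times(0,T))$, and $\div u=0$ by \eqref{isol4}.

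Strong convergence of $\n^\nu$ in $C([0,T];L^p_{loc})$ comes from the transport equation. Because the weight $\bar x^a$ gives tightness, this upgrades to $C([0,T];L^p)$. I would use the Lions–DiPerna renormalization argument, exploiting $\div u=0$ in the limit: from $\n^\nu\to\n$ weakly and $\n^\nu u^\nu\to\n u$, compare $\overline{\n^2}$ and $\n^2$ to show they both satisfy a transport equation with the same initial data.

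To pass to the limit in the nonlinear convective term I need strong compactness of $\sqrt{\n^\nu}u^\nu$ in $L^2_{loc}$. I would obtain it by Aubin–Lions: $u^\nu$ is bounded in $L^2(0,T;H^1(B_R^+))$, where the local $L^2$ bound comes from the $\n$-weighted Poincaré inequality with the mass lower bound \eqref{wsol30}, and $(\n^\nu u^\nu)_t$ is bounded in $L^2(0,T;H^{-1})$ on divergence-free test functions.

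The pressure term $\na(P^\nu-(2\mu+\lm)\div u^\nu)$ disappears when tested against divergence-free fields tangent to the boundary. The pressure $\pi$ is then recovered as the weak limit of $-G^\nu$, which is bounded in the weighted spaces of \eqref{isol3} via the $\na G$ estimate.

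The regularity claims in \eqref{isol3} and \eqref{lws1} then follow by lower semicontinuity. The $\na^2u$ bounds use the Stokes-type elliptic estimate for $-\mu\Delta u+\na\pi=-\n\du$ with slip boundary conditions on the half-space, where $\Delta u=\na^\perp\o$ when $\div u=0$.

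The main obstacle is the convergence of $\n^\nu u^\nu\otimes u^\nu$ without a divergence-free initial velocity. Near $t=0$ only energy-level bounds are uniform in $\nu$, and $\div u^\nu$ is not small in $L^\infty(0,\tau;L^2)$; an initial layer may form there. I would handle it by taking time derivative compactness only in $L^1(0,T;H^{-1})$ using energy bounds, which suffices for Aubin–Lions, and by noting that the $\nu$-uniform $L^2_tL^2_x$ smallness of $\div u^\nu$ kills the gradient part in the weak formulation.
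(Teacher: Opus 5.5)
The skeleton matches the paper: $\nu$-uniform bounds from the proof of Theorem~\ref{th0}, weak limits, $\pi$ as the limit of $-G^\nu$, and $\div u=0$ from $\nu\int_0^\infty\|\div u^\nu\|_{L^2}^2dt\le C$. Two steps, however, fail as written.

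\textbf{The $L^\infty(\tau,\infty;L^2)$ part of \eqref{isol4}.} Gagliardo--Nirenberg needs $\|G^\nu\|_{L^2}$ as an input. It therefore cannot turn $\|\nabla G^\nu\|_{L^2}\le C\|\rho^\nu\dot u^\nu\|_{L^2}$ into a $\nu$-uniform $L^2$ bound on $G^\nu$, and no such bound is available. It would give the better rate $\div u^\nu=O(\nu^{-1})$. The estimates only yield $\|G^\nu\|_{L^2}\lesssim\nu^{1/2}$ for $t\ge\tau$, and the limit pressure is only placed in $L^r$, $r>2$. Your inequality ``$\nu\|\div u\|_{L^2}^2\lesssim\|\nabla u\|_{L^2}^2$'' also runs the wrong way.

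The missing ingredient is the $\sigma$-weighted Hoff bound \eqref{bkj022}, namely $\sup_t\sigma(\|\nabla u^\nu\|_{L^2}^2+\nu\|\div u^\nu\|_{L^2}^2)\le C$, uniformly in $\nu$ and $T$. It comes from multiplying \eqref{bkj219} by $\sigma=\min\{1,t\}$, then using $\int_0^\infty\|\nabla u\|_{L^2}^2dt\le C$ and \eqref{bkj220}. Here \eqref{bkj219} is the inequality for $\mu\|\omega\|_{L^2}^2+\nu^{-1}\|G\|_{L^2}^2$, in which Lemma~\ref{hm} controls $\int G\nabla u^1\cdot\nabla^\perp u^2\,dx$.

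Relatedly, your list of uniform bounds is off. The bounds $\sup_t t\|\sqrt{\rho^\nu}\dot u^\nu\|_{L^2}^2$ and $\int_0^T t\|\nabla\dot u^\nu\|_{L^2}^2dt$ are not available uniformly in $\nu$ when $\div u_0\neq0$. Deriving them requires $\int_0^1\|\sqrt\rho\dot u\|_{L^2}^2dt$, which \eqref{bkj021} bounds only by $C(1+\|\nabla u_0\|_{L^2}^2+\nu\|\div u_0\|_{L^2}^2)$. What is uniform is $\int_0^T\sigma\|\sqrt\rho\dot u\|_{L^2}^2dt$ together with the $t^2$-weighted bounds \eqref{bkj401}. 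That is why \eqref{isol3} carries weight $t$, not $\sqrt t$, on $\sqrt\rho\dot u$, $\nabla\pi$, $\nabla^2u$ in $L^\infty(0,T;L^2)$.

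\textbf{The convective term.} Spatial bounds on $u^\nu$ plus a bound on $(\rho^\nu u^\nu)_t$ tested only against solenoidal fields is not an Aubin--Lions situation, for several reasons. $u^\nu$ is not solenoidal. Localizing to $B_R^+$ with a cutoff destroys solenoidality of the test field. The Leray projection is nonlocal, while $u^\nu$ need not lie in $L^2$ under far-field vacuum. Finally, the gradient part of $(\rho^\nu u^\nu)_t$ is $\nabla G^\nu$, for which near $t=0$ you only have $\int_0^T\|G^\nu\|_{L^2}^2dt\le C\nu$. Smallness of $\div u^\nu$ controls the gradient part of $u^\nu$ globally, but not these localization errors, and your last paragraph does not resolve this.

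The paper sidesteps the issue using bounds you already invoke. For $t\ge\tau$, combine \eqref{bkj022}, \eqref{bkj401}, \eqref{bkj24}, and the Poincar\'e inequality \eqref{pt1} on $B_R^+$ (with the mass lower bound \eqref{wsol30}). As in \eqref{ins01}, this shows $u^\nu$ is bounded in $L^\infty(\tau,T;H^1(B_R^+))\cap H^1(\tau,T;L^2(B_R^+))$. Ordinary Aubin--Lions then gives $u^\nu\to u$ strongly in $L^\infty(\tau,T;L^p(B_R^+))$, so every product in the weak formulation passes to the limit with only weak-$*$ convergence of $\rho^\nu$. The initial layer $(0,\tau)$ is harmless: by the energy bounds it contributes $O(\sqrt\tau)$ to the weak formulation, uniformly in $\nu$. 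So neither a projection argument nor strong convergence of the density is needed, and your renormalization step becomes an optional extra.
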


The following result concerns the singular limit for the case of non-vacuum far-field density.

\begin{theorem}\la{th001}
Let $\tilde{\n}>0$, and suppose the initial data $(\n_0,u_0)$ satisfy \eqref{wsol1} and $\eqref{wsol01}_2$.
Fix $\mu>0$ and the initial data $(\n_0,u_0)$.
For $\nu_2$ given in Theorem \ref{th0}, when $\nu \ge \nu_2$, we denote by $(\n^{\nu},u^{\nu})$ the weak solution to \eqref{ns}--\eqref{bjtj2} given by Theorem \ref{th0}.
Then as $\nu \to \infty$, the solution sequence $(\n^{\nu},u^{\nu})$ admits a subsequence that converges to a solution $(\n, u)$ of \eqref{isol2}.
The limit $(\n,u)$ satisfies for any $0<T<\infty$ and $2\le s <\infty$,
\be\la{lisol3}\ba
\begin{cases}
\rho -\tilde{\n} \in L^{\infty}(\rr_+ \times (0,T)) \cap C([0,T];L^s), \\ 
u\in L^2(0,T;H^1), \\
\sqrt{t} \na^2 u,\ \sqrt{t} \na \pi,\ t \sqrt{\n} u_t,\ t^2 \na u_t \in L^2(\rr_+ \times (0,T)), \\
\sqrt{t} \na u,\ t \na \pi,\ t \na^2 u,\ t^2 \sqrt{\n} u_t \in L^\infty(0,T;L^2).
\end{cases} 
\ea\ee
Moreover, for any $0 < \tau <\infty$, we have
\be\la{lisol4}\ba
\div u^{\nu} = O(\nu^{-1/2}) \  \textnormal{in} \  L^2(\rr_+ \times (0,\infty)) \cap L^\infty(\tau,\infty;L^2).
\ea\ee
If the initial data $(\n_0,u_0)$ additionally satisfy
\be\la{0lws}\ba
\div u_0=0,
\ea\ee
then the limit $(\n,u)$ satisfies for any $0<T<\infty$ and $2 \le p <\infty$,
\be\la{0lws1}\ba
\begin{cases}
\rho -\tilde{\n} \in L^{\infty}(\rr_+ \times (0,T)) \cap C([0,T];L^p), \\
u\in L^\infty(0,T;H^1), \quad \sqrt{\n} u \in C([0,T];L^2), \\
\sqrt{\n} u_t,\ \na^2 u,\ \na \pi,\ \sqrt{t} \na u_t \in L^2(\rr_+ \times (0,T)), \\
\sqrt{\n} u_t,\ \sqrt{t} \na \pi,\ \sqrt{t} \na^2 u \in L^\infty(0,T;L^2) \cap L^2(0,T;L^p).
\end{cases} 
\ea\ee
Furthermore, it holds that
\be\la{0lws2}\ba
\div u^{\nu} = O(\nu^{-1/2}) \  \textnormal{in} \  L^2(\rr_+ \times (0,\infty)) \cap L^\infty(0,\infty;L^2).
\ea\ee
\end{theorem}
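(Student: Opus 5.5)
The strategy is a compactness argument driven by the $\nu$-uniform a priori estimates behind Theorem \ref{th0} in the case $\tilde{\n}>0$. The lower-order estimates are uniform in $\nu$. These are the energy identity, the uniform density bound \eqref{2wsol2}, and the bound on $\mu\|\na u\|_{L^2}^2+\nu\|\div u\|_{L^2}^2$ in $L^\infty(\tau,\infty)\cap L^1(0,\infty)$. The time-weighted higher-order bounds are also uniform: $\sqrt{t}\,\na u$ in $L^\infty(0,T;L^2)$, $\sqrt{t}\sqrt{\n}\dot u$ in $L^2(0,T;L^2)$, $t\sqrt{\n}\dot u$ in $L^\infty(0,T;L^2)$, $t\na\dot u$ in $L^2(0,T;L^2)$, and bounds on $G$ and $\o$. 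Since $\nu\|\div u^\nu\|_{L^2}^2$ appears on the dissipation side of the energy inequality, with $\int_0^\infty\nu\|\div u^\nu\|_{L^2}^2dt\le CE_0$, we get directly that $\div u^\nu=O(\nu^{-1/2})$ in $L^2(\rr_+\times(0,\infty))$. This is the first part of \eqref{lisol4}.

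For the $L^\infty(\tau,\infty;L^2)$ part we use the estimate on $t\big(\mu\|\na u\|_{L^2}^2+\nu\|\div u\|_{L^2}^2\big)$, or $\sigma(t)=\min\{1,t\}$ times it. This comes from multiplying the momentum equation by $\dot u$, and the constant does not depend on $\nu$. The time weight is needed because $\nu\|\div u_0\|_{L^2}^2$ is not bounded uniformly when $\div u_0\ne0$. This is the step I expect to be the main obstacle. In the $\dot u$-estimate, the boundary terms on $\p\rr_+$ from the Navier-slip condition must be handled. We do this through $u\cdot\na u\cdot n=-u\cdot\na n\cdot u=0$ on the flat boundary, together with the decomposition $\na u$ in terms of $G,\o$. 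The pressure term $\int P\div u$ must be controlled by a constant independent of $\nu$, using $\div u=(G+P-P(\tilde{\n}))/\nu$. When $\div u_0=0$, the weight can be dropped, since $\nu\|\div u_0\|_{L^2}^2=0$. This yields \eqref{0lws2} and the unweighted bounds $\sqrt{\n}u_t,\na^2u\in L^2$.

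Next we pass to the limit. By Aubin--Lions type arguments (Lions' compactness for $\n$, and the bounds on $\sqrt{\n}u_t$ away from $t=0$), we extract a subsequence with the following properties. First, $\n^\nu\to\n$ in $C([0,T];L^s_{loc})$, using the renormalized transport equation and weak-$*$ convergence in $L^\infty$. Second, $u^\nu\rightharpoonup u$ weakly in $L^2(0,T;H^1)$, and strongly in $L^2_{loc}$ on $[\tau,T]$. Third, $\n^\nu u^\nu\to\n u$ in the sense of distributions. The bound on $\div u^\nu$ gives $\div u=0$. Strong convergence of $\n^\nu$ is obtained because $\n^\nu$ solves $\n_t+u^\nu\cdot\na\n=-\n\div u^\nu$ with $\|\div u^\nu\|_{L^1_tL^2}\to0$, so the limit solves the pure transport equation with divergence-free $u$. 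The DiPerna--Lions theory then gives renormalization and $\|\n^\nu\|_{L^2_{loc}}\to\|\n\|_{L^2_{loc}}$.

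It remains to identify the pressure. The term $-(\mu+\lambda)\na\div u^\nu+\na P=-\na G^\nu+\mu\na\div u^\nu$ is a gradient. Testing with divergence-free fields tangent to the boundary removes it, and $\pi$ is recovered as the weak limit of $-G^\nu$. This uses the uniform bounds $\sqrt{t}\na G\in L^2$ and $t\na G\in L^\infty L^2$. The boundary condition $\curl u=-Au\cdot n^\bot$ passes to the limit in weak form. Finally, the regularity \eqref{lisol3} and \eqref{0lws1} follows from lower semicontinuity of norms under weak convergence applied to the uniform bounds. The bounds on $\na^2u$ come from the elliptic estimate for $\mu\Delta u-\na\pi=\n\dot u$ with slip conditions, after $\dot u$ is replaced by $u_t$ via $u\cdot\na u$ bounds. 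The continuity $\sqrt{\n}u\in C([0,T];L^2)$ follows from $\sqrt{\n}u_t\in L^2$ and the transport equation.
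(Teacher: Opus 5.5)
Your plan follows the paper's proof. It uses the $\nu$-uniform bounds of Section 4: energy, $E(T)\le\mathbf{C_1}$, \eqref{2bkj401}, and \eqref{2bkj201} when $\div u_0=0$. It reads off $\div u^\nu=O(\nu^{-1/2})$ from the $\nu\|\div u\|_{L^2}^2$ dissipation and from $E(T)$, and it gets the pressure from the gradient part $-\nabla G^\nu$ of the momentum equation.

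Where you differ, you do more than the paper:
\begin{itemize}
\item The paper only uses $\rho^n\rightharpoonup\rho$ weak-$*$ in $L^\infty$. That suffices to pass to the limit in the equations, because $u^n\to u$ strongly in $L^\infty(\tau,T;L^p(B_R^+))$.
\item Your strong convergence of $\rho^\nu-\tilde\rho$ is not in the paper. It follows, e.g., from $\frac{d}{dt}\|\rho^\nu-\tilde\rho\|_{L^2}^2=-\int((\rho^\nu)^2-\tilde\rho^2)\div u^\nu\,dx\to0$ in $L^1(0,T)$, plus DiPerna--Lions conservation for the limit. This is what actually justifies $\rho-\tilde\rho\in C([0,T];L^s)$, and it lets the weighted bounds on $\sqrt{\rho}\,u_t$ pass to the limit by lower semicontinuity. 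The paper leaves both points implicit.
\item Your divergence-free test fields tangent to $\partial\mathbb{R}^2_+$ keep the Navier-slip condition in the weak formulation. The paper's test functions in $(\mathcal{D}(\mathbb{R}^2_+))^2$ do not see the boundary.
\end{itemize}

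Two steps need fixing, though both are repaired with bounds you already list.

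\textbf{The pressure.} For $\tilde\rho>0$ you cannot take the weak limit of $G^\nu$ itself. By $E(T)$, $\|G^\nu\|_{L^2}\le\nu\|\div u^\nu\|_{L^2}+C$ is only $O(\nu^{1/2})$. The bound \eqref{bkj28} used when $\tilde\rho=0$ relies on $\rho\in L^1$, which fails here. Only $\nabla G^\nu$ is uniformly bounded, via \eqref{2bkj24}. This is exactly why the paper extracts $\nabla G^n\rightharpoonup\Psi$ in $L^2(\mathbb{R}^2_+\times(\tau,T))$, checks that $\Psi$ annihilates divergence-free test fields, and writes $\Psi=-\nabla\pi$ by De Rham's theorem. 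Alternatively, you can subtract from $G^\nu$ its mean over a fixed half-ball.

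\textbf{Time compactness of $u^\nu$.} Bounds on $\sqrt{\rho}\,u_t$ give none, since $\rho_0$ may vanish. You need $u^\nu_t$ bounded in $L^2(\tau,T;L^2)$. The paper gets this from the Poincar\'e-type inequality \eqref{pti} applied to $\dot u$, the $\sigma^2\|\nabla\dot u\|_{L^2}^2$ bound in \eqref{2bkj401}, and $u_t=\dot u-u\cdot\nabla u$.

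Finally, lower semicontinuity cannot deliver the unweighted $\sqrt{\rho}\,u_t\in L^\infty(0,T;L^2)\cap L^2(0,T;L^p)$ listed in \eqref{0lws1}. Without a compatibility condition on $u_0$, only the $\sqrt t$-weighted quantity $\sqrt t\sqrt{\rho}\,u_t$ has $\nu$-uniform bounds in those spaces. That weighted bound is also all the paper's argument yields.
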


If the initial data $(\n_0,u_0)$ satisfy higher regularity conditions, we can obtain the global existence of strong solutions.

\begin{theorem}\la{th1}
In addition to the assumption on the initial data $(\n_0,u_0)$ in Theorem \ref{th0}, we assume further that for some $q>2$,
\be\la{ssol1}\ba
\begin{cases}
{\bar{x}}^a \rho_0 \in H^1 \cap W^{1,q},     \quad &\mathrm{ if \  }  \tilde{\n}=0,\\
\n_0-\tilde{\rho} \in H^1 \cap W^{1,q},      \quad &\mathrm{ if \  }  \tilde{\n}>0.
\end{cases}
\ea\ee

\noindent\textbf{\textup{(1) Vacuum far-field density ($\tilde{\n}=0$):}}
Let $A = 0$. For $\nu_1$ determined in Theorem \ref{th0}, if $\nu \ge \nu_1$, then the problem \eqref{ns}--\eqref{bjtj2} with $\tilde{\n}=0$ admits a unique global strong solution $(\n,u)$ in $\rr_+ \times (0,\infty)$
satisfying \eqref{wsol2}, \eqref{wsol3}, \eqref{wsol30}, \eqref{wsol300}, and
\be\la{ssol4}\ba
\begin{cases}	
\rho \in C([0,T];L^1 \cap H^1\cap W^{1,q} ),\\ 
{\bar{x}}^a\rho \in L^\infty ( 0,T ;L^1\cap H^1\cap W^{1,q} ),\\ 
\sqrt{\rho } u,\,\nabla u,\, {\bar{x}}^{-1}u, \, \sqrt{t} \sqrt{\rho } u_t \in L^\infty (0,T; L^2 ) , \\ 
\nabla u\in L^2(0,T;H^1)\cap L^{(q+1)/q}(0,T; W^{1,q}), \\ 
\sqrt{t}\nabla u\in L^2(0,T; W^{1,q} ) , \\ 
\sqrt{\rho } u_t, \, \sqrt{t}\nabla u_t ,\, \sqrt{t} {\bar{x}}^{-1}u_t\in L^2({\mathbb R}_+^2 \times (0,T)).
\end{cases} 
\ea\ee

\noindent\textbf{\textup{(2) Non-vacuum far-field density ($\tilde{\n}>0$):}}
For $\nu_2$ as in Theorem \ref{th0}, if $\nu \ge \nu_2$, then the problem \eqref{ns}--\eqref{bjtj2} with $\tilde{\n}>0$ admits a unique global strong solution $(\n,u)$ in $\rr_+ \times (0,\infty)$
satisfying \eqref{2wsol2}, \eqref{wsol4}, \eqref{cpwsol4}, and
\be\la{ssol5}\ba
\begin{cases}	
\rho-\tilde{\n}\in C([0,T];W^{1,q} ), \quad \n_t\in L^\infty(0,T;L^2), \\ 
u\in L^\infty(0,T; H^1) \cap L^{(q+1)/q}(0,T; W^{2,q}), \\ 
\sqrt{t} u \in L^2(0,T; W^{2,q}) \cap L^\infty(0,T;H^2), \\
\sqrt{t} u_t \in L^2(0,T;H^1), \\
\n u\in C([0,T];L^2), \quad \sqrt{\n} u_t\in L^2(\rr_+ \times(0,T)),
\end{cases} 
\ea\ee
for any $0<T<\infty$.
\end{theorem}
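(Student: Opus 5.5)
The proof of Theorem \ref{th1} follows the usual route: local existence of strong solutions, $\nu$-uniform a priori bounds, and extension to all times by continuity. First I would quote a local existence and uniqueness result for strong solutions in $\rr_+$ under the Navier-slip conditions. For $\tilde{\n}=0$ this is in the spirit of Li-Liang type results with weights ${\bar{x}}^a$. For $\tilde{\n}>0$ it follows the Cho-Kim framework. Either way we get a strong solution on some $(0,T_*)$ with the regularity \eqref{ssol4} or \eqref{ssol5}. Set $T^*$ to be the maximal existence time.

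The core step is a priori bounds for $T<T^*$, uniform in $\nu\ge\nu_i$. I would assume the bootstrap hypothesis
\[\sup_{0\le t\le T}\|\n\|_{L^\infty}\le 4(1+\tilde{\n}+\|\n_0\|_{L^\infty})\]
and aim to improve it to the bound \eqref{wsol2} or \eqref{2wsol2}. These are exactly the estimates the paper derives for Theorem \ref{th0}. They consist of:
\begin{itemize}
\item the energy estimate;
\item the $\nu$-weighted bounds on $\|\na u\|_{L^2}$ via the effective viscous flux $G$ and the vorticity $\o$;
\item the $t$-weighted decay bounds on $\sqrt{\n}\dot u$ and on $\na u$ in $L^p$.
\end{itemize}
For the $G$ and $\o$ estimates, the boundary conditions $u\cdot n=0$ and $\curl u=-Au\cdot n^\bot$ make $\o$ satisfy a simple boundary condition, and $G$ solves a Neumann-type problem. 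The density bound is then closed from the transport form of the equation,
\[\frac{D}{Dt}\,\nu\log\n=-P+P(\tilde{\n})-G,\]
by a Zlotnik-type argument: we need $\nu^{-1}\int_0^T\|G\|_{L^\infty}\,dt$ small for $\nu$ large. This uses the time-decay rates, which are uniform in $\nu$ and give integrability at large times. The a priori bounds are therefore taken as given from the proof of Theorem \ref{th0}, with the same $\nu_1$ and $\nu_2$.

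Next come the higher-order estimates. These may depend on $\nu$ and $T$, but they must be finite. Differentiate the mass equation, and in the case $\tilde{\n}=0$ also its weighted version for ${\bar{x}}^a\n$. This gives
\[\frac{d}{dt}\|\na\n\|_{L^2\cap L^q}\le C\bigl(1+\|\na u\|_{L^\infty}\bigr)\|\na\n\|_{L^2\cap L^q}+C\|\na^2u\|_{L^2\cap L^q}.\]
Elliptic estimates for the Lamé system with slip boundary conditions bound $\|\na^2 u\|_{L^q}$ by $\|\n\dot u\|_{L^q}$ plus $\|\na P\|_{L^q}$. Equivalently, one can argue through $\na G$ and $\na\o$, which solve Neumann and Dirichlet-type problems on the half-space.

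To control $\|\na u\|_{L^\infty}$, I would use a Beale-Kato-Majda type inequality adapted to $\rr_+$:
\[\|\na u\|_{L^\infty}\le C\bigl(\|\div u\|_{L^\infty}+\|\o\|_{L^\infty}\bigr)\log\bigl(e+\|\na^2u\|_{L^q}\bigr)+C\|\na u\|_{L^2}+C.\]
One could instead bound $\|G\|_{L^\infty}+\|\o\|_{L^\infty}$ by the $t$-weighted $\na\dot u$ estimates. This gives the time integrability of $\na u$ in $L^{(q+1)/q}(0,T;W^{1,q})$, and Gronwall then bounds $\na\n$. In the vacuum case, $u$ itself is not in $L^2$. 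I would control ${\bar{x}}^{-1}u$ and ${\bar{x}}^{-1}u_t$ by a Hardy-type inequality on $\rr_+$ combined with the mass lower bound \eqref{wsol30}, using Lemma-type inequalities of Li-Liang. Finally, the $t$-weighted estimates on $\sqrt{t}\na u_t$ and $\sqrt{t}\na^2u$ follow by applying $\pa_t+\div(u\,\cdot)$ to the momentum equation and testing with $u_t$.

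With these bounds, if $T^*<\infty$ the solution could be continued past $T^*$, a contradiction. Uniqueness follows by a standard energy estimate on the difference of two solutions. For $\tilde{\n}=0$ this uses the weighted estimates with ${\bar{x}}$. The decay and large-time statements are inherited from Theorem \ref{th0}.

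I expect the main obstacle to be the half-space log-type estimate for $\|\na u\|_{L^\infty}$ and the $W^{1,q}$ elliptic theory under Navier-slip conditions. The case $A>0$ is the delicate one, since the boundary term $Au\cdot n^\bot$ must be absorbed. The vacuum-case weighted estimates on the unbounded domain are a secondary difficulty. To handle the boundary, I would first try even/odd reflection across $x_2=0$, which is effective when $A=0$, and treat the $A>0$ boundary term as a lower-order perturbation.
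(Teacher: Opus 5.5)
Your architecture matches the paper's: local existence (Lemma \ref{lct}), the $\nu$-uniform density bound of Propositions \ref{pro1} and \ref{pro2}, the $T$-dependent higher-order bounds of Lemmas \ref{1hl1}--\ref{1hl3} and \ref{2hl1}--\ref{2hl2}, continuation, and uniqueness as in \cite{LLL}. The higher-order bounds use Gr\"onwall for $\log(e+\|\na\n\|_{L^q})$ via Lemma \ref{bkm}, weighted bounds on ${\bar{x}}^a\n$ via Lemma \ref{WPE}, and the $t$-weighted $u_t$ estimate from the time-differentiated momentum equation.

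\textbf{The genuine gap is the large-time behavior.} You cannot ``inherit'' \eqref{cpwsol4} from Theorem \ref{th0}. That theorem only asserts that \emph{some} weak solution has these properties, so transferring them to your strong solution would need a weak--strong uniqueness result you do not have. In the paper the logic in fact runs the other way: Theorem \ref{th0} is obtained from the strong-solution estimates by compactness. The paper proves \eqref{cpwsol4} directly for the strong solution, as follows.
\begin{itemize}
\item The uniform bounds of Section 4 give $\int_1^\infty(\|\n-\tilde{\n}\|_{L^4}^4+\|\na u\|_{L^2}^2+\|\na u\|_{L^4}^4)\,dt\le C$.
\item The mass equation gives $\frac{d}{dt}\|\n-\tilde{\n}\|_{L^4}^4\le C\|\n-\tilde{\n}\|_{L^4}^4+C\|\na u\|_{L^2}^2$. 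Averaging over $[N,N+1]$ then yields $\|\n-\tilde{\n}\|_{L^4}\to0$.
\item The bound $\int_1^\infty\bigl|\frac{d}{dt}\|\na u\|_{L^2}^2\bigr|\,dt\le C\int_1^\infty(\|\na\du\|_{L^2}^2+\|\na u\|_{L^2}^2+\|\na u\|_{L^4}^4)\,dt\le C$ yields $\|\na u\|_{L^2}\to0$.
\item Interpolating with the uniform bounds on $\|\n\|_{L^\infty}$, $\|\n-\tilde{\n}\|_{L^2}$ and on $\|\na u\|_{L^p}$ for $t\ge1$ gives all $s\in(2,\infty)$ and $r\in[2,\infty)$.
\end{itemize}
For $\tilde{\n}=0$, \eqref{wsol300} does follow directly from Lemma \ref{bkjl4} and \eqref{bkj421}, so there you are fine.

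\textbf{Your description of how the density bound closes is also off.} This is not load-bearing, since you take those bounds from the paper, but as written it would not work. The estimates do not make $\nu^{-1}\int_0^T\|G\|_{L^\infty}\,dt$ small uniformly in $T$:
\begin{itemize}
\item the available bounds on $G$ grow with $\nu$ (e.g.\ $\|G\|_{L^2}\le C\nu^{1/2}$ for $t\ge1$);
\item for $\tilde{\n}>0$ no time-decay rates exist at all, only the $\si$-weighted bounds of Lemma \ref{2bkjl4}.
\end{itemize}
What the paper actually proves is $\int_{\si(T)}^T\|G\|_{L^\infty}^4\,dt\le C\nu^{1/18}$ (resp.\ $C\nu^{1/2}$). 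Young's inequality then gives $h(t_2)-h(t_1)\le\nu^{-1}(t_2-t_1)+C\nu^{-17/18}$ (resp.\ $C\nu^{-1/2}$). The linear-in-time part is absorbed in Lemma \ref{zli} by the pressure damping $g(\zeta)\le-\nu^{-1}$ for $\zeta\ge1$ (resp.\ $\zeta\ge1+\tilde{\n}$). The interval $[0,\si(T)]$ is handled separately via $|h|\le C\nu^{-5/8}$.

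Moreover, for $\tilde{\n}>0$ a bootstrap on $\|\n\|_{L^\infty}$ alone is not what the paper closes. Proposition \ref{pro2} also bootstraps $E(T)\le 2\mathbf{C_1}$, because \eqref{2bkj322} contains the term $\frac{C_3}{\nu}(1+E(T))E(T)$. That term can be absorbed for large $\nu$ only once $E(T)$ is bounded a priori.
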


Finally, following the approach in \cite{LX}, we can obtain the following large-time behavior of the spatial gradient of the density for the strong solution in Theorem $\ref{th1}$ when vacuum occurs initially.

\begin{theorem}\la{th3}
In addition to the assumptions in Theorem \ref{th1}, 
assume further that there exists some point $x_0 \in \rr_+$ such that $\n_0(x_0)=0$.
Then the unique global strong solution $(\n,u)$ obtained in Theorem \ref{th1} satisfies, for any $r>2$,
\be\la{pbu02}\ba
\lim_{t \to \infty} \| \na \n(\cdot ,t) \|_{L^r} = \infty.
\ea\ee
\end{theorem}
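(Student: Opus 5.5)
We argue by contradiction, following \cite{LX}. Suppose that for some $r>2$ there exist a constant $C_1>0$ and a sequence $t_k\to\infty$ with $\|\na\n(\cdot,t_k)\|_{L^r}\le C_1$. The first step is to show that vacuum persists. Since $\n$ is transported along the flow of $u$, and the strong solution of Theorem \ref{th1} has $\na u\in L^1(0,T;L^\infty)$ (this follows from $\na u\in L^{(q+1)/q}(0,T;W^{1,q})$ in \eqref{ssol4}--\eqref{ssol5}), the particle path $X(t)$ with $X(0)=x_0$ is well defined. The boundary condition $u\cdot n=0$ keeps $X(t)$ in $\overline{\rr_+}$, so $\n(X(t),t)=0$ for all $t\ge0$.

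\textbf{Non-vacuum far field ($\tilde\n>0$).} By \eqref{cpwsol4}, $\|\n(\cdot,t)-\tilde\n\|_{L^s}\to0$ for every $s\in(2,\infty)$. Gagliardo--Nirenberg on the half-space (after even extension in $x_2$) gives
\[
\|\n-\tilde\n\|_{L^\infty}\le C\|\n-\tilde\n\|_{L^s}^{\theta}\|\na\n\|_{L^r}^{1-\theta},\qquad \theta\in(0,1).
\]
Evaluated at $t=t_k$, this yields $\|\n(\cdot,t_k)-\tilde\n\|_{L^\infty}\to0$. But $|\n(X(t_k),t_k)-\tilde\n|=\tilde\n>0$, a contradiction.

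\textbf{Vacuum far field ($\tilde\n=0$, $A=0$).} Here the preserved quantity is mass: $\int\n\,dx=1$, with the localization \eqref{wsol30}. The decay \eqref{wsol300} gives $\|P(\cdot,t)\|_{L^{r'}}=\|\n\|_{L^{\ga r'}}^\ga\to0$ for every $r'\in(1,\infty)$, hence $\|\n(\cdot,t)\|_{L^{s}}\to0$ for every $s\in(\ga,\infty)$. Combined with the uniform bound on $\|\na\n(\cdot,t_k)\|_{L^r}$, the same interpolation gives $\|\n(\cdot,t_k)\|_{L^\infty}\to0$. Vacuum at one point alone does not contradict this, since $\n\to0$ everywhere is consistent with $\n(X(t),t)=0$. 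The contradiction must therefore come from mass conservation combined with the spreading rate. By \eqref{wsol30},
\[
\tfrac14\le\int_{B^+_{N_1(1+t_k)}}\n\,dx\le C(1+t_k)^{2(1-1/s)}\|\n(\cdot,t_k)\|_{L^s},
\]
so we need a quantitative rate $\|\n(\cdot,t)\|_{L^s}\ll t^{-2(1-1/s)}$. The bound \eqref{wsol300} only gives $\|\n\|_{L^s}\le C(\nu^{-1}t)^{-(1-\ga/s)/\ga}$, which is not obviously fast enough.

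\textbf{Main obstacle.} This rate comparison in the case $\tilde\n=0$ is the step I expect to be hardest. I would first try interpolating $L^\infty$ between $L^1$ (with mass $1$) and $\|\na\n\|_{L^r}$. Since this does not force smallness, the fallback is to exploit the vacuum point $X(t_k)$ directly. The uniform gradient bound makes $\n(\cdot,t_k)$ H\"older continuous with exponent $1-2/r$ and a $k$-independent constant, so $\n(\cdot,t_k)$ is small on a ball of fixed radius around $X(t_k)$. Mass conservation together with \eqref{wsol30} and the $L^{s}$ decay then has to be played against this uniform H\"older modulus to reach a contradiction, following precisely the argument of \cite{LX}.
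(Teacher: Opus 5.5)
Your treatment of the case $\tilde\n>0$ is correct, and it is the argument the paper intends by its reference to \cite[Theorem 1.2]{LX}. Vacuum is transported along the particle path $X(t)$, which is well defined because $\na u\in L^1(0,T;L^\infty)$ and stays in $\overline{\rr_+}$ because $u\cdot n=0$. By \eqref{cpwsol4}, $\|\n-\tilde\n\|_{L^s}\to0$, and \eqref{gn12} upgrades this to uniform convergence along any sequence $t_k$ on which $\|\na\n(\cdot,t_k)\|_{L^r}$ stays bounded. That is incompatible with $\n(X(t_k),t_k)=0$.

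The case $\tilde\n=0$, however, is left open, and neither of your proposed routes can close it. The mechanism of \cite{LX} needs the density to relax to a \emph{positive} constant state; here the limiting state is vacuum, so a vacuum point contradicts nothing.

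\begin{itemize}
\item Your rate comparison fails for a structural reason. The inequality $\frac14\le C(1+t)^{2(1-1/s)}\|\n(\cdot,t)\|_{L^s}$ uses neither the vacuum point nor the bound on $\|\na\n(\cdot,t_k)\|_{L^r}$. A decay estimate strong enough to violate it would therefore contradict \eqref{wsol30} for every solution, so it can never be proved.
\item The H\"older-modulus fallback fails as well. Consider $\n(x,t)=t^{-2}\phi(x/t)$ with $\phi\ge0$, $\int\phi=1$, and $\phi(y_0)=0$. It has mass one, satisfies the localization \eqref{wsol30}, has $\|\n\|_{L^s}\to0$ for all $s>1$ (consistent with the rate implied by \eqref{wsol300}), vanishes at the moving point $ty_0$, and has a uniform H\"older modulus. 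Yet $\|\na\n(\cdot,t)\|_{L^r}=t^{-3+2/r}\|\na\phi\|_{L^r}\to0$.
\end{itemize}

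So any argument built only from these properties plus interpolation is doomed. You would need a genuinely dynamical lower bound on $\na\n$ near the vacuum trajectory, extracted from the equations themselves. Your closing appeal to ``precisely the argument of \cite{LX}'' points to a step that \cite{LX} does not contain. Since the paper's proof consists only of that reference, it does not supply this ingredient either: the argument it invokes covers only the case $\tilde\n>0$.
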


A few remarks are in order.

\begin{remark}\la{lrk1}
If the initial data $(\n_0,u_0)$ further satisfy higher regularity
and the compatibility condition:
\be\la{csol2}\ba
- \mu \Delta u_0 - (\mu+\lm) \nabla \div u_0 + \nabla P(\n_0) = \n_0^{1/2} g,
\ea\ee
for some $g \in L^2$, then the strong solution obtained in Theorem \ref{th1} becomes a classical solution for positive time.
The detailed proofs follow from arguments analogous to those in \cite{LZZ,HL,HLX2,LeXi3}.
\end{remark}

\begin{remark}\la{lrk2}
It is noted that by adapting the arguments in \cite[Theorem 2.1]{L1}, one can show that the inhomogeneous incompressible Navier-Stokes equations \eqref{isol2} admit global weak solutions for initial data with $\div u_0 \neq 0$.
Moreover, Theorems \ref{th01} and \ref{th001} establish the convergence of the compressible Navier-Stokes equations to \eqref{isol2} without requiring the restrictive condition ${\rm div} u_0 = 0$.
In particular, this convergence result implies the global existence of weak solutions to \eqref{isol2} when $\div u_0 \neq 0$, which coincides with \cite[Theorem 2.1]{L1}.
\end{remark}

\begin{remark}\la{lrk3}
In \cite{WWZ}, the authors established the global existence of weak solutions to \eqref{ns}--\eqref{bjtj2} with non-vacuum far-field density, under the assumptions that the bulk viscosity coefficient is sufficiently large and $\| \div u_0 \|^2_{L^2} \le M \nu^{-1}$ for some $M>0$.
Therefore, our results generalize and improve the previous results in \cite{WWZ}.
\end{remark}

\begin{remark}\la{lrk4}
Note that in the case $\tilde{\n}=0$, we require $A=0$.
The main reason is that we need to derive a time-uniform space-time $L^2$ estimate for the pressure.
Due to the lack of the integrability of the velocity, such an estimate cannot be obtained when $A \neq 0$.
For more details, we refer to Lemma \ref{bkjl2}.
\end{remark}

\begin{remark}\la{lrk5}
Using Lemma \ref{hm} and adapting arguments in \cite{LSZ}, we can obtain the global existence and uniqueness of the strong solution to \eqref{isol2} for initial data satisfying \eqref{wsol1}, \eqref{wsol01}, \eqref{ws}, and \eqref{ssol1}.
Furthermore, for such initial data, via a weak-strong uniqueness argument (see \cite{Lei1} for details), we can show that the weak solutions obtained in Theorem \ref{th0} converge to the unique global strong solution of \eqref{isol2}.
\end{remark}

We now make some comments on the analysis of this paper.
For initial data satisfying \eqref{wsol1}, \eqref{wsol01}, and \eqref{ssol1}, the local existence and uniqueness of strong solutions to (\ref{ns})--(\ref{bjtj2}) can be established by arguments similar to those in \cite{LLL,LZ}.
To extend these solutions globally in time, we need to derive global a priori estimates in suitable higher-order norms, where the key issue is to obtain a time-uniform upper bound for the density.

We note that in previous work \cite{LeXi3} on the Cauchy problem, the analysis relies heavily on the compensated compactness result of \cite{CLMS}.
Exploiting the symmetry of the half-space, we establish, via an extension argument, a similar compensated compactness result under suitable boundary condition in the half-space (see Lemma \ref{hm}), which plays a central role in our analysis.
For the case $\tilde{\n}=0$, the main difficulties arise from the appearance of vacuum at the far field and the lack of integrability of the velocity and its material derivatives.
To overcome these difficulties, motivated by \cite{LeXi3,LX2}, we need to derive a space-time $L^2$ estimate for the pressure that is independent of time.
This estimate is essential for deriving the large-time decay of the effective viscous flux.
To this end, we observe that the boundary conditions (\ref{bjtj1}) imply that the effective viscous flux $G$ satisfies the elliptic equation (\ref{bkj22}).
Applying the standard elliptic estimates together with even extension arguments, we obtain estimates for the effective viscous flux and its derivatives (see (\ref{bkj23}) and (\ref{bkj28})), which further yield the desired pressure estimate.
In addition, using the boundary conditions (\ref{bjtj1}) and the special geometric structure of the planar boundary, we have
\be\la{bkjbjds}\ba
\dot{u} \cdot n =0, \quad  u \cdot \na u \cdot n = 0 \  \text{ on } \p \rr_+.
\ea\ee
Combining this with Lemma \ref{hm} and the pressure estimate, we derive several key decay estimates independent of the bulk viscosity through a series of refined computations.
Using these estimates and Zlotnik's inequality (see Lemma \ref{zli}), we obtain the desired time-uniform upper bound for the density.

Compared with the case $\tilde{\n}=0$, the main difficulty in the case $\tilde{\n}>0$ is that the density is no longer $L^1$-integrable, which prevents us from deriving a time-uniform $L^2$ estimate for the pressure term $P-P(\tilde{\n})$.
To overcome this, we first employ the standard energy estimate together with arguments similar to those used in the case $\tilde{\n}=0$ to establish uniform estimates for the velocity gradient on the short time interval $(0,\min\{1,T\})$.
By introducing time-weighted estimates, we further obtain the $\nu$-uniform estimates.
Building upon the framework developed in \cite{LeXi3,LZ2}, and using Lemma \ref{hm} together with the $\nu$-uniform estimates, we derive global time-uniform estimates for the velocity gradient through a series of careful calculations.
Combining these estimates with Zlotnik's inequality, we derive the desired time-uniform upper bound for the density.
After establishing the upper bound for the density, we adapt the arguments in \cite{HL,LLL,HLX3,HLX2} to derive higher-order estimates for the solution, which allow the local solution to be extended globally in time.
Moreover, by using the $\nu$-uniform estimates and adapting the methods in \cite{LeXi1,LeXi2,LeXi3}, we prove that, as the bulk viscosity coefficient tends to infinity, solutions of the compressible Navier-Stokes equations converge to those of the inhomogeneous incompressible Navier-Stokes equations.

The rest of this paper is organized as follows:
Section 2 recalls some elementary inequalities and known results.
Sections 3 and 4 are devoted to establishing the necessary a priori estimates for the cases of vacuum and non-vacuum far-field density, respectively.
Finally, Section 5 presents the proofs of our main results, Theorems \ref{th0}--\ref{th3}.

\section{Preliminaries}
In this section, we will introduce some known facts and elementary inequalities which will be used frequently later.

First, we have the following local existence theory of the strong solution, and its proof can be found in \cite{LLL,LZ}.
\begin{lemma}\la{lct}
Assume $\left(\n_0,u_0\right)$ satisfy \eqref{wsol1}, \eqref{wsol01}, and \eqref{ssol1}.
Then there is a small time $T>0$ such that there exists a unique strong solution $(\n,u)$ to the problem 
\eqref{ns}--\eqref{bjtj2} in $\rr_+ \times (0,T]$ and
when $\tilde{\n}=0$, $(\n,u)$ satisfies \eqref{wsol3} and \eqref{ssol4};
when $\tilde{\n}>0$, $(\n,u)$ satisfies \eqref{wsol4} and \eqref{ssol5}.
\end{lemma}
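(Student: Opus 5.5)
The plan is to prove Lemma \ref{lct} by the standard route for the Cauchy problem with vacuum, as in \cite{LLL,LZ}, adapted to the half-space with the Navier-slip conditions \eqref{bjtj1}. The first step is to regularize the data. We replace $\n_0$ by $\n_0^\de=\n_0+\de$ (case $\tilde{\n}>0$) or $\n_0^\de=\n_0+\de \bar{x}^{-a}$-type positive perturbations (case $\tilde{\n}=0$), keeping the weighted bounds in \eqref{ssol1}. We also mollify $u_0$ inside $\tilde{H}^1$ (resp.\ $\tilde{D}^1_+$) so that the boundary conditions are preserved. For $\de>0$ the density is bounded below locally, and a classical linearization/contraction argument yields a unique local strong solution $(\n^\de,u^\de)$. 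In that argument the transport equation is solved along characteristics, which do not leave $\overline{\rr_+}$ because $u\cdot n=0$. The momentum equation is treated as a Lamé system with slip conditions, solvable by even/odd reflection of $(u^1,u^2)$ across $\{x_2=0\}$ when $A=0$, and by perturbation of that case when $A>0$.

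The second step, which is the heart of the matter, is to derive a priori estimates on some $[0,T_0]$, with $T_0$ independent of $\de$, for the quantity
\[
\Phi(t)=1+\|\n\|_{L^\infty}+\|\n-\tilde{\n}\|_{H^1\cap W^{1,q}}+\|\na u\|_{L^2}^2
\]
(with $\bar{x}^a\n$ in place of $\n-\tilde{\n}$ when $\tilde{\n}=0$). We take the following estimates in order.
\begin{itemize}
\item The energy identity, in which the boundary term $A\int_{\p\rr_+}|u|^2$ has a good sign.
\item $\int\mu|\curl u|^2+\nu|\div u|^2$ estimates from testing with $\du$, using \eqref{bkjbjds} to handle the boundary.
\item The time-weighted $\sqrt{t}\sqrt{\n}u_t$, $\sqrt{t}\na u_t$ bounds.
\item Elliptic estimates for $G$ and $\o$, namely $\|\na G\|_{L^p}+\|\na\o\|_{L^p}\le C\|\n\du\|_{L^p}$, obtained from $\na G+\mu\na^\bot\o=\n\du$ and the boundary relations, again via reflection.
\item $W^{1,q}$ bounds for $\n$ from $\int_0^T\|\na u\|_{L^\infty}dt$ via a Beale--Kato--Majda type logarithmic inequality.
\end{itemize}
The whole chain is closed by a Gronwall-type ODE inequality $\Phi'\le C\Phi^{k}$, giving $T_0$ depending only on the data norms.

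The third step deals with the case $\tilde{\n}=0$, where $u$ itself is not in $L^2$. Here we follow \cite{LX2,LLL} and use the weighted Hardy/Sobolev inequality
\[
\|\bar{x}^{-1}u\|_{L^2}\le C\big(\|\sqrt{\n}u\|_{L^2(B^+_{N})}+\|\na u\|_{L^2}\big),
\]
which holds on $\rr_+$ by even extension of the Caffarelli--Kohn--Nirenberg type bound on $\rr$. The constant $N$ is controlled through a mass-localization estimate like \eqref{wsol30}, obtained on short time from the $L^1$ weight $\bar{x}^a\n$. We then propagate $\bar{x}^a\n$ in $L^1\cap H^1\cap W^{1,q}$ using $|u\cdot\na\bar{x}^a|\le C\bar{x}^{a}|\bar{x}^{-1}u|\log$-type bounds.

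Finally, we pass $\de\to0$ by compactness (Aubin--Lions on the uniform bounds), obtaining a solution in the classes \eqref{wsol3}, \eqref{ssol4} or \eqref{wsol4}, \eqref{ssol5}. Uniqueness follows by an $L^2$ energy estimate for the difference. In the vacuum case this uses the weighted $\bar{x}^{-1}$ bound together with the estimate of $\n_1-\n_2$ in a weighted $L^{2r/(r+1)}$-type norm, as in \cite{LLL}.

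The main obstacle is the $\de$-uniform local estimate in the vacuum far-field case. Control of $u$ comes only through the weighted norm, and the boundary terms from the slip condition must vanish or be absorbed. We expect to handle both using $A=0$ and the flat-boundary identities \eqref{bkjbjds}.
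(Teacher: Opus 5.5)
The paper gives no argument of its own for this lemma; it simply defers to \cite{LLL,LZ}. Your outline (regularize, $\delta$-uniform short-time estimates, compactness, uniqueness) is the scheme of those references. There is, however, a genuine gap in your first step when $\tilde\rho=0$.

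\textbf{The gap.} The choice $\rho_0+\delta\bar{x}^{-a}$ already destroys the bound you claim to keep, since $\bar{x}^a\cdot\delta\bar{x}^{-a}=\delta\notin L^1$. More seriously, any perturbation compatible with $\bar{x}^a\rho_0^\delta\in L^1$ must still decay at infinity, so $\inf_{\mathbb{R}^2_+}\rho_0^\delta=0$. ``Bounded below locally'' does not make a classical linearization/contraction argument work on an unbounded domain:
\begin{itemize}
\item the momentum equation still degenerates at infinity;
\item $\sqrt{\rho}u_t$ gives no $L^2$ control of $u_t$ there;
\item in two dimensions $\nabla u\in L^2$ controls $u$ in no $L^p$.
\end{itemize}
So your approximate problem is a far-field-vacuum problem of exactly the original difficulty. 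Its solvability cannot be taken from the classical positive-density theory. This is why \cite{LLL} first works on bounded balls, where the approximate density can be taken bounded below by a positive constant. The cutoff is then removed using estimates independent of the radius, which is where estimates of the type in your second and third steps enter.

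\textbf{A clean repair.} In the half-space you can use the fact that $A=0$ whenever $\tilde\rho=0$. Extend $\rho_0$ and $u_0^1$ evenly and $u_0^2$ oddly in $x_2$. This keeps $u_0\in D^1$, because $u_0^2=0$ on $\partial\mathbb{R}^2_+$. It also keeps $\bar{x}^a\rho_0\in L^1\cap H^1\cap W^{1,q}$, because $\bar{x}$ is even.

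System \eqref{ns} is invariant under $x_2\mapsto -x_2$, $u^2\mapsto -u^2$. By uniqueness, the whole-plane strong solution of \cite{LLL} with these data is therefore symmetric. Parity then yields $u^2=0$ and $\partial_2 u^1=0$, i.e.\ $\mathrm{curl}\,u=0$, on $\{x_2=0\}$. Conversely, the symmetric extension of a half-space solution in the class \eqref{ssol4} lies in $H^2_{\rm loc}$ across $\{x_2=0\}$ precisely because of the slip conditions, so uniqueness transfers as well.

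This settles the case $\tilde\rho=0$ without any half-space estimates, and the same reflection reduces $\tilde\rho>0$, $A=0$ to the whole-plane problem too. Only $\tilde\rho>0$, $A>0$ needs a genuinely half-space argument. There your regularization $\rho_0+\delta$ is fine, provided you shift the far-field state to $\tilde\rho+\delta$; otherwise $\rho_0^\delta-\tilde\rho\notin L^2$.
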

Next, the following Gagliardo-Nirenberg inequalities (see \cite{NI}) will be used frequently later.
\begin{lemma}\la{gn1}
For $2<p<\infty$, there exists a generic positive constant $C$ such that for any $u\in H^1(\rr_+)$,
\be\ba\la{gn11}
\| u \|_{L^p} \le Cp^{1/2}\| u \|^{2/p}_{L^2} \| \na u\|^{1-2/p}_{L^2}.
\ea\ee
Furthermore, for $1\le r <\infty$, $2<q<\infty$, 
there exists a positive constant $C$ depending only on $r,\ q $,
such that for every function $v\in L^r(\rr_+) \cap D^{1,q}(\rr_+)$ it holds that
\be\ba\la{gn12}
\|v \|_{L^\infty} \le C\|v \|^{r(q-2)/(2q+r(q-2))}_{L^r} \|\na v\|^{2q/(2q+r(q-2))}_{L^q}.
\ea\ee
\end{lemma}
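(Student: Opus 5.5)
The plan is to reduce both inequalities to their whole-plane analogues by extension across the boundary $\{x_2=0\}$. Given $u$ on $\rr_+$, define the even reflection
\[
Eu(x_1,x_2)=u(x_1,|x_2|).
\]
For $u\in H^1(\rr_+)$ we have $Eu\in H^1(\rr)$, with
\[
\|Eu\|_{L^p(\rr)}^p=2\|u\|_{L^p(\rr_+)}^p, \qquad \|\na Eu\|_{L^2(\rr)}^2=2\|\na u\|_{L^2(\rr_+)}^2 .
\]
The same identities hold for $L^r$ and for $\|\na\cdot\|_{L^q}$ in the second inequality. No trace condition is needed, because even reflection preserves $W^{1,q}$ for every $q$. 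So it suffices to prove the whole-space versions and then track the powers of $2$; the homogeneity of each inequality absorbs these factors into $C$.

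For \eqref{gn11} on $\rr$, the step that needs care is the explicit $p^{1/2}$ growth of the constant. I would get it from the Sobolev embedding with sharp-order constant: for $2\le p<\infty$,
\[
\|f\|_{L^p(\rr)}\le C p^{1/2}\|f\|_{H^1}.
\]
One way to prove this is Hausdorff--Young in Fourier variables, splitting the frequency integral at a radius and optimizing. Another is the Ladyzhenskaya-type iteration $\|f\|_{L^{2k}}^{2k}\le C\|f^k\|_{L^2}\|\na f^k\|_{L^2}$, which gives growth like $\sqrt{k}$. The argument then applies this embedding to the rescaled function $f_\lambda(x)=f(\lambda x)$ and optimizes in $\lambda$, which converts $\|f\|_{L^2}+\|\na f\|_{L^2}$ into the product $\|f\|_{L^2}^{2/p}\|\na f\|_{L^2}^{1-2/p}$ without changing the $p^{1/2}$ factor. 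This optimization is valid since $\|f_\lambda\|_{L^2}=\lambda^{-1}\|f\|_{L^2}$ while $\|\na f_\lambda\|_{L^2}=\|\na f\|_{L^2}$ and $\|f_\lambda\|_{L^p}=\lambda^{-2/p}\|f\|_{L^p}$. Getting the $p^{1/2}$ constant is the main obstacle; if needed, I would simply cite \cite{NI} together with the standard proof of the $\sqrt p$ Sobolev constant in two dimensions.

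For \eqref{gn12}, I would use the Morrey-type bound on $\rr$ for $q>2$: for every ball $B_\rho(x)$,
\[
|v(x)-\bar v_{B_\rho(x)}|\le C\rho^{1-2/q}\|\na v\|_{L^q}.
\]
Combined with $|\bar v_{B_\rho}|\le C\rho^{-2/r}\|v\|_{L^r}$ by H\"older, this gives
\[
|v(x)|\le C\bigl(\rho^{-2/r}\|v\|_{L^r}+\rho^{1-2/q}\|\na v\|_{L^q}\bigr).
\]
Optimizing in $\rho$ balances the two terms and yields exactly the exponents $r(q-2)/(2q+r(q-2))$ and $2q/(2q+r(q-2))$. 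Since the functions need not decay, the Morrey estimate has to be applied to $Ev$ directly, and this is legitimate because $Ev\in L^r\cap D^{1,q}(\rr)$. Restricting back to $\rr_+$ then finishes the proof, with constants depending only on $r$ and $q$.
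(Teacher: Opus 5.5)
The paper gives no argument of its own for this lemma; it only cites \cite{NI}. Your proof is correct and more complete than the citation. The scale-invariant inequalities in \cite{NI} are formulated on $\mathbb{R}^n$ (on bounded domains only with an extra lower-order term), and they do not track how the constant depends on $p$. So both your even-reflection step and an explicit computation are needed to justify the statement as written.

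Your Hausdorff--Young route is the one that actually delivers $p^{1/2}$. Start from $\|f\|_{L^p}\le C\|\hat f\|_{L^{p'}}$ and split at $|\xi|=R$. For low frequencies, H\"older gives $(\pi R^2)^{1/2-1/p}\|f\|_{L^2}$. For high frequencies it gives
$\bigl\||\xi|^{-1}\bigr\|_{L^{2p/(p-2)}(|\xi|>R)}\|\nabla f\|_{L^2}=\bigl(\pi(p-2)/2\bigr)^{1/2-1/p}R^{-2/p}\|\nabla f\|_{L^2}$.
Choosing $R=\|\nabla f\|_{L^2}/\|f\|_{L^2}$ then yields \eqref{gn11} on $\mathbb{R}^2$ directly, so your separate scaling step is not even needed. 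Your Morrey-plus-averaging argument for \eqref{gn12}, with the optimization in $\rho$, is also correct and gives exactly the stated exponents. What the bare citation buys the paper is only brevity.

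One correction: drop the Ladyzhenskaya-type alternative.
\begin{itemize}
\item As displayed, $\|f\|_{L^{2k}}^{2k}\le C\|f^k\|_{L^2}\|\nabla f^k\|_{L^2}$ amounts to $\|f^k\|_{L^2}\le C\|\nabla f^k\|_{L^2}$. This fails on $\mathbb{R}^2$: under $f\mapsto f(\lambda\,\cdot)$ with $\lambda\to 0$, the left side grows like $\lambda^{-1}$ while the right side stays fixed.
\item The natural valid version uses $\|g\|_{L^2}\le C\|\nabla g\|_{L^1}$ with $g=|f|^k$, giving $\|f\|_{L^{2k}}^{k}\le Ck\|f\|_{L^{2k-2}}^{k-1}\|\nabla f\|_{L^2}$. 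Iterating this produces a constant of order $(k!)^{1/k}\approx k/e$, i.e. $O(p)$ rather than $O(p^{1/2})$.
\end{itemize}
The $\sqrt{p}$ rate reflects the exponential-square (Trudinger) integrability of $H^1(\mathbb{R}^2)$. It needs an $L^2$-based argument such as your Fourier one.
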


The following Poincar\'e type inequality can be found in \cite{F}.
\begin{lemma}\la{pt}
Let $\OM \subset \rr$ be a bounded Lipschitz domain.
Assume that $v\in H^1(\OM)$ and that $\n$ is a non-negative function satisfying
\be\ba\nonumber
0<M_1\leq \int_{\om} \n dx,\quad \int_{\om} \n^r dx \leq M_2,
\ea\ee
with $r>1$. Then there exists a positive constant $C$ depending only on
$M_1$, $M_2$, $r$, and $\OM$ such that
\be\ba\la{pt1}
\|v\|_{L^2(\om)}^2 \leq C\int_{\om} \n |v|^2 dx + C \|\na v\|_{L^2(\om)}^2.
\ea\ee
\end{lemma}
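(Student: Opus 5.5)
We argue by contradiction and compactness, which is the standard route for weighted Poincaré inequalities of this type (the statement is Lemma \ref{pt}, cited from \cite{F}). Suppose \eqref{pt1} fails. Then there exist sequences $v_k\in H^1(\OM)$ and non-negative $\n_k$, each $\n_k$ satisfying $\int_\OM \n_k\,dx\ge M_1$ and $\int_\OM \n_k^r\,dx\le M_2$, such that
\be\nonumber
\|v_k\|_{L^2(\OM)}=1,\qquad \int_\OM \n_k|v_k|^2dx+\|\na v_k\|_{L^2(\OM)}^2\le \frac1k .
\ee
In particular $\{v_k\}$ is bounded in $H^1(\OM)$.

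We then pass to limits in both sequences.

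\textbf{The functions $v_k$.} Since $\OM$ is a bounded Lipschitz domain in $\rr$, the Rellich--Kondrachov theorem applies. A subsequence therefore converges weakly in $H^1$ and strongly in $L^p(\OM)$ for every $p<\infty$ to some $v$. Because $\na v_k\to 0$ in $L^2$, we get $\na v=0$. As $\OM$ is connected (implicit in ``domain''), $v\equiv c$ is constant, and $\|v\|_{L^2}=1$ forces $|c|=|\OM|^{-1/2}>0$.

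\textbf{The weights $\n_k$.} The bound $\int\n_k^r\le M_2$ with $r>1$ lets us extract a subsequence with $\n_k\rightharpoonup \n$ weakly in $L^r(\OM)$. Testing against the constant $1\in L^{r'}(\OM)$, which is allowed since $\OM$ is bounded, gives $\int_\OM\n\,dx=\lim\int_\OM\n_k\,dx\ge M_1$.

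\textbf{Contradiction.} The main point is to pass to the limit in $\int\n_k|v_k|^2$. Choose $p=2r'<\infty$. Then $|v_k|^2\to c^2$ strongly in $L^{r'}(\OM)$, because
\be\nonumber
\||v_k|^2-c^2\|_{L^{r'}}\le \|v_k-c\|_{L^{2r'}}\,\|v_k+c\|_{L^{2r'}}\to0 .
\ee
Weak convergence of $\n_k$ in $L^r$ paired with strong convergence of $|v_k|^2$ in $L^{r'}$ yields
\be\nonumber
\int_\OM \n_k|v_k|^2dx\to c^2\int_\OM \n\,dx\ge c^2M_1>0 .
\ee
This contradicts $\int_\OM\n_k|v_k|^2dx\le 1/k\to0$.

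The constant produced this way depends only on $M_1$, $M_2$, $r$, and $\OM$. Indeed, the contradiction hypothesis ranges over all admissible pairs $(\n,v)$ with these parameters fixed.

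The only delicate step is that last limit, and it is handled by the weak–strong pairing, which works precisely because $r>1$. For $r=1$ mass could concentrate and the argument would fail.
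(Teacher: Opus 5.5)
Your compactness argument is correct. The normalization, the Rellich--Kondrachov step into $L^{2r'}(\Omega)$ (available since $\Omega\subset\mathbb{R}^2$ is bounded and Lipschitz), the identification of the limit as a nonzero constant via connectedness, and the weak--strong pairing of $L^r$ against $L^{r'}$ all go through. Since the negated statement ranges over all admissible pairs, the resulting constant depends only on $M_1$, $M_2$, $r$, $\Omega$.

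The paper gives no argument of its own and simply cites \cite{F}. The useful comparison is therefore with the direct proof built from the same two ingredients: a Sobolev--Poincar\'e bound in $L^{2r'}$ and H\"older's inequality with $\rho\in L^r$. Setting $\bar v=|\Omega|^{-1}\int_\Omega v\,dx$, one has
\[
M_1\bar v^2\le\int_\Omega\rho\,\bar v^2dx\le 2\int_\Omega\rho|v|^2dx+2\|\rho\|_{L^r}\|v-\bar v\|_{L^{2r'}}^2\le 2\int_\Omega\rho|v|^2dx+2C_S^2M_2^{1/r}\|\nabla v\|_{L^2}^2,
\]
and $\|v\|_{L^2}^2\le 2|\Omega|\bar v^2+2C_P^2\|\nabla v\|_{L^2}^2$. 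Here $C_P$ and $C_S$ are the Poincar\'e--Wirtinger and the ($L^{2r'}$) Sobolev--Poincar\'e constants of $\Omega$. This gives the explicit constant $C=\max\{4|\Omega|/M_1,\ 4|\Omega|C_S^2M_2^{1/r}/M_1+2C_P^2\}$.

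What this buys is visibility: it shows how $C$ blows up as $M_1\to0$ or $M_2\to\infty$, which your contradiction argument cannot supply. Yours, in exchange, needs no bookkeeping and extends verbatim to other norms. For the way the lemma is used in this paper (on half-balls $B_R^+$, where only independence of $\nu$ matters), either route suffices.
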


The following weighted $L^p$ estimates can be found in \cite[Theorem B.1]{L1}.
\begin{lemma}\la{WPE}
For $m \in [2,\infty)$ and $\theta \in (1+\frac{m}{2},\infty)$, there exists a generic positive constant $C$ such that for any $v \in D^1(\rr)$,
\be\la{WPE1}\ba
\left( \int _{{\mathbb {R}^2 }} \frac{|v|^m}{e+|x|^2}(\log (e+|x|^2))^{-\theta }dx \right) ^{1/m}
\le C\Vert v\Vert _{L^2(B_1)}+C\Vert \nabla v\Vert _{L^2({\mathbb {R}^2 }) }.
\ea\ee
\end{lemma}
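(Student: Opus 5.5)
The strategy is to reduce Lemma \ref{WPE} to two standard facts: a weighted Hardy-type inequality, and a local estimate near the origin controlled by $\|v\|_{L^2(B_1)}$ and $\|\na v\|_{L^2}$.

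\textbf{Step 1 (normalization).} Write $\bar v \triangleq \frac{1}{|B_1|}\int_{B_1} v\,dx$ and $w = v-\bar v$. The weight
\[
W(x)=(e+|x|^2)^{-1}(\log(e+|x|^2))^{-\theta}
\]
satisfies $\int_{\rr} W\,dx<\infty$ because $\theta>1$. This gives
\[
\Big(\int W|\bar v|^m dx\Big)^{1/m}\le C|\bar v|\le C\|v\|_{L^2(B_1)},
\]
so it suffices to bound $\big(\int W|w|^m\big)^{1/m}$ by $C\|\na v\|_{L^2}$.

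\textbf{Step 2 (dyadic decomposition).} Set $A_0=B_2$ and $A_k=\{2^{k-1}<|x|<2^{k+1}\}$ for $k\ge1$. On $A_k$ the weight obeys $W\approx 2^{-2k}k^{-\theta}$. Rescaling $A_k$ to a fixed annulus and using the Sobolev–Poincaré inequality there gives
\[
\|w-w_k\|_{L^m(A_k)}\le C 2^{2k/m}\|\na v\|_{L^2(A_k)},
\]
where $w_k$ is the average of $w$ over $A_k$. The constant depends on $m$ but not on $k$, since in 2D $\|\cdot\|_{L^m}$ scales like $2^{2k/m}$ and $\|\na\cdot\|_{L^2}$ is scale invariant. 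The averages are controlled by telescoping through the overlapping annuli:
\[
|w_k|\le \sum_{j\le k}|w_j-w_{j-1}|\le C\sum_{j\le k}\|\na v\|_{L^2(A_j\cup A_{j-1})}\le C k^{1/2}\|\na v\|_{L^2}
\]
by Cauchy–Schwarz. Here $w_0$ is itself controlled by Poincaré on $B_2$, because $w$ has zero mean on $B_1$.

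\textbf{Step 3 (summation).} Combining the two bounds,
\[
\int_{A_k} W|w|^m \le C2^{-2k}k^{-\theta}\Big(2^{2k}\|\na v\|_{L^2}^m + 2^{2k}k^{m/2}\|\na v\|_{L^2}^m\Big)\le C k^{m/2-\theta}\|\na v\|_{L^2}^m .
\]
The series $\sum_k k^{m/2-\theta}$ converges exactly when $\theta>1+\frac m2$, which is the hypothesis. Summing over $k$ and taking the $m$-th root gives \eqref{WPE1}.

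The main obstacle is Step 2, specifically the logarithmic growth $k^{1/2}$ of the annular means. This growth is what forces the extra factor $\log^{-\theta}$ and fixes the threshold $\theta>1+m/2$. The rest is routine scaling, and a density argument for $v\in D^1(\rr)$ finishes the proof.
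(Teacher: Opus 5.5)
Your proof is correct and complete. The paper gives no argument of its own for this lemma. It imports the inequality from Lions \cite[Theorem B.1]{L1} and then transfers it to the half-plane by even reflection in \eqref{bkjjq}. So the comparison is between a citation and your self-contained dyadic proof.

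What your route buys is transparency about the hypotheses:
\begin{itemize}
\item Subtracting the mean over $B_1$ isolates the $\|v\|_{L^2(B_1)}$ term. This uses only $\int_{\mathbb{R}^2}(e+|x|^2)^{-1}(\log(e+|x|^2))^{-\theta}dx<\infty$, which needs merely $\theta>1$.
\item The two-dimensional scale invariance of $\|\nabla v\|_{L^2}$ makes the Sobolev--Poincar\'e constant on $A_k$ uniform in $k$. You are left with $\|w-w_k\|^m_{L^m(A_k)}\le C2^{2k}\|\nabla v\|^m_{L^2}$, which the factor $2^{-2k}$ in the weight exactly cancels.
\item The only genuine loss is the $k^{1/2}$ growth of the annular means, i.e.\ the $\sqrt{\log|x|}$ growth that functions in $D^1(\mathbb{R}^2)$ can have. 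This is what forces the logarithmic factor in the weight, and the threshold $\theta>1+\frac m2$ enters precisely through $\sum_k k^{m/2-\theta}<\infty$.
\end{itemize}

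Equivalently, pass to the variables $t=\log|x|$ and the polar angle $\phi$. The Dirichlet integral over $\{|x|>1\}$ becomes the flat one on a half-cylinder, and the weighted measure becomes comparable to $t^{-\theta}\,dt\,d\phi$. Your annuli are unit-length pieces of that cylinder, and your telescoping is the one-dimensional bound $|f(t)-f(0)|\le t^{1/2}\|f'\|_{L^2}$.

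Two minor remarks. First, the closing density argument is unnecessary: $v\in D^1(\mathbb{R}^2)$ already lies in $H^1_{\rm loc}$, so every step applies to it directly. Second, on $A_k$ with $k\ge1$ you only use the upper bound $W\le C2^{-2k}k^{-\theta}$, which follows from $\log(e+|x|^2)\ge ck$ there. The piece $A_0=B_2$ is handled by Sobolev--Poincar\'e on $B_2$ together with your bound on $w_0$.
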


The following estimate plays a crucial role in the analysis for the case $\tilde{\rho}=0$, and its proof can be found in \cite[Lemma 2.4]{LX2}.
\begin{lemma}\la{esnr}
For $N_*\ge 1$ and positive constants $M_3$, $M_4$, $\beta$, we suppose that $\n$ satisfies
\be\la{esnr1}\ba
0\le \rho \le M_3, \quad M_4\le \int _{B_{N_*}}\rho dx ,\quad{\bar{x}}^\beta \rho \in L^1(\rr).
\ea\ee

Then for any $r \in [2,\infty)$, there exists a positive constant $C$ depending only on $M_3$, $M_4$, $\beta$ and $r$ such that for every 
$v\in \left\{ v\in D^1 ({\rr}) \mid \rho^{1/2}v\in L^2(\rr) \right\}$,
\be\la{esnr2}\ba
\left( \int _{\mathbb {R}^2}\rho |v |^r dx\right) ^{1/r} 
\le C N_*^3 (1+\Vert {\bar{x}}^\beta \rho \Vert _{L^1(\rr)}) 
\left( \Vert \sqrt{\n} v\Vert _{L^2(\mathbb {R}^2)} 
+ \Vert \nabla v \Vert _{L^2(\mathbb {R}^2)}\right).
\ea\ee
\end{lemma}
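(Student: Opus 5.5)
The statement to prove is Lemma \ref{esnr}. It concerns $\rr$, the whole plane, with a density $\n$ satisfying \eqref{esnr1}. The aim is to control $\int\n|v|^r$ by $\|\sqrt{\n}v\|_{L^2}+\|\na v\|_{L^2}$. My plan has two steps: first bound $\|v\|_{L^2(B_{N_*})}$, then use the weighted estimate of Lemma \ref{WPE}, with Hölder's inequality absorbing the weight into $\bar{x}^\beta\n\in L^1$.

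\textbf{Step 1: an $L^2$ bound on $B_{N_*}$.} Rescale by $x=N_*y$ and set $w(y)=v(N_*y)$, $\tilde\n(y)=\n(N_*y)$ on $B_1$. Then
\[
\int_{B_1}\tilde\n\,dy=N_*^{-2}\int_{B_{N_*}}\n\,dx\ge M_4N_*^{-2}, \qquad \int_{B_1}\tilde\n^2\,dy\le M_3^2|B_1|.
\]
Instead of applying Lemma \ref{pt} with constants depending on $N_*$, I will argue directly. Write $\bar w$ for the mean of $w$ over $B_1$. Then
\[
|\bar w|\int\tilde\n \le \int\tilde\n|w| + \int\tilde\n|w-\bar w| \le \Big(\int\tilde\n\Big)^{1/2}\Big(\int\tilde\n w^2\Big)^{1/2}+C M_3\|\na w\|_{L^2(B_1)},
\]
using the Poincaré inequality on $B_1$. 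This gives $|\bar w|\le C N_*\|\sqrt{\tilde\n}w\|_{L^2}+CN_*^2\|\na w\|_{L^2}$.

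Scaling back, with $\|\na w\|_{L^2}=\|\na v\|_{L^2}$ and $\|\sqrt{\tilde\n}w\|_{L^2(B_1)}\le N_*^{-1}\|\sqrt\n v\|_{L^2}$, I obtain
\[
\|v\|_{L^2(B_{N_*})}\le CN_*\big(|\bar w|+\|\na v\|_{L^2}\big)\le CN_*^{3}\big(\|\sqrt\n v\|_{L^2}+\|\na v\|_{L^2}\big).
\]
This is the source of the $N_*^3$ factor. Since $N_*\ge1$, it also gives $\|v\|_{L^2(B_1)}\le CN_*^3(\|\sqrt\n v\|_{L^2}+\|\na v\|_{L^2})$.

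\textbf{Step 2: the weighted estimate.} Fix $r$, pick $m=2r$ large, and choose $\theta>1+m/2$. Split with the weight $\bar x$:
\[
\int\n|v|^r=\int \big(\n\,\bar x^{\beta}\big)^{1/2}\,\n^{1/2}\bar x^{-\beta/2}|v|^r .
\]
By Cauchy–Schwarz,
\[
\int\n|v|^r\le \|\bar x^\beta\n\|_{L^1}^{1/2}\Big(\int\n\,\bar x^{-\beta}|v|^{2r}\Big)^{1/2}\le \|\bar x^\beta\n\|_{L^1}^{1/2}M_3^{1/2}\Big(\int\bar x^{-\beta}|v|^{2r}\Big)^{1/2}.
\]
Lemma \ref{WPE} requires the weight $\bar x^{-\beta}$ to dominate $(e+|x|^2)^{-1}\log^{-\theta}$. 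That fails for small $\beta$. So I instead interpolate: apply Hölder with exponents chosen so that $\n\bar x^{-\beta\epsilon}$ is paired with a power of $\bar x^\beta\n$, giving
\[
\int\n|v|^r\le \Big(\int\n\bar x^{\beta}\Big)^{1-1/s}\Big(\int \n\,\bar x^{-\beta(s-1)}|v|^{rs}\Big)^{1/s}.
\]
Taking $s$ large makes $\beta(s-1)\ge 2$, and since $\log^2\ge1$ the weight $\bar x^{-\beta(s-1)}$ is at most $(e+|x|^2)^{-1}\log^{-\theta}(e+|x|^2)$. Lemma \ref{WPE} with $m=rs$ then applies, and bounding $\n\le M_3$ gives
\[
\Big(\int\n|v|^r\Big)^{1/r}\le C(1+\|\bar x^\beta\n\|_{L^1})\big(\|v\|_{L^2(B_1)}+\|\na v\|_{L^2}\big).
\]
Inserting Step 1 completes the proof.

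\textbf{Main obstacle.} I expect the main difficulty to be Step 1: getting the exact $N_*^3$ dependence uniform in the other data, which requires care with how the constants scale. The exponent bookkeeping in Step 2 is routine.
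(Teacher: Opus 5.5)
Your proof is correct and follows the same route as the argument in \cite[Lemma 2.4]{LX2}, to which the paper defers. The mean of $v$ over $B_{N_*}$ is controlled by the mass lower bound and Poincar\'e's inequality, which is where the factor $N_*^3$ comes from. Then Lemma \ref{WPE}, combined with H\"older's inequality against $\bar{x}^\beta\rho\in L^1$, gives the weighted $L^r$ bound.

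One step in Step 2 needs a small correction. The conditions $\beta(s-1)\ge 2$ and $\log(e+|x|^2)\ge 1$ do not yield $\bar{x}^{-\beta(s-1)}\le C(e+|x|^2)^{-1}\log^{-\theta}(e+|x|^2)$. The required $\theta>1+rs/2$ grows in $s$ faster than the logarithmic exponent $2\beta(s-1)$ whenever $\beta<r/4$, so matching logarithms never works.

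The fix is to choose $s$ with $\beta(s-1)>2$ strictly, say $\beta(s-1)\ge 4$. Then $\bar{x}^{-\beta(s-1)}\le (e+|x|^2)^{-2}$, and the spare factor $(e+|x|^2)^{-1}$ absorbs $\log^{\theta}(e+|x|^2)$. The resulting constant depends only on $\beta$ and $r$.
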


We will frequently use the following div-curl estimates (see \cite{AJ,MD,WWV}).
\begin{lemma}\la{dc}
Let $k \ge 0$ be an integer, $1<p<\infty$, and let $\OM=\rr_+$.
Then there exists a positive constant $C$ depending only on $k$ and $p$ such that for every $u\in D_+^{k,p}(\rr_+)$ with $u \cdot n=0$ on $\p \rr_+$, the following estimate holds:
\be\ba\la{dc1}
\| \na u \|_{W^{k,p}} \le C\left( \|\div u\|_{W^{k,p}} +\| \curl u \|_{W^{k,p}} \right).
\ea\ee
\end{lemma}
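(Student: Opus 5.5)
The estimate (\ref{dc1}) reduces to the whole-space Calderón--Zygmund theory by an odd/even reflection across $\p\rr_+$. The point is that $u\cdot n=0$ says $u^2=0$ on $x_2=0$. We extend $u^1$ evenly and $u^2$ oddly in $x_2$, setting
\[
U^1(x_1,x_2)=u^1(x_1,|x_2|),\qquad U^2(x_1,x_2)=\mathrm{sgn}(x_2)\,u^2(x_1,|x_2|).
\]

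\textbf{Regularity of the extension.} Since $u^2$ has zero trace, $U^2\in W^{1,p}_{loc}$ and no jump is created. The even extension of $u^1$ lies in $W^{1,p}_{loc}$ for any $W^{1,p}$ function. Hence $U\in D^{1,p}(\rr)$ with $\|\na U\|_{L^p(\rr)}\le 2^{1/p}\|\na u\|_{L^p}$.

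\textbf{Parity of $\div$ and $\curl$.} A direct computation shows that $\div U=\pa_1U^1+\pa_2U^2$ is the even extension of $\div u$, since $\pa_2$ of an odd function is even. Likewise $\curl U=\pa_1U^2-\pa_2U^1$ is the odd extension of $\curl u$. Both identities hold in the distributional sense across $x_2=0$: the only possible singular parts would be jumps of $U^2$ (excluded by the trace condition) or of $U^1$ (excluded by evenness). Consequently $\|\div U\|_{L^p(\rr)}+\|\curl U\|_{L^p(\rr)}$ is comparable to the corresponding half-space norms.

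\textbf{Case $k=0$.} On $\rr$ we have $\na U=\na(-\Delta)^{-1}(\na\div U-\na^\bot\curl U)$, i.e.\ $\pa_iU^j$ is a combination of Riesz transforms $R_iR_l$ applied to $\div U$ and $\curl U$. The identity holds modulo a constant, and that constant vanishes because $\na U\in L^p$. Boundedness of Riesz transforms on $L^p$ for $1<p<\infty$ gives
\[
\|\na U\|_{L^p(\rr)}\le C(\|\div U\|_{L^p}+\|\curl U\|_{L^p}).
\]
Restricting to $\rr_+$ yields (\ref{dc1}) for $k=0$.

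\textbf{Higher $k$.} Tangential derivatives $\pa_1^j$ commute with the reflection and preserve the condition $u^2=0$ on the boundary, so the $k=0$ bound applied to $\pa_1^ju$ controls $\na\pa_1^ju$. Normal derivatives are then recovered algebraically:
\[
\pa_2u^2=\div u-\pa_1u^1,\qquad \pa_2u^1=\pa_1u^2-\curl u .
\]
Iterating this gives all mixed derivatives up to order $k+1$ in terms of $\na^{\le k}\div u$, $\na^{\le k}\curl u$ and tangential derivatives, which were already controlled. Alternatively one can apply the whole-space Riesz bound directly to $\na^{k}\na U$, but the extended higher derivatives may jump across $x_2=0$, so I prefer the tangential-then-normal induction.

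\textbf{Main obstacle.} The delicate step is justifying that the reflected $U$ has no singular part in $\div U$ and $\curl U$ on $\{x_2=0\}$, i.e.\ the trace argument. For $u\in D^{1,p}$ only (not $W^{1,p}$) this needs an approximation by smooth functions satisfying $u^2=0$ on the boundary, using density in the homogeneous space. A secondary point is the homogeneous-space issue that $u$ is determined only up to constants; the tangential component allows a constant, but this does not affect $\na u$.
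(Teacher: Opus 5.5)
Your argument is correct, but it does not follow the paper, because the paper gives no proof of Lemma \ref{dc}; it only cites the div--curl literature \cite{AJ,MD,WWV}.

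\textbf{What the citation buys.} Those works use integral-equation and potential-theoretic methods, so they cover curved or Lipschitz boundaries where no reflection is available. However, they are formulated for bounded domains and typically carry a lower-order or harmonic-field term. Extracting the scale-invariant, homogeneous inequality on $\mathbb{R}^2_+$ from them would still need a localization or scaling step, which the paper leaves implicit.

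\textbf{What your route buys.} You use an even reflection of $u^1$ and an odd reflection of $u^2$, Riesz transforms on $\mathbb{R}^2$ for $k=0$, and then for $k\ge1$ tangential derivatives $\partial_1^j u$ (which keep $\partial_1^j u^2=0$ on $\{x_2=0\}$). You recover normal derivatives algebraically via $\partial_2u^2=\div u-\partial_1u^1$ and $\partial_2u^1=\partial_1u^2-\curl u$. This is self-contained, uses the same parity extension the paper itself uses in Lemma \ref{hm} and in (\ref{bkj25}), and gives exactly the stated homogeneous estimate.

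Your approach also exposes two points the citation hides:
\begin{enumerate}
\item The a priori hypothesis $\nabla u\in W^{k,p}$ is genuinely needed; the class $D^{k,p}_+$ as literally defined is too weak. For example, $u=(x_1,-x_2)$ has $u\cdot n=0$ and $\div u=\curl u=0$, but $\nabla u\neq0$.
\item For $k\ge1$ you cannot reflect higher derivatives directly. The odd extension of $\curl u$ jumps across $\{x_2=0\}$ unless $\curl u$ vanishes there, which is why your tangential-then-normal induction is the right mechanism.
\end{enumerate}

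\textbf{Minor points.}
\begin{itemize}
\item With the paper's $\nabla^\bot=(-\partial_2,\partial_1)$ one has $\Delta U=\nabla\div U+\nabla^\bot\curl U$. Your representation formula therefore has sign slips, though they are harmless for the bound.
\item The difference between $\nabla U$ and its Riesz-transform expression is an $L^p$ harmonic function, hence zero by Liouville, not merely ``a constant''.
\item The trace issue you call the main obstacle is milder than you suggest. Already $\nabla u\in L^p$ gives $u\in W^{1,p}_{loc}$ up to the boundary, and the reflection is a local operation, so no density argument in the homogeneous space is needed.
\end{itemize}
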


To estimate $\| \na u\|_{L^{\infty}}$ and $\| \na \n\|_{L^{q}}$ 
we require the following Beale-Kato-Majda type inequality, 
which was established in \cite{K} when $\div u \equiv 0$. 
For further reference, we direct readers to \cite{BKM,CL,HLX1}.
\begin{lemma}\la{bkm}
For $2<q<\infty$, 
there exists a positive constant $C$ 
depending only on $q $ such that for every function $u \in \tilde{D}^1_+(\rr_+)$ and $\na u \in D_+^{1,q}(\rr_+)$, it holds that
\be\ba\la{bkm1}
 \|\na u\|_{L^\infty} \le C \left( \|\div u\|_{L^\infty}+ \|\o\|_{L^\infty} \right)\log \left(e+ \|\na^2 u\|_{L^q} \right)+ C\|\na u\|_{L^2}+C.
\ea\ee
\end{lemma}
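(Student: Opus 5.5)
The plan is to reduce the half-space estimate to the whole-space Beale--Kato--Majda inequality by reflection across $\p\rr_+$. I will first derive an $L^\infty$ bound for $\na u$ through a Green-function / potential representation in terms of $\div u$ and $\o$, and then treat the near-boundary and far-field parts of the kernel separately.

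\textbf{Step 1 (reflection).} Extend $u$ from $\rr_+$ to $\rr$ by
\[
U^1(x_1,x_2)=u^1(x_1,-x_2), \qquad U^2(x_1,x_2)=-u^2(x_1,-x_2) \qquad (x_2<0).
\]
Since $u\cdot n=0$ on $\p\rr_+$, i.e. $u^2=0$ there, both components of $U$ are continuous across $x_2=0$. Hence $U\in D^1(\rr)$, and the extensions satisfy:
\begin{itemize}
\item $\div U$ is the even extension of $\div u$;
\item $\curl U$ is the odd extension of $\o$ (because $\pa_2 U^1$ changes sign while $\pa_1 U^2$ also changes sign).
\end{itemize}
Consequently
\[
\|\div U\|_{L^\infty(\rr)}=\|\div u\|_{L^\infty}, \qquad \|\curl U\|_{L^\infty(\rr)}=\|\o\|_{L^\infty}, \qquad \|\na U\|_{L^2(\rr)}\le C\|\na u\|_{L^2}.
\]

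\textbf{Step 2 (regularity of the extension).} Estimating $\|\na^2 U\|_{L^q(\rr)}$ is the main obstacle. The odd extension of $\o$ belongs to $W^{1,q}$ only if $\o=0$ on the boundary. In our setting $\o=-Au^1$ on $\p\rr_+$, so the odd extension generally has a jump, unless $A=0$.

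To avoid this, I will not differentiate $U$ twice globally. Instead I will work with the representation
\[
\na U=\na(-\Delta)^{-1}\left(\na \div U - \na^\bot \curl U\right),
\]
and split the singular integral as in the classical Kozono / BKM argument:
\begin{itemize}
\item a \emph{near part} $|x-y|<\delta$, controlled by a Hölder seminorm of $\na u$;
\item an \emph{intermediate part}, controlled by $(\|\div u\|_{L^\infty}+\|\o\|_{L^\infty})\log(1/\delta)$;
\item a \emph{far part} $|x-y|>1$, controlled by $\|\na u\|_{L^2}$.
\end{itemize}

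The key point is that the kernels act on $\div U$ and $\curl U$ themselves, so only their $L^\infty$ norms enter the intermediate part. The jumps across the boundary do not matter there, because Calderón--Zygmund kernels with cancellation integrated over annuli give only the logarithm.

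For the near part I will use $\na u\in C^{0,1-2/q}$, estimated by Morrey's inequality on the half-space:
\[
[\na u]_{C^{0,1-2/q}(\overline{\rr_+})}\le C\|\na^2u\|_{L^q}.
\]
The reflected function $\na U$ is piecewise Hölder, with possible jumps across $x_2=0$. I will handle this by taking $x\in\rr_+$ and splitting the near region into $B_\delta(x)\cap\rr_+$ and its mirror image, which produces an extra kernel $K(x-y^*)$, where $y^*$ is the reflection of $y$. For the mirror piece with $|x-y^*|\ge x_2$ the kernel is nonsingular unless $x$ is near the boundary. There I will use the cancellation of $K$ on half-discs, combined with the Hölder continuity of $\na u$ up to the boundary.

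\textbf{Step 3 (choice of $\delta$).} Take $\delta=\min\{1,\|\na^2u\|_{L^q}^{-q/(q-2)}\}$. Then the near part is $O(1)$ and $\log(1/\delta)\le C\log(e+\|\na^2u\|_{L^q})$, which gives \eqref{bkm1}.

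Alternatively, if the literature cited (\cite{CL,HLX1}) provides the bounded-domain version, one could localize with cutoffs and apply it near the boundary, using the whole-space BKM in the interior. I would pursue the reflection-plus-kernel approach first, expecting the boundary-layer treatment of the mirror kernel in Step 2 to be the delicate part.
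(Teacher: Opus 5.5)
Your argument is correct. It is not the paper's route, because the paper gives no proof: it attributes the inequality to \cite{K} (whole plane, $\div u\equiv 0$) and refers to \cite{BKM,CL,HLX1}, leaving the adaptation to the half-plane with Navier-slip conditions to the reader.

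That adaptation is not automatic, for exactly the reason you identify. When $A\neq 0$, the boundary value $\o=-Au^1$ makes the odd extension of $\o$ discontinuous, so the whole-space inequality cannot simply be applied to the reflected field. Your reflection plus the Kozono-type kernel splitting supplies the missing argument. It also shows that only $u\cdot n=0$ is used, so $C=C(q)$ is independent of $A$, as the statement asserts.

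The one step you should write out precisely is the mirror term. Take $x_2>0$ and let $\Omega$ be the odd extension of $\o$. The near integral $\int_{B_\de(x)}K(x-y)(\Omega(y)-\Omega(x))\,dy$ splits into three parts.
\begin{itemize}
\item Two H\"older pieces, each bounded by $C\de^{1-2/q}\|\na^2u\|_{L^q}$. For $y_2<0$, use $|\o(y^*)-\o(x)|\le [\o]_{C^{0,1-2/q}}|x-y^*|^{1-2/q}$ together with $|x-y^*|\le|x-y|$.
\item A jump term $-2\o(x)\int_{B_\de(x)\cap\{y_2<0\}}K(x-y)\,dy$.
\end{itemize}
The kernels $K$ of the second-order Riesz transforms are even, so their angular part integrates to zero over every half-circle. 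The missing arc $\{\theta\in(0,\pi):\sin\theta\le x_2/r\}$ has length $O(x_2/r)$. Hence the integral in the jump term is bounded by $C\int_{x_2}^\infty x_2 r^{-2}\,dr=C$, uniformly in $x$ and $\de$. The jump therefore costs only $C\|\o\|_{L^\infty}$, which is absorbed into the logarithmic term since $\log(e+\cdot)\ge 1$. In the intermediate annulus no cancellation is needed, only $|K(z)|\le C|z|^{-2}$.
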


Let $\mathcal{H}^1(\rr)$ and $\mathcal{BMO}(\rr)$ denote the standard Hardy and BMO spaces (see \cite[Chapter IV]{SEM}).
The following well-known results can be found in \cite{CLMS}.
\begin{lemma}\la{HM}
(i) There exists a generic positive constant $C$ such that
\be\ba\nonumber
\| E \cdot B \|_{\mathcal{H}^1(\rr)} \le C \| E \|_{L^2(\rr)} \| B \|_{L^2(\rr)},
\ea\ee
for all $E \in L^2(\rr)$ and $B \in L^2(\rr)$ satisfying
\be\ba\nonumber
\na^{\bot} \cdot E =0, \quad \div B=0 \ \text{ in } D'(\rr).
\ea\ee
(ii) There exists a generic positive constant $C$ such that for all $v \in D^1(\rr)$,
\be\ba\nonumber
\| v \|_{\mathcal{BMO}(\rr)} \le C \| \na v \|_{L^2(\rr)}.
\ea\ee
\end{lemma}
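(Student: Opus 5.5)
The approach is the classical one of Coifman--Lions--Meyer--Semmes \cite{CLMS}: for (i), use the maximal-function characterization of $\mathcal{H}^1(\rr)$ and move the derivative off the gradient field by integrating by parts against the divergence-free field; for (ii), use Poincar\'e's inequality on balls.

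\emph{Part (i).} First, since $\na^\bot\cdot E=0$ in $D'(\rr)$ and $E\in L^2$, I write $E=\na\varphi$ with $\varphi\in D^1(\rr)$. Second, I fix a radial $h\in C_c^\infty(B_1)$ with $\int h=1$ and set $h_t=t^{-2}h(\cdot/t)$. By the Fefferman--Stein theorem it suffices to bound the radial maximal function $\sup_{t>0}|(E\cdot B)*h_t|$ in $L^1$. Third, for fixed $x$ and $t$, let $\varphi_{x,t}$ denote the average of $\varphi$ over $B_t(x)$. Using $\div B=0$ I integrate by parts:
\begin{equation*}
(E\cdot B)*h_t(x)=\int h_t(x-y)\,\na\varphi(y)\cdot B(y)\,dy=\int \na_y\big(h_t(x-y)\big)\cdot B(y)\,\big(\varphi(y)-\varphi_{x,t}\big)\,dy .
\end{equation*}
I would justify this identity by a density argument. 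Since $|\na h_t|\le Ct^{-3}\mathbf 1_{B_t(x)}$, the right-hand side is bounded by
\begin{equation*}
\frac{C}{t}\,\frac{1}{|B_t|}\int_{B_t(x)}|B|\,|\varphi-\varphi_{x,t}|\,dy .
\end{equation*}

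Fourth, I apply H\"older with exponents $4/3$ and $4$, followed by the Sobolev--Poincar\'e inequality on the ball, $\big(\frac{1}{|B_t|}\int_{B_t}|\varphi-\varphi_{x,t}|^4\big)^{1/4}\le Ct\big(\frac{1}{|B_t|}\int_{B_t}|\na\varphi|^{4/3}\big)^{3/4}$. This is scale invariant in two dimensions because $(4/3)^*=4$. The result is the pointwise bound
\begin{equation*}
\sup_{t>0}|(E\cdot B)*h_t(x)|\le C\,\big[M(|E|^{4/3})(x)\big]^{3/4}\big[M(|B|^{4/3})(x)\big]^{3/4},
\end{equation*}
where $M$ is the Hardy--Littlewood maximal operator. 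Finally, Cauchy--Schwarz together with the maximal theorem on $L^{3/2}$ gives $\|[M(|f|^{4/3})]^{3/4}\|_{L^2}=\|M(|f|^{4/3})\|_{L^{3/2}}^{3/4}\le C\|f\|_{L^2}$, and this yields $\|E\cdot B\|_{\mathcal H^1}\le C\|E\|_{L^2}\|B\|_{L^2}$.

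\emph{Part (ii).} For any ball $B_r$, the Poincar\'e and Cauchy--Schwarz inequalities give
\begin{equation*}
\frac{1}{|B_r|}\int_{B_r}|v-v_{B_r}|\,dx\le \frac{Cr}{|B_r|}\int_{B_r}|\na v|\,dx\le C\Big(\int_{B_r}|\na v|^2dx\Big)^{1/2}\le C\|\na v\|_{L^2(\rr)},
\end{equation*}
where the middle step uses $|B_r|^{1/2}\sim r$ in dimension two. Taking the supremum over balls gives the BMO bound.

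The main obstacle is the integration-by-parts step in (i): it is what makes $E\cdot B$ lie in $\mathcal H^1$ and not merely in $L^1$. It requires choosing H\"older exponents strictly below $2$ on both factors, so that the maximal theorem applies, while keeping the Sobolev--Poincar\'e inequality scale invariant; the pair $(4/3,4)$ achieves both. The remaining steps are routine.
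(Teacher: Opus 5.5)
Your proof is correct and is exactly the argument of Coifman--Lions--Meyer--Semmes \cite{CLMS}, which the paper cites for this lemma instead of proving it; part (ii) is the standard scale-invariant Poincar\'e argument. The only slip is a missing minus sign: $\int \na_y\big(h_t(x-y)(\varphi-\varphi_{x,t})\big)\cdot B\,dy=0$ gives $(E\cdot B)*h_t(x)=-\int \na_y(h_t(x-y))\cdot B\,(\varphi-\varphi_{x,t})\,dy$, which is harmless because you immediately take absolute values.
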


By Lemma \ref{HM}, we can obtain the following result, which plays an important role in our analysis.
\begin{lemma}\la{hm}
Let $f=(f^1,f^2),\ g=(g^1,g^2) \in (H^1(\rr_+))^2$ satisfying the boundary condition $g\cdot n = 0$ on $\partial \rr_+$, and let $h \in H^1(\rr_+)$.
There exists a generic positive constant $C$ such that
\be\ba\la{hm01}
\int_{\rr_+} h (\na f^1 \cdot \na^\bot g^2) dx
\le C \| \na h \|_{L^2(\rr_+)} \| \na f \|_{L^2(\rr_+)} \| \na g \|_{L^2(\rr_+)},
\ea\ee
and
\be\ba\la{hm02}
\int_{\rr_+} h (\na g^2 \cdot \na^\bot f^1) dx
\le C \| \na h \|_{L^2(\rr_+)} \| \na f \|_{L^2(\rr_+)} \| \na g \|_{L^2(\rr_+)}.
\ea\ee
\end{lemma}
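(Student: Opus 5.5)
The plan is to reduce both inequalities to the whole-plane statement of Lemma \ref{HM} by reflecting across $\p\rr_+=\{x_2=0\}$, choosing the parity of each extension so that all extended functions stay in $H^1(\rr)$ and the integrand becomes even. Note that $n=(0,-1)$, so $g\cdot n=0$ on $\p\rr_+$ means exactly $g^2=0$ on $\{x_2=0\}$.

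\textbf{Step 1: extensions.} I define $\bar h(x_1,x_2)=h(x_1,|x_2|)$ and $\bar f^1(x_1,x_2)=f^1(x_1,|x_2|)$ (even extensions). I define $\bar g^2(x_1,x_2)=\mathrm{sgn}(x_2)\,g^2(x_1,|x_2|)$ (odd extension).
\begin{itemize}
\item Even reflections of $H^1(\rr_+)$ functions lie in $H^1(\rr)$.
\item The odd reflection lies in $H^1(\rr)$ precisely because the trace $g^2|_{x_2=0}=0$ vanishes. This is the only place the boundary condition is used.
\item We have $\|\na\bar h\|_{L^2(\rr)}^2=2\|\na h\|_{L^2(\rr_+)}^2$, and similarly for $\bar f^1$ and $\bar g^2$.
\end{itemize}

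\textbf{Step 2: parity of the integrand.} For the even function $\bar f^1$, $\pa_1\bar f^1$ is even and $\pa_2\bar f^1$ is odd in $x_2$. For the odd function $\bar g^2$, $\pa_1\bar g^2$ is odd and $\pa_2\bar g^2$ is even. Since
$$\na\bar f^1\cdot\na^\bot\bar g^2=-\pa_1\bar f^1\,\pa_2\bar g^2+\pa_2\bar f^1\,\pa_1\bar g^2,$$
the first term is even$\cdot$even and the second is odd$\cdot$odd. Hence the integrand is even in $x_2$. Multiplying by the even function $\bar h$ gives
$$\int_{\rr_+}h\,\na f^1\cdot\na^\bot g^2\,dx=\frac12\int_{\rr}\bar h\,\na\bar f^1\cdot\na^\bot\bar g^2\,dx .$$

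\textbf{Step 3: Hardy--BMO duality.} Take $E=\na\bar f^1$ and $B=\na^\bot\bar g^2$. Both lie in $L^2(\rr)$, with $\na^\bot\cdot E=0$ and $\div B=0$ in $D'(\rr)$.
\begin{itemize}
\item Lemma \ref{HM}(i) gives $\|E\cdot B\|_{\mathcal H^1}\le C\|\na \bar f^1\|_{L^2}\|\na\bar g^2\|_{L^2}$.
\item Lemma \ref{HM}(ii) gives $\|\bar h\|_{\mathcal{BMO}}\le C\|\na\bar h\|_{L^2}$.
\item $\mathcal H^1$--$\mathcal{BMO}$ duality then bounds the whole-plane integral by $C\|\na h\|_{L^2(\rr_+)}\|\na f\|_{L^2(\rr_+)}\|\na g\|_{L^2(\rr_+)}$, which is \eqref{hm01}.
\end{itemize}

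\textbf{Step 4: \eqref{hm02}.} It follows from the identity $a\cdot\na^\bot b=-\,b\cdot\na^\bot a$, applied with $a=\na g^2$, $b=f^1$ (more precisely to the gradients). This gives $\na g^2\cdot\na^\bot f^1=-\na f^1\cdot\na^\bot g^2$ pointwise, so \eqref{hm02} is \eqref{hm01} with a sign change.

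\textbf{Main obstacle.} The delicate point is justifying the duality pairing. The identity $\int \bar h\,\phi=\langle \bar h,\phi\rangle_{\mathcal{BMO},\mathcal H^1}$ is not automatic for a general $\mathcal{BMO}$ function. Here, however, $\bar h\in H^1(\rr)\subset L^2$ and $E\cdot B\in L^1\cap\mathcal H^1$. I would proceed in three moves:
\begin{enumerate}
\item Approximate $\bar h$ in $H^1(\rr)$ by functions $h_k\in C_c^\infty(\rr)$, for which the pairing is the genuine integral.
\item Use the BMO bound from Lemma \ref{HM}(ii) with $\|\na h_k\|_{L^2}$ to control each $\int h_k\,E\cdot B$.
\item Pass to the limit $k\to\infty$. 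This limit is the delicate part: $h_k\to\bar h$ in $L^2$ does not by itself control $\int (h_k-\bar h)\,E\cdot B$ with only $E\cdot B\in L^1$. I would therefore also approximate $f$ and $g$ (preserving the parity structure) by smooth compactly supported functions. Then the integrand is bounded, so the limit in $h_k$ is justified, and afterwards I would remove the approximation of $f,g$ by continuity of the right-hand side.
\end{enumerate}
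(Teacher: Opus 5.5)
Your proposal is correct and is essentially the paper's own proof. Both use even extensions of $h$ and $f^1$, an odd extension of $g^2$ (admissible because $g^2=0$ on $\partial\mathbb{R}^2_+$), evenness of the integrand in $x_2$, Lemma \ref{HM} with $\mathcal{H}^1$--$\mathcal{BMO}$ duality, and the identity $\nabla g^2\cdot\nabla^\bot f^1=-\nabla f^1\cdot\nabla^\bot g^2$ for \eqref{hm02}. Your density argument goes beyond the paper, which applies the duality directly, but its last step also needs $\int_{\mathbb{R}^2}\bar h\,\nabla\bar f^1_k\cdot\nabla^\bot\bar g^2_k\,dx$ to converge to the unapproximated integral; continuity of the right-hand side alone does not give this, though H\"older's inequality does under extra integrability of the kind available where the lemma is applied (e.g.\ $h\in L^4$, $\nabla f\in L^4$, approximating $f^1$ in $W^{1,4}\cap H^1$).
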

\begin{proof}
We first extend $f^1$ and $h$ evenly to $\rr$ by
\be\nonumber\ba
\widetilde{f^1}(x_1,x_2) \triangleq
\begin{cases}
f^1(x_1,x_2), \ & x_2 \ge 0, \\
f^1(x_1,-x_2), \ & x_2 < 0,
\end{cases} \quad
\widetilde{h}(x_1,x_2) \triangleq
\begin{cases}
h(x_1,x_2), \ & x_2 \ge 0, \\
h(x_1,-x_2), \ & x_2 < 0,
\end{cases}
\ea\ee
which implies that $\widetilde{f^1}, \widetilde{h} \in H^1(\rr)$ and satisfy
\be\la{hm1}\ba
\| \na \widetilde{f^1} \|^2_{L^2(\rr)} = 2 \| \na f^1 \|^2_{L^2(\rr_+)}, \quad
\| \na \widetilde{h} \|^2_{L^2(\rr)} = 2 \| \na h \|^2_{L^2(\rr_+)}.
\ea\ee
Moreover, we define the odd extension of $g^2$ to $\rr$ by
\be\nonumber\ba
\widetilde{g^2}(x_1,x_2) \triangleq
\begin{cases}
g^2(x_1,x_2), \ & x_2 \ge 0, \\
- g^2(x_1,-x_2), \ & x_2 < 0.
\end{cases}
\ea\ee
The boundary condition $g \cdot n = 0$ on $\partial \rr_+$ implies that $g^2 = 0$ on $\partial \rr_+$, hence $\widetilde{g^2} \in H^1(\rr)$ and satisfies
\be\la{hm2}\ba
\| \na^\bot \widetilde{g^2} \|^2_{L^2(\rr)} = 2 \| \na^\bot g^2 \|^2_{L^2(\rr_+)}.
\ea\ee
Note that $\widetilde{h} (\na \widetilde{f^1} \cdot \na^\bot \widetilde{g^2})$ is an even function with respect to $x_2$.
Using the fact that $\mathcal{BMO}(\rr)$ is the dual space of $\mathcal{H}^1(\rr)$ (see \cite{FC}), together with (\ref{hm1}), (\ref{hm2}), and Lemma \ref{HM}, we derive
\be\ba\la{hm3}
\int_{\rr_+} h (\na f^1 \cdot \na^\bot g^2) dx
& = \frac{1}{2} \int_{\rr} \widetilde{h} (\na \widetilde{f^1} \cdot \na^\bot \widetilde{g^2}) dx \\
& \le C \| \widetilde{h} \|_{\mathcal{BMO}(\rr)} \|\na \widetilde{f^1} \cdot \na^\bot \widetilde{g^2} \|_{\mathcal{H}^1(\rr)} \\
& \le C \| \na \widetilde{h} \|_{L^2(\rr)} \| \na \widetilde{f^1} \|_{L^2(\rr)} \| \na^\bot \widetilde{g^2} \|_{L^2(\rr)} \\
& \le C \| \na h \|_{L^2(\rr_+)} \| \na f \|_{L^2(\rr_+)} \| \na g \|_{L^2(\rr_+)},
\ea\ee
which yields (\ref{hm01}).

Finally, since
\be\nonumber\ba
h (\na g^2 \cdot \na^\bot f^1) = - h (\na f^1 \cdot \na^\bot g^2),
\ea\ee
the estimate (\ref{hm02}) follows directly from (\ref{hm01}).
The proof of Lemma \ref{hm} is completed.
\end{proof}

The following Zlotnik inequality (see \cite{ZAA}) will be used to derive the time-uniform upper bound for the density.
\begin{lemma}\la{zli}
Let the function $y(t) \in W^{1,1}(0,T)$ satisfy
\bnn
y'(t)= g(y)+h'(t) \textnormal{ on } [0,T], \quad y(0)=y_0, 
\enn
with $ g\in C(\mathbb{R})$ and $h \in W^{1,1}(0,T).$ If $g(\infty)=-\infty$
and 
\bnn 
h(t_2)-h(t_1) \le N_0 +N_1(t_2-t_1),
\enn
for all $0 \le t_1<t_2\le T$
with some $N_0\ge 0$ and $N_1\ge 0$, then
\bnn
y(t)\le \max\left\{y_0,\overline{\zeta} \right\}+N_0<\infty
\textnormal{ on } [0,T],
\enn
where $\overline{\zeta}$ is a constant such
that 
\bnn
g(\zeta)\le -N_1 \quad \textnormal{ for }\quad \zeta\ge \overline{\zeta}.
\enn
\end{lemma}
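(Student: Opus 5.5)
The statement is Zlotnik's inequality, Lemma \ref{zli}, which the paper quotes from \cite{ZAA}. I would prove it directly by a comparison argument on the time intervals where $y$ lies above the threshold. Set $M \triangleq \max\{y_0,\overline{\zeta}\}$. Suppose that for some $t_2\in(0,T]$ we have $y(t_2) > M + N_0$; we seek a contradiction.

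\textbf{Step 1: locate the last exit.} Since $y$ is absolutely continuous and $y(0)=y_0\le M < y(t_2)$, the set $\{t\in[0,t_2]: y(t)\le M\}$ is nonempty and closed. Let
\[
t_1 \triangleq \sup\{t\in[0,t_2]: y(t)\le M\} .
\]
Then $y(t_1)=M$ and $y(t)>M\ge\overline{\zeta}$ for all $t\in(t_1,t_2]$, with $t_1<t_2$.

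\textbf{Step 2: integrate the equation on $(t_1,t_2)$.} On this interval $y(t)\ge\overline{\zeta}$, so $g(y(t))\le -N_1$. Here I use the hypothesis in the form ``$g(\zeta)\le -N_1$ for all $\zeta\ge\overline{\zeta}$''; this is where $g(\infty)=-\infty$ guarantees that such a $\overline{\zeta}$ exists. Since $y,h\in W^{1,1}$ and $g$ is continuous, $g(y(\cdot))$ is continuous and bounded on compact subintervals, so the fundamental theorem of calculus applies. It gives
\[
y(t_2)-y(t_1) = \int_{t_1}^{t_2} g(y(s))\,ds + h(t_2)-h(t_1) \le -N_1(t_2-t_1) + N_0 + N_1(t_2-t_1) = N_0 .
\]
Hence $y(t_2)\le M+N_0$, which is the desired contradiction. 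So $y(t)\le \max\{y_0,\overline{\zeta}\}+N_0$ on $[0,T]$.

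The only delicate point is Step 1, choosing $t_1$ so that $y$ stays above $\overline{\zeta}$ on the whole interval $(t_1,t_2]$. Taking the supremum of the sublevel set handles this by continuity. A careless choice, such as the first time $y$ exceeds $M$, would fail because $y$ could dip back below $\overline{\zeta}$ before $t_2$. Everything else is one line of integration. Finiteness of the bound is immediate, since $N_0<\infty$ and both $y_0$ and $\overline{\zeta}$ are finite.
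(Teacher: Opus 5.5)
Your proof is correct and complete. The paper does not prove this lemma; it quotes it from Zlotnik \cite{ZAA}. Your last-exit-time argument is the standard proof of that result: take $t_1=\sup\{t\le t_2: y(t)\le M\}$, so that $y(t_1)=M$ and $g(y)\le -N_1$ on all of $(t_1,t_2)$, then integrate.
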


\section{A Priori Estimates for Vacuum Far-Field Density}

In this section, we establish some necessary a priori estimates for the case of $\tilde{\n}=0$.
Assume that the initial data $(\n_0,u_0)$ satisfy (\ref{wsol1}) and $(\ref{wsol01})_1$.
Let $(\n,u)$ be a strong solution to (\ref{ns})--(\ref{bjtj2}) on $\rr_+ \times (0,T]$ provided by Lemma \ref{lct}.

\subsection{A Priori Estimates (I): Lower Order Estimates}

The main aim of this subsection is to derive the following a priori estimates.
\begin{proposition}\la{pro1}
There exists a positive constant $\nu_1$ depending only on
$N_0$, $\ga$, $\mu$, $a$, $E_0$, $\|{\bar{x}}^a \n_0\|_{L^1}$, $\|\n_0\|_{L^\infty}$, and $\| \na u_0\|_{L^2}$
such that if $(\n,u)$ satisfies
\be\ba\la{pro101}
\sup_{0\leq t\leq T} \|\n\|_{L^\infty} \leq 2 \left( 1 + \| \n_0 \|_{L^\infty} \right),
\ea\ee
then
\be\ba\la{pro102}
\sup_{0\leq t\leq T} \|\n\|_{L^\infty} \leq \frac{3}{2} \left( 1 + \| \n_0 \|_{L^\infty} \right),
\ea\ee
provided $\nu \geq \nu_1$.
\end{proposition}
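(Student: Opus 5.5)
The argument closes the density bound through Zlotnik's inequality (Lemma \ref{zli}) applied along particle paths. Every intermediate estimate must be uniform in both $\nu$ and $T$, and must assume only the bootstrap bound \eqref{pro101}.

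Writing $D_t = \partial_t + u\cdot\nabla$, the continuity equation together with the definition \eqref{gw} of $G$ gives
\[
D_t \rho = -\frac{\rho P}{\nu} - \frac{\rho G}{\nu}.
\]
In the language of Lemma \ref{zli} this reads $y=\rho(X(t),t)$, $g(y)=-y^{\gamma+1}/\nu$ and $h(t)=-\nu^{-1}\int_0^t \rho G\,ds$. Since $\rho$ is bounded, it suffices to show
\[
\nu^{-1}\int_{t_1}^{t_2}\|G\|_{L^\infty}\,ds \le N_0' + N_1'(t_2-t_1),
\]
with $N_0'$ small (say $\le \tfrac14$ after multiplying by the density bound) and with $N_1'\nu$ small enough that $g(\zeta)\le -N_1$ already holds for $\zeta \ge 1+\|\rho_0\|_{L^\infty}$. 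The natural choice is $N_1'\sim \nu^{-1}$, so that $N_1 \lesssim \nu^{-1}$ and $\bar\zeta$ is of order one. Zlotnik then gives $\rho \le \max\{\|\rho_0\|_{L^\infty},\bar\zeta\}+N_0 \le \tfrac32(1+\|\rho_0\|_{L^\infty})$.

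The supporting estimates will be established in the following order.

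\textbf{(a) Energy and mass.} I will prove the basic energy equality, which gives $\nu\int_0^T\|\div u\|_{L^2}^2\,dt \le CE_0$. I will also derive the mass-localization bound \eqref{wsol30} and the weighted bound on $\bar x^a\rho$ in $L^1$. These feed into Lemma \ref{esnr}, which controls $\rho|u|^r$.

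\textbf{(b) Time-uniform pressure estimate.} Here $A=0$ enters. On the boundary, $\curl u=0$ and $u\cdot n=0$, so $G$ satisfies the Neumann-type problem $\Delta G = \div(\rho\dot u)$ with $\partial_n G = \rho\dot u\cdot n$ (equation \eqref{bkj22}). Even extension then yields $\|\nabla G\|_{L^2}\lesssim\|\rho\dot u\|_{L^2}$ and $\|\nabla\omega\|_{L^2}\lesssim\|\rho\dot u\|_{L^2}$. Testing the momentum equation against a Bogovskii-free substitute, namely $P = \nu\,\div u - G$ combined with $\int P^2 = \int P(\nu\,\div u - G)$ and integration in time, I aim for
\[
\int_0^T\|P\|_{L^2}^2\,dt \le C,
\]
independent of $\nu$ and $T$.

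\textbf{(c) $\nu$-uniform velocity gradient bound.} I will multiply the momentum equation by $\dot u$, using $\dot u\cdot n=0$ on the boundary from \eqref{bkjbjds}. The cubic terms $\int G\,\partial_i u^j\partial_j u^i$ are rewritten via $\partial_i u^j\partial_j u^i = (\div u)^2 + $ Jacobian-type terms, and these are controlled by Lemma \ref{hm} as $\|\nabla G\|_{L^2}\|\nabla u\|_{L^2}^2$. This avoids any $\|\nabla u\|_{L^4}$ bound with bad $\nu$-dependence. Together with Gronwall and step (a), it gives $\sup_t(\mu\|\omega\|_{L^2}^2 + \nu^{-1}\|G\|_{L^2}^2) + \int_0^T\|\sqrt\rho\,\dot u\|_{L^2}^2\,dt \le C$. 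The time-weighted versions ($t\|\sqrt\rho\dot u\|^2$, $t^2\|\nabla\dot u\|^2$, and so on) follow by the Hoff-type $\dot u$ estimate, again using Lemma \ref{hm} and $u\cdot\nabla u\cdot n=0$ for the boundary terms.

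\textbf{(d) Feeding into Zlotnik.} Gagliardo--Nirenberg gives
\[
\|G\|_{L^\infty}\lesssim \|G\|_{L^2}^{1/2}\|\nabla G\|_{L^4}^{1/2}\quad\text{or}\quad \|G\|_{L^\infty}\lesssim\|G\|_{L^2}^{\theta}\|\rho\dot u\|_{L^4}^{1-\theta}.
\]
On $[0,1]$, step (c) and Hölder give $\nu^{-1}\int_0^1\|G\|_{L^\infty} \le C\nu^{-\delta}$. For large time the decay rates $t\|\sqrt\rho\dot u\|_{L^2}\le C$ and $\|G\|_{L^2}$ decay make $\int_1^\infty\|G\|_{L^\infty}$ finite, so $N_0'\le C\nu^{-\delta}$ small and $N_1'=0$.

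The main obstacle is step (b) together with the $\nu$-uniform decay in (c)–(d). The decay of $G$ is limited by $\|P\|_{L^2}$, because $G = \nu\,\div u - P$, and $P$ is only known in $L^2_tL^2_x$ with a $\nu$-dependent rate $(\nu^{-1}t)^{-1+1/r}$. If the large-time integral of $\nu^{-1}\|G\|_{L^\infty}$ cannot be bounded uniformly, I would instead absorb a linear-in-time part into $N_1'=C\nu^{-1}$. I would then check that $\bar\zeta$ stays below $\tfrac32(1+\|\rho_0\|_{L^\infty})$, which holds because $g$ carries the same factor $\nu^{-1}$.
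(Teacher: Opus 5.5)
The skeleton matches the paper's: Zlotnik along particle paths with $g(\rho)=-\rho^{\gamma+1}/\nu$ and $h(t)=-\nu^{-1}\int_0^t\rho G\,ds$, a split at $\sigma(T)$, and Lemma~\ref{hm} for the Jacobian terms. However, two of the $\nu$-uniform bounds you plan to prove do not hold as stated, and the estimate that actually closes the argument is missing.

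The uniformity claims in (b) and (c) fail as soon as $\div u_0\neq 0$, which the proposition allows. Since $\nu^{-1}\|G(\cdot,0)\|_{L^2}^2\approx \nu\|\div u_0\|_{L^2}^2$, neither $\sup_t\nu^{-1}\|G\|_{L^2}^2$ nor, via your Gr\"onwall argument, $\int_0^T\|\sqrt{\rho}\dot u\|_{L^2}^2\,dt$ is bounded independently of $\nu$; this is why \eqref{bkj021} carries $\nu\|\div u_0\|_{L^2}^2$. Likewise, $\int P^2dx=\int P(\nu\div u-G)\,dx$ plus the energy bound yields only $\int_0^T\nu^{-1}\|P\|_{L^2}^2\,dt\le C$, not a $\nu$-free bound on $\int_0^T\|P\|_{L^2}^2\,dt$. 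None should be expected: the pressure relaxes on the time scale $\nu$, cf.\ the rate $(\nu^{-1}t)^{-1+1/r}$ in \eqref{wsol300}.

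Only the $\sigma$-weighted bounds \eqref{bkj022}, \eqref{bkj301}, \eqref{bkj401} are $\nu$-uniform. On $[0,\sigma(T)]$ you must pair them with the crude bound $\|G\|_{L^2}\le C\nu$, which follows from \eqref{bkj021} and $\nu\|\div u_0\|_{L^2}^2\le\nu\|\nabla u_0\|_{L^2}^2$. You must also choose the interpolation so the time singularity is integrable. The paper uses $\|G\|_{L^\infty}\le C\|G\|_{L^2}^{3/8}\|\nabla G\|_{L^5}^{5/8}$ and gets $|h|\le C\nu^{-5/8}$ there. Your $\|G\|_{L^\infty}\lesssim\|G\|_{L^2}^{1/2}\|\nabla G\|_{L^4}^{1/2}$ is not a valid planar inequality; scaling forces exponents $1/3,2/3$. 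Even with those, $\nabla G\in L^4$ and H\"older against $\int_0^1\sigma^2\|\nabla\dot u\|_{L^2}^2\,dt$ leave $\int_0^1\sigma^{-1}\,ds$, which diverges.

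The more serious gap is the large-time step: nothing available makes $\int_1^\infty\|G\|_{L^\infty}\,dt$ finite. The rates you have are:
\begin{itemize}
\item $\|G\|_{L^2}\le C\nu^{1/2}t^{-1/2}$, from \eqref{bkj301};
\item $\|\nabla G\|_{L^2}\le C\|\sqrt{\rho}\dot u\|_{L^2}\le Ct^{-1}$, from \eqref{bkj401};
\item $\|\nabla G\|_{L^p}$ for $p>2$ only through Lemma~\ref{bkjl5}, with a $(1+t)^4$ loss.
\end{itemize}
Any interpolation then gives at best a rate like $t^{-1+\varepsilon}$ for $\|G\|_{L^\infty}$, which is not integrable, so $N_1'=0$ is out of reach. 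Your fallback is the right route, but its entire content is the estimate you have not supplied: $\int_{\sigma(T)}^T\|G\|_{L^\infty}^p\,dt\le C\nu^{\alpha}$ for some $p>1$, $\alpha<1$, uniformly in $T$.

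The paper gets $p=4$, $\alpha=1/18$ by putting almost all the weight on the $\nu$-uniform quantity $\|\nabla G\|_{L^2}$. First, $\|G\|_{L^{72}}\le C\|G\|_{L^2}^{1/36}\|\nabla G\|_{L^2}^{35/36}\le C\nu^{1/72}t^{-71/72}$. Then $\|G\|_{L^\infty}^4\le C\|G\|_{L^{72}}^{35/9}\|\nabla G\|_{L^{72}}^{1/9}$, where the small power $1/9$ turns the $(1+t)^4$ loss into a harmless $t^{4/9}$ and \eqref{bkj401} controls $\|\nabla\dot u\|_{L^2}^{1/9}$. Young's inequality gives $h(t_2)-h(t_1)\le\nu^{-1}(t_2-t_1)+C\nu^{-17/18}$. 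Thus $N_1=\nu^{-1}$, $N_0=C\nu^{-17/18}$ and $\bar\zeta=1$, which works precisely because $g$ carries the same factor $\nu^{-1}$, as you observed.
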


The proof of Proposition \ref{pro1} will be postponed to the end of this subsection.

We first state the standard energy estimate.
\begin{lemma}\la{bkjl1}
There exists a positive constant $C$ depending only on
$\ga$, $\mu$, and $E_0$ such that
\be\ba\la{bkj01}
\sup_{0\leq t\leq T} \int \left( \rho |u|^2 + \n^\ga \right) dx
+ \int_0^T \int \left( |\na u|^2 + \nu (\div u)^2 \right) dx dt \leq C.
\ea\ee
\end{lemma}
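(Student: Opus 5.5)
We prove Lemma \ref{bkjl1} by the standard energy identity for \eqref{ns} with $\tilde{\n}=0$, $A=0$. The plan is to multiply the momentum equation $\eqref{ns}_2$ by $u$, integrate over $\rr_+$, and use the continuity equation, handling the boundary terms with the Navier-slip conditions \eqref{bjtj1}.

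\emph{Transport and pressure terms.} Using $\eqref{ns}_1$ and $u\cdot n=0$ on $\p\rr_+$, we get
\[
\int \left((\n u)_t+\div(\n u\otimes u)\right)\cdot u\,dx=\frac{d}{dt}\int \frac12\n|u|^2dx .
\]
For the pressure, $H(\n)=\frac{1}{\ga-1}\n^\ga$ satisfies $H(\n)_t+\div(H(\n)u)+P\div u=0$. Since $u\cdot n=0$, this gives
\[
\int \na P\cdot u\,dx=-\int P\div u\,dx=\frac{d}{dt}\int H(\n)\,dx .
\]

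\emph{Viscous terms.} In two dimensions, $-\Delta u=-\na\div u+\na^\bot\o$, so
\[
-\mu\Delta u-(\mu+\lm)\na\div u=-\nu\na\div u+\mu\na^\bot\o .
\]
Integrating by parts with $u\cdot n=0$ gives $\int -\nu\na\div u\cdot u\,dx=\nu\int(\div u)^2dx$. For the rotational part,
\[
\int \na^\bot\o\cdot u\,dx=\int \o^2dx+\int_{\p\rr_+}\o\,u\cdot n^\bot\,dS ,
\]
with the sign fixed by the orientation. Since $\o=-Au\cdot n^\bot$ on the boundary, the boundary term equals $\mu A\int_{\p\rr_+}|u\cdot n^\bot|^2dS\ge0$, and it vanishes here because $A=0$.

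\emph{Energy identity.} Combining the above yields
\[
\frac{d}{dt}\int\Big(\tfrac12\n|u|^2+H(\n)\Big)dx+\mu\|\o\|_{L^2}^2+\nu\|\div u\|_{L^2}^2+\mu A\|u\cdot n^\bot\|^2_{L^2(\p\rr_+)}=0 .
\]

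\emph{Recovering $\na u$.} Lemma \ref{dc} with $k=0$, $p=2$ gives $\|\na u\|_{L^2}^2\le C(\|\div u\|_{L^2}^2+\|\o\|_{L^2}^2)$. Because $\nu\ge\mu$ by \eqref{numu}, we have
\[
\mu\|\o\|^2_{L^2}+\nu\|\div u\|^2_{L^2}\ge \tfrac12\mu\|\o\|^2_{L^2}+\tfrac12\mu\|\div u\|^2_{L^2}+\tfrac12\nu\|\div u\|^2_{L^2}\ge c\mu\|\na u\|^2_{L^2}+\tfrac12\nu\|\div u\|_{L^2}^2 .
\]
Integrating in time and recalling that $H(\n)=\n^\ga/(\ga-1)$ then gives \eqref{bkj01}, with $C$ depending only on $\ga$, $\mu$, and $E_0$.

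The only delicate point is justifying the integrations by parts on the unbounded half-space for the strong solution, i.e.\ that boundary terms at infinity vanish. For this we use the regularity from Lemma \ref{lct}: $\na u\in L^2(0,T;H^1)$, ${\bar x}^{-1}u\in L^\infty(0,T;L^2)$, and ${\bar x}^a\n\in L^1$. We carry out the computation against a cutoff $\phi_R(x)=\phi(|x|/R)$ and let $R\to\infty$, using the weighted bounds to make the cutoff error terms vanish.
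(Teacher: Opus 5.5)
Your argument is the same as the paper's: multiply $(\ref{ns})_2$ by $u$, use $(\ref{ns})_1$ and the slip conditions to obtain the energy identity, then recover $\|\na u\|_{L^2}$ from $\|\div u\|_{L^2}$ and $\|\o\|_{L^2}$ via (\ref{dc1}) together with $\nu\ge\mu$. Two of your intermediate signs are wrong, but the errors cancel: in fact $\Delta u=\na\div u+\na^\bot\o$, so the viscous term is $-\nu\na\div u-\mu\na^\bot\o$, and $\int\na^\bot\o\cdot u\,dx=-\int\o^2dx+\int_{\p\rr_+}\o\,(u\cdot n^\bot)\,ds$; these corrected formulas give exactly your energy identity, including the nonnegative boundary term $\mu A\int_{\p\rr_+}|u\cdot n^\bot|^2ds$.
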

\begin{proof}
Multiplying $(\ref{ns})_2$ by $u$ and integrating the resulting equation over $\rr_+$, we obtain (\ref{bkj01}) after using $(\ref{ns})_1$ and (\ref{dc1}).
\end{proof}

\begin{lemma}\la{bkjl2}
Let $(\n,u)$ be a strong solution to \eqref{ns}--\eqref{bjtj2} satisfying \eqref{pro101}.
Then there exists a positive constant $C$ depending only on
$\ga$, $\mu$, $E_0$, and $\| \n_0 \|_{L^\infty}$ such that
\be\la{bkj021}\ba
\sup_{0\le t\le T} \left( \| \na u \|^2_{L^2} + \nu \| \div u \|^2_{L^2} \right) + \int_0^T \| \sqrt{\n} \dot{u}\|^2_{L^2} dt
\le C \left( 1 + \| \na u_0 \|^2_{L^2} + \nu \| \div u_0 \|^2_{L^2} \right),
\ea\ee
and
\be\la{bkj022}\ba
\sup_{0\le t\le T} \si \left( \| \na u \|^2_{L^2} + \nu \| \div u \|^2_{L^2} \right)
+ \int_0^T \left( \si \| \sqrt{\n} \dot{u}\|^2_{L^2} + \frac{1}{\nu} \| P \|^2_{L^2} \right) dt
\le C,
\ea\ee
with
\be\nonumber
\si(t) \triangleq \min\{1,t\}.
\ee
\end{lemma}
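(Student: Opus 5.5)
We multiply the momentum equation $(\ref{ns})_2$ by $\dot u$ and integrate over $\rr_+$. First we rewrite the viscous terms using the identity $\Delta u=\na\div u+\na^\bot\o$ (with $\na^\bot=(-\pa_2,\pa_1)$), so that
\[
\n\dot u=\na G+\mu\na^\bot\o .
\]
Here $G$ and $\o$ are as in (\ref{gw}), and $P-P(\tilde\n)=P$ since $\tilde\n=0$.

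\textbf{The basic identity.} We compute $\int \na G\cdot\dot u\,dx$ and $\mu\int\na^\bot\o\cdot\dot u\,dx$, using $\dot u\cdot n=0$ on $\p\rr_+$ from (\ref{bkjbjds}) and $\o=0$ on the boundary (since $A=0$). The boundary terms then vanish, and we arrive at
\[
\frac12\frac{d}{dt}\int\big(\mu\o^2+\nu(\div u)^2\big)dx-\frac{d}{dt}\int P\div u\,dx+\int\n|\dot u|^2dx=\text{cubic terms},
\]
where we use $P_t+\div(Pu)+(\ga-1)P\div u=0$ to handle the pressure. The cubic terms have the form $\int \na u\,\na u\,\na u$, $\nu\int(\div u)^3$, and $\int P(\div u)^2$. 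The $\nu$-dependent ones are rewritten through $\nu\div u=G+P$, so that cubic terms in $\div u$ become terms like $\int G(\div u)^2$ and $\nu^{-1}$-weighted terms.

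\textbf{Handling the cubic terms.} The critical quantities $\int G\,\pa_iu^j\pa_ju^i$ and $\int \o\,\na u\na u$ are treated with Lemma \ref{hm}. The key identity is $\pa_iu^j\pa_ju^i-(\div u)^2=-2(\pa_1u^1\pa_2u^2-\pa_2u^1\pa_1u^2)$, which is a $\na f\cdot\na^\bot g$ structure with $g=u$ and $g\cdot n=0$. Lemma \ref{hm} then gives the bound $C\|\na G\|_{L^2}\|\na u\|_{L^2}^2$. We also have $\|\na G\|_{L^2}+\mu\|\na\o\|_{L^2}\le C\|\sqrt\n\dot u\|_{L^2}$, by even extension of $G$ and using $\o=0$ on the boundary. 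Young's inequality then absorbs these terms:
\[
C\|\sqrt\n\dot u\|_{L^2}\|\na u\|_{L^2}^2\le \tfrac14\|\sqrt\n\dot u\|^2_{L^2}+C\|\na u\|^4_{L^2}.
\]
Gronwall's inequality, together with $\int_0^T\|\na u\|^2_{L^2}dt\le C$ from Lemma \ref{bkjl1} and $|\int P\div u|\le \frac{\nu}{4}\|\div u\|^2_{L^2}+C\nu^{-1}\|P\|_{L^2}^2$, yields (\ref{bkj021}). Here $\|P\|_{L^2}\le C$ by (\ref{pro101}) and Lemma \ref{bkjl1}.

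\textbf{The time-weighted estimate (\ref{bkj022}).} We multiply the same identity by $\si$. The extra term $\si'\int(\mu\o^2+\nu(\div u)^2)$ is integrable on $(0,1)$ by Lemma \ref{bkjl1}. For $\int_0^T\nu^{-1}\|P\|_{L^2}^2dt$, which is the obstacle flagged in Remark \ref{lrk4}, we write $P=\nu\div u-G$. This gives
\[
\nu^{-1}\int_0^T\|P\|^2_{L^2}dt\le C\nu\int_0^T\|\div u\|^2_{L^2}dt+C\nu^{-1}\int_0^T\|G\|^2_{L^2}dt,
\]
and the first term is bounded by Lemma \ref{bkjl1}. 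The remaining task is a time-uniform bound on $\int_0^T\|G\|_{L^2}^2dt$. We obtain it by testing the pressure equation against $\mathcal{B}$-type potentials, or equivalently by using the elliptic problem for $G$ (div of momentum with Neumann data via $\dot u\cdot n=0$): $\Delta G=\div(\n\dot u)$ with $\pa_nG=\n\dot u\cdot n=0$. After even extension this gives $G=(-\Delta)^{-1}\div$ applied to the extension of $(\n u)_t+\div(\n u\otimes u)$. Integrating $\int G P$ in time then produces terms $\frac{d}{dt}\int \n u\cdot\na(-\Delta)^{-1}P$ plus terms controlled by $\|\n u\|$, $\|\sqrt\n u\|_{L^2}\|\n u\|_{L^4}$ and $\|P\|_{L^2}\|\na u\|_{L^2}$. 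Here $A=0$ is essential, since it provides no boundary term and makes $u$ enter only through $\n u$, which is integrable.

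We expect this last step, obtaining the $\nu$-uniform, time-uniform bound on $\int_0^T\|P\|_{L^2}^2$ despite the lack of integrability of $u$, to be the main difficulty. We would first try the duality argument in the half-space via even extension of $P$ and Lemma \ref{esnr}.
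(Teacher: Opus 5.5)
\textbf{Verdict.} The proposal has a genuine gap: the time-uniform pressure integral in (\ref{bkj022}), and also the time-uniformity of (\ref{bkj021}), both need an estimate you do not supply.

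\textbf{The Gr\"onwall step.} Your first half follows the paper's skeleton. The paper tests $\rho\dot u=\nabla G+\mu\nabla^\bot\omega$ against $\dot u$ with the functional $\mu\|\omega\|_{L^2}^2+\nu^{-1}\|G\|_{L^2}^2$, which differs from yours only by $\nu^{-1}\|P\|_{L^2}^2$, and it treats $\int G\nabla u^1\cdot\nabla^\bot u^2$ with Lemma \ref{hm} as you do. However, (\ref{bkj021}) is uniform in $T$, so every forcing term in the Gr\"onwall argument must be integrable on $(0,\infty)$. Substituting $\nu\div u=G+P$ into $\nu\int(\div u)^3$ leaves the cross term $\nu^{-1}\int GP\div u$. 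Bounded with absolute values and $\|P\|_{L^\infty}\le C$, it gives $C\|\div u\|_{L^2}^2+C\nu^{-1}\|P\|_{L^2}\|\div u\|_{L^2}$. Knowing only $\|P\|_{L^2}\le C$, the last piece is not time-integrable. The paper controls it through the pressure bound (\ref{bkj212}) (see (\ref{bkj218})). Alternatively, you could note the exact identity $\nu^{-1}\int GP\div u=\int P(\div u)^2+\frac{1}{(2\gamma-1)\nu}\frac{d}{dt}\|P\|_{L^2}^2$. Your sketch does neither.

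\textbf{The pressure integral (main gap).} The real problem is the bound on $\nu^{-1}\int_0^T\|P\|_{L^2}^2\,dt$. Your route rewrites $\rho\dot u=(\rho u)_t+\div(\rho u\otimes u)$ and integrates by parts in time, which is Lions' pressure estimate. It cannot give a $T$-independent bound. The remainders it produces are only bounded, or even growing, in time:
\[ \int\rho u\cdot\nabla\Delta^{-1}\div(Pu)\lesssim\|\sqrt\rho u\|_{L^2}^2,\qquad \int\rho u\otimes u:\nabla^2\Delta^{-1}P\lesssim\|\rho|u|^2\|_{L^2}\|P\|_{L^2},\qquad \int\rho u\cdot\nabla\Delta^{-1}(P\div u)\lesssim\|\div u\|_{L^2}. \]
Nothing available makes these integrable on $(0,\infty)$; for instance, nothing gives time-integrability of $\|\sqrt\rho u\|_{L^2}^2$. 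Integrating therefore produces growth like $T$, and invoking Lemma \ref{esnr} adds factors $(1+t)^3$. Moreover, $\|G\|_{L^2}$ is the wrong target: in two dimensions the Riesz-potential bound for $G$ holds only in $L^r$ with $r>2$.

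\textbf{The missing idea.} Keep the material derivative and bound $P$ pointwise in time.
\begin{enumerate}
\item The Neumann problem (\ref{bkj22}) has $u$-free data $\partial_nG=\rho\dot u\cdot n$ (this is where $A=0$ is used). Even/odd extension then gives (\ref{bkj28}), and with $r=4\gamma$,
\[ \|G\|_{L^{4\gamma}}\le C\|\rho\dot u\|_{L^{4\gamma/(2\gamma+1)}}\le C\|\rho^{1/2}\|_{L^{4\gamma}}\|\sqrt\rho\dot u\|_{L^2}. \]
\item For the pressure, $\|P\|_{L^{4\gamma/(4\gamma-1)}}\le\|\rho^{1/2}\|_{L^2}\|P\|_{L^2}^{1-1/(2\gamma)}$. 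Here $\|\rho^{1/2}\|_{L^2}^2=\int\rho_0=1$, which is where $\tilde\rho=0$ enters, and $\|\rho^{1/2}\|_{L^{4\gamma}}=\|P\|_{L^2}^{1/(2\gamma)}$.
\item Inserting both into $\|P\|_{L^2}^2=-\int GP+\nu\int P\div u$ yields
\[ \|P\|_{L^2}^2\le C\|\sqrt\rho\dot u\|_{L^2}^2+2\nu^2\|\div u\|_{L^2}^2 . \]
\end{enumerate}
This bound handles the cross term above. It also turns the pressure integral into a consequence of the $\sigma$-weighted estimate rather than a separate one. Since $\sigma=1$ on $(1,T)$,
\[ \int_{\sigma(T)}^T\frac{1}{\nu}\|P\|_{L^2}^2\,dt\le \frac{C}{\nu}\int_0^T\sigma\|\sqrt\rho\dot u\|_{L^2}^2\,dt+2\nu\int_0^T\|\div u\|_{L^2}^2\,dt\le C, \]
and on $(0,\sigma(T))$ one simply uses $\|P\|_{L^2}\le C$.
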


\begin{proof}
First, we rewrite $(\ref{ns})_2$ as 
\be\la{bkj21}\ba
\n\dot{u} = \na G + \mu\na^{\bot}\o.
\ea\ee
This, combined with the boundary conditions (\ref{bjtj1}), implies that $G$ satisfies
\be\la{bkj22}\ba
\begin{cases}
\Delta G = \div \left( \rho \dot{u} \right) & \text{ in } \rr_+, \\
\frac {\p G}{\p n}= \rho \dot{u} \cdot n & \text{ on } \p \rr_+, \\
G \to 0 & \text{ as } |x| \to \infty.
\end{cases}
\ea\ee
The standard $L^p$ estimate of elliptic equations (see \cite{JK}) together with (\ref{bkj21}) yields for any $2\le p<\infty$,
\be\la{bkj23}\ba
\| \na G \|_{L^p} + \| \na \o \|_{L^p} \le C \| \n \dot{u}\|_{L^p}.
\ea\ee
In particular, from (\ref{pro101}) and (\ref{bkj23}), we deduce that
\be\la{bkj24}\ba
\| \na G \|_{L^2} + \| \na \o \|_{L^2}
& \le C \| \rho \dot{u} \|_{L^2} \le C \| \sqrt{\rho} \dot{u} \|_{L^2}.
\ea\ee
Next, we estimate $G$ by extending the problem (\ref{bkj22}) to the whole space.
We set $v \triangleq \rho \dot{u}$ and define
\be\la{bkj25}\ba
\tilde{G}(x_1,x_2) & \triangleq
\begin{cases}
G(x_1,x_2), \ & x_2 \ge 0, \\
G(x_1,-x_2), \ & x_2 < 0,
\end{cases} \\
\tilde{v}^i(x_1,x_2) & \triangleq
\begin{cases}
v^i(x_1,x_2), \quad & x_2 \ge 0, \\
(-1)^{i+1} v^i(x_1,-x_2), \quad & x_2 < 0.
\end{cases}
\ea\ee
A direct calculation shows that $\tilde{G}$ satisfies
\be\la{bkj26}\ba
\begin{cases}
\Delta{\tilde{G}} = \div (\tilde{v}) \ \   & \text{ in } \rr, \\
\tilde{G} \to 0 & \text{ as } \ |x| \to \infty.
\end{cases}
\ea\ee
By the properties of Riesz potentials (see \cite{SE}), we obtain for any $2<r<\infty$,
\be\la{bkj27}\ba
\| \tilde{G} \|_{L^r(\rr)} \le C \| \tilde{v} \|_{L^{\frac{2r}{2+r}}(\rr)},
\ea\ee
which together with (\ref{bkj25}) gives
\be\la{bkj28}\ba
\| G \|_{L^r} \le C \| v \|_{L^{\frac{2r}{2+r}}}
\le C \| \rho \dot{u} \|_{L^{\frac{2r}{2+r}}}.
\ea\ee
Moreover, note that $P$ satisfies
\be\la{bkj29}\ba
P = - G + \nu \div u.
\ea\ee
Multiplying (\ref{bkj29}) by $P$ and using (\ref{bkj28}) and Young's inequality, we arrive at
\be\la{bkj210}\ba
\int P^2 dx & = - \int G P dx + \nu \int \div u P dx \\
& \le \| G \|_{L^{4\ga}} \| P \|_{L^{\frac{4\gamma}{4\gamma-1}}} + \nu \| \div u\|_{L^2} \| P \|_{L^2} \\
& \le C \| \n \dot{u} \|_{ L^{\frac{4\ga}{2\ga+1}} } \| \n^{\frac{1}{2}} \|_{L^2} \| \n^{\ga-\frac{1}{2}} \|_{L^{ \frac{4\ga}{2\ga-1} }}
+ \nu \| \div u\|_{L^2} \| P \|_{L^2} \\
& \le C \| \n^{\frac{1}{2}} \|_{L^{4\ga}} \| \sqrt{\n} \dot{u} \|_{ L^{2} }
\| P \|^{1-\frac{1}{2\ga}}_{L^{2}} + \nu \| \div u\|_{L^2} \| P \|_{L^2} \\
& \le C \| \sqrt{\n} \dot{u} \|_{ L^{2} } \| P \|_{L^{2}}
+ \nu \| \div u\|_{L^2} \| P \|_{L^2} \\
& \le \frac{1}{2} \| P \|^2_{L^2} + C \| \sqrt{\n} \dot{u} \|^2_{L^2} + \nu^2 \| \div u \|^2_{L^2},
\ea\ee
where we have used the following fact:
\be\la{bkj211}\ba
\int \n dx = \int \n_0 dx,
\ea\ee
due to $(\ref{ns})_1$.
Hence, we have
\be\ba\la{bkj212}
\| P \|^2_{L^2} \le C \| \sqrt{\n} \dot{u}\|^2_{L^2} + 2 \nu^2 \| \div u\|^2_{L^2}.
\ea\ee

Next, multiplying (\ref{bkj21}) by $2 \dot u$ and integrating by parts over $\rr_+$, we obtain
\be\la{bkj213}\ba
& \frac{d}{dt} \int \left(\mu \o^2 + \frac{G^2}{\nu}\right)dx + 2\| \sqrt{\n} \dot{u}\|^2_{L^2}\\
& = -\mu \int \o^2\div udx - 4\int G\nabla u^1\cdot\nabla^{\perp}u^2dx- 2\int G(\div u)^2dx\\
&\quad +\frac{1}{\nu} \int G^2\div udx +\frac{2\ga}{\nu} \int P G\div udx = \sum_{i=1}^5 I_i,
\ea\ee
where we have used (\ref{bkjbjds}) and the following facts:
\be\la{bkj214}\ba
\na^{\bot}\cdot \dot u= \frac{D}{Dt}\o +(\p_1u\cdot\na) u^2
-(\p_2u\cdot\na)u^1  = \frac{D}{Dt}\o + \o \div u , 
\ea\ee
and
\be\la{bkj215}\ba 
\div  \dot u&=\frac{D}{Dt}\div u +(\p_1u\cdot\na) u^1+(\p_2u\cdot\na)u^2\\&
=\frac{1}{\nu} \frac{D}{Dt}G+ \frac{1}{\nu} \frac{D}{Dt}( P-P( \tilde{\n} ) )
+ 2\nabla u^1\cdot\nabla^{\perp}u^2 + (\div u)^2.	
\ea\ee
It follows from (\ref{gn11}), (\ref{bkj24}), and H\"older's inequality that
\be\la{bkj216}\ba
|I_1| &\le C \| \o\|^2_{L^4} \| \div u\|_{L^2} \\
& \le C \| \o\|_{L^2} \| \na \o\|_{L^2} \| \div u\|_{L^2} \\
& \le C \| \sqrt{\n} \dot{u}\|_{L^2} \| \o\|_{L^2} \| \div u\|_{L^2} \\
& \le \frac{1}{16} \| \sqrt{\n} \dot{u}\|^2_{L^2} + C \| \na u\|^4_{L^2}.
\ea\ee
By virtue of the boundary conditions (\ref{bjtj1}), (\ref{bkj24}), and Lemma \ref{hm}, we have
\be\la{bkj217}\ba
|I_2| & \le C \| \na G\|_{L^2} \| \na u\|^2_{L^2} \\
& \le C \| \sqrt{\n} \dot{u}\|_{L^2} \| \na u\|^2_{L^2} \\
& \le \frac{1}{16} \| \sqrt{\n} \dot{u}\|^2_{L^2}+ C \| \na u\|^4_{L^2}.
\ea\ee
Using (\ref{gn11}), (\ref{bkj24}), (\ref{gw}), (\ref{bkj212}), and H\"older's inequality, we derive
\be\la{bkj218}\ba
\sum_{i=3}^5I_i & \le \frac{C}{\nu} \int G^2 |\div u| dx + \frac{C}{\nu}\int P |G| |\div u| dx \\
& \le \frac{C}{\nu} \| G \|^2_{L^4} \| \div u \|_{L^2} + \frac{C}{\nu} \| G \|_{L^2} \| \div u \|_{L^2} \\
& \le \frac{C}{\nu} \| G \|_{L^2} \| \na G \|_{L^2} \| \div u \|_{L^2}
+ C \| \div u \|^2_{L^2} + \frac{C}{\nu} \| P \|_{L^2} \| \div u \|_{L^2} \\
& \le \frac{C}{\nu} \| G \|_{L^2} \| \sqrt{\n} \dot{u} \|_{L^2} \| \div u \|_{L^2}
+ C \| \div u \|^2_{L^2} + \frac{C}{\nu} \| \sqrt{\n} \dot{u} \|_{L^2} \| \div u \|_{L^2} \\
& \le \frac{1}{16} \| \sqrt{\n} \dot{u}\|^2_{L^2}
+ \frac{C}{\nu} \| G \|^2_{L^2} \| \na u\|^2_{L^2}+C\| \na u\|^2_{L^2}.
\ea\ee
Substituting (\ref{bkj216}), (\ref{bkj217}), and (\ref{bkj218}) into (\ref{bkj213}) gives
\be\la{bkj219}\ba
\frac{d}{dt} \left( \mu \| \o \|^2_{L^2} + \frac{1}{\nu} \| G \|^2_{L^2} \right) + \| \sqrt{\n} \dot{u}\|^2_{L^2}
\le C \left( \mu \| \o \|^2_{L^2} + \frac{1}{\nu} \| G \|^2_{L^2} \right) \| \na u\|^2_{L^2} + C\| \na u\|^2_{L^2},
\ea\ee
where we have used the following estimate:
\be\la{bkj220}\ba
\nu \| \div u \|^2_{L^2} + \| \na u\|^2_{L^2} & \le C \left( \nu \| \div u\|^2_{L^2}+\| \o \|^2_{L^2} \right) \\
& \le \frac{C}{\nu} \left( \| G \|^2_{L^2} + \| P \|^2_{L^2} \right) + C \| \o \|^2_{L^2},
\ea\ee
due to (\ref{dc1}) and (\ref{pro101}).
Applying Gr\"onwall's inequality to (\ref{bkj219}) and using (\ref{bkj01}), (\ref{pro101}), (\ref{bkj211}), and (\ref{bkj220}), we arrive at (\ref{bkj021}).

Furthermore, multiplying (\ref{bkj219}) by $\si$ leads to
\be\la{bkj221}\ba
& \frac{d}{dt} \left( \si \left( \mu \| \o \|^2_{L^2} + \frac{1}{\nu} \| G \|^2_{L^2} \right) \right) + \si \| \sqrt{\n} \dot{u}\|^2_{L^2} \\
& \le \si' \left( \mu \| \o \|^2_{L^2} + \frac{1}{\nu} \| G \|^2_{L^2} \right)
+ C \si \left( \mu \| \o \|^2_{L^2} + \frac{1}{\nu} \| G \|^2_{L^2} \right) \| \na u\|^2_{L^2} + C \| \na u\|^2_{L^2},
\ea\ee
which together with (\ref{bkj01}), (\ref{pro101}), (\ref{bkj211}), (\ref{bkj220}), and Gr\"onwall's inequality implies
\be\la{bkj222}\ba
\sup_{0\le t\le T} \si \left( \| \na u \|^2_{L^2} + \nu \| \div u \|^2_{L^2} \right)
+ \int_0^T \si \| \sqrt{\n} \dot{u}\|^2_{L^2} dt \le C.
\ea\ee
Finally, in view of (\ref{bkj211}), (\ref{bkj212}), (\ref{bkj01}), and (\ref{bkj222}), we have
\be\la{bkj223}\ba
\int_0^T \frac{1}{\nu} \| P \|^2_{L^2} dt
& = \int_0^{\si(T)} \frac{1}{\nu} \| P \|^2_{L^2} dt + \int_{\si(T)}^T \frac{1}{\nu} \| P \|^2_{L^2} dt \\
& \le C + C \int_{\si(T)}^T \left( \| \sqrt{\n} \dot{u} \|^2_{L^2} + \nu \| \div u \|^2_{L^2} \right) dt \le C.
\ea\ee
This, combined with (\ref{bkj222}), yields (\ref{bkj022}) and completes the proof of Lemma \ref{bkjl2}.
\end{proof}

\begin{lemma}\la{bkjl3}
Let $(\n,u)$ be a strong solution to \eqref{ns}--\eqref{bjtj2} satisfying \eqref{pro101}.
Then there exists a positive constant $C$ depending only on
$\ga$, $\mu$, $E_0$, and $\| \n_0 \|_{L^\infty}$ such that
\be\la{bkj301}\ba
\sup_{\si(T) \le t\le T} t \left( \| \na u \|^2_{L^2} + \nu \| \div u \|^2_{L^2} + \frac{1}{\nu} \| P \|^2_{L^2} \right)
+ \int_{\si(T)}^T t \left( \| \sqrt{\n} \dot{u}\|^2_{L^2} + \frac{1}{\nu^2} \| P \|^3_{L^3} \right) dt
\le C,
\ea\ee
and
\be\la{bkj302}\ba
\sup_{\si(T) \le t\le T} \left( \frac{1}{\nu^2} t^2 \| P \|_{L^3}^3 \right) + \int_{\si(T)}^T \frac{1}{\nu^3} t^2 \| P \|^4_{L^4} dt
\le C.
\ea\ee
\end{lemma}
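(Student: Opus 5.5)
The plan is to derive time-weighted estimates on $[\si(T),T]$, using $t$ as the weight. The starting point is \eqref{bkj219} together with the pressure equation, and the base estimates are those of Lemmas \ref{bkjl1} and \ref{bkjl2}.

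\textbf{Pressure equation.} Since $\tilde\n=0$, the density equation gives
\be\nonumber
P_t+\div(Pu)+(\ga-1)P\div u=0 .
\ee
Substituting $\div u=(G+P)/\nu$ yields
\be\nonumber
P_t+\div(Pu)+\frac{\ga-1}{\nu}P^2=-\frac{\ga-1}{\nu}PG .
\ee

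\textbf{Weighted version of \eqref{bkj219}.} Multiply \eqref{bkj219} by $t$. The term $\frac{d}{dt}t$ applied to $\mu\|\o\|^2_{L^2}+\nu^{-1}\|G\|^2_{L^2}$ produces $\|\na u\|^2_{L^2}+\nu\|\div u\|^2_{L^2}$, which is integrable in time by \eqref{bkj01}. The Gr\"onwall factor $\int\|\na u\|_{L^2}^2dt$ is bounded by \eqref{bkj01}, and the initial value at $t=\si(T)$ is bounded by \eqref{bkj022}. A problem appears in \eqref{bkj220}: the bound on $\nu\|\div u\|^2_{L^2}$ contains $\nu^{-1}\|P\|_{L^2}^2$, and $t\nu^{-1}\|P\|^2_{L^2}$ is not yet controlled. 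So the pressure must be treated simultaneously.

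\textbf{$L^2$ estimate for $P$.} Multiply the pressure equation by $2P$ and integrate. The boundary term vanishes because $u\cdot n=0$. This gives
\be\nonumber
\frac{d}{dt}\|P\|_{L^2}^2+\frac{c}{\nu}\|P\|^3_{L^3}\le\frac{C}{\nu}\int P^2|G|\,dx .
\ee
For the right side, use H\"older, the bound \eqref{bkj28} on $\|G\|_{L^r}$, and the mass bound \eqref{bkj211} to interpolate. This gives
\be\nonumber
\frac{C}{\nu}\|P\|^2_{L^3}\|G\|_{L^3}\le\frac{c}{2\nu}\|P\|^3_{L^3}+\frac{C}{\nu}\|\sqrt\n\dot u\|^2_{L^2}\cdot(\cdots).
\ee
Dividing by $\nu$, multiplying by $t$, and adding to the weighted \eqref{bkj219} closes the first estimate, since
\be\nonumber
\frac{d}{dt}t\le 1 \quad\text{and}\quad \int_0^T\nu^{-1}\|P\|_{L^2}^2\,dt\le C
\ee
by \eqref{bkj022}. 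The dissipation from \eqref{bkj219} absorbs the $\sqrt\n\dot u$ terms. The coupling through \eqref{bkj220} is handled by choosing the combination
\be\nonumber
t\Big(\mu\|\o\|^2_{L^2}+\nu^{-1}\|G\|^2_{L^2}+\nu^{-1}\|P\|^2_{L^2}\Big),
\ee
which controls $t(\|\na u\|^2_{L^2}+\nu\|\div u\|^2_{L^2})$ via \eqref{bkj220}. This yields \eqref{bkj301}.

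\textbf{$L^3$ estimate for $P$.} Multiply the pressure equation by $3P^2$. This gives
\be\nonumber
\frac{d}{dt}\|P\|^3_{L^3}+\frac{c}{\nu}\|P\|^4_{L^4}\le\frac{C}{\nu}\int P^3|G|\,dx\le\frac{c}{2\nu}\|P\|_{L^4}^4+\frac{C}{\nu}\|G\|^4_{L^4}.
\ee
Multiply by $t^2/\nu^2$. The weight-derivative term is $2t\nu^{-2}\|P\|^3_{L^3}$, which is integrable by \eqref{bkj301}. For the $G$ term, Gagliardo--Nirenberg \eqref{gn11} and \eqref{bkj24} give
\be\nonumber
\|G\|^4_{L^4}\le C\|G\|^2_{L^2}\|\sqrt\n\dot u\|^2_{L^2},
\ee
where
\be\nonumber
\|G\|^2_{L^2}\le C\big(\nu^2\|\div u\|^2_{L^2}+\|P\|^2_{L^2}\big).
\ee
Then $t^2\nu^{-3}\|G\|^2_{L^2}\|\sqrt\n\dot u\|^2_{L^2}$ is bounded by
\be\nonumber
\big[t(\nu^{-1}\|\div u\|^2_{L^2}\nu+\nu^{-2}\|P\|^2_{L^2})\big]\cdot t\|\sqrt\n\dot u\|^2_{L^2},
\ee
which is integrable by \eqref{bkj301}. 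This gives \eqref{bkj302}.

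\textbf{Main obstacle.} The hardest step is the $P^2G$ term in the $L^2$ pressure estimate. There the powers of $\nu$ must come out uniformly without a smallness assumption. I would first try bounding $\|G\|_{L^2}$ through \eqref{bkj28} with $\|\n\dot u\|_{L^1}\le C\|\sqrt\n\dot u\|_{L^2}$ and the mass bound \eqref{bkj211}, so that $G$ is controlled in $L^2$-type norms purely by $\sqrt\n\dot u$.
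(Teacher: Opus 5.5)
Your architecture matches the paper's: the functional $t(\mu\|\omega\|_{L^2}^2+\nu^{-1}\|G\|_{L^2}^2+\nu^{-1}\|P\|_{L^2}^2)$, the $L^2$ and $L^3$ pressure identities, and the $t^2\nu^{-2}\|P\|^3_{L^3}$ step for \eqref{bkj302}, which is fine. The first estimate, however, does not close as you set it up, because you start from \eqref{bkj219}. Besides the terms you list, its right-hand side contains the inhomogeneous term $C\|\nabla u\|_{L^2}^2$. That term comes from \eqref{bkj218}, where $\frac{C}{\nu}\int P|G||\mathrm{div}\,u|\,dx$ was bounded using $\|P\|_{L^\infty}\le C$, producing $C\|\mathrm{div}\,u\|_{L^2}^2$. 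After multiplying by $t$ you get $Ct\|\nabla u\|_{L^2}^2$. The bounds \eqref{bkj01} and \eqref{bkj022} control $\int_0^T\|\nabla u\|_{L^2}^2dt$, but not $\int_{\sigma(T)}^T t\|\nabla u\|_{L^2}^2dt$. Feeding the term into Gr\"onwall as (constant)$\times$(functional) gives a factor exponential in $T$, which destroys the $T$-independence the lemma needs. Adding $\nu^{-1}\|P\|_{L^2}^2$ to the functional only repairs the conversion \eqref{bkj220}; it does not remove this term.

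The missing step is to return to \eqref{bkj213} and re-estimate $I_3$--$I_5$ without using $P\in L^\infty$. By H\"older in $L^3$ and $\mathrm{div}\,u=(G+P)/\nu$,
\[
\frac{C}{\nu}\int G^2|\mathrm{div}\,u|\,dx+\frac{C}{\nu}\int P|G||\mathrm{div}\,u|\,dx\le \frac{C}{\nu^2}\|G\|_{L^3}^3+\frac{2\gamma-1}{4\nu^2}\|P\|_{L^3}^3 .
\]
The last term is absorbed by half of the dissipation $\frac{2\gamma-1}{2\nu^2}\|P\|_{L^3}^3$ that comes from $\nu^{-1}$ times your $L^2$ pressure identity. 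This absorption is the real reason the pressure estimate must be coupled in. All $G$-terms, both here and in the pressure identity, are then handled with \eqref{gn11} and \eqref{bkj24}:
\[
\frac{C}{\nu^2}\|G\|_{L^3}^3\le \frac{C}{\nu^2}\|G\|_{L^2}^2\|\sqrt{\rho}\dot u\|_{L^2}\le \frac14\|\sqrt{\rho}\dot u\|_{L^2}^2+\frac{C}{\nu^4}\|G\|_{L^2}^4 .
\]
After this, every right-hand term is (functional)$\times$(time-integrable factor), and Gr\"onwall closes uniformly in $T$.

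The route in your last paragraph would not work. Estimate \eqref{bkj28} holds only for $r>2$, since Hardy--Littlewood--Sobolev fails at the $L^1$ endpoint, so $\|G\|_{L^2}\lesssim\|\rho\dot u\|_{L^1}$ is unavailable. Using \eqref{bkj28} with $r=3$ instead gives $\nu^{-2}\|\sqrt{\rho}\dot u\|_{L^2}^3$, which cannot be absorbed without a smallness assumption.
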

\begin{proof}
Using (\ref{bkj213}), (\ref{bkj216}), (\ref{bkj217}), (\ref{bkj212}), (\ref{gw}), and Young's inequality, we derive
\be\la{bkj31}\ba
& \frac{d}{dt} \left( \mu \| \o \|^2_{L^2} + \frac{1}{\nu} \| G \|^2_{L^2} \right) + \frac{3}{2} \| \sqrt{\n} \dot{u} \|^2_{L^2} \\
& \le C \| \na u\|^4_{L^2}
+ \frac{C}{\nu} \int G^2 |\div u| dx + \frac{C}{\nu}\int P |G| |\div u| dx \\
& \le C \| \na u\|^4_{L^2} + \frac{C}{\nu} \| G \|^2_{L^3} \| \div u \|_{L^3} + \frac{C}{\nu} \| P \|_{L^3} \| G \|_{L^3} \| \div u \|_{L^3} \\
& \le C \| \na u\|^4_{L^2} + \frac{C}{\nu^2} \| G \|^3_{L^3} + \frac{2\ga -1}{4\nu^2} \| P \|^3_{L^3} \\
& \le C \| \na u\|^4_{L^2} + \frac{C}{\nu^2} \| G \|^2_{L^2} \| \na G \|_{L^2} + \frac{2\ga -1}{4\nu^2} \| P \|^3_{L^3} \\
& \le C \| \na u\|^4_{L^2} + \frac{C}{\nu^2} \| G \|^2_{L^2} \| \sqrt{\n} \dot{u} \|_{L^2} + \frac{2\ga -1}{4\nu^2} \| P \|^3_{L^3} \\
& \le \frac{1}{4} \| \sqrt{\n} \dot{u}\|^2_{L^2} + C \| \na u\|^4_{L^2}
+ \frac{C}{\nu^4} \| G \|^4_{L^2} + \frac{2\ga -1}{4\nu^2} \| P \|^3_{L^3},
\ea\ee
which gives
\be\la{bkj32}\ba
\frac{d}{dt} \left( \mu \| \o \|^2_{L^2} + \frac{1}{\nu} \| G \|^2_{L^2} \right) + \frac{5}{4} \| \sqrt{\n} \dot{u}\|^2_{L^2}
\le C \| \na u\|^4_{L^2} + \frac{C}{\nu^2} \| G \|^4_{L^2} + \frac{2\ga -1}{4\nu^2} \| P \|^3_{L^3}.
\ea\ee
By $(\ref{ns})_1$, we deduce that $P$ satisfies
\be\la{bkj33}\ba
P_t + u \cdot \na P + \ga P \div u = 0.
\ea\ee
For any $2 \le p < \infty$, multiplying (\ref{bkj33}) by $p P^{p-1}$, integrating by parts over $\rr_+$, and using (\ref{bjtj1}), (\ref{gw}), (\ref{gn11}), and Young's inequality, we obtain
\be\la{bkj34}\ba
\frac{d}{dt} \| P \|^p_{L^p} + \frac{p\ga -1}{\nu} \| P \|^{p+1}_{L^{p+1}}
& = - \frac{p\ga -1}{\nu} \int P^p G dx \\
& \le \frac{p\ga -1}{2\nu} \| P \|^{p+1}_{L^{p+1}} + \frac{C}{\nu} \| G \|^{p+1}_{L^{p+1}} \\
& \le \frac{p\ga -1}{2\nu} \| P \|^{p+1}_{L^{p+1}}
+ \frac{C}{\nu} \| G \|^{2}_{L^2} \| \sqrt{\n} \dot{u} \|^{p-1}_{L^2},
\ea\ee
which implies that
\be\la{bkj35}\ba
\frac{d}{dt} \left( \frac{1}{\nu} \| P \|^p_{L^p} \right) + \frac{p\ga -1}{2 \nu^2} \| P \|^{p+1}_{L^{p+1}}
\le \frac{C}{\nu^2} \| G \|^{2}_{L^2} \| \sqrt{\n} \dot{u} \|^{p-1}_{L^2}.
\ea\ee
Choosing $p=2$ in (\ref{bkj35}) yields
\be\la{bkj36}\ba
\frac{d}{dt} \left( \frac{1}{\nu} \| P \|^2_{L^2} \right) + \frac{2\ga -1}{2 \nu^2} \| P \|^{3}_{L^{3}}
\le \frac{C}{\nu^2} \| G \|^{2}_{L^2} \| \sqrt{\n} \dot{u} \|_{L^2}
\le \frac{1}{4} \| \sqrt{\n} \dot{u} \|^2_{L^2} + \frac{C}{\nu^4} \| G \|^4_{L^2}.
\ea\ee
The combination of (\ref{bkj32}), (\ref{bkj35}), (\ref{gw}), and (\ref{bkj220}) gives
\be\la{bkj37}\ba
& \frac{d}{dt} \left( \mu \| \o \|^2_{L^2} + \frac{1}{\nu} \| G \|^2_{L^2} + \frac{1}{\nu} \| P \|_{L^2}^2 \right) + \| \sqrt{\n} \dot{u}\|^2_{L^2} + \frac{2\ga -1}{4\nu^2} \| P \|^3_{L^3} \\
& \le C \| \na u\|^4_{L^2} + \frac{C}{\nu^2} \| G \|^4_{L^2} \\
& \le C \left( \mu \| \o \|^2_{L^2} + \frac{1}{\nu} \| G \|^2_{L^2} + \frac{1}{\nu} \| P \|_{L^2}^2 \right)
\left( \| \na u\|^2_{L^2} + \nu \| \div u \|^2_{L^2} + \frac{C}{\nu} \| P \|^2_{L^2} \right).
\ea\ee
Multiplying (\ref{bkj37}) by $t$ leads to
\be\la{bkj38}\ba
& \frac{d}{dt} \left( t \left( \mu \| \o \|^2_{L^2} + \frac{1}{\nu} \| G \|^2_{L^2} + \frac{1}{\nu} \| P \|_{L^2}^2 \right) \right) + t \| \sqrt{\n} \dot{u}\|^2_{L^2} + \frac{2\ga -1}{4\nu^2} t \| P \|^3_{L^3} \\
& \le C t \left( \mu \| \o \|^2_{L^2} + \frac{1}{\nu} \| G \|^2_{L^2} + \frac{1}{\nu} \| P \|_{L^2}^2 \right)
\left( \| \na u\|^2_{L^2} + \nu \| \div u \|^2_{L^2} + \frac{C}{\nu} \| P \|^2_{L^2} \right) \\
& \quad + C \left( \| \na u\|^2_{L^2} + \nu \| \div u \|^2_{L^2} + \frac{C}{\nu} \| P \|^2_{L^2} \right).
\ea\ee
Applying Gr\"onwall's inequality to (\ref{bkj38}) over $(\si(T),T)$ and using (\ref{bkj01}), (\ref{bkj022}), and (\ref{bkj220}), we arrive at
\be\la{bkj39}\ba
\sup_{\si(T) \le t\le T} t \left( \| \na u \|^2_{L^2} + \nu \| \div u \|^2_{L^2} + \frac{1}{\nu} \| P \|_{L^2}^2 \right) + \int_{\si(T)}^T t \left( \| \sqrt{\n} \dot{u}\|^2_{L^2} + \frac{1}{\nu^2} \| P \|^3_{L^3} \right) dt
\le C,
\ea\ee
which together with (\ref{bkj220}) shows (\ref{bkj301}).

In addition, choosing $p=3$ in (\ref{bkj34}), we obtain
\be\la{bkj310}\ba
\frac{d}{dt} \| P \|^3_{L^3} + \frac{3\ga -1}{2 \nu} \| P \|^{4}_{L^{4}}
\le \frac{C}{\nu} \| G \|^{2}_{L^2} \| \sqrt{\n} \dot{u} \|^{2}_{L^2},
\ea\ee
which yields
\be\la{bkj311}\ba
\frac{d}{dt} \left( \frac{1}{\nu^2} t^2 \| P \|^3_{L^3} \right) + \frac{3\ga -1}{2 \nu^3} t^2 \| P \|^{4}_{L^{4}}
\le \frac{2}{\nu^2} t \| P \|^3_{L^3} + \frac{C}{\nu^3} t^2 \| G \|^{2}_{L^2} \| \sqrt{\n} \dot{u} \|^{2}_{L^2}.
\ea\ee
Integrating (\ref{bkj311}) over $(\si(T),T)$ and applying (\ref{bkj021}), (\ref{bkj022}), and (\ref{bkj39}) lead to (\ref{bkj302}), thereby completing the proof of Lemma \ref{bkjl3}.
\end{proof}

\begin{lemma}\la{bkjl4}
Let $(\n,u)$ be a strong solution to \eqref{ns}--\eqref{bjtj2} satisfying \eqref{pro101}.
Then there exists a positive constant $C$ depending only on
$\ga$, $\mu$, $E_0$, and $\| \n_0 \|_{L^\infty}$ such that
\be\ba\la{bkj401}
\sup_{0\le t\le T}
t^2 \int\n|\dot u|^2dx
+\int_0^{T} t^2 \| \na\dot u\|^2_{L^2} dt \le C.
\ea\ee
Moreover, for any $p \in [2,\infty)$, there exists a positive constant $C$ depending only on
$p$, $\ga$, $\mu$, $E_0$, and $\| \n_0 \|_{L^\infty}$ such that
\be\la{bkj402}\ba
\sup_{\si(T) \le t \le T} t^{p-1} \left( \frac{1}{\nu^{p-1}} \| P \|^{p}_{L^{p}} + \| \na u \|^p_{L^p} \right)
\le C.
\ea\ee
\end{lemma}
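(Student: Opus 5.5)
The plan is to use Hoff's material-derivative estimate with weight $t^2$. We apply $\dot u^j[\partial_t + \div(u\,\cdot)]$ to the $j$-th component of the momentum equation written as $\rho\dot u=\nabla G+\mu\nabla^\bot\omega$, and integrate over $\rr_+$. This gives
\[
\frac12\frac{d}{dt}\int\rho|\dot u|^2dx=\int \dot u\cdot\big(\nabla G_t+\div(u\otimes\nabla G)\big)dx+\mu\int\dot u\cdot\big(\nabla^\bot\omega_t+\div(u\otimes\nabla^\bot\omega)\big)dx .
\]
We then integrate by parts. Boundary terms vanish or are controlled because $\dot u\cdot n=0$ on $\partial\rr_+$ and $u\cdot n=0$ (see (\ref{bkjbjds})). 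For the vorticity term, we write $\omega_t=\dot\omega-u\cdot\nabla\omega$ and use the condition $\omega=0$ on the boundary, since $A=0$; the tangential boundary term then drops out. For the $G$ term, we use $G_t=\dot G-u\cdot\nabla G$ and the identity
\[
\dot G=\nu\div\dot u-\nu\nabla u:\nabla u^{T}+\nu(\div u)^2\cdot(\ldots)+\gamma P\div u\ (\text{type terms}).
\]
Concretely, we compute $\dot G$ from $(\ref{bkj33})$ and $(\ref{bkj215})$, so that $\int \div\dot u\,\dot G$ produces the good term $\nu\|\div\dot u\|_{L^2}^2$, and $\mu\|\curl\dot u\|^2$ comes from the vorticity part. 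By Lemma \ref{dc}, both together bound $\mu\|\nabla\dot u\|_{L^2}^2$.

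The result is an inequality of the form
\[
\frac{d}{dt}\|\sqrt\rho\dot u\|_{L^2}^2+\mu\|\nabla\dot u\|_{L^2}^2+\nu\|\div\dot u\|_{L^2}^2\le C\|\nabla u\|_{L^4}^4+C\nu^{-1}\|P\|_{L^4}^4+\text{(cross terms)}.
\]
The key point is that the coefficients must be $\nu$-uniform. The dangerous terms are $\nu\int \div\dot u\,\partial_iu^j\partial_ju^i$ and $\nu\int\div\dot u(\div u)^2$. For the first, we write $\partial_iu^j\partial_ju^i=(\div u)^2-2\nabla u^1\cdot\nabla^\bot u^2$ and treat the Jacobian part with Lemma \ref{hm}. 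Rather than absorbing $\nu$ into $\|\div\dot u\|$, we rewrite $\nu\div\dot u$ in terms of $\dot G$ plus $\nu^{-1}$-small pieces. The remaining terms with $\nu\div u$ are replaced by $G+P$, since $\nu\div u=G+P$. This produces terms bounded by $\|\nabla G\|_{L^2}\|\nabla\dot u\|_{L^2}\|\nabla u\|_{L^2}$-type quantities (via the Hardy–BMO estimate) and $\|P\|_{L^4}^2\|\div\dot u\|$-type quantities. I expect this $\nu$-uniform handling of the $\nu\div\dot u\,\partial_iu^j\partial_ju^i$ term to be the main obstacle, and I would try Lemma \ref{hm} with $h=\dot u$ first.

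Next we bound the right-hand side. We use $\|\nabla u\|_{L^4}\le C(\|\div u\|_{L^4}+\|\omega\|_{L^4})$ and $\nu\div u=G+P$. By (\ref{gn11}) and (\ref{bkj24}), this gives $\|\nabla u\|_{L^4}^4\le C\|\nabla u\|_{L^2}^2\|\sqrt\rho\dot u\|_{L^2}^2+C\nu^{-4}\|P\|_{L^4}^4$. We multiply the inequality by $t^2$ and integrate in time. The term $2t\|\sqrt\rho\dot u\|^2$ is integrable by (\ref{bkj022}) and (\ref{bkj301}). The term $t^2\|\nabla u\|^2_{L^2}\|\sqrt\rho\dot u\|^2_{L^2}$ is handled by Gr\"onwall with $\int\|\nabla u\|^2dt\le C$, and the pressure terms $\nu^{-3}t^2\|P\|_{L^4}^4$ by (\ref{bkj302}). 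On $(0,\sigma(T))$, the bound $\rho\le C$ suffices. This gives (\ref{bkj401}).

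For (\ref{bkj402}), we apply (\ref{bkj35}) multiplied by $t^{p-1}$ and argue by induction on $p$, starting from (\ref{bkj301}) and (\ref{bkj302}). The right-hand side $\nu^{-(p-1)}\|G\|_{L^2}^2\|\sqrt\rho\dot u\|_{L^2}^{p-1}$ is controlled using (\ref{bkj401}) and $\|G\|_{L^2}^2\le C\nu$. The term $t^{p-2}\nu^{-(p-1)}\|P\|^p_{L^p}$ is bounded by interpolating between $L^{p-1}$ and $L^{p+1}$ and absorbing into the damping. Finally, $\|\nabla u\|_{L^p}\le C(\|\omega\|_{L^p}+\nu^{-1}\|G\|_{L^p}+\nu^{-1}\|P\|_{L^p})$, and Gagliardo–Nirenberg together with (\ref{bkj401}) gives $t^{p-1}\|\omega\|_{L^p}^p+t^{p-1}\|G\|_{L^p}^p\le C$. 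The $P$ term is bounded by the previous step.
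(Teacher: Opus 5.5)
Your overall framework is the paper's: Hoff's operator $\dot u^j[\partial_t+\div(u\,\cdot)]$ with weight $t^2$, boundary terms removed by $\dot u\cdot n=0$ and $\omega|_{\partial\mathbb{R}^2_+}=0$, and induction on $p$ through (\ref{bkj35}). However, the step you yourself flag as the main obstacle is not resolved, and the inequality you aim for cannot hold $\nu$-uniformly. One has $\nu\div\dot u=\dot G-\gamma P\div u+\nu\,\partial_iu^j\partial_ju^i$. The last piece is of order $\nu$, not a $\nu^{-1}$-small correction. In the incompressible limit $\div\dot u\to\partial_iu^j\partial_ju^i$, which is generically nonzero, so $\nu\|\div\dot u\|_{L^2}^2$ is of size $\nu$ and cannot sit on the left-hand side. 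The dissipation actually available is $\frac{1}{\nu}\|\dot G\|_{L^2}^2+\mu\|\curl\dot u\|_{L^2}^2$, with $\|\nabla\dot u\|_{L^2}$ recovered afterwards from (\ref{bkj415})--(\ref{bkj416}). Relative to that dissipation, the bad term is $-\int\dot G\,\partial_iu^j\partial_ju^i\,dx$, and no purely spatial manipulation closes it:
Young's inequality leaves $\nu\|\nabla u\|_{L^4}^4$;
Lemma \ref{hm} with $h=\dot G$ needs $\|\nabla\dot G\|_{L^2}$;
your suggestion $h=\dot u$ (after integrating by parts in $x$) produces $\nu\|\nabla\dot u\|_{L^2}\|\nabla^2u\|_{L^2}\|\nabla u\|_{L^2}$.

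The missing idea is to integrate by parts in \emph{time}. Writing $\dot G=G_t+u\cdot\nabla G$ and using $u\cdot n=0$,
\[
-\int\dot G\,\partial_iu^j\partial_ju^i\,dx=-\frac{d}{dt}\int G\,\partial_iu^j\partial_ju^i\,dx+2\int G\,\partial_iu^j\partial_j\dot u^i\,dx-2\int G\,\partial_iu^j\partial_ju^k\partial_ku^i\,dx+\int G\div u\,\partial_iu^j\partial_ju^i\,dx .
\]
Now the derivative lands on $\dot u$ and $G$ appears undifferentiated. By (\ref{bkj43}), the second term splits into $\int G\div u\div\dot u\,dx$ (handled by H\"older with $\nu\div u=G+P$) plus Jacobian terms. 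Lemma \ref{hm} applies to the Jacobian terms with $h=G$, and (\ref{bkj24}) then gives the bound $C\|\nabla G\|_{L^2}\|\nabla u\|_{L^2}\|\nabla\dot u\|_{L^2}\le C\|\sqrt{\rho}\dot u\|_{L^2}\|\nabla u\|_{L^2}\|\nabla\dot u\|_{L^2}$. This is where your anticipated $\|\nabla G\|\,\|\nabla\dot u\|\,\|\nabla u\|$ bound actually comes from. The price is the non-signed corrector $\int G\,\partial_iu\cdot\nabla u^i\,dx$ in the energy. After multiplication by $t^2$ it must be controlled as in (\ref{bkj418}): apply Lemma \ref{hm} again, absorb $\frac{1}{4}t^2\|\sqrt{\rho}\dot u\|_{L^2}^2$, and use (\ref{bkj022}), (\ref{bkj301}), (\ref{bkj302}).

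There is also a smaller slip in the second part. The pointwise bound $\|G\|_{L^2}^2\le C\nu$ makes the source in (\ref{bkj35}) only bounded in time, so integrating over $(\sigma(T),T)$ gives a constant that grows with $T$. Use instead the time integrability of $\nu^{-2}\|G\|_{L^2}^2\le C(\|\div u\|_{L^2}^2+\nu^{-2}\|P\|_{L^2}^2)$ from (\ref{bkj01}) and (\ref{bkj022}), together with $t\|\sqrt{\rho}\dot u\|_{L^2}\le C$ from (\ref{bkj401}).
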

\begin{proof}
First, adapting the approach in \cite{H1,Lei1}, we apply the operator $ \dot{u}^j[\frac{\pa}{\pa t}+\div(u\cdot)]$ to $(\ref{bkj21})^j$, sum with respect to $j$, and integrate by parts over $\rr_+$ to derive
\be\la{bkj41}\ba
\frac{d}{dt}\left(\frac{1}{2}\int\rho|\dot{u}|^2dx \right)
&=\int \bigg( {\dot{u}}\cdot \nabla G_t +{\dot{u}}^j\mathrm {div}(u \partial _jG) \bigg) dx\\
&\quad +\mu \int \bigg( {\dot{u}}\cdot \nabla ^\bot \omega _t 
+{\dot{u}}^j\partial _k(u^k (\nabla ^\bot \omega )_j ) \bigg) dx \\
& \triangleq I_1+I_2.
\ea\ee
Integration by parts along with (\ref{bjtj1}), (\ref{bkjbjds}), (\ref{bkj24}), and Lemma \ref{hm} yields
\be\la{bkj42}\ba
I_1 & = \int_{\partial \rr_+} G_t ( \dot{u} \cdot n) ds
- \int \div \dot{u} \left( \dot{G} - u \cdot \na G \right) dx
- \int u \cdot \na \dot{u}^j \p_j G dx \\
& = - \int \div \dot{u} \dot{G} dx
+ \int \left( u \cdot \na G \div \dot{u} - u \cdot \na \dot{u}^j \p_j G \right) dx \\
& = - \int \div \dot{u} \dot{G} dx
+ \int \left( - \div u  \div \dot{u} G + \p_j u \cdot \na \dot{u}^j G \right) dx \\
& = - \int \div \dot{u} \dot{G} dx
+ \int G \left( \na u^1 \cdot \na^\bot \dot{u}^2 - \na u^2 \cdot \na^\bot \dot{u}^1 \right) dx \\
& \le - \int \div \dot{u} \dot{G} dx
+ C \| \na G \|_{L^2} \| \na u \|_{L^2} \| \na \dot{u} \|_{L^2} \\
& \le - \int \div \dot{u} \dot{G} dx
+ \ep \| \na \dot{u} \|^2_{L^2}
+ C \| \sqrt{\n} \dot{u} \|^2_{L^2} \| \na u \|^2_{L^2},
\ea\ee
where we have used the following fact:
\be\la{bkj43}\ba
\sum_{j=1}^{2} \p_j u \cdot \na \dot{u}^j
= \div u \div \dot{u} + \na u^1 \cdot \na^\bot \dot{u}^2 - \na u^2 \cdot \na^\bot \dot{u}^1.
\ea\ee
From (\ref{bkj33}) and the definition of $G$, we conclude that
\be\ba\la{bkj44}
\div \dot{u} & = (\div u)_t + \p_i u^j \p_j u^i + u \cdot \na \div u \\
& = \frac{1}{\nu} \dot{G} + \frac{1}{\nu} (P_t+u \cdot \na P) + \p_i u^j \p_j u^i \\
& = \frac{1}{\nu} \dot{G} - \frac{\ga}{\nu} P \div u + \p_i u^j \p_j u^i,
\ea\ee
which together with Young's inequality gives
\be\la{bkj45}\ba
- \int \div \dot{u} \dot{G} dx
& = -\frac{1}{\nu} \| \dot{G} \|^2_{L^2} + \frac{\ga}{\nu} \int \dot{G} P \div u dx
- \int \dot{G} \p_i u^j \p_j u^i dx \\
& \le -\frac{1}{2\nu} \| \dot{G} \|^2_{L^2}
+ \frac{C}{\nu^3} \| P \|^4_{L^4} + \frac{C}{\nu^3} \| G \|^4_{L^4}
- \int \dot{G} \p_i u^j \p_j u^i dx.
\ea\ee
For the last term in the final line of (\ref{bkj45}), integration by parts leads to
\be\la{bkj46}\ba
- \int \dot{G} \p_i u^j \p_j u^i dx 
& = - \int ( G_t + u \cdot \na G ) \p_i u \cdot \na u^i dx \\
& = - \frac{d}{dt} \left( \int G \p_i u \cdot \na u^i dx \right) 
+ 2 \int G \p_i u \cdot \na u^i_t dx \\
& \quad + \int G \div u \p_i u \cdot \na u^i dx 
+ 2 \int G u \cdot \na \p_i u \cdot \na u^i dx \\
& = - \frac{d}{dt} \left( \int G \p_i u \cdot \na u^i dx \right) 
+ 2 \int G \p_i u \cdot \na \dot{u}^i dx \\
& \quad - 2 \int G \p_i u \cdot \na u \cdot \na u^i dx
+ \int G \div u \p_i u \cdot \na u^i dx.
\ea\ee
By (\ref{bkj24}), Lemma \ref{hm}, and Young's inequality, we derive
\be\la{bkj47}\ba
2 \int G \p_i u \cdot \na \dot{u}^i dx
& = 2 \int G \left( \div u \div \dot{u} + \na u^1 \cdot \na^\bot \dot{u}^2 - \na u^2 \cdot \na^\bot \dot{u}^1 \right) dx \\
& \le C \| \na \dot{u} \|_{L^2} \| G \|_{L^4} \| \div u \|_{L^4}
+ C \| \na G \|_{L^2} \| \na \dot{u} \|_{L^2} \| \na u \|_{L^2} \\
& \le \ep \| \na \dot{u} \|^2_{L^2} + \frac{C}{\nu} \| G \|^4_{L^4}
+ \frac{C}{\nu^3} \| P \|^4_{L^4}
+ C \| \sqrt{\n} \dot{u} \|^2_{L^2} \| \na u \|^2_{L^2}.
\ea\ee
Note that
\be\ba\la{bkj408}
\sum_{i=1}^{2} \p_i u \cdot \na u \cdot \na u^i
= (\div u)^3 +3 \div u \na u^1\cdot \na^\bot u^2,
\ea\ee
which together with H\"older's inequality yields
\be\la{bkj48}\ba
- 2 \int G \p_i u \cdot \na u \cdot \na u^i dx
& = - 2 \int G \left( (\div u)^3+3\div u \na u^1 \cdot \na^\bot u^2 \right) dx \\
& \le C \| G \|_{L^4} \| \div u \|_{L^4} \| \na u \|^2_{L^4} \\
& \le \frac{C}{\nu} \| G \|^4_{L^4} + \frac{C}{\nu^3} \| P \|^4_{L^4} 
+ C \| \na u \|^4_{L^4}.
\ea\ee
Similarly, we have
\be\la{bkj49}\ba
\int G \div u \p_i u \cdot \na u^i dx 
\le \frac{C}{\nu} \| G \|^4_{L^4} + \frac{C}{\nu^3} \| P \|^4_{L^4}
+ C \| \na u \|^4_{L^4}.
\ea\ee
Substituting (\ref{bkj47}), (\ref{bkj48}) and (\ref{bkj49}) into (\ref{bkj46}) gives
\be\la{bkj410}\ba
- \int \dot{G} \p_i u^j \p_j u^i dx
& \le - \frac{d}{dt} \left( \int G \p_i u \cdot \na u^i dx \right)
+ \ep \| \na \dot{u} \|^2_{L^2} + \frac{C}{\nu} \| G \|^4_{L^4} \\
& \quad + \frac{C}{\nu^3} \| P \|^4_{L^4} + C \| \sqrt{\n} \dot{u} \|^2_{L^2} \| \na u \|^2_{L^2} + C \| \na u \|^4_{L^4}.
\ea\ee
It follows from (\ref{gn11}), (\ref{gw}), and (\ref{bkj24}) that
\be\la{bkj411}\ba
\frac{1}{\nu} \| G \|^4_{L^4} + \|\na u\|^4_{L^4}
& \le \frac{1}{\nu} \| G \|^4_{L^4} + C \left( \|\div u\|^4_{L^4} + \|\o \|^4_{L^4} \right) \\
& \le \frac{C}{\nu} \| G \|^4_{L^4} + \frac{C}{\nu^4} \| P \|^4_{L^4}
+ C \|\o \|^2_{L^2} \| \na \o \|^2_{L^2} \\
& \le \frac{C}{\nu} \| G \|^2_{L^2} \| \na G \|^2_{L^2} + \frac{C}{\nu^4} \| P \|^4_{L^4}
+ C \|\na u \|^2_{L^2} \| \sqrt{\n} \dot{u} \|^2_{L^2} \\
& \le C \left( \nu \| \div u \|^2_{L^2} + \frac{1}{\nu} \| P \|^2_{L^2} + \| \na u \|^2_{L^2} \right) \| \sqrt{\n} \dot{u} \|^2_{L^2} + \frac{C}{\nu^4} \| P \|^4_{L^4}.
\ea\ee
Combining (\ref{bkj42}), (\ref{bkj45}), (\ref{bkj410}) and (\ref{bkj411}), we arrive at
\be\la{bkj412}\ba
I_1 & \le - \frac{d}{dt} \left( \int G \p_i u \cdot \na u^i dx \right)
- \frac{1}{2\nu} \| \dot{G} \|^2_{L^2} + 2 \ep \| \na \dot{u} \|^2_{L^2} + \frac{C}{\nu^3} \| P \|^4_{L^4} \\
& \quad + C \left( \nu \| \div u \|^2_{L^2} + \frac{1}{\nu} \| P \|^2_{L^2} + \| \na u \|^2_{L^2} \right) \| \sqrt{\n} \dot{u} \|^2_{L^2}.
\ea\ee
For $I_2$, we integrate by part over $\rr_+$ and use the boundary conditions (\ref{bjtj1}) and Young's inequality to derive
\be\la{bkj413}\ba
I_2&=\mu \int \left({\dot{u}}\cdot \nabla ^\bot \omega _t
+{\dot{u}}^j \p_k \left( u^k (\na^\bot \o)^j \right) \right) dx \\
&=\mu \int_{\partial \rr_+} ({\dot{u}}\cdot n^\bot )\omega _t ds
-\mu \int \curl \dot{u} \o_t dx-\mu \int u \cdot \na \dot{u} \cdot \na^\bot \o dx \\
&= - \mu \int (\curl \dot{u})^2 dx + \mu \int \curl \dot{u} \curl (u \cdot \na u) dx
+ \mu \int \left( u \cdot \na \curl \dot{u} + \na^\bot_j u \cdot \na \dot{u}^j \right) \o dx \\
& \le - \mu \int (\curl \dot{u})^2 dx + \mu \int \curl \dot{u} (\na^\bot_j u^i \p_i u^j + u \cdot \na \o) dx
+ \mu \int \left( u \cdot \na \curl \dot{u} + \na^\bot_j u \cdot \na \dot{u}^j \right) \o dx \\
& = - \mu \int (\curl \dot{u})^2 dx + \mu \int \curl \dot{u} (\na^\bot_j u^i \p_i u^j - \o \div u ) dx
+ \mu \int \na^\bot_j u \cdot \na \dot{u}^j \o dx \\
& \le - \mu \| \curl \dot{u}\|^2_{L^2} + \ep \| \na \dot{u} \|^2_{L^2} + C \| \na u \|^4_{L^4}.
\ea\ee
Putting (\ref{bkj412}) and (\ref{bkj413}) into (\ref{bkj41}) and using (\ref{bkj411}), we arrive at
\be\la{bkj414}\ba
& \frac{d}{dt}\left(\frac{1}{2} \int \rho |\dot{u}|^2 dx + \int G \p_i u \cdot \na u^i dx \right)
+ \frac{1}{2 \nu} \| \dot{G} \|^2_{L^2} + \mu \| \curl \dot{u} \|^2_{L^2} \\
& \le 3 \ep \| \na \dot{u} \|^2_{L^2} + \frac{C}{\nu^3} \| P \|^4_{L^4}
+ C \left( \nu \| \div u \|^2_{L^2} + \frac{1}{\nu} \| P \|^2_{L^2} + \| \na u \|^2_{L^2} \right) \| \sqrt{\n} \dot{u} \|^2_{L^2}.
\ea\ee
By virtue of (\ref{bkj44}) and Young's inequality, we have
\be\la{bkj415}\ba
\| \div \dot{u} \|^2_{L^2}
& \le \frac{C}{\nu^2} \| \dot{G} \|^2_{L^2}
+ \frac{C}{\nu^4} \| P \|^4_{L^4} + \frac{C}{\nu^4} \| G \|^4_{L^4} + C \| \na u \|^4_{L^4},
\ea\ee
which together with (\ref{bkjbjds}), (\ref{dc1}), and (\ref{bkj411}) yields
\be\la{bkj416}\ba
\| \na \dot{u} \|^2_{L^2}
& \le C \left( \| \div \dot{u} \|^2_{L^2} + \| \curl \dot{u} \|^2_{L^2} \right) \\
& \le C \left( \frac{1}{2\nu} \| \dot{G} \|^2_{L^2} + \| \curl \dot{u} \|^2_{L^2} \right) + \frac{C}{\nu^3} \| P \|^4_{L^4} \\
& \quad + C \left( \nu \| \div u \|^2_{L^2} + \frac{1}{\nu} \| P \|^2_{L^2} + \| \na u \|^2_{L^2} \right) \| \sqrt{\n} \dot{u} \|^2_{L^2}.
\ea\ee
Substituting (\ref{bkj416}) into (\ref{bkj414}) and choosing $\ep$ sufficiently small, we obtain
\be\la{bkj417}\ba
& \frac{d}{dt}\left(\frac{1}{2} \int \rho |\dot{u}|^2 dx + \int G \p_i u \cdot \na u^i dx \right)
+ \frac{1}{4 \nu} \| \dot{G} \|^2_{L^2} + \frac{\mu}{2} \| \curl \dot{u} \|^2_{L^2} \\
& \le \frac{C}{\nu^3} \| P \|^4_{L^4}
+ C \left( \nu \| \div u \|^2_{L^2} + \frac{1}{\nu} \| P \|^2_{L^2} + \| \na u \|^2_{L^2} \right) \| \sqrt{\n} \dot{u} \|^2_{L^2}.
\ea\ee
From (\ref{gn11}), (\ref{bkj24}), (\ref{bkj43}), Lemma \ref{hm}, and Young's inequality, we deduce that
\be\la{bkj418}\ba
\left| \int G \p_i u \cdot \na u^i dx \right|
& = \left| \int \left( G (\div u)^2 + 2 G \na u^1 \cdot \na^\bot u^2 \right) dx \right| \\
& \le \frac{C}{\nu^2} \| G \|^3_{L^3} + \frac{C}{\nu^2} \| P \|^3_{L^3}
+ C \| \na G \|_{L^2} \| \na u \|^2_{L^2} \\
& \le \frac{C}{\nu^2} \| G \|^2_{L^2} \| \na G \|_{L^2} + \frac{C}{\nu^2} \| P \|^3_{L^3}
+ C \| \sqrt{\n} \dot{u} \|_{L^2} \| \na u \|^2_{L^2} \\
& \le \frac{C}{\nu^2} \| G \|^2_{L^2} \| \sqrt{\n} \dot{u} \|_{L^2} + \frac{C}{\nu^2} \| P \|^3_{L^3}
+ C \| \sqrt{\n} \dot{u} \|_{L^2} \| \na u \|^2_{L^2} \\
& \le \frac{1}{4} \| \sqrt{\n} \dot{u} \|^2_{L^2}
+\frac{C}{\nu^4} \| G \|^4_{L^2} + \frac{C}{\nu^2} \| P \|^3_{L^3} + C \| \na u \|^4_{L^2} \\
& \le \frac{1}{4} \| \sqrt{\n} \dot{u} \|^2_{L^2}
+ \frac{C}{\nu^4} \| P \|^4_{L^2} + \frac{C}{\nu^2} \| P \|^3_{L^3} + C \| \na u \|^4_{L^2}.
\ea\ee
Multiplying (\ref{bkj417}) by $t^2$ and using (\ref{bkj022}), (\ref{bkj301}), and (\ref{bkj418}), we derive
\be\la{bkj419}\ba
& \frac{d}{dt}\left(\frac{1}{2} t^2 \int \rho |\dot{u}|^2 dx + t^2 \int G \p_i u \cdot \na u^i dx \right)
+ \frac{1}{4 \nu} t^2 \| \dot{G} \|^2_{L^2} + \frac{\mu}{2} t^2 \| \curl \dot{u} \|^2_{L^2} \\
& \le \frac{C}{\nu^3} t^2 \| P \|^4_{L^4}
+ C t \| \sqrt{\n} \dot{u} \|^2_{L^2} + \frac{C}{\nu} \| P \|^2_{L^2} + \frac{C}{\nu^2} t \| P \|^3_{L^3} + C \| \na u \|^2_{L^2}.
\ea\ee
Integrating (\ref{bkj419}) over $(0,T)$ and using (\ref{bkj01}), (\ref{bkj022}), (\ref{bkj301}), (\ref{bkj302}), (\ref{bkj416}), and (\ref{bkj418}), we arrive at (\ref{bkj401}).

It remains to prove (\ref{bkj402}).
Using (\ref{dc1}) and (\ref{gn11}), we obtain for any $2 \le r<\infty$,
\be\la{bkj420}\ba
\| \na u \|^r_{L^r}
& \le C \| \div u \|^r_{L^r} + C \| \o \|^r_{L^r} \\
& \le \frac{C}{\nu^r} \left( \| G \|^r_{L^r} + \| P \|^r_{L^r} \right) + C \| \o \|^2_{L^2} \| \na \o \|^{r-2}_{L^2} \\
& \le \frac{C}{\nu^r} \| G \|^2_{L^2} \| \na G \|^{r-2}_{L^2} + \frac{C}{\nu^r} \| P \|^r_{L^r} + C \| \na u \|^2_{L^2} \| \sqrt{\n} \dot{u} \|^{r-2}_{L^2} \\
& \le C \left( \frac{1}{\nu} \| G \|^2_{L^2} + \| \na u \|^2_{L^2} \right) \| \sqrt{\n} \dot{u} \|^{r-2}_{L^2} + \frac{C}{\nu^r} \| P \|^r_{L^r}.
\ea\ee
We claim that for $n \in \mathbb{N}^+$,
\be\la{bkj421}\ba
\sup_{\si(T) \le t \le T} \left( \frac{1}{\nu^n} t^n \| P \|^{n+1}_{L^{n+1}} \right)
+ \int_{\si(T)}^T \frac{1}{\nu^{n+1}} t^{n} \| P \|^{n+2}_{L^{n+2}} dt \le C.
\ea\ee
This, combined with (\ref{bkj401}), (\ref{bkj420}), and H\"older's inequality, yields (\ref{bkj402}).

We shall prove (\ref{bkj421}) by induction.
First, (\ref{bkj301}) shows that (\ref{bkj421}) holds for $n=1$.
Assume that (\ref{bkj421}) holds for $n=k$, that is,
\be\la{bkj422}\ba
\sup_{\si(T) \le t \le T} \left( \frac{1}{\nu^k} t^k \| P \|^{k+1}_{L^{k+1}} \right)
+ \int_{\si(T)}^T \frac{1}{\nu^{k+1}} t^{k} \| P \|^{k+2}_{L^{k+2}} dt \le C.
\ea\ee
Choosing $p=k+2$ in (\ref{bkj35}), multiplying the resulting inequality by $\frac{1}{\nu^k} t^{k+1}$, and using (\ref{bkj401}), we obtain
\be\la{bkj423}\ba
& \frac{d}{dt} \left( \frac{1}{\nu^{k+1}} t^{k+1} \| P \|^{k+2}_{L^{k+2}} \right)
+ \frac{(k+2)\ga-1}{2 \nu^{k+2}} t^{k+1} \| P \|^{k+3}_{L^{k+3}} \\
& \le \frac{k+1}{\nu^{k+1}} t^k \| P \|^{k+2}_{L^{k+2}}
+ \frac{C}{\nu^{k+2}} t^{k+1} \| G \|^{2}_{L^2} \| \sqrt{\n} \dot{u} \|^{k+1}_{L^2} \\
& \le \frac{k+1}{\nu^{k+1}} t^k \| P \|^{k+2}_{L^{k+2}} + C \left( \| \na u \|^{2}_{L^2} + \frac{1}{\nu} \| P \|^2_{L^2} \right).
\ea\ee
Integrating (\ref{bkj423}) over $[\si(T),T]$ and using (\ref{bkj422}), (\ref{bkj01}), and (\ref{bkj022}), we deduce that (\ref{bkj421}) holds for $n=k+1$.
By induction, we arrive at (\ref{bkj402}) and complete the proof of Lemma \ref{bkjl4}.
\end{proof}

The following lemma plays a crucial role in deriving the upper bound for the density.
\begin{lemma}\la{bkjl5}
Let $(\n,u)$ be a strong solution to \eqref{ns} on $\rr_+ \times (0,T]$
with $ \tilde{\n}=0 $.
Then for any $r \in [2,\infty)$, there exists a positive constant $C$ depending only on
$a$, $\Vert {\bar{x}}^a \rho_0 \Vert_{L^1},\ N_0$, $E_0$, $\| \n_0 \|_{L^\infty}$, and $r$
such that for all $t \in (0,T]$,
\be\la{bkj501}\ba
\left( \int_{\rr_+}\rho |v|^r dx\right) ^{1/r} 
\le C (1+t)^4 \left( \| \sqrt{\rho} v \| _{L^2} + \| \nabla v \|_{L^2}\right),
\ea\ee
for any $v\in \left\{ v\in D^1 ({\rr_+}) \mid \sqrt{\rho} v \in L^2(\rr_+) \right\}$.
\end{lemma}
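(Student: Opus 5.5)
The plan is to reduce the half-space estimate to the whole-space Lemma \ref{esnr} by even reflection, after first establishing a time-dependent mass-localization bound of the form \eqref{wsol30}.

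\textbf{Step 1: reflection.} Given $v$ with $\sqrt{\n}v\in L^2(\rr_+)$ and $\na v\in L^2(\rr_+)$, extend $\n$ and $v$ evenly in $x_2$ to $\tilde\n$ and $\tilde v$ on $\rr$. Then $\tilde v\in D^1(\rr)$ with
\[
\|\na\tilde v\|^2_{L^2(\rr)}=2\|\na v\|^2_{L^2}, \qquad \|\sqrt{\tilde\n}\tilde v\|^2_{L^2(\rr)}=2\|\sqrt{\n}v\|^2_{L^2}, \qquad \int_{\rr}\tilde\n|\tilde v|^r dx=2\int_{\rr_+}\n|v|^r dx .
\]
In the same way $\|{\bar{x}}^a\tilde\n\|_{L^1(\rr)}=2\|{\bar{x}}^a\n\|_{L^1}$, since ${\bar{x}}$ is radial, and $\int_{B_N}\tilde\n\,dx=2\int_{B_N^+}\n\,dx$. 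Hence \eqref{bkj501} follows from \eqref{esnr2} once the three hypotheses in \eqref{esnr1} hold at time $t$ with $N_*=N_1(1+t)$:
\[
M_3=2(1+\|\n_0\|_{L^\infty}), \qquad M_4=\tfrac12, \qquad \|{\bar{x}}^a\n(t)\|_{L^1}\le C(1+t).
\]
With these, $N_*^3(1+\|{\bar{x}}^a\n\|_{L^1})\le C(1+t)^4$, which is exactly the power in \eqref{bkj501}. The $L^\infty$ bound is \eqref{pro101}; in the setting of this lemma I would either use the standing a priori assumption or simply let the constant depend on $\sup_t\|\n\|_{L^\infty}$, bounded by \eqref{pro101}.

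\textbf{Step 2: weighted mass.} I expect the weighted bound $\|{\bar{x}}^a\n(t)\|_{L^1}\le C(1+t)$ to be the main obstacle. Multiply $(\ref{ns})_1$ by ${\bar{x}}^a$ and integrate over $\rr_+$; the boundary term vanishes because $u\cdot n=0$. Using $|\na{\bar{x}}^a|\le C{\bar{x}}^{a}(e+|x|^2)^{-1/2}\log^2(e+|x|^2)$, this gives
\[
\frac{d}{dt}\int{\bar{x}}^a\n\,dx \le C\int \n\,{\bar{x}}^{a}\,{\bar{x}}^{-1}\log^4(e+|x|^2)\,|u|\,dx .
\]
Split $\n=\n^{1/2}\n^{1/2}$ and apply H\"older, so that one factor is $\|\sqrt{\n}u\|_{L^2}\le C$ by \eqref{bkj01} and the other is controlled by $\int{\bar{x}}^a\n\,dx$ (the extra logarithms are absorbed since ${\bar{x}}^{-1}\log^4\le C$). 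This yields a bound of the form $\frac{d}{dt}\int{\bar{x}}^a\n \le C(\int{\bar{x}}^a\n)^{1-1/(2a)}$ or better, and integration gives $\int{\bar{x}}^a\n\le C(1+t)^{2a}$. If that growth is too fast, I would follow \cite{LX2} and estimate $\int \n|u|{\bar{x}}^{a-1}\log^4$ through a weighted bound on ${\bar{x}}^{-1}u$ from Lemma \ref{WPE}, applied to the even extension of $u$. In any case the final power of $(1+t)$ only enters through this step, and any polynomial growth is harmless if the exponent is adjusted; I will aim for linear growth as in \cite{LX2}.

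\textbf{Step 3: mass localization.} To prove \eqref{wsol30}, take a cutoff $\varphi_N=\varphi(|x|/N)$, with $\varphi=1$ on $[0,1]$ and supported in $[0,2)$. From $(\ref{ns})_1$ and $u\cdot n=0$,
\[
\Big|\frac{d}{dt}\int\n\varphi_N\,dx\Big|\le CN^{-1}\|\sqrt{\n}\|_{L^2}\|\sqrt{\n}u\|_{L^2}\le CN^{-1},
\]
using \eqref{bkj01} and \eqref{bkj211}. Therefore
\[
\int_{B^+_{2N}}\n(t)\,dx \ge \int\n_0\varphi_N\,dx - CtN^{-1}.
\]
Choose $N=N_0(1+t)$ with $N_0$ large relative to $C$ and the data, using \eqref{rho00} so that $\int\n_0\varphi_N\ge\frac12$ and $CtN^{-1}$ is small. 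This gives mass at least $\frac14$ in $B^+_{N_1(1+t)}$ with $N_1=2N_0$ adjusted as needed. Combining Steps 1--3 then gives \eqref{bkj501}.
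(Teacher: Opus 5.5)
\textbf{Gap: the exponent $(1+t)^4$ is not reached with $\beta=a$.}

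Your reflection in Step 1 and your cutoff argument in Step 3 are essentially the paper's. (The paper extends $\rho$ by zero rather than evenly, which makes no difference.) The gap is in Step 2, together with your choice $\beta=a$ when you invoke Lemma~\ref{esnr}.

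To obtain the factor $(1+t)^4$ you need $\|\bar{x}^a\rho(t)\|_{L^1}\le C(1+t)$. For $a>1$ this linear bound cannot come out of the energy estimate. Indeed, $|\nabla\bar{x}^a|$ grows like $\bar{x}^{a-1}$ up to logarithms. Nothing in $\int\rho|u|^2dx\le 2E_0$ prevents a lump of mass from travelling at a fixed speed, and then the $a$-th moment grows like $t^a$.

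Your own differential inequality gives $(1+t)^{2a}$, or $(1+t)^{a}$ at best. Fed into \eqref{esnr2}, this produces a factor of at least $(1+t)^{3+a}$, not $(1+t)^4$.

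There is a further problem for larger $a$. Your H\"older splitting closes only when $\bar{x}^{a-2}\log^{4}(e+|x|^2)$ is bounded, i.e.\ for $a<2$. For $a\ge 2$, your fallback via Lemma~\ref{WPE} needs control of $\|\nabla u\|_{L^2}$ and of $u$ on a ball. That brings into the constant quantities such as $\mu$, which the statement does not allow, and it still gives only polynomial growth, not linear growth.

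You remark that ``any polynomial growth is harmless.'' That is true for the later use in Proposition~\ref{pro1}, but it means you would be proving a weaker inequality than \eqref{bkj501}.

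\textbf{The missing idea: take $\beta$ small.}

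Lemma~\ref{esnr} holds for \emph{every} $\beta>0$, so take $\beta<1$. Then $\bar{x}^\beta\le C(1+|x|^2)^{1/2}$. The weight $(1+|x|^2)^{1/2}$ has gradient bounded by $1$. Multiplying $(\ref{ns})_1$ by it and using $u\cdot n=0$ on $\partial\mathbb{R}^2_+$ gives
\[
\frac{d}{dt}\int\rho(1+|x|^2)^{1/2}dx\le\int\rho|u|\,dx\le\|\rho\|_{L^1}^{1/2}\|\sqrt{\rho}u\|_{L^2}\le C .
\]
Hence
\[
\|\bar{x}^\beta\rho(t)\|_{L^1}\le C\int\rho(1+|x|^2)^{1/2}dx\le C\left(\|\bar{x}^a\rho_0\|_{L^1}+t\right),
\]
where $C$ depends only on $E_0$ and the normalized mass. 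The initial term is controlled because $(1+|x|^2)^{1/2}\le\bar{x}^a$.

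With $N_*=N_1(1+t)$ this gives exactly $N_*^3(1+\|\bar{x}^\beta\rho\|_{L^1})\le C(1+t)^4$, with constants depending only on the listed quantities. This is how the paper argues: the weight $\bar{x}^a$ enters only through the initial moment.
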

\begin{proof}
First, for any integer $N>1$, choose a smooth cutoff function $ \varphi_N $ on $\rr$ satisfying:
\be\la{bkj51}\ba
0 \le \varphi_N \le 1, \quad \varphi_N=
\begin{cases}
1,\quad &\mathrm{ if \  } |x| \le N,\\
0,\quad &\mathrm{ if \  } |x| \ge 2N,
\end{cases}
\quad |\na \varphi_N | \le 2 N^{-1}.
\ea\ee
Denote $\varphi^+_N \triangleq \varphi_{N}|_{\rr_+}$.
Multiplying $(\ref{ns})_1$ by $\varphi^+_N$, integrating by parts over $\rr_+$, and using (\ref{bjtj1}), (\ref{bkj01}), and (\ref{bkj51}), we derive
\be\ba\nonumber
\frac{d}{dt} \int_{\rr_+} \n \varphi^+_N dx =\int_{\rr_+} \n u \cdot \na \varphi^+_N dx 
\ge -2 N^{-1} \left( \int_{\rr_+} \n dx \right)^{\frac{1}{2}} 
\left( \int_{\rr_+} \n |u|^2 dx \right)^{\frac{1}{2}} \ge -2 \hat{C} N^{-1},
\ea\ee
where the positive constant $\hat{C}$ depends only on $E_0$ and $\| {\bar{x}}^a \rho_0 \|_{L^1}$.

Taking $\tilde{N} \triangleq 4(1+N_0+4\hat{C}t)$, where $N_0$ is defined in \eqref{rho00}, we use (\ref{rho00}) to derive
\be\la{bkj52}\ba
\int_{B^+_{\tilde{N}}} \n dx \ge \int \n \varphi^+_{\tilde{N}/2} dx
& \ge \int \n_0 \varphi^+_{\tilde{N}/2} dx - 4 \hat{C} \tilde{N}^{-1}t \\
& \ge \int \n_0 \varphi^+_{N_0} dx - 4 \hat{C} \tilde{N}^{-1}t \\
& \ge \int_{B^+_{N_0}} \n_0 dx - 4 \hat{C} \tilde{N}^{-1}t \ge \frac{1}{4}.
\ea\ee
Thus, there exists a positive constant $N_1$ depending on
$N_0$, $\| {\bar{x}}^a \rho_0 \|_{L^1}$, and $E_0$,
such that for all $t \in (0,T]$,
\be\la{bkj53}\ba
\int _{B^+_{N_1(1+t)}}\rho (x,t) dx \ge \frac{1}{4}.
\ea\ee
Then, for any $v \in D_+^1(\rr_+)$, we extend it to $\rr$ by
\be\la{bkj54}\ba
\tilde{v}(x_1,x_2) \triangleq
\begin{cases}
v(x_1,x_2), \ & x_2 \ge 0, \\
v(x_1,-x_2), \ & x_2 < 0,
\end{cases}
\ea\ee
so that $\tilde{v} \in D^1(\rr)$.
In addition, we extend $\n$ by zero outside $\mathbb{R}^2_+$ and denote the extension by $\tilde{\n}$.

From (\ref{bkj53}) and Lemma \ref{esnr}, we conclude that for any $r \in [2,\infty)$ and $\beta>0$,
\be\la{bkj55}\ba
\left( \int_{\rr} \tilde{\rho} |\tilde{v}|^r dx\right) ^{1/r} 
& \le C (1+t)^3 (1+\Vert {\bar{x}}^\beta \tilde{\rho} \Vert_{L^1(\rr)}) 
\left( \Vert \sqrt{\tilde{\rho}} \tilde{v} \Vert _{L^2(\rr)} 
+ \Vert \nabla \tilde{v} \Vert _{L^2(\rr)}\right),
\ea\ee
which together with (\ref{bkj54}) gives
\be\la{bkj56}\ba
\left( \int _{\rr_+}\rho |v|^r dx\right) ^{1/r} 
& \le C (1+t)^3 (1+\Vert {\bar{x}}^\beta \rho \Vert_{L^1}) 
\left( \Vert \sqrt{\rho} v \Vert _{L^2} 
+ \Vert \nabla v \Vert _{L^2}\right).
\ea\ee
Moreover, multiplying $(\ref{ns})_1$ by $\left(1+|x|^2 \right)^{\frac{1}{2}}$, integrating by parts over $\rr_+$, and using (\ref{bkj01}), we obtain
\be\la{bkj57}\ba
\frac{d}{dt}\int \rho (1+|x|^2)^{\frac{1}{2}} dx
&\le \int |x| (1+|x|^2)^{-\frac{1}{2}} \rho |u| dx \\
&\le \left( \int \rho dx\right) ^{1/2}\left( \int \rho |u|^2 dx\right) ^{1/2} \\
&=\left( \int \rho_0 dx\right) ^{1/2}\left( \int \rho |u|^2 dx\right) ^{1/2} \\
&\le \left( \int {\bar{x}}^a \rho_0 dx\right) ^{1/2}\left( \int \rho |u|^2 dx\right) ^{1/2} \\
&\le C,
\ea\ee
which yields
\be\la{bkj58}\ba
\int \rho (1+|x|^2)^{\frac{1}{2}} dx 
& \le \int \rho_0 (1+|x|^2)^{\frac{1}{2}} dx + C t \\
& \le \int {\bar{x}}^a \rho_0 dx + C t \\
& \le C(1+t).
\ea\ee
Choosing $\beta$ sufficiently small in (\ref{bkj56}) and using (\ref{bkj58}), we arrive at (\ref{bkj501}) and complete the proof of Lemma \ref{bkjl5}.
\end{proof}

With Lemmas \ref{bkjl1}--\ref{bkjl5} at hand, we are in a position to prove Proposition \ref{pro1}.

\begin{proof}[Proof of Proposition \ref{pro1}]
First, it follows from (\ref{pro101}), (\ref{bkj501}), and (\ref{bkj23}) that for any $2 \le p < \infty$,
\be\la{pro10}\ba
\| \na G \|_{L^p} \le C \| \n \dot{u} \|_{L^p}
\le C(1+t)^4 \left( \| \sqrt{\n} \dot{u} \|_{L^2} + \| \na \dot{u} \|_{L^2} \right).
\ea\ee
We use (\ref{gw}) to rewrite $(\ref{ns})_1$ as
\be\ba\la{pro11}
\frac{d}{dt} \n = g(\n) + h'(t),
\ea\ee
where
\be\la{pro12}\ba
g(\n) = - \frac{1}{\nu} \n^{\ga+1}, \quad h(t) = - \frac{1}{\nu} \int_0^t \n G ds.
\ea\ee
Using (\ref{gn11}), (\ref{bkj021}), (\ref{bkj401}), (\ref{pro10}), and the Gagliardo-Nirenberg inequality, we derive that for any $0 \le t \le \si(T)$,
\be\la{pro13}\ba
|h(t)| & \le \frac{C}{\nu} \int_0^{\si(t)} \| G \|_{L^\infty} ds \\
& \le \frac{C}{\nu} \int_0^{\si(t)} \| G \|_{L^2}^{\frac38} \| \na G \|_{L^5}^{\frac58} ds \\
& \le C \nu^{-\frac58} \int_0^{\si(t)} \left( \si^2 \| \sqrt{\n} \dot u \|^{2}_{L^2} + \si^2 \| \na \dot u \|^{2}_{L^2} \right)^{\frac{5}{16}}
\si^{-\frac58} ds \\
& \le C \nu^{-\frac58} \left( \int_0^1 \si^{-\frac{10}{11}} ds \right)^{\frac{11}{16}} \\
& \le C \nu^{-\frac58},
\ea\ee
which together with (\ref{pro11}) yields
\be\la{pro14}\ba
\sup_{0 \le t \le \si(T)} \| \n \|_{L^\infty} \le \| \n_0 \|_{L^\infty} + C \nu^{-\frac58}.
\ea\ee
Moreover, from (\ref{bkj401}), (\ref{pro10}), and the Gagliardo-Nirenberg inequality, we conclude that
\be\la{pro16}\ba
\int_{\si(T)}^T \| G \|^4_{L^\infty} dt
& \le C \int_{\si(T)}^T \| G \|^{\frac{35}{9}}_{L^{72}} \| \na G \|^{\frac19}_{L^{72}} dt \\
& \le C \int_{\si(T)}^T \| G \|^{\frac{35}{9}}_{L^{72}}
t^{\frac49} \left( \| \sqrt{\n} \dot{u} \|_{L^2} + \| \na \dot{u} \|_{L^2} \right)^{\frac19} dt \\
& \le C \nu^{\frac{1}{18}} \int_{\si(T)}^T t^{-\frac{27}{18}}
\left( \| \sqrt{\n} \dot{u} \|_{L^2} + \| \na \dot{u} \|_{L^2} \right)^{\frac19} dt \\
& \le C \nu^{\frac{1}{18}},
\ea\ee
where in the third inequality we have used that for any $t \in [\si(T),T]$,
\be\nonumber\ba
\| G \|_{L^{72}} \le C \| G \|^{\frac{1}{36}}_{L^2} \| \na G \|^{\frac{35}{36}}_{L^2}
\le C \| G \|^{\frac{1}{36}}_{L^2} \| \sqrt{\n} \dot{u} \|^{\frac{35}{36}}_{L^2}
\le C \nu^{\frac{1}{72}} t^{-\frac{71}{72}},
\ea\ee
owing to (\ref{gn11}), (\ref{bkj24}), (\ref{bkj301}), and (\ref{bkj401}).

This, combined with Young's inequality, implies that for any $\si(T) \le t_1 \le t_2 \le T$,
\be\la{pro17}\ba
h(t_2) - h(t_1) & \le \frac{C}{\nu} \int_{t_1}^{t_2} \| G \|_{L^\infty} dt \\
& \le \frac{1}{\nu}(t_2 - t_1) + \frac{C}{\nu} \int_{\si(T)}^T \| G \|^4_{L^\infty} dt \\
& \le \frac{1}{\nu}(t_2 - t_1) + C \nu^{-\frac{17}{18}}.
\ea\ee
Choose $N_0$, $N_1$, and $\overline{\zeta}$ in Lemma \ref{zli} as follows:
\be\la{pro18}\ba
N_0 = C \nu^{-\frac{17}{18}}, \quad N_1 = \frac{1}{\nu}, \quad \overline{\zeta} = 1,
\ea\ee
which together with (\ref{pro12}) gives
\be\nonumber\ba
g(\zeta) = - \frac{1}{\nu} \zeta^{\ga+1} \le - N_1 = - \frac{1}{\nu} \quad \text{ for all } \zeta \ge 1.
\ea\ee
Combining (\ref{pro14}), (\ref{pro18}), and Lemma \ref{zli}, we arrive at
\be\la{pro19}\ba
\sup_{ \si(T) \le t \le T} \| \n \|_{L^\infty}
\le 1 + \| \n_0 \|_{L^\infty} + \mathbf{M_1} \nu^{-\frac58},
\ea\ee
where $\mathbf{M_1}$ is a positive constant depending only on
$\mu$, $\ga$, $a$, $\Vert {\bar{x}}^a \rho_0 \Vert_{L^1}$, $N_0$, $E_0$, $\| \n_0 \|_{L^\infty}$, and $\|\na u_0\|_{L^2}$,
but is independent of $T$ and $\nu$.

Finally, we define
\be\ba\la{pro110}
\nu_1 \triangleq \left( \frac{2 \mathbf{M_1}}{1 + \| \n_0 \|_{L^\infty}} \right)^{\frac{8}{5}},
\ea\ee
which implies (\ref{pro102}) provided $\nu \geq \nu_1$ and completes the proof of Proposition \ref{pro1}.
\end{proof}

\subsection{A Priori Estimates (II): Higher Order Estimates}

In this subsection, we assume that (\ref{pro101}) holds and derive the higher-order estimates needed to extend the local solution globally in time.
Let $(\n,u)$ be a strong solution to (\ref{ns})--(\ref{bjtj2}) on $\rr_+ \times (0,T]$ satisfying (\ref{pro101}).

\begin{lemma}\la{1hl1}
There exists a positive constant $C$ depending only on 
$T$, $q$, $\ga$, $\mu$, $\lam$, $E_0$, $N_0$, $\| {\bar{x}}^a \rho_0 \|_{L^1}$, $\|\n_0\|_{L^\infty}$, and $\| \na u_0 \|_{L^2}$ such that
\be\la{1h01}\ba
&\sup_{0\le t\le T} \left( \| \n \|_{H^1 \cap W^{1,q}} + \| \na u \|_{L^2} + t \| \na^2 u \|^2_{L^2} \right) \\
& + \int_0^T \left( \|\nabla^2 u\|^{2}_{L^2}+\|\nabla^2 u\|^{(q+1)/q}_{L^q}+t \|\nabla^2 u\|_{L^q}^2 \right) dt\le C.
\ea\ee
\end{lemma}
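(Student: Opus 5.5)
The plan is the standard higher-order scheme (as in \cite{HLX2,LX2,LeXi3}): since $T$ and $\lam$ may now enter the constants, we only need closable bounds. The bound $\sup_t\|\na u\|_{L^2}$ is already given by (\ref{bkj021}). For the second derivatives, we combine (\ref{dc1}) with $k=1$, the definitions (\ref{gw}) and (\ref{bkj23}) to get, for $p\in\{2,q\}$,
\be\nonumber
\|\na^2 u\|_{L^p}\le C\left(\|\na\div u\|_{L^p}+\|\na\o\|_{L^p}\right)\le C\left(\|\n\dot u\|_{L^p}+\|\na P\|_{L^p}\right).
\ee
With $p=2$, (\ref{bkj021}) and (\ref{bkj401}) give $\int_0^T\|\na^2u\|_{L^2}^2dt\le C(1+\int_0^T\|\na\n\|_{L^2}^2dt)$ and $t\|\na^2u\|^2_{L^2}\le C(1+\|\na\n\|^2_{L^2})$. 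With $p=q$, we use Lemma \ref{bkjl5} and Gagliardo--Nirenberg to bound $\|\n\dot u\|_{L^q}\le C(1+t)^4\|\sqrt\n\dot u\|_{L^2}^{2(q-1)/(q^2-2)}(\|\sqrt\n\dot u\|_{L^2}+\|\na\dot u\|_{L^2})^{\cdots}$. Then (\ref{bkj021}) and (\ref{bkj401}) give $\int_0^T(\|\n\dot u\|_{L^q}^{(q+1)/q}+t\|\n\dot u\|_{L^q}^2)dt\le C$; the exponent $(q+1)/q$ is chosen precisely so that the singular factor $t^{-\cdots}$ near $t=0$ stays integrable.

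It therefore remains to bound $\|\na\n\|_{L^2\cap L^q}$, where $\na\n\in W^{1,q}$ with $\n\in L^\infty$ already gives $\|\n\|_{L^2\cap L^q}$ via mass conservation. We differentiate $(\ref{ns})_1$ and test with $|\na\n|^{p-2}\na\n$, $p\in\{2,q\}$, using $u\cdot n=0$ to kill the boundary flux. This yields
\be\nonumber
\frac{d}{dt}\|\na\n\|_{L^p}\le C(1+\|\na u\|_{L^\infty})\|\na\n\|_{L^p}+C\|\na^2u\|_{L^p}.
\ee
Inserting the elliptic bound above with $\|\na P\|_{L^p}\le C\|\na\n\|_{L^p}$ (by (\ref{pro101})) gives
\be\nonumber
\frac{d}{dt}\|\na\n\|_{L^p}\le C\left(1+\|\na u\|_{L^\infty}\right)\|\na\n\|_{L^p}+C\|\n\dot u\|_{L^p}.
\ee

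The main obstacle is $\|\na u\|_{L^\infty}$, which we control by Lemma \ref{bkm}:
\be\nonumber
\|\na u\|_{L^\infty}\le C(\|\div u\|_{L^\infty}+\|\o\|_{L^\infty})\log(e+\|\na^2u\|_{L^q})+C.
\ee
Here $\|\div u\|_{L^\infty}+\|\o\|_{L^\infty}\le C(1+\|G\|_{L^\infty}+\|\o\|_{L^\infty})$, and by Gagliardo--Nirenberg with (\ref{bkj24}) and (\ref{bkj23}) this is bounded by $C(1+\|\n\dot u\|_{L^q}+\|\sqrt\n\dot u\|_{L^2})$, which lies in $L^1(0,T)$. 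Using $\log(e+\|\na^2u\|_{L^q})\le C\log(e+\|\n\dot u\|_{L^q})+C\log(e+\|\na\n\|_{L^q})$, we set $f=e+\|\na\n\|_{L^q}$ and obtain
\be\nonumber
(\log f)'\le C\left(1+\|\n\dot u\|_{L^q}+\|\sqrt\n\dot u\|_{L^2}\right)\left(\log(e+\|\n\dot u\|_{L^q})+\log f\right)+C\|\n\dot u\|_{L^q}.
\ee
All coefficients are $L^1(0,T)$; in particular $\|\n\dot u\|_{L^q}\log(e+\|\n\dot u\|_{L^q})\in L^1$ because it lies in $L^{(q+1)/q}$. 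Gronwall then bounds $\log f$, hence $\|\na\n\|_{L^q}$, and so $\int_0^T\|\na u\|_{L^\infty}dt\le C$. The $p=2$ estimate then follows by Gronwall with the already-integrable coefficient.

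Substituting back into the elliptic bounds yields every term of (\ref{1h01}). The constants depend on $\lam$ and $T$ through Lemma \ref{bkm}, the elliptic constants and the $(1+t)^4$ weights, as the statement allows.
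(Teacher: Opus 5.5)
Your overall architecture is the paper's: a transport estimate for $\na\n$ in $L^p$, the elliptic bound $\|\na^2u\|_{L^p}\le C(\|\n\dot u\|_{L^p}+\|\na\n\|_{L^p})$, Lemma \ref{bkm}, and a logarithmic Gr\"onwall argument. Your treatment of the logarithm via $x\log(e+x)\le C(1+x^{(q+1)/q})$ is also fine.

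The gap is in the time-weighted input. You cite (\ref{bkj021}) and (\ref{bkj401}), but (\ref{bkj401}) carries the weight $t^2$, not $t$. It gives only $t\|\sqrt\n\dot u\|_{L^2}^2\le Ct^{-1}$ and $\int_0^Tt^2\|\na\dot u\|_{L^2}^2dt\le C$. So it does not give $\sup_t t\|\na^2u\|_{L^2}^2\le C(1+\|\na\n\|_{L^2}^2)$. Nor does it give $\int_0^T(\|\n\dot u\|_{L^q}^{(q+1)/q}+t\|\n\dot u\|_{L^q}^2)dt\le C$, on which your whole Gr\"onwall step rests.

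To see this, set $A=\|\sqrt\n\dot u\|_{L^2}^2$, $B=\|\na\dot u\|_{L^2}^2$ and $\theta=q(q-2)/(q^2-2)$, so your interpolation reads $\|\n\dot u\|_{L^q}\lesssim A^{1/2}+A^{(1-\theta)/2}B^{\theta/2}$. The information you use is $\int_0^TA\,dt\le C$, $\sup t^2A\le C$ and $\int_0^Tt^2B\,dt\le C$. It is consistent with $A\sim t^{-1}|\log t|^{-2}$ and $B\sim t^{-3}|\log t|^{-2}$ near $t=0$. For these, $A^{(1-\theta)/2}B^{\theta/2}\sim t^{-1/2-\theta}|\log t|^{-1}$, which is not even in $L^1(0,1)$ once $\theta\ge1/2$, i.e.\ $q\ge2+\sqrt2$. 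The $t^2$-weights suffice for the small exponent $5/8$ in (\ref{pro13}), but they are too weak here.

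The missing step is the $\si$-weighted estimate
\begin{equation*}
\sup_{0\le t\le T}\si\|\sqrt{\n}\dot u\|_{L^2}^2+\int_0^T\si\|\na\dot u\|_{L^2}^2dt\le C,
\end{equation*}
which the paper proves first as (\ref{1h12}). You obtain it by multiplying (\ref{bkj417}) by $\si$ rather than $t^2$:
\begin{itemize}
\item the term $\si'\|\sqrt\n\dot u\|_{L^2}^2$ is integrable by the unweighted bound $\int_0^T\|\sqrt\n\dot u\|_{L^2}^2dt\le C$ from (\ref{bkj021}), which is where $\|\na u_0\|_{L^2}$ and $\lam$ enter;
\item the cross term $\si\int G\p_iu\cdot\na u^i\,dx$ is absorbed via (\ref{bkj418});
\item Gr\"onwall uses $\int_0^T(\nu\|\div u\|_{L^2}^2+\nu^{-1}\|P\|_{L^2}^2+\|\na u\|_{L^2}^2)dt\le C$;
\item (\ref{bkj416}) then gives the $\na\dot u$ part.
\end{itemize}
With $\si A\le C$ and $\int_0^T\si B\,dt\le C$ you get $t\|\n\dot u\|_{L^q}^2\lesssim tA+(tA)^{1-\theta}(tB)^{\theta}$. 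Young's inequality likewise gives $\|\n\dot u\|_{L^q}^{(q+1)/q}\lesssim A^{(q+1)/(2q)}+tB+t^{-\beta}$ with $\beta=\frac{q^3+q^2-2q-2}{q^3+q^2-2q}<1$. This is (\ref{1h110}), and once you insert it your remaining steps, including the $p=2$ bounds, go through.
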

\begin{proof}
First, by (\ref{pro101}), (\ref{gw}), (\ref{bkj01}), and (\ref{bkj021}), we have
\be\la{1h11} \ba
\sup_{0\le t\le T} \left( \| \n \|_{L^\infty} + \| \na u \|_{L^2} \right) 
+ \int_0^T \left( \| \na u\|^2_{L^2}+ \| \sqrt{\n} \dot{u} \|^2_{L^2} \right)dt \le C.
\ea \ee
Multiplying (\ref{bkj417}) by $\si$ and using (\ref{1h11}), (\ref{bkj416}), (\ref{bkj418}), and Gr\"onwall's inequality, one obtains
\be\la{1h12} \ba
\sup_{0\le t\le T} \si \int\n|\dot u|^2dx+\int_0^{ T} \si \|\na\dot u\|^2_{L^2}dt\le C.
\ea\ee
For any $p \in [2,q]$, from $(\ref{ns})_1$, we deduce that $|\na \n|^p$ satisfies
\be\la{1h13}\ba
& (|\nabla\n|^p)_t + \text{div}(|\nabla\n|^p u)+ (p-1)|\nabla\n|^p\text{div}u  \\
&+ p|\nabla\n|^{p-2} \p_i\n \p_i u^j \p_j\n +
p \n |\nabla\n|^{p-2}\p_i\n  \p_i \text{div}u = 0.
\ea\ee
Integrating (\ref{1h13}) over $\rr_+$ yields
\be\la{1h14} \ba
\frac{d}{dt} \norm[L^p]{\nabla\n}  
&\le C \norm[L^{\infty}]{\nabla u}  \norm[L^p]{\nabla\n} +C\|\na^2 u\|_{L^p} \\ 
&\le C(1+\norm[L^{\infty}]{\nabla u} ) \norm[L^p]{\nabla\n}+C \|\n\dot u\|_{L^p},
\ea\ee
where we have used the following estimate:
\be\la{1h15}\ba
\|\na^2 u\|_{L^p}
&\le C(\|\na \div u\|_{L^p}+\|\na \o\|_{L^p}) \\
&\le C (\| \nabla G\|_{L^p}+ \|\nabla P \|_{L^p})+ C\|\na \o\|_{L^p} \\
&\le C\|\n\dot u\|_{L^p} + C  \|\nabla \n \|_{L^p},
\ea\ee
due to (\ref{dc1}) and (\ref{bkj23}).

It follows from (\ref{gn12}), (\ref{bkj23}), and (\ref{1h11}) that
\be\la{1h16}\ba 
& \| \div u \|_{L^\infty}+\| \o \|_{L^\infty} \\
& \le C \left( \| G \|_{L^\infty} + \| P-P(\tilde{\n}) \|_{L^\infty} \right)
+ \| \o \|_{L^\infty} \\
& \le C \left( 1 + \| G \|_{L^2}^{\frac{q-2}{2(q-1)}}
\| \na G \|_{L^q}^{\frac{q}{2(q-1)}}
+ \| \o \|_{L^2}^{\frac{q-2}{2(q-1)}} \| \na \o \|_{L^q}^{\frac{q}{2(q-1)}} \right) \\
& \le C + C \| \n \dot{u} \|_{L^q}^{\frac{q}{2(q-1)}},
\ea\ee
which together with Lemma \ref{bkm} and (\ref{1h15}) gives
\be\la{1h17}\ba
\|\na u\|_{L^\infty}
& \le C \left(\|{\rm div}u\|_{L^\infty } + \|\o\|_{L^\infty} \right)\log(e+\|\na^2 u\|_{L^q}) + C \|\na u\|_{L^2} + C \\
& \le C \left( 1 + \| \n \dot{u} \|_{L^q}^{\frac{q}{2(q-1)}} \right) \log(e+\|\rho \dot u\|_{L^q} + \|\na \rho\|_{L^q}) + C \\
& \le C \left(1+\|\n\dot u\|_{L^q} \right) \log(e+\|\na \rho\|_{L^q}).
\ea\ee
Choosing $p=q$ in (\ref{1h14}) and using (\ref{1h17}) lead to
\be\la{1h18}\ba
\frac{d}{dt} \log(e+ \|\na \rho\|_{L^q})
\le C\left(1+\|\n\dot u\|_{L^q} \right) \log(e+ \|\na \rho\|_{L^q}).
\ea\ee 
Furthermore, using (\ref{bkj501}) and H\"{o}lder's inequality, one derives
\be\la{1h19}\ba
\| \rho \dot u\|_{L^q} 
& \le C\| \rho \dot u\|_{L^2}^{2(q-1)/(q^2-2)}
\| \n \dot{u} \|_{L^{q^2}}^{q(q-2)/(q^2-2)} \\ 
& \le C\| \rho \dot u\|_{L^2}^{2(q-1)/(q^2-2)} 
\left( \| \sqrt{\n} \dot u\|_{L^2}+\| \na \dot u\|_{L^2} \right)^{q(q-2)/(q^2-2)} \\ 
& \le C\| \sqrt{\n}  \dot u\|_{L^2} + C\| \sqrt{\n} \dot u\|_{L^2}^{2(q-1)/(q^2-2)}
\|\na \dot u\|_{L^2}^{q(q-2)/(q^2-2)},
\ea\ee
which together with (\ref{1h12}) implies
\be\la{1h110} \ba
& \int_0^T \left( \|\rho \dot u\|^{1+1 /q}_{L^q}+t\| \n \dot u\|^2_{L^q} \right) dt \\
& \le C+C \int_0^T\left( \| \sqrt{\n}  \dot u\|_{L^2}^2 +  t\|\na \dot u\|_{L^2}^2+ 
t^{-(q^3-q^2-2q)/(q^3-q^2-2q+2)} \right)dt \\ 
& \le C.
\ea\ee
Applying Gr\"onwall's inequality to (\ref{1h18}), we obtain after using (\ref{1h110}) that
\be\la{1h111}\ba
\sup\limits_{0\le t\le T} \|\nabla
\rho\|_{L^q}\le  C.
\ea\ee
This, combined with (\ref{1h12}), (\ref{1h15}), (\ref{1h110}), and (\ref{1h111}), gives
\be\la{1h112}\ba 
\int_0^T \left( \|\nabla^2 u\|^{(q+1)/q}_{L^q}+t \|\nabla^2 u\|_{L^q}^2 \right) dt \le C.
\ea\ee

In addition, choosing $p=2$ in (\ref{1h14}) and using (\ref{1h11}), (\ref{1h12}), (\ref{1h110}), and Gr\"onwall's inequality, we arrive at
\be\la{1h113}\ba
\sup\limits_{0\le t\le T} \left( \|\nabla \rho\|_{L^2} + t \| \na^2 u \|^2_{L^2} \right)
+ \int_0^T \| \na^2 u \|^2_{L^2} dt
\le C.
\ea\ee
Combining (\ref{1h113}), (\ref{1h111}), (\ref{1h112}), and (\ref{1h11}) yields (\ref{1h01}), thereby completing the proof of Lemma \ref{1hl1}.
\end{proof}

\begin{lemma}\la{1hl2}
There exists a positive constant $C$ depending only on
$T$, $q$, $\ga$, $\mu$, $\lam$, $E_0$, $N_0$, $\|\n_0\|_{L^\infty}$, $\| \na u_0 \|_{L^2}$, $\| {\bar{x}}^a \rho_0 \|_{L^1}$, and $\|\na(\bar x^a\n_0)\|_{L^2\cap L^q}$ such that
\be\la{1h02}\ba
&\sup_{0\le t\le T} \|  \bar x^a\n \|_{L^1\cap H^1\cap W^{1,q}}  \le C.
\ea\ee
\end{lemma}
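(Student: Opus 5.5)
We follow the standard weighted-density argument of \cite{LX2,LLL}. We first bound $\|\bar x^a\n\|_{L^1}$, then $\|\na(\bar x^a\n)\|_{L^p}$ for $p\in\{2,q\}$. The $L^2$ and $L^q$ norms of $\bar x^a\n$ itself then follow by interpolation, since $\n$ is bounded. The key quantitative input is the weighted bound on $u$ coming from Lemma \ref{WPE}, applied after even extension to $\rr$.

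\emph{Step 1: $L^1$ bound.} Multiply $(\ref{ns})_1$ by $\bar x^a$ and integrate over $\rr_+$. The boundary term vanishes because $u\cdot n=0$. Using $|\na\bar x^a|\le C\bar x^{a}\,\bar x^{-1}\log^2(e+|x|^2)\le C\bar x^{a-1+\de}$, we get
\[
\frac{d}{dt}\|\bar x^a\n\|_{L^1}\le C\int \n\bar x^{a-1}\log^{2}(e+|x|^2)|u|\,dx .
\]
We split this integrand by H\"older as $(\n\bar x^a)^{1-1/r}\cdot \n^{1/r}\bar x^{a/r-1}\log^2(e+|x|^2)\,|u|$. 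For the second factor we use the even extension of $u$ (which lies in $D^1(\rr)$) together with Lemma \ref{WPE}. Choosing $r$ large, the weight $\bar x^{a/r-1}\log^2$ is dominated by $(e+|x|^2)^{-1/(2r')}\log^{-\theta}$ in the $L^{r}$-sense; this works because $a/r$ is small, so the bound holds up to lower-order factors.

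The local term $\|u\|_{L^2(B_1^+)}$ is then controlled by Lemma \ref{pt}, using \eqref{bkj53} and \eqref{pro101}, by $C(1+t)(\|\sqrt\n u\|_{L^2}+\|\na u\|_{L^2})\le C(T)$. This gives $\frac{d}{dt}\|\bar x^a\n\|_{L^1}\le C(1+\|\bar x^a\n\|_{L^1})$, and Gr\"onwall's inequality yields the bound. Equivalently, we could invoke the known estimate $\|\bar x^{-1}u\|_{L^{r}}$-type bound that follows from Lemmas \ref{WPE} and \ref{pt}.

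\emph{Step 2: gradient bounds.} Set $w=\bar x^a\n$. It satisfies
\[
w_t+u\cdot\na w-w\,u\cdot\na\log\bar x^a+w\div u=0 .
\]
Differentiating and testing with $|\na w|^{p-2}\na w$ for $p\in\{2,q\}$, the boundary terms again vanish by $u\cdot n=0$, and we obtain
\[
\frac{d}{dt}\|\na w\|_{L^p}\le C(1+\|\na u\|_{L^\infty}+\|u\cdot\na\log\bar x\|_{L^\infty})\|\na w\|_{L^p}+C\|w\|_{L^\infty}\|\,|\na u|\,|\na\log\bar x|+|u||\na^2\log\bar x|\,\|_{L^p}+C\|w\|_{L^\infty}\|\na^2u\|_{L^p}.
\]
The needed ingredients are already in hand. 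From Lemma \ref{1hl1} (via \eqref{1h17} and \eqref{1h110}) we have $\int_0^T\|\na u\|_{L^\infty}dt\le C$, together with $\int_0^T\|\na^2u\|_{L^p}^{(q+1)/q}dt\le C$. The velocity terms satisfy $|u|\,\bar x^{-1+\de}\in L^\infty(0,T;L^\infty)$ up to $\log$ factors, by Gagliardo--Nirenberg applied to $\bar x^{-1/2}u$ and the weighted bounds from Step 1. Finally, $\|w\|_{L^\infty}$ is controlled by interpolating $\|w\|_{L^1}$ and $\|\na w\|_{L^q}$ through \eqref{gn12}, which enters only sublinearly. Gr\"onwall's inequality then closes the estimate.

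\emph{Main obstacle.} The delicate point is the coefficient $\|u\cdot\na\log\bar x\|_{L^\infty}$ and the related term $\|\,|u|\,|\na^2\log\bar x|\,\|_{L^p}$. Since $u$ itself is not integrable in the vacuum case, these must be handled through $\bar x^{-1}u$. We plan to bound $\|\bar x^{-\de}u\|_{L^\infty}\le C(\|\bar x^{-1}u\|_{L^4}+\|\na u\|_{L^4}+\|\na^2 u\|_{L^q})$, a bound that is integrable in time by Lemma \ref{1hl1}.
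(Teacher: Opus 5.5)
Your Step 2 is the paper's argument: the transport equation for $\bar x^a\rho$, the bound $\|u\bar x^{-\varepsilon}\|_{L^\infty}\le C+C\|\nabla u\|_{L^q}$ from \eqref{bkjjq} (integrable in time by Lemma \ref{1hl1}), and Gr\"onwall first with $p=q$ and then with $p=2$.

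The real difference is in Step 1. The paper applies Cauchy--Schwarz against the kinetic energy,
\[
\frac{d}{dt}\int\rho\bar x^a\,dx\le C\Big(\int\rho\,\bar x^{2a-2}\log^4(e+|x|^2)\,dx\Big)^{1/2}\Big(\int\rho|u|^2dx\Big)^{1/2}\le C\Big(\int\rho\bar x^a\,dx\Big)^{1/2},
\]
using only \eqref{bkj01}. This needs neither a Poincar\'e inequality nor Lemma \ref{WPE}. However, the last inequality requires $\bar x^{a-2}\log^4(e+|x|^2)$ to be bounded, so it tacitly takes $a<2$.

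Your H\"older splitting uses more machinery: Lemma \ref{WPE} after even extension, and Lemma \ref{pt} on the fixed half-ball $B^+_{N_1(1+T)}$, which is legitimate by \eqref{bkj53}. In exchange it works for every $a>1$. The precise requirement is
\[
\int\bar x^{a-r}\log^{2r}(e+|x|^2)|u|^r\,dx\le C\big(\|u\|_{L^2(B_1^+)}+\|\nabla u\|_{L^2}\big)^r,
\]
which holds once $r>a+2$; the weight $(e+|x|^2)^{-1/(2r')}$ you wrote is not the right comparison.

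Three minor corrections:
\begin{enumerate}
\item $|u|\bar x^{-1+\delta}$ is not in $L^\infty(0,T;L^\infty)$ in general, since $u_0$ is only in $\tilde D^1_+$ and need not be locally bounded. Rely only on the time-integrable bound from your last paragraph, which is all Gr\"onwall needs.
\item That bound must also contain a zeroth-order term such as $\|\bar x^{-\delta}u\|_{L^{q/\delta}}$ (controlled by \eqref{bkjjq}), exactly as in \eqref{1h23}. Without it, the inequality fails for small $\delta$ on bumps of $u$ located far from the origin.
\item The $L^2$ and $L^q$ norms of $\bar x^a\rho$ come from interpolating $\|\bar x^a\rho\|_{L^1}$ with $\|\bar x^a\rho\|_{L^\infty}$, the latter obtained from \eqref{gn12}. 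They do not follow from the boundedness of $\rho$ alone.
\end{enumerate}
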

\begin{proof}
First, multiplying $(\ref{ns})_1$ by ${\bar{x}}^a$, integrating by parts over $\rr_+$, and using (\ref{bkj01}), we arrive at
\be\la{1h21}\ba
\frac{d}{dt} \int \n {\bar{x}}^a dx 
& \le C \int \n |u| {\bar{x}}^{a-1} \log^2(e+|x|^2) dx \\
& \le C \left( \int \n {\bar{x}}^{2a-2} \log^4(e+|x|^2) dx \right)^{\frac{1}{2}}
\left( \int \n |u|^2 dx \right)^{\frac{1}{2}} \\
& \le C \left( \int \n {\bar{x}}^a dx \right)^{\frac{1}{2}},
\ea\ee
which together with Gr\"onwall's inequality shows
\be\ba\la{1h22}
\sup_{0\leq t\leq T} \int \n {\bar{x}}^a dx \leq C.
\ea\ee

Moreover, for any $v \in D^1(\rr_+)$, using Lemma \ref{WPE} and extending $v$ evenly to $\rr$, we can obtain for $m \in [2,\infty)$ and $\theta \in (1+\frac{m}{2},\infty)$,
\be\la{bkjjq}\ba
\left(\int_{{\mathbb{R}^2_+}} \frac{|v|^m}{e+|x|^2}(\log (e+|x|^2))^{-\theta }dx \right)^{\frac1m}
\le C \| v \|_{L^2(B^+_1)} + C \| \nabla v \|_{L^2({\mathbb{R}^2_+ })},
\ea\ee
where $C$ is a generic positive constant.

By the Gagliardo-Nirenberg inequality, (\ref{bkjjq}), (\ref{bkj01}), and (\ref{1h11}), we derive that for any $\ve\in(0,1]$,
\be\la{1h23}\ba
\| u \bar{x}^{-\ep} \|_{L^\infty}
& \le C \| u \bar{x}^{-\ep} \|_{L^{q/\ep}} + C \| \na( u \bar{x}^{-\ep} ) \|_{L^q} \\
& \le C + C \| \na u \|_{L^q} + C \| u \bar{x}^{-1/2} \|_{L^q} \\
& \le C + C \| \na u \|_{L^q}.
\ea\ee
Let $ v\triangleq\n\bar x^a$.
Then from $(\ref{ns})_1$ we deduce that $v$ satisfies
\be\nonumber\ba
v_t+u\cdot\na v-a vu\cdot\na \log \bar x+v\div u=0,
\ea\ee
which together with (\ref{1h23}) and H\"older's inequality yields that for any $p\in [2,q]$,
\be\la{1h24}\ba 
\frac{d}{dt} \|\na v\|_{L^p}
& \le C(1+\|\na u\|_{L^\infty}+\|u\cdot \na \log \bar x\|_{L^\infty}) \|\na v\|_{L^p} \\
&\quad +C\|v\|_{L^\infty}\left( \||\na u||\na\log \bar x|\|_{L^p}+\||  u||\na^2\log \bar x|\|_{L^p}+\| \na^2 u \|_{L^p}\right)\\
& \le C(1 +\|\na u\|_{W^{1,q}})  \|\na v\|_{L^p} \\
& \quad + C \|v\|_{L^\infty} \left( \|\na u\|_{L^p} 
+ \| u \bar x^{-2/5} \|_{L^{4p}} \|\bar x^{-3/2}\|_{L^{4p/3}} + \|\na^2 u\|_{L^p}\right) \\
& \le C(1 +\|\na^2u\|_{L^p}+\|\na u\|_{W^{1,q}})(1+ \|\na v\|_{L^p}+\|\na v\|_{L^q}). \ea\ee
Choosing $p=q$ in (\ref{1h24}) and using (\ref{1h01}) and Gr\"onwall's inequality, one obtains
\be\la{1h25}\ba
\sup\limits_{0\le t\le T}\|\na (\n \bar x^a)\|_{L^q} \le C.
\ea\ee

Moreover, choosing $p=2$ in (\ref{1h24}), we obtain after using (\ref{1h01}), (\ref{1h25}), and Gr\"onwall's inequality that
\bnn
\sup\limits_{0\le t\le T}\|\na(\n \bar x^a)\|_{L^2 } \le C.
\enn
This, combined with (\ref{1h25}) and (\ref{1h22}), gives (\ref{1h02}) and completes the proof of Lemma \ref{1hl2}.
\end{proof}

\begin{lemma}\la{1hl3}
There exists a positive constant $C $ depending only on
$T$, $q$, $\ga$, $\mu$, $\lam$, $E_0$, $N_0$, $\|\n_0\|_{L^\infty}$, $\| \na u_0 \|_{L^2}$, $\| {\bar{x}}^a \rho_0 \|_{L^1}$, and $\|\na(\bar x^a\n_0)\|_{L^2\cap L^q}$ such that
\be\la{1h03}\ba
\sup_{0\leq t\leq T} t \| \sqrt{\n} u_t\|^2_{L^2} + \int_0^T t \|\na u_t\|_{L^2}^2 dt \le C.
\ea\ee
\end{lemma}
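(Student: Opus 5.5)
The plan is to transfer the weighted bounds on the material derivative already in hand, namely (\ref{1h12}) together with (\ref{1h01}) and (\ref{1h02}), to $u_t$ via the identity $u_t=\dot u-u\cdot\na u$. The only difficulty is that for $\tilde\n=0$ the velocity $u$ itself is not in $L^2$. This is handled with the weight $\bar x$, exactly as in (\ref{1h23}) and (\ref{bkjjq}).

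\textbf{The $\sqrt{\n}u_t$ term.} We write
\[
\|\sqrt{\n}u_t\|_{L^2}\le \|\sqrt{\n}\dot u\|_{L^2}+\|\sqrt{\n}\,u\cdot\na u\|_{L^2}.
\]
The first term is controlled by (\ref{1h12}) after multiplying by $t$, since $t\le C(T)\si$ on $[0,T]$. For the second term we use
\[
\|\sqrt{\n}\,u\cdot\na u\|_{L^2}\le \|\n^{1/2}\bar x^{\ve}\|_{L^4}\,\|u\bar x^{-\ve}\|_{L^\infty}\,\|\na u\|_{L^4}.
\]
Taking $\ve$ small compared with $a$, the factor $\|\n^{1/2}\bar x^{\ve}\|_{L^4}$ is bounded by (\ref{1h02}) and the $L^\infty$ bound on $\n$. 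The factor $\|u\bar x^{-\ve}\|_{L^\infty}$ is at most $C+C\|\na u\|_{L^q}$ by (\ref{1h23}). Finally, $\|\na u\|_{L^q}+\|\na u\|_{L^4}\le C\|\na u\|_{H^1}$. Hence
\[
t\|\sqrt{\n}\,u\cdot\na u\|_{L^2}^2\le C\,t\left(1+\|\na^2u\|_{L^2}^4\right),
\]
and $\sup_t t\|\na^2u\|^2_{L^2}\le C$ by (\ref{1h01}).

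This is where I expect trouble: $t\|\na^2 u\|_{L^2}^4$ is only bounded by $C t^{-1}$. To avoid this, I would not bound the full $H^1$ norm but instead interpolate, using $\|\na u\|_{L^4}^2\le C\|\na u\|_{L^2}\|\na u\|_{H^1}$ and $\|\na u\|_{L^q}$ in the same way. The product then has total power $1+$(something less than 1) in $\|\na^2u\|$. If that is still insufficient, I would fall back on the elliptic bound (\ref{1h15}), namely $\|\na^2u\|_{L^2}\le C\|\n\dot u\|_{L^2}+C\|\na\n\|_{L^2}$. With $\|\na\n\|_{L^2}\le C$ by (\ref{1h01}), this gives $\|\na^2u\|_{L^2}\le C(1+\|\sqrt\n\dot u\|_{L^2})$, and then $t\|\sqrt\n\dot u\|^2_{L^2}\le C$ closes the estimate, with lower powers absorbed by Young's inequality.

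\textbf{The $\na u_t$ term.} Similarly,
\[
\na u_t=\na\dot u-\na u\cdot\na u-u\cdot\na^2u .
\]
The term $\int_0^T t\|\na\dot u\|_{L^2}^2\,dt$ is bounded by (\ref{1h12}). The quadratic term satisfies $t\|\na u\|_{L^4}^4\le C t\|\na u\|^2_{L^2}\|\na u\|^2_{H^1}$, which is integrable by (\ref{1h01}). For the convective term we use
\[
\|u\cdot\na^2u\|_{L^2}\le \|u\bar x^{-\ve}\|_{L^\infty}\,\|\bar x^{\ve}\na^2u\|_{L^2}.
\]
This weighted product is the main obstacle: $\na^2u$ carries no weight information. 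I would therefore avoid this splitting. Instead I would exploit the boundary conditions and use the div-curl estimate (\ref{dc1}) for $u_t$, which satisfies $u_t\cdot n=0$ and $\curl u_t=-Au_t\cdot n^\bot$ on $\p\rr_+$ (here $A=0$). This gives
\[
\|\na u_t\|_{L^2}\le C\left(\|\div u_t\|_{L^2}+\|\curl u_t\|_{L^2}\right).
\]
We then write $\div u_t=\div\dot u-\div(u\cdot\na u)$ and expand
\[
\div(u\cdot\na u)=\p_iu^j\p_ju^i+u\cdot\na\div u,
\]
and similarly for $\curl$. The dangerous terms $u\cdot\na\div u$ and $u\cdot\na\o$ are then handled through $G$ and $P$, via
\[
\nu\,\na\div u=\na G+\na P.
\]
Here $\|u\bar x^{-\ve}\|_{L^\infty}\|\bar x^{\ve}\na P\|_{L^2}$ is controlled through (\ref{1h02}), since $\bar x^{\ve}\na\n$ is bounded by $\na(\bar x^a\n)$ plus lower order terms.

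The remaining piece, $u\cdot\na G$ and $u\cdot\na\o$, is the step I expect to be hardest. My first attempt would be to show that it suffices to bound $u\cdot\na u_t$-type products in $L^2$ against $\sqrt\n$. If that fails, I would instead derive the estimate directly. This means differentiating the momentum equation in time, testing with $u_t$ in the form $\n u_{tt}+\n u\cdot\na u_t-\mu\Delta u_t-(\mu+\lam)\na\div u_t+\na P_t=-\n_t(u_t+u\cdot\na u)-\n u_t\cdot\na u$, multiplying by $t$, and closing with Gr\"onwall's inequality. The terms involving $\n_t=-\div(\n u)$ are integrated by parts onto $u_t$, and the weights are used exactly as in (\ref{1h23}) and (\ref{1h02}).
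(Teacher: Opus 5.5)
\textbf{Gap in the main route.} Transferring the $\dot u$-bounds to $u_t$ through $u_t=\dot u-u\cdot\na u$ does not close for the gradient part. Since $\na u_t=\na\dot u-\na u\cdot\na u-u\cdot\na^2u$, you would need $\int_0^T t\|u\cdot\na^2u\|_{L^2}^2dt\le C$. For $\tilde\n=0$ that would need either $u\in L^\infty$ or a spatially weighted bound such as $\bar x^{\ve}\na^2u\in L^2$. You only have $u\bar x^{-\ve}\in L^\infty$ from (\ref{1h23}), and Lemma \ref{1hl1} puts no weight on $\na^2u$.

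Rewriting via $\div u_t$, $\curl u_t$ and $\nu\na\div u=\na G+\na P$ removes only the pressure part. The terms $u\cdot\na G$ and $u\cdot\na\o$ are the same obstruction, because (\ref{bkj23}) controls $\na G,\na\o$ only by the unweighted $\|\n\dot u\|_{L^p}$.

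There is also a smaller flaw in your $\sqrt\n u_t$ estimate. Using (\ref{1h23}) with exponent $q$ produces the bound $t\|\na^2u\|_{L^2}^{3-4/q}=O(t^{-1/2+2/q})$. This is not bounded near $t=0$ when $q>4$, and neither (\ref{1h15}) nor Young's inequality can handle a power above $2$. It is simpler to write $\|\sqrt\n\,u\cdot\na u\|_{L^2}\le\|\sqrt\n u\|_{L^6}\|\na u\|_{L^3}$, with $\|\sqrt\n u\|_{L^6}\le C$ by (\ref{bkj501}). This gives $\|\sqrt\n u_t\|_{L^2}^2\le C(\|\sqrt\n\dot u\|_{L^2}^2+\|\na^2u\|_{L^2}^2+1)$.

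\textbf{The fallback is the right route.} Your fallback (time-differentiate the momentum equation, test with $u_t$, weight by $t$) is exactly the paper's proof. It works for a structural reason you should state: in the $u_t$-energy identity every occurrence of $u$ is multiplied by $\n$, or, inside $P_t=-u\cdot\na P-\ga P\div u$, by $\na\n$. So the growth of $u$ is absorbed by the density. The ingredients you still need to supply are:
\begin{enumerate}
\item The bound $\|\n^\eta u\|_{L^{s/\eta}}+\|u\bar x^{-\eta}\|_{L^{s/\eta}}\le C$, from (\ref{bkj501}), (\ref{bkjjq}) and (\ref{1h01}).
\item The bound $\|\sqrt\n u_t\|_{L^6}\le C(\|\sqrt\n u_t\|_{L^2}+\|\na u_t\|_{L^2})$, i.e.\ (\ref{bkj501}) applied to $v=u_t$. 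This controls $\int\n|u||u_t||\na u_t|$, $\int\n|u|^2|u_t||\na^2u|$ and $\int\n|u_t|^2|\na u|$ up to $\de\|\na u_t\|_{L^2}^2$.
\item The bound $\|P_t\|_{L^2}\le C\|u\bar x^{-a}\|_{L^{2q/(q-2)}}\|\bar x^a\na\n\|_{L^q}+C\|\na u\|_{L^2}\le C$, which handles $\int P_t\div u_t$. This is where (\ref{1h02}) is genuinely used.
\end{enumerate}
Together these give $\frac{d}{dt}\|\sqrt\n u_t\|_{L^2}^2+\mu\|\na u_t\|_{L^2}^2\le C(\|\na^2u\|_{L^2}^2+\|\sqrt\n\dot u\|_{L^2}^2+1)$. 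After multiplying by $t$, the extra term $\|\sqrt\n u_t\|_{L^2}^2$ is handled by the bound above. Everything on the right is then integrable by (\ref{1h01}) and (\ref{1h11}), so no Gr\"onwall argument is needed.
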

\begin{proof}
First, we conclude from (\ref{bkjjq}), (\ref{bkj501}) and (\ref{1h01}) that for any $\eta\in(0,1]$ and any $s>2$,
\be\la{h31}\ba
\|\n^\eta u \|_{L^{s/\eta}}+ \|u\bar x^{-\eta}\|_{L^{s/\eta}}\le C(\eta,s).
\ea\ee
Differentiating $(\ref{ns})_2$ with respect to $t$ leads to
\be\la{h32}\ba
& \n u_{tt}+\n u\cdot \na u_t-\mu\Delta u_t-( \mu+\lm)\na  \div u_t  \\ 
&= -\n_t(u_t+u\cdot\na u)-\n u_t\cdot\na u -\na P_t.
\ea\ee
Multiplying (\ref{h32}) by $u_t$, integrating by parts over $\rr_+$, and using $(\ref{ns})_1$, we derive
\be\la{h33}\ba
& \frac{1}{2}\frac{d}{dt} \int \n |u_t|^2dx+\int \left(\mu|\na u_t|^2+( \mu+\lm)(\div u_t)^2  \right)dx \\
& =-2\int \n u \cdot \na  u_t\cdot u_tdx  -\int \n u \cdot\na (u\cdot\na u\cdot u_t)dx\\
& \quad-\int \n u_t \cdot\na u \cdot  u_tdx
+\int P_{t}\div u_{t} dx \\
& \quad \triangleq I_1+I_2+I_3+I_4.
\ea\ee
It follows from (\ref{h31}), (\ref{1h01}), (\ref{gn11}), and H\"older's inequality that
\be\la{h34}\ba
I_1+I_2 
& \le C \int  \n |u| \left( |\na u_t| |u_t| + |\na u|^2 |u_t| + |u| |u_t| |\na^2 u| + |u| |\na u| |\na u_t| \right) dx \\
& \le C \| \sqrt{\n} u\|_{L^{6}}\| \sqrt{\n} u_{t} \|_{L^{2}}^{1/2} \| \sqrt{\n} u_{t}\|_{L^{6}}^{1/2}\left(\| \na u_{t}\|_{L^{2}}+\| \na u\|_{L^{4}}^{2} \right) \\
&\quad + C\|\n^{1/4} u \|_{L^{12}}^{2}\| \sqrt{\n} u_{t}\|_{L^{2}}^{1/2} \| \sqrt{\n} u_{t}\|_{L^{6}}^{1/2} \| \na^{2} u \|_{L^{2}} 
+ C \| \sqrt{\n} u\|_{L^{8}}^{2}\|\na u\|_{L^{4}} \| \na u_{t}\|_{L^{2}} \\
& \le C  \| \sqrt{\n} u_{t}\|_{L^{2}}^{1/2}\left( \| \sqrt{\n} u_{t}\|_{L^{2}} +\| \na u_{t}\|_{L^{2}}\right)^{1/2}\left( \| \na u_{t}\|_{L^{2}} +  \| \na^{2} u \|_{L^{2}}+1\right) \\
& \quad + C \| \na u_{t}\|_{L^{2}} \left( 1 +  \| \na^{2} u \|_{L^{2}} \right) \\
&\le \de \| \na u_{t}\|_{L^{2}}^{2}+C(\de)  \left(\| \na^{2} u \|_{L^{2}}^{2} + \|\sqrt{\n} u_{t}\|_{L^{2}}^{2}+1\right),
\ea\ee
where in the third inequality, we have used the following estimate:
\be\la{h35}\ba
\| \sqrt{\n} u_t\|_{L^6} \le C \| \sqrt{\n} u_t\|_{L^2}+C \|\na u_t\|_{L^2},
\ea\ee
owing to (\ref{bkj501}) and (\ref{bkj53}).

Using (\ref{h35}), (\ref{1h01}) and Young's inequality, we arrive at
\be\la{h36}\ba
I_3 + I_4 & \le C \int \n |u_t|^{2}|\na u | + |P_t| |\na u_t| dx \\
& \le C \| \na u\|_{L^{2}} \| \sqrt{\n} u_{t}\|_{L^{6}}^{3/2}
\| \sqrt{\n} u_{t}\|_{L^{2}}^{1/2} + \| P_t \|_{L^2} \| \na u_t \|_{L^2} \\
& \le \de \| \na u_{t}\|_{L^{2}}^{2}+C(\de) \left(\| \na^{2} u \|_{L^{2}}^{2} +
\| \sqrt{\n} u_{t}\|_{L^{2}}^{2}+1\right),
\ea\ee
where in the last inequality we have used the following fact:
\be\la{h37}\ba 
\|P_t\|_{L^2 } &\le C\|\bar x^{-a} u\|_{L^{2q/(q-2)}}\|\n\|_{L^\infty}^{\ga-1}\|\bar x^a \na \n\|_{  L^q}+C\|\na u\|_{L^2 }\le C,
\ea\ee
due to $(\ref{ns})_1$, (\ref{h31}) and (\ref{1h01}).

In addition, by (\ref{gn11}), (\ref{h31}), and H\"older's inequality, one has
\be\la{h38}\ba
\| \sqrt{\n} u_t \|^2_{L^2} 
& \le C \left( \| \sqrt{\n} \dot{u} \|^2_{L^2}
+ \| \sqrt{\n} u \cdot \na u \|^2_{L^2} \right) \\
& \le C \left( \| \sqrt{\n} \dot{u} \|^2_{L^2}
+ \| \sqrt{\n} u \|^2_{L^6} \| \na u \|^2_{L^3} \right) \\
& \le C \left( \| \sqrt{\n} \dot{u} \|^2_{L^2}
+ \| \na^2 u \|^2_{L^2} \right).
\ea\ee

Putting (\ref{h34}) and (\ref{h36}) into (\ref{h33}) and choosing $\de$ sufficiently small, we obtain after using (\ref{h38}) that
\be\la{h39}\ba
\frac{d}{dt} \int \n |u_t|^2dx+\mu\int   |\na u_t|^2 dx
\le C \left(\| \na^{2} u \|_{L^{2}}^{2} + \| \sqrt{\n} \dot{u} \|^2_{L^2} + 1 \right).
\ea\ee
Multiplying (\ref{h39}) by $t$ yields
\be\la{h310}\ba
\frac{d}{dt} \left( t \int \n |u_t|^2 dx \right) + \mu t \int |\na u_t|^2 dx
\le C \left(\| \na^{2} u \|_{L^{2}}^{2} + \| \sqrt{\n} \dot{u} \|^2_{L^2} + 1 \right).
\ea\ee
Integrating (\ref{h310}) over $(0,T)$ and using (\ref{1h01}) and (\ref{1h11}), we get (\ref{1h03}) and complete the proof of Lemma \ref{1hl3}.
\end{proof}

\section{A Priori Estimates for Non-vacuum Far-Field Density}

In this section, for $\tilde{\n}>0$, we will establish some necessary a priori bounds for local strong solutions $(\n,u)$ to the problem (\ref{ns})--(\ref{bjtj2}), whose existence is guaranteed by Lemma \ref{lct}.
Thus, let $T>0$ be a fixed time and $(\n,u)$ be a strong solution to (\ref{ns})--(\ref{bjtj2}) on $\rr_+ \times (0,T]$ with initial data $(\n_0,u_0)$ satisfying (\ref{wsol1}) and $(\ref{wsol01})_2$.

\subsection{A Priori Estimates (I): Lower Order Estimates}

Define
\be\ba\nonumber
E(T) \triangleq \sup_{0 \le t \le T} \si \left( \| \na u \|^2_{L^2} + \nu \| \div u \|^2_{L^2} \right) + \int_0^T \si \| \sqrt{\n} \dot{u} \|^2_{L^2} dt.
\ea\ee

In this subsection, we derive the following key a priori estimates on $(\n,u)$.
\begin{proposition}\label{pro2}
There are two generic positive constants $\nu_2$ and $\mathbf{C_1}$ depending only on
$A$, $\ga$, $\mu$, $E_0$, $\| \n_0 \|_{L^\infty}$, $\| \na u_0 \|_{L^2}$, and $\tilde{\n}$ such that if $(\rho,u)$ is a strong solution to \eqref{ns}--\eqref{bjtj2} on $\rr_+ \times(0,T]$ satisfying
\be\la{pro201}\ba
\sup_{0\leq t\leq T} \|\n\|_{L^\infty} \leq 2 \left( 1 + \tilde{\n} + \| \n_0 \|_{L^\infty} \right), \quad
E(T) \le 2 \mathbf{C_1},
\ea\ee
the following estimates hold:
\be\la{pro202}\ba
\sup_{0\leq t\leq T} \|\n\|_{L^\infty} \leq \frac{3}{2} \left( 1 + \tilde{\n} + \| \n_0 \|_{L^\infty} \right), \quad
E(T) \le \mathbf{C_1},
\ea\ee
provided that $\nu \ge \nu_2$.
\end{proposition}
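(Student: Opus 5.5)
The plan is a continuity (bootstrap) argument parallel to Proposition \ref{pro1}, split into a short-time part on $(0,\si(T))$ and a long-time part on $(\si(T),T)$. The basic energy estimate gives, as in Lemma \ref{bkjl1}, $\sup_t\int(\n|u|^2+H(\n))dx+\int_0^T(\mu\|\na u\|_{L^2}^2+\nu\|\div u\|_{L^2}^2)dt\le C(E_0)$. The boundary term $A\int_{\p\rr_+}|u|^2ds$ has a good sign because $A\ge0$. By (\ref{qkjsn}) and (\ref{pro201}), $\|P-P(\tilde\n)\|_{L^2}\le C$ uniformly in time. This replaces the $\int_0^T\nu^{-1}\|P\|_{L^2}^2$ bound, which is unavailable here.

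Next I redo the computation (\ref{bkj213}) with $P$ replaced by $P-P(\tilde\n)$. The elliptic problem (\ref{bkj22}) and the bounds (\ref{bkj23})--(\ref{bkj28}) persist. Now $\curl u=-Au^1$ on the boundary, so the $\o$-equation produces extra boundary terms. I would handle these by writing $\o+Au^1$, which vanishes on $\p\rr_+$, and estimating the commutator terms via trace inequalities and $\|u\|_{L^2}\le C$ (available since $\n\ge$? is not assumed, so $\|u\|_{L^2}$ must come from the $\tilde H^1$ framework). This is where the dependence on $A$ enters. The terms $I_3$--$I_5$ are controlled using $\|P-P(\tilde\n)\|_{L^2}\le C$ in place of (\ref{bkj212}).

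Multiplying by $\si$ and applying Gr\"onwall on $(0,1)$ with $\int_0^1\|\na u\|_{L^2}^2\le C$ gives $E(\min\{1,T\})\le C_*$, with $C_*$ independent of $\nu$. Since (\ref{pro201}) is assumed, I set $\mathbf{C_1}=2C_*+C$. For $t\ge1$, $E$ only involves unweighted quantities. In (\ref{bkj219}) the forcing $C\|\na u\|_{L^2}^4$ is bounded by $C\|\na u\|_{L^2}^2\cdot E(T)$, and Gr\"onwall on $(1,T)$ uses $\int\|\na u\|^2\le C$. The constant, however, then depends on $E(T)$ exponentially. To close the argument I must instead exploit smallness: for $t\ge1$ the cross terms carry factors $\nu^{-1}$ or $\int_1^T\|\na u\|^2$. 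I would first try to show that the right-hand side is $\le C(E_0)(1+\nu^{-1/2}E(T)^2)$, so that $E(T)\le \mathbf{C_1}$ once $\nu$ is large. This closing step, without a time-uniform pressure dissipation, is the main obstacle. I would follow the $t$-weighted strategy of \cite{LeXi3,LZ2}, using Lemma \ref{hm} to bound the $G\na u^1\cdot\na^\bot u^2$ term by $\|\na G\|\|\na u\|^2$ without losing powers of $\nu$.

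Next I establish the $\n\dot u$ estimate (\ref{bkj401}) analogue with weights $\si^2$, plus $\int_1^T\|\na\dot u\|^2\le C$, by repeating Lemma \ref{bkjl4}. The Navier boundary term $\int_{\p}\dot u\cdot n^\bot\o_t$ is treated through $\o=-Au^1$, which gives a time derivative of a boundary energy plus trace terms.

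For the density I use Zlotnik's lemma exactly as in the proof of Proposition \ref{pro1}. The equation is now $\frac{d}{dt}\n=-\frac{\n}{\nu}(P-P(\tilde\n))-\frac{\n}{\nu}G$, where $g(\zeta)=-\zeta(\zeta^\ga-\tilde\n^\ga)/\nu\le-1/\nu$ for $\zeta\ge1+\tilde\n$. On $(0,\si(T))$, $|h|\le C\nu^{-5/8}$ via Gagliardo--Nirenberg and the $\si$-weighted $\na\dot u$ bounds. On $(\si,T)$, $\|G\|_{L^\infty}\le C\|G\|_{L^2}^{1/2}\|\na G\|_{L^4}^{1/2}$. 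Here $\|\na G\|_{L^4}\le C\|\dot u\|_{L^4}$ is controlled by $\|\dot u\|_{L^2}^{1/2}\|\na\dot u\|_{L^2}^{1/2}$, since $\n$ bounded below at infinity is not available. I therefore need $\|\dot u\|_{L^2}\le C(\|\sqrt\n\dot u\|+\|\na\dot u\|)$-type control, obtained through $\tilde\n>0$ and the bound $\|\n-\tilde\n\|_{L^2}\le C$, which lets one bound $\|\dot u\|_{L^2}$ on the set $\{\n<\tilde\n/2\}$ of bounded measure. This gives $h(t_2)-h(t_1)\le\nu^{-1}(t_2-t_1)+C\nu^{-\theta}$, and $\nu_2$ is chosen so that $C\nu^{-\theta}\le\frac12(1+\tilde\n+\|\n_0\|_{L^\infty})$.
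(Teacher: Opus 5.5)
Your proposal has a genuine gap at the step that carries the real content of Proposition \ref{pro2}: the $T$-uniform bound $E(T)\le\mathbf{C_1}$ on $(1,T)$ when $\tilde\rho>0$. You call this ``the main obstacle'', but then you only state a hope (a right-hand side $\le C(1+\nu^{-1/2}E(T)^2)$) and point to \cite{LeXi3,LZ2}; no mechanism is given.

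Your diagnosis of the difficulty is also off. The quartic term $C\|\nabla u\|_{L^2}^4$ in \eqref{bkj219} is harmless. Gronwall absorbs it with the factor $\exp(C\int_1^T\|\nabla u\|_{L^2}^2dt)$, which is bounded by the energy and does not involve $E(T)$.

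The real obstruction is that you keep the $\omega$--$G$ identity and control $I_3$--$I_5$ only through $\|P-P(\tilde\rho)\|_{L^2}\le C$. For example, $\frac{2\gamma}{\nu}\int PG\div u\,dx$ with $G=\nu\div u-(P-P(\tilde\rho))$ then leaves $C\nu^{-1}\|P-P(\tilde\rho)\|_{L^2}\|\div u\|_{L^2}$. After Young this is a forcing of size $\nu^{-2}\|P-P(\tilde\rho)\|_{L^2}^2$, which is where the $+C$ on the right of \eqref{2bkj212} comes from. It is bounded but not integrable on $(1,T)$. Without $L^1$-integrability of $\rho-\tilde\rho$ there is no analogue of $\int_0^T\nu^{-1}\|P\|_{L^2}^2dt\le C$ from \eqref{bkj022}. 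So your scheme gives a bound that grows with $T$, and the bootstrap cannot close.

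The paper avoids this by switching, for $t\ge1$, to the Hoff-type identity: multiply the momentum equation by $\dot u$, with the functional $B_1$ in \eqref{b1}.
\begin{itemize}
\item \emph{Pressure term.} Via the transport equation \eqref{2bkj34}, it becomes $\frac{d}{dt}\int\div u\,(P-P(\tilde\rho))dx+O(\|\nabla u\|_{L^2}^2)$. So $P-P(\tilde\rho)$ only enters multiplied by time-integrable quantities.
\item \emph{Boundary term.} It is the exact derivative $-\frac{\mu}{2}\frac{d}{dt}\int_{\partial\mathbb{R}^2_+}A|u|^2ds$, since $\omega=-Au^1$ and $u^2=0$ there (cf.\ \eqref{2bkj29}). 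No trace estimates are needed. In any case $\|u\|_{L^2}\le C$ is not available; only \eqref{pti} is.
\item \emph{Large-coefficient terms.} $(\mu+\lambda)\int(\div u)^3dx$ and $(\mu+\lambda)\int\div u\,\nabla u^1\cdot\nabla^\perp u^2dx$ are rewritten using $(\mu+\lambda)\div u=\frac{\mu+\lambda}{\nu}(G+P-P(\tilde\rho))$, whose prefactor is bounded.
\begin{itemize}
\item The second term is at most $C\|\nabla u\|_{L^2}^2+C\|\nabla G\|_{L^2}\|\nabla u\|_{L^2}^2$, by Lemma \ref{hm}.
\item The first is at most $C\nu^{-3}(\|G\|_{L^4}^4+\|P-P(\tilde\rho)\|_{L^4}^4)+C\nu\|\div u\|_{L^2}^2$.
\item Here $\nu^{-3}\|G\|_{L^4}^4$ is bounded by $C\nu^{-1}(1+\|\nabla u\|_{L^2}^2)\|\sqrt\rho\dot u\|_{L^2}^2$ plus harmless terms.
\item The $L^4$ pressure term is absorbed by adding a multiple of $\nu^{-2}\int(P-P(\tilde\rho))^3dx$ to the functional, using \eqref{2bkj316}.
\end{itemize}
\end{itemize}
Integrating \eqref{2bkj317} over $(1,t)$ and applying Gronwall gives $E(T)\le C+\frac{C}{\nu}(1+E(T))E(T)$. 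This closes under $E(T)\le2\mathbf{C_1}$ for large $\nu$.

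A smaller point concerns your density step. The inequality $\|G\|_{L^\infty}\le C\|G\|_{L^2}^{1/2}\|\nabla G\|_{L^4}^{1/2}$ is not scale-invariant in two dimensions; the exponents should be $1/3$ and $2/3$. The paper instead uses $\|G\|_{L^\infty}^4\le C\|G\|_{L^4}^2\|\nabla G\|_{L^4}^2$, with $\|G\|_{L^4}^2\le C\|G\|_{L^2}\|\nabla G\|_{L^2}\le C\nu^{1/2}$ on $(1,T)$. That again presupposes $\nu\|\div u\|_{L^2}^2\le C$, i.e.\ the missing bound on $E(T)$.
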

\begin{proof}
Proposition \ref{pro2} is an easy consequence of the following Lemmas \ref{2bkjl3} and \ref{2bkjl5}, with $\nu_2$ as in (\ref{2bkj511}) and $\mathbf{C_1}$ as in (\ref{2bkj324}).
\end{proof}

We begin with the following standard energy estimate.
\begin{lemma}\la{2bkjl1}
There exists a positive constant $C$ depending only on
$\ga$, $\mu$, and $E_0$ such that
\be\ba\la{2bkj01}
\sup_{0\leq t\leq T} \int \left( \frac{1}{2}\rho |u|^2 + H(\n) \right) dx
+ \int_0^T  \left(\|\na u\|^2_{L^2} + (\mu+\lam) \|\div u\|^2_{L^2} \right) dt \leq C.
\ea\ee
\end{lemma}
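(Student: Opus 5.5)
We plan to prove Lemma \ref{2bkjl1} by the standard energy method, as in Lemma \ref{bkjl1}. The only new ingredients are the non-vacuum far field, which requires the renormalized potential $H(\n)$, and the boundary term $A$ coming from the Navier-slip condition.

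\emph{Kinetic and potential energy.} We multiply $(\ref{ns})_2$ by $u$ and integrate over $\rr_+$. Using $(\ref{ns})_1$, the transport terms combine into $\frac{d}{dt}\int \frac12\n|u|^2dx$. For the pressure we use $\div(\n u)=-\n_t$ and the definition of $H$, which gives $\n H'(\n)-H(\n)=P(\n)-P(\tilde\n)$. Therefore $H(\n)_t+\div(H(\n)u)+(P-P(\tilde\n))\div u=0$, and we get
\[
\int \na P\cdot u\,dx=-\int (P-P(\tilde\n))\div u\,dx=\frac{d}{dt}\int H(\n)dx .
\]
The boundary integrals vanish because $u\cdot n=0$ on $\p\rr_+$, and the far-field terms vanish because $u\to0$ and $\n\to\tilde\n$.

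\emph{Viscous term.} We use the identity $-\Delta u=-\na\div u+\na^\bot\o$, so that
\[
-\mu\Delta u-(\mu+\lam)\na\div u=-\nu\na\div u+\mu\na^\bot\o .
\]
Integrating by parts with $u\cdot n=0$ gives $\int -\nu\na\div u\cdot u\,dx=\nu\|\div u\|_{L^2}^2$. The second term gives
\[
\mu\int\na^\bot\o\cdot u\,dx=\mu\|\o\|_{L^2}^2+\mu\int_{\p\rr_+}\o\,(u\cdot n^\bot)\,ds,
\]
up to a sign fixed by the orientation. The boundary condition $\o=-Au\cdot n^\bot$ turns this boundary term into $\mu A\int_{\p\rr_+}|u\cdot n^\bot|^2ds\ge0$. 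With the correct orientation this term has a favorable sign, and we must check that sign carefully; this is the only delicate point. We also need $\nu\ge\mu$ by (\ref{numu}).

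\emph{Conclusion.} We arrive at
\[
\frac{d}{dt}\int\Big(\frac12\n|u|^2+H(\n)\Big)dx+\nu\|\div u\|_{L^2}^2+\mu\|\o\|_{L^2}^2+\mu A\|u\cdot n^\bot\|^2_{L^2(\p\rr_+)}=0 .
\]
Lemma \ref{dc} with $k=0$, $p=2$ gives $\|\na u\|_{L^2}^2\le C(\|\div u\|_{L^2}^2+\|\o\|_{L^2}^2)$. Since $\nu=\mu+(\mu+\lam)\ge \mu$, we can split $\nu\|\div u\|^2_{L^2}$ and control $\|\na u\|_{L^2}^2+(\mu+\lam)\|\div u\|_{L^2}^2$ by $C(\mu)$ times the dissipation. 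Integrating in time then yields (\ref{2bkj01}), with a constant depending only on $\ga$, $\mu$, and $E_0$.

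The main obstacle is the sign and justification of the boundary term in the integration by parts of $\na^\bot\o\cdot u$, since the orientation of $n^\bot=(1,0)$ relative to $n=(0,-1)$ must be checked. The integrations by parts at infinity also need justification; for these we can use the regularity from Lemma \ref{lct} together with a cutoff argument, since $u\in H^1$ and $\n-\tilde\n\in L^2$.
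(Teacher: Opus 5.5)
Your plan is the paper's own argument: multiply $(\ref{ns})_2$ by $u$, handle the pressure through $\n H'(\n)-H(\n)=P(\n)-P(\tilde\n)$, and close with (\ref{dc1}) and $\nu\ge\mu$. The energy identity you finally write down is also correct. The trouble is the one step you flag as delicate. As written it contains two sign errors, and followed literally it gives the \emph{unfavorable} sign for the boundary term. So the favorable sign you assert is not actually derived.

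In two dimensions $\Delta u=\na\div u+\na^\bot\o$. Hence $-\mu\Delta u-(\mu+\lam)\na\div u=-\nu\na\div u-\mu\na^\bot\o$ (compare (\ref{2bkj32})), not $-\nu\na\div u+\mu\na^\bot\o$. There is also no orientation ambiguity, since $n=(0,-1)$ and $n^\bot=(1,0)$ are fixed. Integrating by parts gives
\[
\int\na^\bot\o\cdot u\,dx=\int_{\p\rr_+}\o\,(u\cdot n^\bot)\,ds-\|\o\|_{L^2}^2 ,
\]
exactly as in (\ref{2bkj36}), with $-\|\o\|_{L^2}^2$ rather than $+\|\o\|_{L^2}^2$. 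Your two formulas combined give $\mu\|\o\|_{L^2}^2+\mu\int_{\p\rr_+}\o\, u^1ds=\mu\|\o\|_{L^2}^2-\mu A\int_{\p\rr_+}|u^1|^2ds$. That boundary term has the bad sign, which contradicts the next sentence of your proposal.

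With both signs corrected, the viscous contribution is
\[
\nu\|\div u\|_{L^2}^2+\mu\|\o\|_{L^2}^2-\mu\int_{\p\rr_+}\o\,u^1ds
=\nu\|\div u\|_{L^2}^2+\mu\|\o\|_{L^2}^2+\mu A\int_{\p\rr_+}|u^1|^2ds .
\]
This is nonnegative because $A\ge 0$, and the rest of your argument goes through with a constant independent of $A$.

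A quicker check avoids the curl and (\ref{dc1}) entirely. We have $\int(-\Delta u)\cdot u\,dx=\|\na u\|_{L^2}^2+\int_{\p\rr_+}\p_2u\cdot u\,ds$. On $\p\rr_+$, $u^2=0$ and $\p_1u^2=0$, so $\p_2u^1=-\o=Au^1$. The boundary term therefore equals $A\int_{\p\rr_+}|u^1|^2ds\ge0$, and the dissipation $\mu\|\na u\|_{L^2}^2+(\mu+\lam)\|\div u\|_{L^2}^2$ comes out directly.
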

\begin{proof}
Multiplying $(\ref{ns})_2$ by $u$, integrating the resulting equation by parts over $\rr_+$, and using $(\ref{ns})_1$ and (\ref{dc1}), we arrive at (\ref{2bkj01}).
\end{proof}

\begin{lemma}\la{2bkjl2}
Let $(\n,u)$ be a strong solution to \eqref{ns}--\eqref{bjtj2} satisfying \eqref{pro201}.
Then there exists a positive constant $\tilde{C}$ depending only on
$A$, $\ga$, $\mu$, $E_0$, $\| \n_0 \|_{L^\infty}$, and $\tilde{\n}$ such that
\be\la{2bkj201}\ba
\sup_{0 \le t \le \si(T)} \left( \| \na u \|^2_{L^2} + \nu \| \div u \|^2_{L^2} \right)
+ \int_0^{\si(T)} \| \sqrt{\n} \dot{u}\|^2_{L^2} dt
\le \tilde{C} \left( 1 + \| \na u_0 \|^2_{L^2} + \nu \| \div u_0 \|^2_{L^2} \right),
\ea\ee
and
\be\la{2bkj202}\ba
\sup_{0 \le t \le \si(T)} t \left( \| \na u \|^2_{L^2} + \nu \| \div u \|^2_{L^2} \right)
+ \int_0^{\si(T)} t \| \sqrt{\n} \dot{u}\|^2_{L^2} dt \le \tilde{C}.
\ea\ee
\end{lemma}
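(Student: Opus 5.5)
The plan is to repeat the argument of Lemma \ref{bkjl2} on the short interval $(0,\si(T))$, with $G=\nu\div u-(P-P(\tilde\n))$. Two changes are needed. First, the boundary contributions produced by $A\ne0$ must be handled. Second, the time-uniform pressure bound \eqref{bkj212} is replaced by the energy bound $\|P-P(\tilde\n)\|_{L^2}\le C$, which is legitimate because the interval has length at most one.

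\textbf{Step 1 (auxiliary bounds).} By \eqref{qkjsn}, \eqref{2bkj01} and \eqref{pro201} we have $\sup_t\|P-P(\tilde\n)\|_{L^2\cap L^\infty}\le C$. Since the problem \eqref{bkj22} for $G$ is unchanged, \eqref{bkj24} and \eqref{bkj23} still hold, giving $\|\na G\|_{L^2}+\|\na\o\|_{L^2}\le C\|\sqrt\n\dot u\|_{L^2}$. Also $\frac1\nu\|G\|_{L^2}^2\le C\nu\|\div u\|_{L^2}^2+C\nu^{-1}$. The analogue of \eqref{bkj220} then reads
\[
\|\na u\|_{L^2}^2+\nu\|\div u\|_{L^2}^2\le C\Big(\mu\|\o\|_{L^2}^2+\tfrac1\nu\|G\|_{L^2}^2\Big)+C .
\]
To handle the boundary terms I need $\|u\|_{L^2}$. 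Writing $\tilde\n|u|^2=\n|u|^2+(\tilde\n-\n)|u|^2$ and using \eqref{gn11}, we get
\[
\tilde\n\|u\|_{L^2}^2\le C+C\|\n-\tilde\n\|_{L^2}\|u\|_{L^2}\|\na u\|_{L^2},
\]
hence $\|u\|_{L^2}\le C(1+\|\na u\|_{L^2})$.

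\textbf{Step 2 (energy identity with boundary term).} Multiply \eqref{bkj21} by $2\dot u$. The term $2\int\dot u\cdot\na G$ is treated exactly as before, since $\dot u\cdot n=0$ by \eqref{bkjbjds}. In the term $2\mu\int\dot u\cdot\na^\bot\o$, integration by parts leaves a boundary integral of $\o\,(\dot u\cdot n^\bot)$. Substituting $\o=-Au^1$ on $\p\rr_+$ and $\dot u^1=u^1_t+u^1\p_1u^1$ there (because $u^2=0$), this boundary integral becomes
\[
\mu A\frac{d}{dt}\int_{\p\rr_+}|u^1|^2ds+\tfrac{2\mu A}{3}\int_{\p\rr_+}\p_1(u^1)^3ds ,
\]
and the second piece vanishes. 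So \eqref{bkj213} holds with $\mu\|\o\|_{L^2}^2+\mu A\int_{\p\rr_+}|u|^2ds+\frac1\nu\|G\|_{L^2}^2$ under the time derivative. The new boundary term is nonnegative. At $t=0$ it is bounded by $C\|u_0\|_{L^2}\|\na u_0\|_{L^2}\le C(1+\|\na u_0\|_{L^2}^2)$, using Step 1.

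\textbf{Step 3 (right-hand side).} The terms $I_1$ and $I_2$ are bounded exactly as in \eqref{bkj216}--\eqref{bkj217}; Lemma \ref{hm} applies since $u\cdot n=0$. For $I_3$--$I_5$ we use the bound on $\|P-P(\tilde\n)\|_{L^2\cap L^\infty}$ in place of \eqref{bkj212}:
\[
\frac C\nu\int |P-P(\tilde\n)|\,|G|\,|\div u|\,dx\le C\|G\|_{L^2}\|\div u\|_{L^2}/\nu\le C\|\div u\|_{L^2}^2+C\nu^{-2}\|G\|_{L^2}^2 .
\]
The $G^2\div u$ term is treated as in \eqref{bkj218}. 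This gives the analogue of \eqref{bkj219}, with functional $\Phi=\mu\|\o\|_{L^2}^2+\mu A\|u\|_{L^2(\p\rr_+)}^2+\nu^{-1}\|G\|_{L^2}^2$:
\[
\Phi'+\|\sqrt\n\dot u\|_{L^2}^2\le C\Phi\,\|\na u\|_{L^2}^2+C\|\na u\|_{L^2}^2+C .
\]
The additive constant $C$ is harmless because $t\le\si(T)\le1$. Gr\"onwall's inequality on $(0,\si(T))$ together with \eqref{2bkj01} and Step 1 then yields \eqref{2bkj201}. Multiplying instead by $t$, exactly as in \eqref{bkj221}, and using $\int_0^1\Phi\,dt\le C$ (from \eqref{2bkj01} and Step 1) gives \eqref{2bkj202}.

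\textbf{Main obstacle.} The delicate point is the boundary term in Step 2: checking that the cubic trace term is an exact derivative and vanishes, and that $\int_{\p\rr_+}|u|^2$ at $t=0$ is controlled only through $E_0$, $\tilde\n$, $\|\n_0\|_{L^\infty}$ and $\|\na u_0\|_{L^2}$. Step 1's bound on $\|u\|_{L^2}$ is what makes the latter work.
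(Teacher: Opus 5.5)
Your route is the paper's: rerun the $\omega$--$G$ energy identity on $(0,\sigma(T))$, turn the boundary integral $2\mu\int_{\partial\mathbb{R}^2_+}\omega\,(\dot u\cdot n^\perp)\,ds$ into $-\mu A\frac{d}{dt}\int_{\partial\mathbb{R}^2_+}|u|^2ds$, replace \eqref{bkj212} by the energy bound on $P-P(\tilde\rho)$, and close with Gr\"onwall on an interval of length at most one. Your handling of the new boundary term is correct, including its value at $t=0$ via the trace inequality and $\|u_0\|_{L^2}\le C(1+\|\nabla u_0\|_{L^2})$. It is more explicit than the paper's. One step, however, fails as stated: the claim that the Neumann problem \eqref{bkj22} for $G$ is unchanged, so that $\|\nabla G\|_{L^2}+\|\nabla\omega\|_{L^2}\le C\|\sqrt\rho\,\dot u\|_{L^2}$.

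\textbf{Why it fails.} For $A>0$ you have $\omega=-Au^1\neq0$ on $\partial\mathbb{R}^2_+$. There $\nabla^\perp\omega\cdot n=-\partial_1\omega=A\,\partial_1u^1$, so the normal component of \eqref{bkj21}, together with $\dot u\cdot n=0$, gives $\partial G/\partial n=-\mu A\,\partial_1u^1$ rather than $\rho\dot u\cdot n$. Equivalently, in
$\|\rho\dot u\|_{L^2}^2=\|\nabla G\|_{L^2}^2+\mu^2\|\nabla\omega\|_{L^2}^2+2\mu\int\nabla G\cdot\nabla^\perp\omega\,dx$
the cross term no longer vanishes; it equals $2\mu A\int_{\partial\mathbb{R}^2_+}G\,\partial_1u^1\,dx_1$.

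\textbf{The correct bound and the repair.} The paper obtains the right estimate from the Dirichlet problem $\mu\Delta\omega=\nabla^\perp\cdot(\rho\dot u)$ in $\mathbb{R}^2_+$, $\omega=-Au\cdot n^\perp$ on $\partial\mathbb{R}^2_+$ (one of the places where the dependence of $\tilde C$ on $A$ enters):
$\|\nabla G\|_{L^p}+\|\nabla\omega\|_{L^p}\le C(\|\rho\dot u\|_{L^p}+\|\nabla u\|_{L^p})$.
The damage is confined. With the extra $\|\nabla u\|_{L^2}$, your bounds for $I_1$, $I_2$ and the $G^2\,\mathrm{div}\,u$ term pick up pieces such as $C\|\nabla u\|_{L^2}^3$ and $\frac{C}{\nu}\|G\|_{L^2}\|\nabla u\|_{L^2}\|\mathrm{div}\,u\|_{L^2}$. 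These are bounded by $C\|\nabla u\|_{L^2}^2+C\|\nabla u\|_{L^2}^4+\frac{C}{\nu}\|G\|_{L^2}^2\|\nabla u\|_{L^2}^2$. Using $\|\nabla u\|_{L^2}^2\le C\Phi+C$, they fit into your inequality $\Phi'+\|\sqrt\rho\,\dot u\|_{L^2}^2\le C\Phi\|\nabla u\|_{L^2}^2+C\|\nabla u\|_{L^2}^2+C$. With this correction the rest of your argument goes through, including the $t$-weighted estimate.

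A minor point: $I_5$ is $\frac{2\gamma}{\nu}\int PG\,\mathrm{div}\,u\,dx$ with $P$ itself, not $P-P(\tilde\rho)$. Since you only use an $L^\infty$ bound there, this is harmless.
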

\begin{proof}
First, from $(\ref{ns})_2$ and (\ref{gw}), we have
\be\la{2bkj21}\ba
\n \dot{u} = \na G + \mu \na^\bot \o,
\ea\ee
which together with the boundary conditions (\ref{bjtj1}) implies that $\o$ satisfies
\be\la{2bkj22}\ba
\begin{cases}
\mu \Delta \o = \na^\bot \cdot (\n \dot{u}) \ & \text{ in } \rr_+, \\
\o = -A u \cdot n^\bot \ & \text{ on } \p \rr_+.
\end{cases}
\ea\ee
The standard elliptic estimate (see \cite{GT}) combined with (\ref{2bkj21}) yields for any $2 \le p < \infty$,
\be\la{2bkj23}\ba
\| \na G \|_{L^p} + \| \na \o \|_{L^p}
& \le C \left( \| \n \dot{u} \|_{L^p} + \| \na (A u \cdot n^\bot) \|_{L^p} \right) \\
& \le C \left( \| \n \dot{u} \|_{L^p} + \| \na u \|_{L^p} \right).
\ea\ee
By (\ref{pro201}), we arrive at
\be\la{2bkj24}\ba
\| \na G \|_{L^2} + \| \na \o \|_{L^2}
\le C \left( \| \sqrt{\n} \dot{u} \|_{L^2} + \| \na u \|_{L^2} \right).
\ea\ee
Multiplying (\ref{2bkj21}) by $2 \dot{u}$, integrating by parts over $\rr_+$, and using (\ref{bkj214}) and (\ref{bkj215}), we obtain
\be\la{2bkj25}\ba
& \frac{d}{dt} \int \left(\mu \o^2 + \frac{G^2}{\nu}\right)dx + 2\| \sqrt{\n} \dot{u}\|^2_{L^2}\\
& = -\mu \int \o^2\div udx - 4\int G\nabla u^1\cdot\nabla^{\perp}u^2dx- 2\int G(\div u)^2dx\\
&\quad +\frac{1}{\nu} \int G^2\div udx +\frac{2\ga}{\nu} \int P G\div udx + 2 \mu \int_{\p \rr_+} \o (\dot{u} \cdot n^\bot) ds = \sum_{i=1}^6 J_i.
\ea\ee
From (\ref{gn11}), (\ref{2bkj24}), and Young's inequality, we deduce that
\be\la{2bkj27}\ba
|J_1| + |J_2| & \le C \| \o\|^2_{L^4} \| \div u\|_{L^2} + C \| \na G\|_{L^2} \| \na u\|^2_{L^2} \\
& \le C \| \o\|_{L^2} \| \na \o\|_{L^2} \| \div u\|_{L^2}
+ C \left( \| \sqrt{\n} \dot{u} \|_{L^2} + \| \na u\|_{L^2} \right) \| \na u\|^2_{L^2} \\
& \le C \left( \| \sqrt{\n} \dot{u} \|_{L^2} + \| \na u\|_{L^2} \right) \| \na u\|^2_{L^2} \\
& \le \frac{1}{16} \| \sqrt{\n} \dot{u}\|^2_{L^2} + C \| \na u\|^2_{L^2} + C \| \na u\|^4_{L^2}.
\ea\ee
By (\ref{gn11}), (\ref{bkj24}), (\ref{gw}), (\ref{bkj212}), and H\"older's inequality, we arrive at
\be\la{2bkj28}\ba
\sum_{i=3}^5 J_i & \le \frac{C}{\nu} \int G^2 |\div u| dx + \frac{C}{\nu}\int P |G| |\div u| dx \\
& \le \frac{C}{\nu} \| G \|^2_{L^4} \| \div u \|_{L^2} + \frac{C}{\nu} \| G \|_{L^2} \| \div u \|_{L^2} \\
& \le \frac{C}{\nu} \| G \|_{L^2} \| \na G \|_{L^2} \| \div u \|_{L^2}
+ \frac{C}{\nu} \| G \|_{L^2} \| \div u \|_{L^2} \\
& \le \frac{C}{\nu} \| G \|_{L^2} \left( \| \sqrt{\n} \dot{u} \|_{L^2} + \| \na u\|_{L^2} \right) \| \div u \|_{L^2}
+ \frac{C}{\nu} \| G \|_{L^2} \| \div u \|_{L^2} \\
& \le \frac{1}{16} \| \sqrt{\n} \dot{u}\|^2_{L^2}
+ \frac{C}{\nu} \| G \|^2_{L^2} \| \na u\|^2_{L^2}+C\| \na u\|^2_{L^2} + C.
\ea\ee
The boundary conditions (\ref{bjtj1}) imply that
\be\la{2bkj29}\ba
J_6 &= 2\mu \int_{\p \rr_+ }\o ({\dot{u}}\cdot n^\bot )\ ds \\
&=-2\mu \int_{\p \rr_+ } A(u\cdot n^\bot )\cdot (u\cdot n^\bot )_t \,ds -2\mu \int_{\p \rr_+ } A(u\cdot n^\bot )(u\cdot \nabla u\cdot n^\bot) \,ds\\
&=-\mu \frac{d}{dt} \int_{\p \rr_+ } A(u\cdot n^\bot )^2 \,ds
-2\mu \int_{\p \rr_+ } A (u^1)^2 \p_1 u^1 \,ds \\
& = -\mu \frac{d}{dt} \int_{\p \rr_+ } A |u| ^2 \,ds.
\ea\ee
Moreover, it follows from (\ref{dc1}) and (\ref{gw}) that
\be\ba\la{2bkj210}
\| \na u\|^2_{L^2} & \le C \left( \| \div u\|^2_{L^2}+\| \o \|^2_{L^2} \right) \\
& \le \frac{C}{\nu^2} \left( \| G \|^2_{L^2}+\| P-P( \tilde{\n}) \|^2_{L^2} \right) + C \| \o \|^2_{L^2} \\
& \le C \left( \mu \| \o \|^2_{L^2} + \frac{1}{\nu} \| G \|^2_{L^2} \right) + C,
\ea\ee
where in the second inequality we have used the following estimate:
\be\la{2bkj211}\ba
\| P-P( \tilde{\n}) \|^2_{L^2} \le C \| \n - \tilde{\n} \|^2_{L^2} \le C,
\ea\ee
due to (\ref{pro201}), (\ref{2bkj01}), and (\ref{qkjsn}).

Substituting (\ref{2bkj27})--(\ref{2bkj29}) into (\ref{2bkj25}) and using (\ref{2bkj210}), we obtain
\be\la{2bkj212}\ba
& \frac{d}{dt} \left( \mu \| \o \|^2_{L^2} + \frac{1}{\nu} \| G \|^2_{L^2} + \int_{\p \rr_+ } A |u| ^2 \,ds \right) + \| \sqrt{\n} \dot{u}\|^2_{L^2} \\
& \le C \left( \mu \| \o \|^2_{L^2} + \frac{1}{\nu} \| G \|^2_{L^2} \right) \| \na u\|^2_{L^2} + C\| \na u\|^2_{L^2} + C.
\ea\ee
In addition, multiplying (\ref{2bkj212}) by $t$ and using (\ref{gw}) lead to
\be\la{2bkj213}\ba
& \frac{d}{dt} \left( t \left( \mu \| \o \|^2_{L^2} + \frac{1}{\nu} \| G \|^2_{L^2} + \int_{\p \rr_+ } A |u| ^2 \,ds \right) \right) + t \| \sqrt{\n} \dot{u}\|^2_{L^2} \\
& \le C t \left( \mu \| \o \|^2_{L^2} + \frac{1}{\nu} \| G \|^2_{L^2} \right) \| \na u\|^2_{L^2} + C t \left( \| \na u\|^2_{L^2} + 1 \right) \\
& \quad + C \left( \| \na u\|^2_{L^2} + \nu \| \div u \|^2_{L^2} + 1 \right).
\ea\ee
Applying Gr\"onwall's inequality to (\ref{2bkj212}) and (\ref{2bkj213}) over $(0,\si(T))$ and using (\ref{2bkj01}), we arrive at (\ref{2bkj201}) and (\ref{2bkj202}).
This finishes the proof of Lemma \ref{2bkjl2}.
\end{proof}

\begin{lemma}\la{2bkjl3}
There exist two positive constants $\hat{\nu}_2$ and $\mathbf{C_1}$ depending only on
$A$, $\ga$, $\mu$, $E_0$, $\| \n_0 \|_{L^\infty}$, and $\tilde{\n}$ such that if $(\n,u)$ is a strong solution to \eqref{ns}--\eqref{bjtj2} satisfying \eqref{pro201}, then
\be\la{2bkj301}\ba
E(T) \le \mathbf{C_1},
\ea\ee
provided $\nu \ge \hat{\nu}_2$.
\end{lemma}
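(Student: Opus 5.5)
The plan is to split $E(T)$ into the short-time part on $(0,\sigma(T))$ and the long-time part on $(\sigma(T),T)$. The short-time part is already controlled by Lemma \ref{2bkjl2}: by \eqref{2bkj202} it is bounded by $\tilde C$, which depends only on $A,\ga,\mu,E_0,\|\n_0\|_{L^\infty},\tilde\n$ and not on $\|\na u_0\|_{L^2}$. So the work is a time-uniform estimate for $\sup_{1\le t\le T}(\|\na u\|_{L^2}^2+\nu\|\div u\|_{L^2}^2)+\int_1^T\|\sqrt\n\dot u\|_{L^2}^2dt$, assuming $T>1$.

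The tool is a Gr\"onwall argument on $(1,T)$ for the functional
$$\Phi(t)\triangleq\mu\|\o\|_{L^2}^2+\frac1\nu\|G\|_{L^2}^2+\int_{\p\rr_+}A|u|^2ds,$$
starting from \eqref{2bkj25}. The obstruction in \eqref{2bkj212} is the additive constant $+C$ on the right-hand side. It integrates to $CT$ and so destroys time-uniformity. That constant enters through $J_5=\frac{2\ga}{\nu}\int PG\div u\,dx$ and through \eqref{2bkj210}. I will remove it as follows.

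\textbf{Step 1: split $J_5$.} Write $P=P(\tilde\n)+(P-P(\tilde\n))$. For the constant part, use $G=\nu\div u-(P-P(\tilde\n))$ to get
$$\frac{2\ga P(\tilde\n)}{\nu}\int G\div u\,dx=2\ga P(\tilde\n)\|\div u\|_{L^2}^2-\frac{2\ga P(\tilde\n)}{\nu}\int (P-P(\tilde\n))\div u\,dx .$$
The first term is integrable in time by \eqref{2bkj01}, since $\mu+\lam=\nu-\mu\ge\nu/2$ once $\nu\ge2\mu$. For the second term, the renormalized continuity equation gives $\frac{d}{dt}\int H(\n)dx=-\int(P-P(\tilde\n))\div u\,dx$. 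Hence this term is an exact time derivative, $\frac{2\ga P(\tilde\n)}{\nu}\frac{d}{dt}\int H(\n)dx$, which can be moved into the functional. The added quantity is bounded by $C/\nu$ via \eqref{2bkj01}.

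\textbf{Step 2: the remaining pieces of $J_5$ and $J_3,J_4$.} The part $\frac{C}{\nu}\int|P-P(\tilde\n)||G||\div u|dx$ is bounded by
$$\frac{C}{\nu}\|\n-\tilde\n\|_{L^4}\|G\|_{L^2}^{1/2}\|\na G\|_{L^2}^{1/2}\|\div u\|_{L^2}.$$
Here $\|\n-\tilde\n\|_{L^4}\le C$ by \eqref{pro201}, \eqref{2bkj01} and \eqref{qkjsn}. Together with \eqref{2bkj24} and Young's inequality this yields $\frac1{16}\|\sqrt\n\dot u\|_{L^2}^2+C\Phi\|\na u\|_{L^2}^2+C(\|\na u\|_{L^2}^2+\nu\|\div u\|_{L^2}^2)$, with no pure constant. $J_3,J_4$ are handled as in \eqref{2bkj28}, but with the same interpolation, so that again no constant appears.

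\textbf{Step 3: Gr\"onwall on $(1,T)$.} For $J_1,J_2$ I keep \eqref{2bkj27}, but write $\|\na u\|_{L^2}^4\le C\|\na u\|_{L^2}^2(\Phi+\|P-P(\tilde\n)\|_{L^2}^2/\nu^2)$ via \eqref{2bkj210}, so that it too has the form $C\Phi\|\na u\|_{L^2}^2+C\|\na u\|_{L^2}^2$. The result is
$$\frac{d}{dt}\Big(\Phi+\frac{C}{\nu}\int H(\n)dx\Big)+\|\sqrt\n\dot u\|_{L^2}^2\le C\Phi\|\na u\|_{L^2}^2+C\big(\|\na u\|_{L^2}^2+\nu\|\div u\|_{L^2}^2\big).$$
The coefficient $\|\na u\|_{L^2}^2$ and the forcing are integrable over $(1,\infty)$ with $T$-independent bounds, by \eqref{2bkj01}. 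The initial value $\Phi(1)$ is bounded by $\tilde C$ via \eqref{2bkj202} and the trace inequality. Gr\"onwall then bounds $\sup_{1\le t\le T}\Phi$ and $\int_1^T\|\sqrt\n\dot u\|_{L^2}^2dt$. Finally, convert back to $\|\na u\|_{L^2}^2+\nu\|\div u\|_{L^2}^2$ using \eqref{dc1} and $\nu\|\div u\|_{L^2}^2\le\frac2\nu\|G\|_{L^2}^2+\frac2\nu\|P-P(\tilde\n)\|_{L^2}^2\le\frac2\nu\|G\|_{L^2}^2+C$.

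Adding the short-time part defines $\mathbf{C_1}$ as the sum of the two bounds; it depends only on the listed quantities. The threshold is $\hat\nu_2\ge2\mu$, enlarged if needed so that the $1/\nu$ factors are absorbed. I expect the main obstacle to be Step 1: recognising the $P(\tilde\n)$ contribution in $J_5$ as an exact time derivative of the potential energy, together with checking that every other term is either absorbed or multiplied by an integrable quantity. I also need to confirm that the boundary term $J_6$ stays an exact derivative, as in \eqref{2bkj29}.
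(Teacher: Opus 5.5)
There is a genuine gap in Step 2.

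\textbf{Why Step 2 fails.} After you replace $\|\rho-\tilde\rho\|_{L^4}$ by a constant, you are left with $\frac{C}{\nu}\|G\|_{L^2}^{1/2}\|\nabla G\|_{L^2}^{1/2}\|\div u\|_{L^2}$. Young's inequality cannot bound this by $\frac1{16}\|\sqrt\rho\dot u\|_{L^2}^2+C\Phi\|\nabla u\|_{L^2}^2+C(\|\nabla u\|_{L^2}^2+\nu\|\div u\|_{L^2}^2)$.

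The factor $\|G\|_{L^2}$ is $O(1)$ but neither small nor time-integrable, because it contains $\|P-P(\tilde\rho)\|_{L^2}$. So the expression has total degree $3/2$ in the time-square-integrable quantities $\|\nabla G\|_{L^2}$, $\|\sqrt\rho\dot u\|_{L^2}$, $\|\nabla u\|_{L^2}$, $\|\div u\|_{L^2}$. Every term on your right-hand side has degree $2$. Equivalently, $\|\nabla G\|_{L^2}^{1/2}\|\div u\|_{L^2}$ is only in $L^{4/3}(1,T)$.

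A concrete configuration: take $\omega\approx 0$, and let $P-P(\tilde\rho)$ have amplitude $\varepsilon$ spread over a region of width $1/\varepsilon$, so $\|P-P(\tilde\rho)\|_{L^2}\sim 1$. Take $\|\div u\|_{L^2}\sim\|\nabla G\|_{L^2}\sim\|\sqrt\rho\dot u\|_{L^2}\sim\varepsilon$. Then your reduced left side is $\sim\varepsilon^{3/2}/\nu$, while the right side is $\sim(1+\nu)\varepsilon^2$. A pure constant therefore necessarily returns. After integration over $(1,T)$ it grows like $T$, which is exactly the obstruction you set out to remove.

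The pressure part of $J_3$ has the same problem. After substituting $\nu\div u=G+(P-P(\tilde\rho))$, it is $-\frac{2}{\nu}\int G(P-P(\tilde\rho))\div u\,dx$ and lands in the same term.

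\textbf{How to repair it.} The information you discarded is that $\|P-P(\tilde\rho)\|_{L^4}$ is itself small in such configurations; above it is $\sim\varepsilon^{1/2}$. Write $\Pi\triangleq P-P(\tilde\rho)=\nu\div u-G$ and use $\|\Pi\|_{L^\infty}\le C$. Then $\|\Pi\|_{L^4}^4\le C\nu\|\div u\|_{L^2}\|\Pi\|_{L^4}^2+\|\Pi\|_{L^4}^3\|G\|_{L^4}$, hence $\|\Pi\|_{L^4}^2\le C\nu\|\div u\|_{L^2}+C\|G\|_{L^2}\|\nabla G\|_{L^2}$. Inserting this into your bound, with $\|G\|_{L^2}^2\le\nu\Phi$, restores degree $2$. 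It gives $\eta\|\nabla G\|_{L^2}^2+C_\eta(1+\Phi)\|\div u\|_{L^2}^2$ for any $\eta>0$.

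Alternatively, keep the sign structure as in your Step 1. Write $\frac1\nu\int G\Pi\div u\,dx=\int\Pi(\div u)^2dx-\frac1\nu\int\Pi^2\div u\,dx$; the first term is at most $C\|\div u\|_{L^2}^2$. For the second, \eqref{2bkj34} together with your Step 1 identity gives $\int\Pi^2\div u\,dx=-\frac{1}{2\gamma-1}\frac{d}{dt}\int\left(\Pi^2-2\gamma P(\tilde\rho)H(\rho)\right)dx$, an exact time derivative.

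\textbf{Comparison with the paper.} With either repair, your Gronwall argument on $(1,T)$ closes, and the route is genuinely different from the paper's. The paper multiplies by $\dot u$ (the functional $B_1$), which produces $\|\nabla u\|_{L^3}^3$ and $(\mu+\lambda)\int(\div u)^3dx$. It controls these through the $L^4$-damping identity \eqref{2bkj316}, at the price of the superlinear term $\frac{C_3}{\nu}(1+E(T))E(T)$ in \eqref{2bkj322}. That term is absorbed only by the bootstrap bound $E(T)\le 2\mathbf{C_1}$ from \eqref{pro201} together with $\nu\ge\hat\nu_2$.

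Your repaired $G$--$\omega$ route, with Lemma \ref{hm} handling $J_2$, uses only the density bound in \eqref{pro201}, not the hypothesis on $E(T)$. As written, however, Step 2 does not deliver the estimate it claims.
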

\begin{proof}
First, if $T \le 1$, we set $\mathbf{C_1} = \tilde{C}$ and Lemma \ref{2bkjl2} directly gives (\ref{2bkj301}).
Next, we assume that $T>1$.

By (\ref{2bkj202}), we have
\be\la{2bkj31}\ba
\sup_{0 \le t \le 1} t \left( \| \na u \|^2_{L^2} + \nu \| \div u \|^2_{L^2} \right)
+ \int_0^1 t \| \sqrt{\n} \dot{u}\|^2_{L^2} dt \le C.
\ea\ee
We rewrite $(\ref{ns})_2$ as
\be\la{2bkj32}\ba
\n \dot{u} - \mu \na^\bot \o - \nu \na \div u + \na(P - P(\tilde{\n})) = 0.
\ea\ee
Multiplying $(\ref{ns})_2$ by $\dot{u}$ leads to
\be\la{2bkj33}\ba
\int \n |\dot{u}|^2 dx = - \int \dot{u} \cdot \na (P-P(\tilde{\n})) dx + \mu \int \na^\bot \o \cdot \dot{u} dx + (\mu+\lam) \int \na \div u \cdot \dot{u} dx.
\ea\ee
From $(\ref{ns})_1$, we conclude that $P-P(\tilde{\n})$ satisfies
\be\la{2bkj34}\ba
(P-P(\tilde{\n}))_t + u \cdot \na (P-P(\tilde{\n})) + \ga (P-P(\tilde{\n})) \div u + \ga P(\tilde{\n}) \div u = 0.
\ea\ee
Integration by parts along with (\ref{2bkj34}) and (\ref{bkjbjds}) yields
\be\la{2bkj35}\ba
- \int \dot{u} \cdot \na (P-P(\tilde{\n})) dx
& = \int \left( (\div u)_t (P-P(\tilde{\n})) - (u \cdot \na u) \cdot \na (P-P(\tilde{\n})) \right) dx \\
& = \frac{d}{dt} \left( \int \div u (P-P(\tilde{\n})) dx \right)
+ \int u \cdot \na (P-P(\tilde{\n})) \div u dx \\
& \quad + \int \ga P (\div u)^2 dx
- \int (u \cdot \na u) \cdot \na (P-P(\tilde{\n})) dx \\
& = \frac{d}{dt} \left( \int \div u (P-P(\tilde{\n})) dx \right)
- \int \left( (P-P(\tilde{\n})) - \ga P \right) (\div u)^2 dx \\
& \quad + \int \p_i u^j \p_j u^i (P-P(\tilde{\n})) dx \\
& \le \frac{d}{dt} \left( \int \div u (P-P(\tilde{\n})) dx \right) + C \| \na u \|^2_{L^2}.
\ea\ee
Integrating by parts and using (\ref{2bkj29}), we derive
\be\la{2bkj36}\ba
\mu \int \na^\bot \o \cdot \dot{u} dx
& = \mu \int_{\p \rr_+} \o (\dot{u} \cdot n^\bot) ds
- \mu \int \o (\na^\bot \cdot \dot{u}) dx \\
& = - \frac{\mu}{2} \frac{d}{dt} \int_{\p \rr_+} A |u|^2 ds
- \mu \int \o \left( \o_t + \na^\bot_j u^i \p_i u^j + u \cdot \na \o \right) dx \\
& = - \frac{\mu}{2} \frac{d}{dt} \int_{\p \rr_+} A |u|^2 ds - \frac{\mu}{2} \frac{d}{dt} \int |\o|^2 dx
- \mu \int \o \left( \na^\bot_j u^i \p_i u^j - \frac{1}{2} \o \div u \right) dx \\
& \le - \frac{\mu}{2} \frac{d}{dt} \int_{\p \rr_+} A |u|^2 ds - \frac{\mu}{2} \frac{d}{dt} \int |\o|^2 dx + C \| \na u \|^3_{L^3}.
\ea\ee
Integration by parts also gives
\be\la{2bkj37}\ba
& (\mu+\lam) \int \na \div u \cdot \dot{u} dx \\
& = - \frac{(\mu+\lam)}{2} \left( \| \div u \|^2_{L^2} \right)_t
- (\mu+\lam) \int \div u \div (u \cdot \na u) dx \\
& = -\frac{(\mu+\lam)}{2} \left( \| \div u \|^2_{L^2} \right)_t
- (\mu+\lam) \int \left( \div u \p_i u^j \p_j u^i + \div u u \cdot \na \div u \right) dx \\
& = -\frac{(\mu+\lam)}{2} \left( \| \div u \|^2_{L^2} \right)_t
- (\mu+\lam) \int \div u \p_i u^j \p_j u^i dx
+ \frac{(\mu+\lam)}{2} \int (\div u)^3 dx \\
& = -\frac{(\mu+\lam)}{2} \left( \| \div u \|^2_{L^2} \right)_t
- \frac{(\mu+\lam)}{2} \int (\div u)^3 dx
- 2 (\mu+\lam) \int \div u \na u^1 \cdot \na^\bot u^2 dx,
\ea\ee
where in the last equality we have used the identity
\be\la{2bkj38}\ba
\p_i u^j \p_j u^i = (\div u)^2 + 2 \na u^1 \cdot \na^\bot u^2.
\ea\ee
Putting (\ref{2bkj35})--(\ref{2bkj37}) into (\ref{2bkj33}) gives
\be\la{2bkj39}\ba
\frac{d}{dt} B_1(t) + \int \n |\dot{u}|^2 dx
& \le C \| \na u \|^2_{L^2} + C \| \na u \|^3_{L^3} - \frac{(\mu+\lam)}{2} \int (\div u)^3 dx \\
& \quad - 2(\mu+\lam) \int \div u \na u^1 \cdot \na^\bot u^2 dx,
\ea\ee
where
\be\la{b1}\ba
B_1(t) \triangleq \frac{\mu}{2} \| \o \|^2_{L^2} + \frac{(\mu+\lam)}{2} \| \div u \|^2_{L^2} + \frac{\mu}{2} \int_{\p \rr_+} A |u|^2 ds - \int \div u (P-P(\tilde{\n})) dx.
\ea\ee
It follows from (\ref{gn11}), (\ref{dc1}), (\ref{gw}), (\ref{2bkj24}), and Young's inequality that
\be\la{2bkj310}\ba
\| \na u \|^3_{L^3} & \le C \left( \| \div u \|^3_{L^3} + \| \o \|^3_{L^3} \right) \\
& \le C \left( \| \div u \|^2_{L^2} + \| \div u \|^4_{L^4} \right) + C \| \o \|^2_{L^2} \| \na \o \|_{L^2} \\
& \le C \| \na u \|^2_{L^2} + \frac{C}{\nu^4} \left( \| G \|^4_{L^4} + \| P - P(\tilde{\n}) \|^4_{L^4} \right) + C \| \o \|^2_{L^2} \left( \| \sqrt{\n} \dot{u} \|_{L^2} + \| \na u \|_{L^2} \right) \\
& \le \ep \| \sqrt{\n} \dot{u} \|^2_{L^2} + \frac{C}{\nu^2} \left( 1 + \| \na u \|^2_{L^2} \right) \| \sqrt{\n} \dot{u} \|^2_{L^2} + \frac{C}{\nu^4} \| P - P(\tilde{\n}) \|^4_{L^4} \\
& \quad + C \left( \| \na u \|^2_{L^2} + \| \na u \|^4_{L^2} \right),
\ea\ee
where in the last inequality we have used the following estimate:
\be\la{2bkj311}\ba
\| G \|^4_{L^4}
& \le C \| G \|^2_{L^2}\| \na G \|^2_{L^2} \\
& \le C \nu^2 \left( \| \div u \|^2_{L^2} + \| P-P(\tilde{\n}) \|^2_{L^2} \right) \left( \| \sqrt{\n} \dot{u} \|^2_{L^2} + \| \na u \|^2_{L^2} \right) \\
& \le C \nu^2 \left( 1 + \| \div u \|^2_{L^2} \right) \| \sqrt{\n} \dot{u} \|^2_{L^2}
+ C \nu^2 \left( \| \na u \|^2_{L^2} + \| \na u \|^4_{L^2} \right),
\ea\ee
due to (\ref{gn11}), (\ref{gw}), (\ref{2bkj24}), and (\ref{2bkj211}).

Using (\ref{gw}), (\ref{2bkj311}), and Young's inequality, we have
\be\la{2bkj312}\ba
- \frac{(\mu+\lam)}{2} \int (\div u)^3 dx
& \le \frac{C}{\nu} \int \left( |G|^2 + | P-P(\tilde{\n}) |^2 \right) |\div u| dx \\
& \le \frac{C}{\nu^3} \left( \| G \|^4_{L^4} + \| P-P(\tilde{\n}) \|^4_{L^4} \right) + C \nu \| \div u \|^2_{L^2} \\
& \le \frac{C}{\nu} \left( 1 + \| \na u \|^2_{L^2} \right) \| \sqrt{\n} \dot{u} \|^2_{L^2} + \frac{C}{\nu^3} \| P-P(\tilde{\n}) \|^4_{L^4} + C \nu \| \div u \|^2_{L^2} \\
& \quad + C \left( \| \na u \|^2_{L^2} + \| \na u \|^4_{L^2} \right).
\ea\ee
By virtue of (\ref{gw}), (\ref{2bkj24}), and Young's inequality, it holds that
\be\la{2bkj313}\ba
- 2(\mu+\lam) \int \div u \na u^1 \cdot \na^\bot u^2 dx
& = - \frac{2(\mu+\lam)}{\nu} \int (G + (P-P(\tilde{\n}))) \na u^1 \cdot \na^\bot u^2 dx \\
& \le C \| \na u \|^2_{L^2} + C \| \na G \|_{L^2} \| \na u \|^2_{L^2} \\
& \le C \| \na u \|^2_{L^2} + C ( \| \sqrt{\n} \dot{u} \|_{L^2} + \| \na u \|_{L^2} ) \| \na u \|^2_{L^2} \\
& \le \frac{1}{4} \| \sqrt{\n} \dot{u} \|^2_{L^2} + C \left( \| \na u \|^2_{L^2} + \| \na u \|^4_{L^2} \right).
\ea\ee
Substituting (\ref{2bkj310}), (\ref{2bkj312}), and (\ref{2bkj313}) into (\ref{2bkj39}) and choosing $\ep$ sufficiently small, we arrive at
\be\la{2bkj314}\ba
\frac{d}{dt} B_1(t) + \frac{1}{2} \int \n |\dot{u}|^2 dx
& \le \frac{C}{\nu} \left( 1 + \| \na u \|^2_{L^2} \right) \| \sqrt{\n} \dot{u} \|^2_{L^2}
+ \frac{C_1}{\nu^3} \| P-P(\tilde{\n}) \|^4_{L^4} \\
& \quad + C \| \na u \|^4_{L^2} + C \left( \| \na u \|^2_{L^2} + \nu \| \div u \|^2_{L^2} \right).
\ea\ee
Moreover, multiplying (\ref{2bkj34}) by $3 (P-P(\tilde{\n}))^2$ and using (\ref{gw}), we derive
\be\la{2bkj315}\ba
\frac{3\ga-1}{\nu} \| P-P(\tilde{\n}) \|^4_{L^4}
& = -\left( \int (P-P(\tilde{\n}))^3 dx \right)_t
- \frac{3\ga-1}{\nu} \int (P-P(\tilde{\n}))^3 G dx \\
& \quad - 3\ga P(\tilde{\n}) \int (P-P(\tilde{\n}))^2 \div u dx \\
& \le -\left( \int (P-P(\tilde{\n}))^3 dx \right)_t
+ \frac{\ga}{\nu} \| P-P(\tilde{\n}) \|^4_{L^4} + \frac{C}{\nu} \| G \|^4_{L^4} + C \nu \| \div u \|^2_{L^2},
\ea\ee
which gives
\be\la{2bkj316}\ba
\frac{1}{\nu} \| P-P(\tilde{\n}) \|^4_{L^4}
& \le -\left( \int (P-P(\tilde{\n}))^3 dx \right)_t
+ \frac{C}{\nu} \| G \|^4_{L^4} + C \nu \| \div u \|^2_{L^2}.
\ea\ee
Multiplying both sides of (\ref{2bkj316}) by $\frac{C_1}{\nu^2}$ and adding the result to (\ref{2bkj314}), we obtain after using (\ref{2bkj311}) that
\be\la{2bkj317}\ba
\frac{d}{dt} B_2(t) + \frac{1}{2} \int \n |\dot{u}|^2 dx
& \le \frac{C}{\nu} \left( 1 + \| \na u \|^2_{L^2} \right) \| \sqrt{\n} \dot{u} \|^2_{L^2} + C \| \na u \|^4_{L^2} \\
& \quad + C \left( \| \na u \|^2_{L^2} + \nu \| \div u \|^2_{L^2} \right),
\ea\ee
where
\be\la{2bkj318}\ba
B_2(t) & \triangleq \frac{\mu}{2} \| \o \|^2_{L^2} + \frac{(\mu+\lam)}{2} \| \div u \|^2_{L^2} + \frac{\mu}{2} \int_{\p \rr_+} A |u|^2 ds \\
& \quad - \int \div u (P-P(\tilde{\n})) dx + \frac{C_1}{\nu^2} \int (P-P(\tilde{\n}))^3 dx.
\ea\ee
For any $t \in (1,T)$, integrating (\ref{2bkj317}) over $(1,t)$ and using (\ref{2bkj01}) yield
\be\la{2bkj319}\ba
& \frac{\mu}{2} \| \o(t) \|^2_{L^2} + \frac{(\mu+\lam)}{2} \| \div u(t) \|^2_{L^2} - \int (\div u (P-P(\tilde{\n}))) (t) dx
+ \frac{1}{2} \int_{1}^t \| \sqrt{\n} \dot{u} \|^2_{L^2} ds \\
& \le B_2(1) + \frac{C}{\nu} \left( 1 + E(T) \right) E(T)
+ C \int_{1}^t \| \na u(\cdot,s) \|^4_{L^2} ds + C.
\ea\ee
It follows from (\ref{pro201}), (\ref{2bkj211}), (\ref{2bkj31}), and H\"older's inequality that
\be\la{2bkj320}\ba
B_2(1) & = \frac{\mu}{2} \| \o(\cdot,1) \|^2_{L^2} + \frac{(\mu+\lam)}{2} \| \div u(\cdot,1) \|^2_{L^2} + \frac{\mu}{2} \int_{\p \rr_+} A |u(x,1)|^2 ds \\
& \quad - \int (\div u (P-P(\tilde{\n})))(x,1) dx
+ \frac{C_1}{\nu^2} \int (P-P(\tilde{\n}))^3 (x,1) dx \\
& \le C \left( \| \na u(\cdot,1) \|^2_{L^2} + \nu \| \div u(\cdot,1) \|^2_{L^2} \right) + C \| (P-P(\tilde{\n}))(\cdot,1) \|^2_{L^2}
\le C,
\ea\ee
and
\be\la{2bkj321}\ba
\int (\div u (P-P(\tilde{\n}))) (t) dx
\le \| \div u(\cdot,t) \|^2_{L^2} + C \| (P-P(\tilde{\n}))(\cdot,t) \|^2_{L^2}
\le \frac{1}{\nu} E(T) + C.
\ea\ee
The combination of (\ref{2bkj319}), (\ref{2bkj320}), and (\ref{2bkj321}) gives
\be\la{2bkj322}\ba
& \| \na u(t) \|^2_{L^2} + \nu \| \div u(t) \|^2_{L^2} + \int_1^t \| \sqrt{\n} \dot{u} \|^2_{L^2} ds \\
& \le C_2 + \frac{C_3}{\nu} \left( 1 + E(T) \right) E(T)
+ C_4 \int_1^t \| \na u(\cdot,s) \|^4_{L^2} ds.
\ea\ee
Applying Gr\"onwall's inequality to (\ref{2bkj322}), we derive for any $t \in (1,T)$,
\be\la{2bkj323}\ba
& \| \na u(t) \|^2_{L^2} + \nu \| \div u(t) \|^2_{L^2} + \int_1^t \| \sqrt{\n} \dot{u} \|^2_{L^2} ds \\
& \le \left( C_2 + \frac{C_3}{\nu} \left( 1 + E(T) \right) E(T) \right)
\exp{ \left( C_4 \int_1^t \| \na u(\cdot,s) \|^2_{L^2} ds \right) } \\
& \le \left( C_2 + \frac{C_3}{\nu} \left( 1 + E(T) \right) E(T) \right) C_5.
\ea\ee
Setting
\be\la{2bkj324}\ba
\mathbf{C_1} \triangleq C_2 C_5 + \tilde{C},
\ea\ee
and
\be\la{2bkj325}\ba
\hat{\nu}_2 \triangleq 2 \tilde{C}^{-1} C_3 C_5 \left( C_2 C_5 + \tilde{C} \right) \left( 1 + 2 C_2 C_5 + 2 \tilde{C} \right),
\ea\ee
where $\tilde{C}$ is given in Lemma \ref{2bkjl2}.

From (\ref{2bkj202}), (\ref{2bkj323}), (\ref{2bkj324}), and (\ref{2bkj325}), we conclude that (\ref{2bkj301}) holds provided $\nu \ge \hat{\nu}_2$, which completes the proof of Lemma \ref{2bkjl3}.
\end{proof}

\begin{lemma}\la{2bkjl4}
Let $(\n,u)$ be a strong solution to \eqref{ns}--\eqref{bjtj2} satisfying \eqref{pro201}.
Then there exists a positive constant $C$ depending only on
$A$, $\ga$, $\mu$, $E_0$, $\| \n_0 \|_{L^\infty}$, and $\tilde{\n}$ such that
\be\ba\la{2bkj401}
\sup_{0\le t\le T}
\si^2 \int\n|\dot u|^2dx
+\int_0^{T} \si^2 \| \na\dot u\|^2_{L^2} dt \le C.
\ea\ee
\end{lemma}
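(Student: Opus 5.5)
We follow the argument of Lemma \ref{bkjl4}: apply $\dot u^j[\partial_t+\div(u\,\cdot)]$ to $(\ref{2bkj21})^j$, sum over $j$, and integrate over $\rr_+$. This again splits $\frac{d}{dt}\frac12\int\n|\dot u|^2dx$ into the $G$-part $I_1$ and the $\o$-part $I_2$. Three features are new compared with the case $\tilde\n=0$. First, $P$ is replaced by $P-P(\tilde\n)$, so (\ref{bkj44}) becomes
\[
\div\dot u=\frac1\nu\dot G-\frac{\ga}{\nu}(P-P(\tilde\n))\div u-\frac{\ga P(\tilde\n)}{\nu}\div u+\p_iu^j\p_ju^i .
\]
Second, the elliptic estimate is now (\ref{2bkj24}), which carries an extra $\|\na u\|_{L^2}$. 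Third, when $A\neq0$ there is a boundary term in $I_2$, since $\o=-Au\cdot n^\bot$ on $\p\rr_+$.

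For $I_1$, the boundary integral still vanishes because $\dot u\cdot n=0$ by (\ref{bkjbjds}). We then copy (\ref{bkj42})--(\ref{bkj412}). The extra term $\frac{\ga P(\tilde\n)}{\nu}\int\dot G\div u\,dx$ is absorbed into $\frac{1}{4\nu}\|\dot G\|_{L^2}^2+C\nu^{-1}\|\div u\|_{L^2}^2$. Lemma \ref{hm} again controls the Jacobian-type terms by $\|\na G\|_{L^2}\|\na u\|_{L^2}\|\na\dot u\|_{L^2}$. The quartic terms $\|G\|_{L^4}^4$ and $\|\na u\|_{L^4}^4$ are bounded as in (\ref{2bkj311}) and (\ref{2bkj310}), giving $C(1+\|\na u\|_{L^2}^2+\nu\|\div u\|_{L^2}^2)(\|\sqrt\n\dot u\|_{L^2}^2+\|\na u\|_{L^2}^2)+C\nu^{-3}\|P-P(\tilde\n)\|_{L^4}^4$. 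The time integral of the last term is controlled through (\ref{2bkj316}), weighted by $\si^2$.

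For $I_2$, integration by parts produces $\mu\int_{\p\rr_+}\o_t(\dot u\cdot n^\bot)\,ds$. We write $\o_t=\dot\o-u\cdot\na\o$ and use $\o=-Au^1$ on the boundary. Since $u^2=0$ there, $u\cdot\na\o=u^1\p_1\o$, so $\dot\o=-A\dot u^1$ on $\p\rr_+$, using $\dot u\cdot n^\bot=\dot u^1$ and $(u\cdot\na u)^1=u^1\p_1u^1$. The boundary term therefore becomes $-\mu A\int_{\p\rr_+}|\dot u^1|^2ds$, which has a good sign, plus tangential remainders of the form $\int_{\p\rr_+}u^1\p_1\o\,\dot u^1\,ds$. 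We rewrite each remainder as an interior integral by the divergence theorem, for example $\int_{\p\rr_+}f\,ds=-\int\p_2(\cdots)\,dx$, or handle it with the trace inequality $\|f\|_{L^2(\p\rr_+)}^2\le C\|f\|_{L^2}\|\na f\|_{L^2}$. The result is bounded by $\ep\|\na\dot u\|_{L^2}^2+C\|\na u\|_{L^4}^4+C\|\na u\|_{L^2}^2(\|\sqrt\n\dot u\|_{L^2}^2+\|\na u\|_{L^2}^2)$. We expect this boundary term, in particular the part involving $\p_1\o$ on the boundary, to be the main obstacle. Our first approach is to express $\p_1\o=-A\p_1u^1$ and integrate by parts along the boundary.

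Combining these bounds, (\ref{bkjbjds}), (\ref{dc1}) and (\ref{bkj415}) yield $\|\na\dot u\|_{L^2}^2\le C(\nu^{-1}\|\dot G\|_{L^2}^2+\|\curl\dot u\|_{L^2}^2)+\dots$, so we obtain an analogue of (\ref{bkj417}). We multiply it by $\si^2$ and bound the correction $\int G\p_iu\cdot\na u^i\,dx$ as in (\ref{bkj418}). Integrating over $(0,T)$, the right-hand side is controlled by $E(T)\le2\mathbf{C_1}$ from (\ref{pro201}), Lemma \ref{2bkjl3}, the energy bound (\ref{2bkj01}), and $\sup_t\|P-P(\tilde\n)\|_{L^2}\le C$ from (\ref{2bkj211}). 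Here $\si^2$ replaces $t^2$ because only $\si$-weighted bounds are uniform in time, and $\si'$ vanishes for $t\ge1$. This gives (\ref{2bkj401}).
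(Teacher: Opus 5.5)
\textbf{There is a genuine gap at the boundary term in $I_2$.} Your overall architecture matches the paper's proof: the operator $\dot u^j[\p_t+\div(u\,\cdot)]$, the split into $I_1$ and $I_2$, Lemma \ref{hm} for the Jacobian terms, \eqref{2bkj316} for the quartic pressure term, the div--curl bound for $\na\dot u$, and the $\si^2$ weight. Your boundary identity $\dot\o=-A\dot u^1$ on $\p\rr_+$, which yields the good term $-\mu A\int_{\p\rr_+}|\dot u^1|^2ds$, is also exactly what the paper uses.

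The gap sits in the one step that is genuinely new in this lemma, namely the remaining boundary terms in $I_2$. There are two of them:
\begin{itemize}
\item $\mu A\int_{\p\rr_+}u^1\p_1u^1\,\dot u^1\,ds$, coming from $\o_t$;
\item $\mu A\int_{\p\rr_+}(u^1)^2\p_1\dot u^1\,ds$, which appears when you integrate $-\mu\int u\cdot\na\dot u\cdot\na^\bot\o\,dx$ by parts as in \eqref{bkj413}, because $\o=-Au^1$ no longer vanishes on the boundary.
\end{itemize}
None of the tools you list controls these.
\begin{itemize}
\item The $L^2$ trace inequality applied to $\p_1u^1$ or $\p_1\dot u^1$ needs $\|\na^2 u\|_{L^2}$ or $\|\na^2\dot u\|_{L^2}$. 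The former contains $\nu^{-1}\na P$, i.e. $\na\n$, which cannot be bounded by constants depending only on $A,\ga,\mu,E_0,\|\n_0\|_{L^\infty},\tilde\n$; this lemma is used for the density bound in Lemma \ref{2bkjl5}, before any estimate on $\na\n$. The latter is not available at all.
\item Integrating by parts along the boundary only shifts $\p_1$ between $u^1$ and $\dot u^1$.
\item The naive identity $\int_{\p\rr_+}f\,ds=-\int\p_2 f\,dx$ creates $u^1\dot u^1\p_1\p_2u^1$ or $(u^1)^2\p_1\p_2\dot u^1$.
\end{itemize}

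\textbf{The missing idea is a curl structure that cancels the second derivatives.} Since $u=u^1n^\bot$ on $\p\rr_+$, the two remainders together equal $\mu\int_{\p\rr_+}F\cdot n^\bot\,ds$ with $F\triangleq Au^1\dot u^1\na u^1+A(u^1)^2\na\dot u^1$. Moreover $\int_{\p\rr_+}F\cdot n^\bot\,ds=\int\na^\bot\cdot F\,dx$. Because $\na^\bot\cdot(\phi\na\psi)=\na^\bot\phi\cdot\na\psi$, all second derivatives drop out, and $|\na^\bot\cdot F|\le C(|u||\na u||\na\dot u|+|\dot u||\na u|^2)$.

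Estimating this still requires $\|u\|_{L^\infty}$ and $\|\dot u\|_{L^4}$, which is a second ingredient your proposal omits. The energy estimate only gives $\sqrt\n u\in L^2$. One must first prove $\|v\|_{L^2}\le C\|\sqrt\n v\|_{L^2}+C\|\na v\|_{L^2}$ for $v\in H^1$, using $\|\n-\tilde\n\|_{L^2}\le C$, which holds because $\tilde\n>0$. This yields $\|u\|_{L^\infty}\le C(1+\|\na u\|_{L^2}+\|\na u\|_{L^4})$ and $\|\dot u\|_{L^4}\le C(\|\sqrt\n\dot u\|_{L^2}+\|\na\dot u\|_{L^2})$. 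The boundary contribution is then bounded by $\ep\|\na\dot u\|^2_{L^2}+C\|\na u\|^2_{L^2}+C\|\na u\|^4_{L^2}+C\|\na u\|^4_{L^4}+C\|\sqrt\n\dot u\|^2_{L^2}$. Your claimed bound omits the lower-order terms $C\|\na u\|^2_{L^2}+C\|\sqrt\n\dot u\|^2_{L^2}$, but they are harmless once weighted by $\si^2$ and integrated in time.
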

\begin{proof}
First, for any $v \in H^1(\rr_+)$, we use (\ref{2bkj211}), (\ref{gn11}), and Young's inequality to derive
\be\nonumber\ba
\int |v|^2 dx & \le C \int \n |v|^2 dx + C \int |\n - \tilde{\n}| |v|^2 dx \\
& \le C \| \sqrt{\n} v \|^2_{L^2} + C \| \n - \tilde{\n} \|_{L^2} \| v \|^2_{L^4} \\
& \le C \| \sqrt{\n} v \|^2_{L^2}
+ C \| v \|_{L^2} \| \na v \|_{L^2} \\
& \le \frac{1}{2} \| v \|^2_{L^2} + C \| \sqrt{\n} v \|^2_{L^2} + C \| \na v \|^2_{L^2}.
\ea\ee
This implies that
\be\la{pti}\ba
\|v\|_{L^2} \leq C\|\sqrt{\rho} v\|_{L^2}+C\|\nabla v\|_{L^2}.
\ea\ee

Applying the operator $\dot{u}^j[\frac{\pa}{\pa t}+\div(u\cdot)]$ to
$(\ref{2bkj21})^j$, summing with respect to $j$, and integrating by parts over $\rr_+$, we arrive at
\be\la{2bkj41}\ba
\frac{d}{dt}\left(\frac{1}{2}\int\rho|\dot{u}|^2dx \right)
&=\int \bigg( {\dot{u}}\cdot \nabla G_t +{\dot{u}}^j\mathrm {div}(u \partial _jG) \bigg) dx\\
&\quad +\mu \int \bigg( {\dot{u}}\cdot \nabla ^\bot \omega _t 
+{\dot{u}}^j\partial _k(u^k (\nabla ^\bot \omega )_j ) \bigg) dx \\
& \triangleq I_1+I_2.
\ea\ee
For $I_1$, integrating by parts, adapting the argument similar to (\ref{bkj42}), and using (\ref{2bkj24}) and Young's inequality, we obtain
\be\la{2bkj42}\ba
I_1 & \le - \int \div \dot{u} \dot{G} dx
+ C \| \na G \|_{L^2} \| \na u \|_{L^2} \| \na \dot{u} \|_{L^2} \\
& \le - \int \div \dot{u} \dot{G} dx
+ C \left( \| \sqrt{\n} \dot{u} \|_{L^2} + \| \na u \|_{L^2} \right) \| \na u \|_{L^2} \| \na \dot{u} \|_{L^2} \\
& \le - \int \div \dot{u} \dot{G} dx
+ \ep \| \na \dot{u} \|^2_{L^2}
+ C \| \sqrt{\n} \dot{u} \|^2_{L^2} \| \na u \|^2_{L^2} + C \| \na u \|^4_{L^2}.
\ea\ee
Recalling (\ref{bkj44}), we have
\be\ba\la{2bkj43}
\div \dot{u} = \frac{1}{\nu} \dot{G} - \frac{\ga}{\nu} P \div u + \p_i u^j \p_j u^i,
\ea\ee
which together with (\ref{pro201}) and Young's inequality gives
\be\la{2bkj44}\ba
- \int \div \dot{u} \dot{G} dx
& = -\frac{1}{\nu} \| \dot{G} \|^2_{L^2} + \frac{\ga}{\nu} \int \dot{G} P \div u dx
- \int \dot{G} \p_i u^j \p_j u^i dx \\
& \le -\frac{1}{2\nu} \| \dot{G} \|^2_{L^2}+\frac{C}{\nu} \| \div u \|^2_{L^2}
- \int \dot{G} \p_i u^j \p_j u^i dx.
\ea\ee
Similarly to (\ref{bkj46}), integration by parts yields
\be\la{2bkj45}\ba
- \int \dot{G} \p_i u^j \p_j u^i dx
& = - \frac{d}{dt} \left( \int G \p_i u \cdot \na u^i dx \right)
+ 2 \int G \p_i u \cdot \na \dot{u}^i dx \\
& \quad - 2 \int G \p_i u \cdot \na u \cdot \na u^i dx
+ \int G \div u \p_i u \cdot \na u^i dx.
\ea\ee
It follows from (\ref{bkj43}), (\ref{bkj408}), (\ref{2bkj24}), and Lemma \ref{hm} that
\be\la{2bkj46}\ba
& 2 \int G \p_i u \cdot \na \dot{u}^i dx - 2 \int G \p_i u \cdot \na u \cdot \na u^i dx
+ \int G \div u \p_i u \cdot \na u^i dx \\
& \le C \| \na \dot{u} \|_{L^2} \| G \|_{L^4} \| \div u \|_{L^4}
+ C \| \na G \|_{L^2} \| \na \dot{u} \|_{L^2} \| \na u \|_{L^2}
+ C \| G \|_{L^4} \| \div u \|_{L^4} \| \na u \|^2_{L^4} \\
& \le \ep \| \na \dot{u} \|^2_{L^2} + \frac{C}{\nu^2} \| G \|^4_{L^4}
+ \frac{C}{\nu^2} \| P - P(\tilde{\n}) \|^4_{L^4}
+ C \| \sqrt{\n} \dot{u} \|^2_{L^2} \| \na u \|^2_{L^2} + C \| \na u \|^4_{L^2} + C \| \na u \|^4_{L^4}.
\ea\ee
Combining (\ref{2bkj42}), (\ref{2bkj44}), (\ref{2bkj45}), and (\ref{2bkj46}) leads to
\be\la{2bkj47}\ba
I_1 & \le - \frac{d}{dt} \left( \int G \p_i u \cdot \na u^i dx \right)
- \frac{1}{2\nu} \| \dot{G} \|^2_{L^2} + 2 \ep \| \na \dot{u} \|^2_{L^2} + \frac{C}{\nu^2} \| G \|^4_{L^4} + C \| \na u \|^2_{L^2} \\
& \quad + \frac{C}{\nu^2} \| P - P(\tilde{\n}) \|^4_{L^4}
+ C \| \sqrt{\n} \dot{u} \|^2_{L^2} \| \na u \|^2_{L^2} + C \| \na u \|^4_{L^2} + C \| \na u \|^4_{L^4}.
\ea\ee

For $I_2$, integrating by parts over $\rr_+$ and using the boundary conditions (\ref{bjtj1}) and Young's inequality, we derive
\be\la{2bkj48}\ba
I_2&=\mu \int \left({\dot{u}}\cdot \nabla ^\bot \omega _t
+{\dot{u}}^j \p_k \left( u^k (\na^\bot \o)^j \right) \right) dx \\
&=\mu \int_{\partial \rr_+} ({\dot{u}}\cdot n^\bot )\omega _t ds
-\mu \int \curl \dot{u} \o_t dx-\mu \int u \cdot \na \dot{u} \cdot \na^\bot \o dx \\
&=\mu \int_{\partial \rr_+} \left( -A ({\dot{u}}\cdot n^\bot)^2 + ({\dot{u}}\cdot n^\bot) A (u \cdot \na u \cdot n^\bot) \right) ds
-\mu \int (\curl \dot{u})^2 dx \\
&\quad + \mu \int \curl \dot{u} \curl (u \cdot \na u) dx
- \mu \int_{\p \rr_+} u \cdot \na \dot{u} \cdot n^\bot \o dx
+ \mu \int \left( u \cdot \na \curl \dot{u} + \na^\bot_j u \cdot \na \dot{u}^j \right) \o dx \\
& \le \mu \int_{\partial \rr_+} \left( ({\dot{u}}\cdot n^\bot) A (u \cdot \na u \cdot n^\bot) + u \cdot \na \dot{u} \cdot n^\bot A (u \cdot n^\bot) \right) ds
-\mu \int (\curl \dot{u})^2 dx \\
&\quad + \mu \int \curl \dot{u} (\na^\bot_j u^i \p_i u^j + u \cdot \na \o) dx
+ \mu \int \left( u \cdot \na \curl \dot{u} + \na^\bot_j u \cdot \na \dot{u}^j \right) \o dx \\
& = \mu \int_{\partial \rr_+} \left( ({\dot{u}}\cdot n^\bot) A (u \cdot \na u \cdot n^\bot) + u \cdot \na \dot{u} \cdot n^\bot A (u \cdot n^\bot) \right) ds
-\mu \int (\curl \dot{u})^2 dx \\
&\quad + \mu \int \curl \dot{u} (\na^\bot_j u^i \p_i u^j - \o \div u ) dx
+ \mu \int \na^\bot_j u \cdot \na \dot{u}^j \o dx \\
& \le - \mu \| \curl \dot{u}\|^2_{L^2} + 2 \ep \| \na \dot{u} \|^2_{L^2} + C \| \na u \|^4_{L^4}
+ C \| \na u \|^2_{L^2} + C \| \na u \|^4_{L^2} + C \| \sqrt{\n} \dot{u} \|^2_{L^2},
\ea\ee
where the boundary term is treated via
\be\ba\nonumber
& \mu \int_{\partial \rr_+} \left( ({\dot{u}}\cdot n^\bot) A (u \cdot \na u \cdot n^\bot) + u \cdot \na \dot{u} \cdot n^\bot A (u \cdot n^\bot) \right) ds \\
& = \mu \int_{\partial \rr_+} \left( ({\dot{u}}\cdot n^\bot) A ( (u \cdot n^\bot) n^\bot \cdot \na u \cdot n^\bot)
+ (u \cdot n^\bot) n^\bot \cdot \na \dot{u} \cdot n^\bot A (u \cdot n^\bot) \right) ds \\
& = \mu \int \na^\bot \cdot \left( \na u \cdot n^\bot ({\dot{u}}\cdot n^\bot) A (u \cdot n^\bot)
+ \na \dot{u} \cdot n^\bot A (u \cdot n^\bot)^2 \right) dx \\
& \le C \int \left( |\na u| |\na \dot{u}| | A u| + A |\na u|^2 |\dot{u}| \right) dx \\
& \le C \| \na \dot{u} \|_{L^2} \| \na u \|_{L^2} \| u \|_{L^\infty} + C \| \dot{u} \|_{L^4} \| \na u \|^2_{L^4} \\
& \le C \| \na \dot{u} \|_{L^2} \| \na u \|_{L^2} \left( 1 + \| \na u \|_{L^2} + \| \na u \|_{L^4} \right)
+ C \left( \| \sqrt{\n} \dot{u} \|_{L^2} + \| \na \dot{u} \|_{L^2} \right) \| \na u \|^2_{L^4} \\
& \le \ep \| \na \dot{u} \|^2_{L^2} + C \| \na u \|^2_{L^2} + C \| \na u \|^4_{L^2} + C \| \na u \|^4_{L^4} + C \| \sqrt{\n} \dot{u} \|^2_{L^2},
\ea\ee
owing to (\ref{pti}), (\ref{2bkj01}), and Young's inequality.

It follows from (\ref{2bkj43}) and (\ref{pro201}) that
\be\nonumber\ba
\| \div \dot{u} \|^2_{L^2} \le \frac{C}{\nu^2} \| \dot{G} \|^2_{L^2} + C \| \na u \|^2_{L^2} + C \| \na u \|^4_{L^4},
\ea\ee
which together with (\ref{dc1}) and (\ref{bkjbjds}) yields
\be\la{2bkj49}\ba
\| \na \dot{u} \|^2_{L^2}
& \le C \left( \| \div \dot{u} \|^2_{L^2} + \| \curl \dot{u} \|^2_{L^2} \right) \\
& \le C \left( \frac{1}{2\nu} \| \dot{G} \|^2_{L^2} + \mu \| \curl \dot{u} \|^2_{L^2} \right) + C \| \na u \|^2_{L^2} + C \| \na u \|^4_{L^4}.
\ea\ee
By (\ref{gn11}), (\ref{gw}), (\ref{2bkj24}), (\ref{2bkj211}), and (\ref{dc1}), we have
\be\la{2bkj410}\ba
\| G \|^4_{L^4} & \le C \| G \|^2_{L^2}\| \na G \|^2_{L^2} \\
& \le C \left( \nu^2 \| \div u \|^2_{L^2} + \| P-P(\tilde{\n}) \|^2_{L^2} \right) \left( \| \sqrt{\n} \dot{u} \|^2_{L^2} + \| \na u \|^2_{L^2} \right) \\
& \le C \left( 1 + \nu^2 \| \div u \|^2_{L^2} \right) \left( \| \sqrt{\n} \dot{u} \|^2_{L^2} + \| \na u \|^2_{L^2} \right),
\ea\ee
and
\be\la{2bkj411}\ba
\|\na u\|^4_{L^4} & \le C \left( \|\div u\|^4_{L^4} + \|\o \|^4_{L^4} \right) \\
& \le \frac{C}{\nu^4} \left( \| G \|^4_{L^4} + \| P-P(\tilde{\n}) \|^4_{L^4} \right)
+ C \|\o \|^2_{L^2} \| \na \o \|^2_{L^2} \\
& \le C \left( 1 + \| \div u \|^2_{L^2} + \| \o \|^2_{L^2} \right) \| \sqrt{\n} \dot{u} \|^2_{L^2} + \frac{C}{\nu^4} \| P-P(\tilde{\n}) \|^4_{L^4}.
\ea\ee
Substituting (\ref{2bkj47}) and (\ref{2bkj48}) into (\ref{2bkj41}), choosing $\ep$ sufficiently small, and using (\ref{2bkj49}), (\ref{2bkj410}), (\ref{2bkj411}), and (\ref{2bkj316}), we arrive at
\be\la{2bkj412}\ba
& \frac{d}{dt}\left(\frac{1}{2} \int \rho |\dot{u}|^2 dx + \int G \p_i u \cdot \na u^i dx \right)
+ \frac{1}{4 \nu} \| \dot{G} \|^2_{L^2} + \frac{\mu}{2} \| \curl \dot{u} \|^2_{L^2} \\
& \le - \frac{C}{\nu} \left( \int (P-P(\tilde{\n}))^3 dx \right)_t
+ C \left( \| \sqrt{\n} \dot{u} \|^2_{L^2} + \| \na u \|^2_{L^2} \right) \left( 1 + \| \na u \|^2_{L^2} \right).
\ea\ee
Multiplying (\ref{2bkj412}) by $\si^2$, we obtain from (\ref{pro201}) that
\be\la{2bkj413}\ba
& \frac{d}{dt}\left(\frac{1}{2} \si^2 \int \rho |\dot{u}|^2 dx + \si^2 \int G \p_i u \cdot \na u^i dx \right)
+ \frac{1}{4 \nu} \si^2 \| \dot{G} \|^2_{L^2} + \frac{\mu}{2} \si^2 \| \curl \dot{u} \|^2_{L^2} \\
& \le 2 \si' \si \left(\int\rho|\dot{u}|^2dx + \frac{2(\nu-\mu)}{\nu} \int G_2 \p_i u \cdot \na u^i dx \right)
- \frac{C}{\nu} \left( \si^2 \int (P-P(\tilde{\n}))^3 dx \right)_t \\
& \quad + \frac{C}{\nu} \si \si' \int (P-P(\tilde{\n}))^3 dx
+ C \si \left( \| \sqrt{\n} \dot{u} \|^2_{L^2} + \| \na u \|^2_{L^2} \right).
\ea\ee
Furthermore, in view of (\ref{gn11}), (\ref{2bkj24}), (\ref{2bkj211}), and (\ref{pro201}), we derive
\be\la{2bkj414}\ba
\left| \int G \p_i u \cdot \na u^i dx \right|
& = \left| \int \left( G (\div u)^2 + 2 G \na u^1 \cdot \na^\bot u^2 \right) dx \right| \\
& \le \frac{C}{\nu^2} \| G \|^3_{L^3} + \frac{C}{\nu^2} \| P-P(\tilde{\n}) \|^3_{L^3}
+ C \| \na G \|_{L^2} \| \na u \|^2_{L^2} \\
& \le \frac{C}{\nu^2} \| G \|^2_{L^2} \| \na G \|_{L^2} + C
+ C \left( \| \sqrt{\n} \dot{u} \|_{L^2} + \| \na u \|^2_{L^2} \right) \| \na u \|^2_{L^2} \\
& \le \frac{1}{4} \| \sqrt{\n} \dot{u} \|^2_{L^2}
+\frac{C}{\nu^4} \| G \|^4_{L^2} + C \| \na u \|^4_{L^2} + C \| \na u \|^2_{L^2} + C \\
& \le \frac{1}{4} \| \sqrt{\n} \dot{u} \|^2_{L^2} + C \| \na u \|^4_{L^2} + C \| \na u \|^2_{L^2} + C.
\ea\ee
Integrating (\ref{2bkj413}) over $(0,T)$ and using (\ref{2bkj414}), (\ref{2bkj211}), (\ref{2bkj49}), and (\ref{pro201}), we get (\ref{2bkj401}) and complete the proof of Lemma \ref{2bkjl4}.
\end{proof}

\begin{lemma}\la{2bkjl5}
There exists a positive constant $\nu_2$ depending only on
$A$, $\ga$, $\mu$, $E_0$, $\| \n_0 \|_{L^\infty}$, $\| \na u_0 \|_{L^2}$, and $\tilde{\n}$ such that if $(\n,u)$ is a strong solution to \eqref{ns}--\eqref{bjtj2} satisfying \eqref{pro201}, then
\be\ba\la{2bkj501}
\sup_{0\leq t\leq T} \|\n\|_{L^\infty} 
\leq \frac{3}{2} \left( 1 + \tilde{\n} + \| \n_0 \|_{L^\infty} \right),
\ea\ee
provided $\nu \geq \nu_2$.
\end{lemma}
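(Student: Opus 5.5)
The plan mirrors the proof of Proposition \ref{pro1}: apply Zlotnik's inequality (Lemma \ref{zli}) along particle paths. Using (\ref{gw}), $(\ref{ns})_1$ becomes
\[
\frac{D}{Dt}\n = g(\n)+h'(t),\qquad g(\n)=-\frac{1}{\nu}\n\big(P(\n)-P(\tilde{\n})\big),\qquad h(t)=-\frac1\nu\int_0^t \n G\,ds .
\]
Since $g(\zeta)\le -\frac1\nu$ for $\zeta\ge \bar\zeta\triangleq 1+\tilde{\n}$, I would take $N_1=\frac1\nu$ and $\bar\zeta = 1+\tilde{\n}$. Lemma \ref{zli} then gives $\|\n\|_{L^\infty}\le \max\{\|\n_0\|_{L^\infty},1+\tilde{\n}\}+N_0$. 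It therefore suffices to show that the oscillation of $h$ is at most $N_0=C\nu^{-\theta}$ for some $\theta>0$, plus a drift $\frac1\nu(t_2-t_1)$. I would treat the short and long time regimes separately, as in (\ref{pro13})--(\ref{pro17}).

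\textbf{Short time, $0\le t\le\si(T)$.} By (\ref{gn12}) with $r=2$, $q=4$,
\[
\|G\|_{L^\infty}\le C\|G\|_{L^2}^{1/3}\|\na G\|_{L^4}^{2/3}.
\]
Next, (\ref{2bkj23}) gives $\|\na G\|_{L^4}\le C(\|\n\dot u\|_{L^4}+\|\na u\|_{L^4})$. The term $\|\dot u\|_{L^4}$ is controlled via (\ref{pti}) and (\ref{gn11}) by $C(\|\sqrt\n\dot u\|_{L^2}+\|\na\dot u\|_{L^2})$. The term $\|\na u\|_{L^4}$ is controlled by (\ref{2bkj411}). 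For the factor $\|G\|_{L^2}$ I use $\|G\|_{L^2}\le C(\nu\|\div u\|_{L^2}+1)\le C\nu^{1/2}(\|\na u_0\|_{L^2}+\nu^{1/2}\|\div u_0\|_{L^2}+1)$, which follows from (\ref{2bkj201}) and (\ref{2bkj211}).

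This is where I expect the main obstacle. The bound (\ref{2bkj201}) contains $\nu\|\div u_0\|^2_{L^2}$, which is not uniform in $\nu$. I would avoid it by using the time-weighted bound (\ref{2bkj202}) instead, which gives $\|G\|_{L^2}\le C\nu^{1/2}t^{-1/2}$. Combined with (\ref{2bkj401}), i.e. $\int_0^1 t^2\|\na\dot u\|_{L^2}^2\le C$, this leads to
\[
|h(t)|\le \frac{C}{\nu}\int_0^{\si}\nu^{1/6}s^{-1/6}\Big(s^{-1}\big(s^2\|\na\dot u\|_{L^2}^2\big)^{1/2}+\dots\Big)^{2/3}ds .
\]
The singular weight $s^{-1/6-2/3}$ is integrable after applying Hölder against $s^2\|\na\dot u\|^2_{L^2}$; one may need to adjust the exponents (for instance $q=5$, as in (\ref{pro13})) to make it work. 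The result is $|h|\le C\nu^{-5/6}$ or a similar negative power of $\nu$, with $C$ independent of $\nu$ and of $\|\div u_0\|_{L^2}$. Hence $\sup_{t\le\si(T)}\|\n\|_{L^\infty}\le\|\n_0\|_{L^\infty}+C\nu^{-\theta}$.

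\textbf{Long time, $t\ge1$.} Now $\si\equiv1$, and $E(T)\le 2\mathbf{C_1}$ from (\ref{pro201}) gives $\nu\|\div u\|_{L^2}^2\le C$, so $\|G\|_{L^2}\le C\nu^{1/2}$. From (\ref{2bkj401}) we have $\int_1^T\|\na\dot u\|^2_{L^2}\le C$, and $E(T)$ gives $\int_1^T\|\sqrt\n\dot u\|^2_{L^2}\le C$. I would estimate
\[
\int_{t_1}^{t_2}\|G\|_{L^\infty}dt\le C\nu^{1/6}\int_{t_1}^{t_2}\big(\|\sqrt\n\dot u\|_{L^2}+\|\na\dot u\|_{L^2}+\|\na u\|_{L^4}\big)^{2/3}dt .
\]
By Young's inequality, $C\nu^{1/6}X^{2/3}\le 1+C\nu^{1/2}X^2$, so
\[
h(t_2)-h(t_1)\le \frac1\nu(t_2-t_1)+C\nu^{-1/2},
\]
provided $\int_1^T\|\na u\|_{L^4}^2dt\le C$. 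That bound follows from (\ref{gn11}), the estimate $\|\na\o\|_{L^2}\le C(\|\sqrt\n\dot u\|_{L^2}+\|\na u\|_{L^2})$, (\ref{2bkj01}), and $\int\|P-P(\tilde{\n})\|^4_{L^4}\nu^{-4}$, which is handled through (\ref{2bkj316}).

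\textbf{Conclusion.} Combining the two regimes with Lemma \ref{zli} gives
\[
\sup_{0\le t\le T}\|\n\|_{L^\infty}\le 1+\tilde{\n}+\|\n_0\|_{L^\infty}+\mathbf{M_2}\nu^{-\theta}.
\]
I would then choose $\nu_2\ge\hat\nu_2$ so large that $\mathbf{M_2}\nu_2^{-\theta}\le\frac12(1+\tilde{\n}+\|\n_0\|_{L^\infty})$, which yields (\ref{2bkj501}).
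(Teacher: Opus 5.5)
\paragraph{Gap in the short-time step.} Your framework is the paper's: Zlotnik's lemma with $g(\zeta)=-\frac1\nu\zeta(\zeta^\gamma-\tilde\rho^\gamma)$, $N_1=\frac1\nu$, $\overline{\zeta}=1+\tilde\rho$, and a split at $\sigma(T)$. Your long-time step is essentially \eqref{2bkj56}--\eqref{2bkj58}. The gap is in the short-time step. Once you replace $\|G\|_{L^2}$ by the weighted bound $C\nu^{1/2}s^{-1/2}$ from \eqref{2bkj202}, the time integral cannot be closed for any exponent.

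By \eqref{gn12} with $r=2$, $\|G\|_{L^\infty}\le C\|G\|_{L^2}^{\alpha}\|\nabla G\|_{L^q}^{1-\alpha}$ with $\alpha=\frac{q-2}{2(q-1)}<\frac12$. The $\nabla\dot u$ part of your integrand is therefore $\nu^{\alpha/2}s^{-1+\alpha/2}\big(s^2\|\nabla\dot u\|_{L^2}^2\big)^{(1-\alpha)/2}$. Hölder against $\int_0^1 s^2\|\nabla\dot u\|_{L^2}^2ds\le C$ needs $\int_0^1 s^{-(2-\alpha)/(1+\alpha)}ds<\infty$, i.e.\ $\alpha>\frac12$, which never holds. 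For $q=4$ you get $\int_0^1 s^{-5/4}ds$, and for $q=5$ you get $\int_0^1 s^{-13/11}ds$.

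This is not a loss in Hölder. The bound $\int_0^1 s^2\|\nabla\dot u\|^2_{L^2}ds\le C$ permits $s^2\|\nabla\dot u\|_{L^2}^2\sim s^{-1}|\log s|^{-2}$. For that profile your integrand behaves like $s^{-3/2+\alpha}|\log s|^{\alpha-1}$, which is not integrable at $0$. Dimensionally, the purely time-weighted bounds \eqref{2bkj202} and \eqref{2bkj401} make $\|G\|_{L^\infty}$ scale exactly like $s^{-1}$ for every $q$. So some unweighted information at $t=0$ is unavoidable; this is why $\nu_2$ is allowed to depend on $\|\nabla u_0\|_{L^2}$. The uniformity in $\|\div u_0\|_{L^2}$ you aim for cannot be reached this way.

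\paragraph{The obstacle you avoided is not one.} Since $\|\div u_0\|_{L^2}\le C\|\nabla u_0\|_{L^2}$, \eqref{2bkj201} gives, for $\nu\ge 1$, $\sup_{t\le\sigma(T)}\|\div u\|_{L^2}\le C$ with $C$ depending on $\|\nabla u_0\|_{L^2}$. Hence $\|G\|_{L^2}\le C\nu$ on $[0,\sigma(T)]$. Then $\frac1\nu\|G\|_{L^2}^{\alpha}\le C\nu^{\alpha-1}$ is still a negative power of $\nu$, because $\alpha<1$. The only remaining time singularity is the weight $s^{-(1-\alpha)}$ from $\|\nabla G\|_{L^q}^{1-\alpha}$ combined with \eqref{2bkj401}. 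Hölder now only requires $\alpha>\frac13$, i.e.\ $q>4$; $q=4$ is borderline, giving $\int_0^1 s^{-1}ds$. This is exactly \eqref{2bkj55} with $q=5$:
\[
|h(t)|\le C\nu^{-5/8}\Big(\int_0^1 s^{-10/11}ds\Big)^{11/16}\le C\nu^{-5/8}.
\]

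\paragraph{A smaller point in the long-time step.} The claim $\int_1^T\|\nabla u\|_{L^4}^2dt\le C$ uniformly in $T$ does not follow from \eqref{2bkj316}. That estimate controls $\nu^{-1}\int\|P-P(\tilde\rho)\|_{L^4}^4$, not $\nu^{-2}\int_1^T\|P-P(\tilde\rho)\|_{L^4}^2$; the $\nu^{-2}\|G\|_{L^4}^2$ piece has the same issue. Both pieces are bounded pointwise in time after Young's inequality, so they only add $O(\nu^{-2})(t_2-t_1)$ to the drift. You can absorb this by splitting Young's inequality as $\frac{1}{2\nu}+\cdots$ and taking $\nu$ large. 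Alternatively, work with fourth powers as in \eqref{2bkj56}, where the pressure enters precisely as $\nu^{-2}\|P-P(\tilde\rho)\|_{L^4}^4$.
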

\begin{proof}
First, by (\ref{pti}), (\ref{2bkj23}), (\ref{2bkj24}), (\ref{dc1}), and (\ref{gw}), we derive that for any $2 \le s<\infty$,
\be\la{2bkj54}\ba
& \| \na G \|_{L^s} \\
& \le C \left( \| \n \dot{u} \|_{L^s} + \| \na u \|_{L^s} \right) \\
& \le C \left( \| \dot{u} \|_{L^2} + \| \na \dot{u} \|_{L^2}
+ \| \div u \|_{L^s} + \| \o \|_{L^s} \right) \\
& \le C \left( \| \sqrt{\n} \dot{u} \|_{L^2} + \| \na \dot{u} \|_{L^2} + \frac{1}{\nu} \| G \|_{L^s} + \frac{1}{\nu} \| P - P(\tilde{\n}) \|_{L^s}
+ \| \o \|_{L^2} + \| \na \o \|_{L^2} \right) \\
& \le C \left( \| \sqrt{\n} \dot{u} \|_{L^2} + \| \na \dot{u} \|_{L^2} + \frac{1}{\nu} \| G \|_{L^s} + \frac{1}{\nu} \| P - P(\tilde{\n}) \|_{L^s}
+ \| \na u \|_{L^2} \right).
\ea\ee
Then, we use (\ref{gw}) to rewrite $(\ref{ns})_1$ as
\be\ba\la{2bkj51}
\frac{d}{dt} \n = g(\n) + h'(t),
\ea\ee
where
\be\la{2bkj52}\ba
g(\n) = - \frac{1}{\nu} \n (\n^{\ga} - \tilde{\n}^\ga), \quad h(t) = - \frac{1}{\nu} \int_0^t \n G ds.
\ea\ee
It follows from (\ref{gw}), (\ref{gn11}), (\ref{2bkj01}), (\ref{2bkj201}), and (\ref{2bkj401}) that for any $0 \le t \le \si(T)$,
\be\ba\la{2bkj55}
|h(t)| & \le \frac{C}{\nu} \int_0^{\si(t)} \| G \|_{L^\infty} ds \\
& \le \frac{C}{\nu} \int_0^{\si(t)} \| G \|_{L^2}^{\frac38} \| \na G \|_{L^5}^{\frac58} ds \\
& \le C \nu^{-\frac58} \int_0^{\si(t)} \left( \| \sqrt{\n} \dot{u} \|_{L^2} + \| \na \dot{u} \|_{L^2} + \frac{1}{\nu} + \| \na u \|_{L^2} \right)^{\frac58} ds \\
& \le C \nu^{-\frac58} + C \nu^{-\frac58} \int_0^{\si(t)} \left( \si^2 \| \sqrt{\n} \dot u \|^{2}_{L^2} + \si^2 \| \na \dot u \|^{2}_{L^2} \right)^{\frac{5}{16}} \si^{-\frac{5}{8}} ds \\
& \le C \nu^{-\frac58} + C \nu^{-\frac58} \left( \int_0^1 \si^{-\frac{10}{11}} ds \right)^{\frac{11}{16}} \\
& \le C \nu^{-\frac58},
\ea\ee
where we have used the following estimate:
\be\nonumber\ba
\| \na G \|_{L^5}
& \le C \left( \| \sqrt{\n} \dot{u} \|_{L^2} + \| \na \dot{u} \|_{L^2} + \frac{1}{\nu} \| G \|_{L^5} + \frac{1}{\nu} \| P - P(\tilde{\n}) \|_{L^5}
+ \| \na u \|_{L^2} \right) \\
& \le C \left( \| \sqrt{\n} \dot{u} \|_{L^2} + \| \na \dot{u} \|_{L^2} + \frac{1}{\nu} \| G \|_{L^2} + \frac{1}{\nu} \| \na G \|_{L^2} + \frac{1}{\nu} + \| \na u \|_{L^2} \right) \\
& \le C \left( \| \sqrt{\n} \dot{u} \|_{L^2} + \| \na \dot{u} \|_{L^2} + \frac{1}{\nu} + \| \na u \|_{L^2} \right),
\ea\ee
due to (\ref{2bkj54}), (\ref{2bkj24}), (\ref{pro201}), and (\ref{2bkj211}).

Combining (\ref{pro201}), (\ref{2bkj51}), and (\ref{2bkj55}) leads to
\be\la{2bkj560}\ba
\sup_{0 \le t \le \si(T)} \| \n \|_{L^\infty} \le \| \n_0 \|_{L^\infty} + C \nu^{-\frac58}.
\ea\ee
In addition, using (\ref{gn11}), (\ref{2bkj54}), (\ref{2bkj24}), (\ref{2bkj311}), and (\ref{2bkj316}), we have
\be\ba\la{2bkj56}
\| G \|^4_{L^\infty}
& \le C \| G \|^2_{L^4} \| \na G \|^2_{L^4} \\
& \le C \| G \|^2_{L^4} \left( \| \sqrt{\n} \dot{u} \|^2_{L^2} + \| \na \dot{u} \|^2_{L^2} + \frac{1}{\nu^2} \| G \|^2_{L^4} + \frac{1}{\nu^2} \| P - P(\tilde{\n}) \|^2_{L^4} + \| \na u \|^2_{L^2} \right) \\
& \le C \| G \|_{L^2} \| \na G \|_{L^2} \left( \| \sqrt{\n} \dot{u} \|^2_{L^2} + \| \na \dot{u} \|^2_{L^2} + \| \na u \|^2_{L^2} \right) \\
& \quad + \frac{C}{\nu^2} \| G \|^4_{L^4} + \frac{C}{\nu^2} \| P - P(\tilde{\n}) \|^4_{L^4} \\
& \le C \left( 1 + \nu \| \div u \|_{L^2} \right)
\left( \| \sqrt{\n} \dot{u} \|_{L^2} + \| \na u \|_{L^2} \right)
\left( \| \sqrt{\n} \dot{u} \|^2_{L^2} + \| \na \dot{u} \|^2_{L^2} + \| \na u \|^2_{L^2} \right) \\
& \quad + C \left( 1 + \| \na u \|^2_{L^2} \right)
\left( \| \sqrt{\n} \dot{u} \|^2_{L^2} + \| \na u \|^2_{L^2} \right)
- \frac{C}{\nu} \left( \int (P-P(\tilde{\n}))^3 dx \right)_t.
\ea\ee
From (\ref{2bkj56}), (\ref{pro201}), (\ref{2bkj401}), and (\ref{2bkj211}), we deduce that
\be\ba\la{2bkj57}
\int_{\si(t)}^t \| G \|^4_{L^\infty} ds
& \le C \int_{\si(t)}^t \| G \|^2_{L^4} \| \na G \|^2_{L^4} ds \\
& \le C \nu^{\frac12} \int_{\si(t)}^t \left( \| \sqrt{\n} \dot{u} \|^2_{L^2} + \| \na \dot{u} \|^2_{L^2} + \| \na u \|^2_{L^2} \right) ds + C \\
& \le C \nu^{\frac12},
\ea\ee
which together with Young's inequality yields that for any $\si(T) \le t_1 \le t_2 \le T$,
\be\la{2bkj58}\ba
h(t_2) - h(t_1) & \le \frac{C}{\nu} \int_{t_1}^{t_2} \| G \|_{L^\infty} dt \\
& \le \frac{1}{\nu}(t_2 - t_1) + \frac{C}{\nu} \int_{\si(T)}^T \| G \|^4_{L^\infty} dt \\
& \le \frac{1}{\nu}(t_2 - t_1) + C \nu^{-\frac{1}{2}}.
\ea\ee
Then, we choose $N_0$, $N_1$, and $\overline{\zeta}$ in Lemma \ref{zli} as follows:
\be\la{2bkj59}\ba
N_0 = C \nu^{-\frac{1}{2}}, \quad N_1 = \frac{1}{\nu}, \quad \overline{\zeta} = 1 + \tilde{\n},
\ea\ee
which together with (\ref{2bkj52}) implies
\be\nonumber\ba
g(\zeta) = - \frac{1}{\nu} \zeta (\zeta^{\ga} - \tilde{\n}^\ga) \le - N_1 = - \frac{1}{\nu} \quad \text{ for all } \zeta \ge 1 + \tilde{\n}.
\ea\ee
This, combined with (\ref{2bkj560}), (\ref{2bkj59}), and Lemma \ref{zli}, gives
\be\la{2bkj510}\ba
\sup_{ \si(T) \le t \le T} \| \n \|_{L^\infty}
\le 1 + \tilde{\n} + \| \n_0 \|_{L^\infty} + \mathbf{M_2} \nu^{-\frac12},
\ea\ee
where $\mathbf{M_2}$ is a positive constant depending only on
$A$, $\tilde{\n}$, $\ga$, $\mu$, $\|\n_0\|_{L^\infty}$, $E_0$, and $\|\na u_0\|_{L^2}$, but is independent of $T$ and $\nu$.

Finally, set
\be\ba\la{2bkj511}
\nu_2 \triangleq \max \left\{ \hat{\nu}_2, \left( \frac{2 \mathbf{M_2}}{1 + \tilde{\n} + \| \n_0 \|_{L^\infty}} \right)^2 \right\},
\ea\ee
with $\hat{\nu}_2$ given in (\ref{2bkj325}).
Then, when $\nu\geq\nu_2$, we derive (\ref{2bkj501}) and complete the proof of Lemma \ref{2bkjl5}.
\end{proof}

\subsection{A Priori Estimates (II): Higher Order Estimates}

In this subsection, we assume that $(\n,u)$ is a strong solution to (\ref{ns})--(\ref{bjtj2}) on $\rr_+ \times (0,T]$ satisfying (\ref{pro201}) and establish the higher-order estimates, which ensure that the strong solution can be extended globally in time.

\begin{lemma}\la{2hl1}
There exists a positive constant $C$ depending only on
$T$, $\ga$, $\mu$, $\lam$, $E_0$, $\tilde{\n}$, $\| \rho_0 \|_{L^\infty}$, $\tilde{\n}$, and $\| \na u_0 \|_{L^2}$ such that
\be\la{2h01} \ba
\sup_{0\le t\le T} \si \int\n|\dot u|^2dx+\int_0^{ T} \si \|\na\dot u\|^2_{L^2}dt\le C.
\ea\ee
\end{lemma}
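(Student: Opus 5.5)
The argument follows the proof of (\ref{1h12}) in the vacuum case, now built on the $\dot u$-energy inequality (\ref{2bkj412}). The difference from Lemma \ref{2bkjl4} is that constants may depend on $T$ and $\lambda$. We can therefore work with $\si$ instead of $\si^2$ and do not need to keep track of powers of $\nu$.

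The first step collects the lower-order bounds available under (\ref{pro201}). From Lemma \ref{2bkjl1}, Lemma \ref{2bkjl2} and Lemma \ref{2bkjl3} (equivalently, $E(T)\le 2\mathbf{C_1}$ together with (\ref{2bkj201})) we have
\begin{equation*}
\sup_{0\le t\le T}\left(\|\rho\|_{L^\infty}+\|\na u\|_{L^2}\right)+\int_0^T\left(\|\na u\|_{L^2}^2+\|\sqrt{\rho}\dot u\|_{L^2}^2\right)dt\le C .
\end{equation*}
The bound on $\sup\|\na u\|_{L^2}$ for $t\le 1$ comes from (\ref{2bkj201}), which involves $\|\na u_0\|_{L^2}$ and $\nu\|\div u_0\|_{L^2}^2\le C(\lambda)$. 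For $t\ge1$ it comes from $E(T)$.

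The second step multiplies (\ref{2bkj412}) by $\si$ and integrates over $(0,t)$. This produces three kinds of terms.
\begin{itemize}
\item The term $\si'\int\rho|\dot u|^2dx$ is supported on $(0,1)$ and is controlled by $\int_0^T\|\sqrt\rho\dot u\|_{L^2}^2dt\le C$.
\item The cross term $\si\int G\,\p_iu\cdot\na u^i dx$, on the left-hand side and its $\si'$ counterpart, is absorbed using (\ref{2bkj414}). That estimate gives $\frac14\si\|\sqrt\rho\dot u\|^2_{L^2}+C$, since $\|\na u\|_{L^2}$ is bounded.
\item The pressure term $-\frac{C}{\nu}(\si\int(P-P(\tilde\rho))^3dx)_t$ and its $\si'$ counterpart are bounded by (\ref{2bkj211}) and (\ref{pro201}): $\int|P-P(\tilde\rho)|^3dx\le C\|P-P(\tilde\rho)\|_{L^2}^2\le C$.
\end{itemize}
The right-hand side $C(\|\sqrt\rho\dot u\|^2_{L^2}+\|\na u\|^2_{L^2})(1+\|\na u\|^2_{L^2})$ is integrable in time by the first step. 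This gives
\begin{equation*}
\sup_{0\le t\le T}\si\|\sqrt\rho\dot u\|_{L^2}^2+\int_0^T\si\left(\frac1\nu\|\dot G\|_{L^2}^2+\|\curl\dot u\|_{L^2}^2\right)dt\le C .
\end{equation*}
The final step converts this into the bound on $\int_0^T\si\|\na\dot u\|^2_{L^2}dt$ via (\ref{2bkj49}). That requires $\int_0^T\si\|\na u\|_{L^4}^4dt\le C$, which follows from (\ref{2bkj411}) and (\ref{2bkj316}):
\begin{equation*}
\frac{1}{\nu^4}\int_0^T\si\|P-P(\tilde\rho)\|_{L^4}^4dt\le C .
\end{equation*}
To prove this, multiply (\ref{2bkj316}) by $\si$ and use (\ref{2bkj410}) to control the $\|G\|_{L^4}^4$ term.

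The main obstacle is that the right-hand side of (\ref{2bkj412}) was obtained using the absorbed terms $\frac{C}{\nu^2}\|G\|_{L^4}^4$ and $\frac{C}{\nu^2}\|P-P(\tilde\rho)\|_{L^4}^4$, and these must remain integrable. The bound (\ref{2bkj410}) contains the factor $\nu^2\|\div u\|_{L^2}^2$, which is only bounded by $C\nu$ on $[0,T]$ through (\ref{2bkj201}) and $E(T)$. Since constants here may depend on $\lambda$ and $T$, this factor is harmless: it is multiplied by $\|\sqrt\rho\dot u\|^2_{L^2}+\|\na u\|^2_{L^2}$, which is integrable by the first step. I would check this bookkeeping carefully and otherwise follow the proof of (\ref{1h12}).
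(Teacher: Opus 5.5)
Your proposal is correct and is essentially the paper's proof. You multiply (\ref{2bkj412}) by $\si$, integrate over $(0,T)$, absorb the cross term with (\ref{2bkj414}), and recover $\int_0^T\si\|\na\dot u\|_{L^2}^2dt$ through (\ref{2bkj49}) using the lower-order bounds (\ref{2h11}).

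Your extra bookkeeping is sound and can be shortened. By (\ref{pro201}) and (\ref{2bkj211}), $\|P-P(\tilde\n)\|_{L^4}^4\le\|P-P(\tilde\n)\|_{L^\infty}^2\|P-P(\tilde\n)\|_{L^2}^2\le C$ pointwise in time. Since constants may depend on $T$, there is then no need to multiply (\ref{2bkj316}) by $\si$.
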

\begin{proof}
First, we conclude from (\ref{2bkj01}) and (\ref{pro201}) that
\be\la{2h11}\ba
\sup_{0\le t\le T} \| \na u\|_{L^2} + \int_0^T \left( \| \na u\|^2_{L^2}+ \| \sqrt{\n} \dot{u} \|^2_{L^2} \right) dt \le C.
\ea\ee
Multiplying (\ref{2bkj412}) by $\si$, integrating over $(0,T)$, and using (\ref{2h11}), (\ref{2bkj49}), and (\ref{2bkj414}), one derives (\ref{2h01}).
\end{proof}

\begin{lemma}\la{2hl2}
There exists a positive constant $C$ depending only on
$T$, $q$, $\ga$, $\mu$, $\lam$, $E_0$, $\tilde{\n}$, $\| \rho_0 \|_{L^\infty}$, $\| \na \n_0 \|_{L^2 \cap L^q}$, and $\| \na u_0 \|_{L^2}$ such that
\be\la{2h02}\ba
&\sup_{0\le t\le T} \left( \| \n - \tilde{\rho} \|_{H^1 \cap W^{1,q}} + \| u \|_{H^1} 
+ t \| \na^2 u \|^2_{L^2} + t \| \sqrt{\n} u_t \|^2_{L^2} + \| \n_t \|_{L^2} \right) \\
& + \int_0^T \left( \| u \|^{2}_{H^2}+\|\nabla^2 u\|^{(q+1)/q}_{L^q}+t \|\nabla^2 u\|_{L^q}^2 
+ \| \sqrt{\n} u_t \|^2_{L^2} + t \| u_t \|^2_{H^1} \right) dt
\le C.
\ea\ee
\end{lemma}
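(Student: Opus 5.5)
The plan is to mirror the proof of Lemmas \ref{1hl1} and \ref{1hl3} from the vacuum case. Since $\tilde{\n}>0$, the weighted inequality (\ref{bkj501}) is not available, and we replace it by the Poincaré-type bound (\ref{pti}) together with the $L^2$ control of $\n-\tilde{\n}$ from (\ref{2bkj211}). From (\ref{2h11}) and (\ref{2h01}) we already have the uniform bounds on $\|\na u\|_{L^2}$, on $\int_0^T\|\sqrt{\n}\dot u\|_{L^2}^2dt$, and on $\sup_t\si\|\sqrt\n\dot u\|_{L^2}^2$ and $\int_0^T\si\|\na\dot u\|_{L^2}^2dt$. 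The bound on $\|u\|_{L^2}$ follows from the energy estimate (\ref{2bkj01}) and (\ref{pti}), because $\sqrt\n u\in L^2$.

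The first step is the gradient of the density. We use (\ref{1h13})--(\ref{1h14}) with $p\in[2,q]$. The elliptic bound (\ref{1h15}) is now replaced, via (\ref{2bkj23}) and (\ref{dc1}), by
\[
\|\na^2u\|_{L^p}\le C\big(\|\n\dot u\|_{L^p}+\|\na u\|_{L^p}+\|\na\n\|_{L^p}\big).
\]
Here $\|\na u\|_{L^p}$ is controlled by the Gagliardo--Nirenberg inequality, using $\|G\|_{L^2}$, $\|\o\|_{L^2}$, (\ref{2bkj24}) and $\|P-P(\tilde\n)\|_{L^p}\le C$. We then apply Lemma \ref{bkm} together with (\ref{gn12}) on $G$ and $\o$, as in (\ref{1h16})--(\ref{1h17}), to get
\[
\|\na u\|_{L^\infty}\le C\big(1+\|\n\dot u\|_{L^q}\big)\log\big(e+\|\na\n\|_{L^q}\big).
\]
The key integrability is $\int_0^T(\|\n\dot u\|_{L^q}^{1+1/q}+t\|\n\dot u\|_{L^q}^2)dt\le C$. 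We obtain it by Gagliardo--Nirenberg, $\|\dot u\|_{L^q}\le C\|\dot u\|_{L^2}^{2/q}\|\na\dot u\|_{L^2}^{1-2/q}$, with $\|\dot u\|_{L^2}\le C(\|\sqrt\n\dot u\|_{L^2}+\|\na\dot u\|_{L^2})$ from (\ref{pti}), and then (\ref{2h01}), handling the $t^{-\theta}$ singularity exactly as in (\ref{1h110}). Gr\"onwall's inequality for $\log(e+\|\na\n\|_{L^q})$ then gives $\sup_t\|\na\n\|_{L^q}\le C$. Taking $p=2$ afterwards gives $\|\na\n\|_{L^2}$. The bound on $\|\n-\tilde\n\|_{L^2\cap L^q}$ comes from (\ref{2bkj211}) and the $L^\infty$ bound. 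From these, $\int_0^T(\|\na^2u\|_{L^2}^2+\|\na^2u\|_{L^q}^{(q+1)/q}+t\|\na^2u\|_{L^q}^2)dt\le C$ and $\sup_t t\|\na^2u\|_{L^2}^2\le C$ follow.

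Next, $\n_t=-u\cdot\na\n-\n\div u$ gives $\|\n_t\|_{L^2}\le C\|u\|_{L^\infty}\|\na\n\|_{L^2}+C\|\na u\|_{L^2}$, with $\|u\|_{L^\infty}\le C\|u\|_{L^4}+C\|\na u\|_{L^4}$. Here a possible obstacle is that $\|\na u\|_{L^4}$ is bounded only after time integration. Instead, $\|u\|_{L^\infty}\le C\|u\|_{H^1}^{1-\theta}\|\na u\|_{L^q}^{\theta}$ fails too, so we use $\|u\cdot\na\n\|_{L^2}\le\|u\|_{L^{2q/(q-2)}}\|\na\n\|_{L^q}$, which is bounded by $\|u\|_{H^1}$. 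For $\sqrt\n u_t$ we write $\sqrt\n u_t=\sqrt\n\dot u-\sqrt\n u\cdot\na u$ and bound $\|u\cdot\na u\|_{L^2}\le\|u\|_{L^4}\|\na u\|_{L^4}$, which is controlled by $\|u\|_{H^1}\|u\|_{H^2}$. This gives $\int_0^T\|\sqrt\n u_t\|_{L^2}^2dt\le C$ and $\sup_t t\|\sqrt\n u_t\|_{L^2}^2\le C$, the latter using $t\|\na^2u\|_{L^2}^2$. Similarly $\na u_t=\na\dot u-\na(u\cdot\na u)$ gives $\int_0^Tt\|\na u_t\|_{L^2}^2dt\le C$ through $\|\na u\|_{L^4}^2+\|u\|_{L^\infty}\|\na^2u\|_{L^2}$, together with $t\|\na^2u\|_{L^2}^2$ bounded and $\int_0^T\|u\|_{H^2}^2dt$. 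The remaining piece, $\int_0^Tt\|u_t\|_{L^2}^2dt$, follows from (\ref{pti}).

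The main obstacle is the boundary term. Since $A\neq0$ is allowed here, $\o=-Au\cdot n^\bot$ on the boundary, so the elliptic estimate contributes the extra $\|\na u\|_{L^p}$ term in (\ref{2bkj23}). In the BKM step, Lemma \ref{bkm} is stated for $u\in\tilde D^1_+$, which covers this case. Only the lower order terms must then be tracked.
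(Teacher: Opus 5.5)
Your overall route is the paper's: redo Lemma \ref{1hl1} with (\ref{pti}) and (\ref{2bkj211}) in place of (\ref{bkj501}), read off $\|\n_t\|_{L^2}$ from $\|u\|_{L^{2q/(q-2)}}\|\na\n\|_{L^q}$, and get the $u_t$ bounds from $u_t=\dot u-u\cdot\na u$. Those later steps are fine. However, the step you single out as the key integrability fails as written.

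Interpolating $\dot u$ itself and then applying (\ref{pti}) gives
\[
\|\n\dot u\|_{L^q}\le C\|\dot u\|_{L^2}^{2/q}\|\na\dot u\|_{L^2}^{1-2/q}\le C\|\sqrt\n\dot u\|_{L^2}^{2/q}\|\na\dot u\|_{L^2}^{1-2/q}+C\|\na\dot u\|_{L^2}.
\]
The last term has $\|\na\dot u\|_{L^2}$ to the full first power, with no factor of $\|\sqrt\n\dot u\|_{L^2}$ in front.

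Near $t=0$, the only information on $\na\dot u$ is $\int_0^1 t\|\na\dot u\|_{L^2}^2dt\le C$ from (\ref{2h01}); no compatibility condition is assumed. This does not control $\int_0^1\|\na\dot u\|_{L^2}dt$, which would require $\int_0^1t^{-1}dt<\infty$. It controls $\int_0^1\|\na\dot u\|_{L^2}^{1+1/q}dt$ even less: Young's inequality leaves the weight $t^{-(q+1)/(q-1)}$, whose exponent exceeds $1$.

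Consequently, neither the Gr\"onwall step for $\log(e+\|\na\n\|_{L^q})$ nor the bound on $\int_0^T\|\na^2u\|_{L^q}^{(q+1)/q}dt$ is justified. The remark about handling $t^{-\theta}$ "exactly as in (\ref{1h110})" does not apply here. The loss comes from dropping the density weight at the low-integrability end. When vacuum is present, $\|\dot u\|_{L^2}$ is not controlled by $\|\sqrt\n\dot u\|_{L^2}$ alone, so (\ref{pti}) necessarily brings in a full $\|\na\dot u\|_{L^2}$.

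The fix is the device of (\ref{1h19}): interpolate $\n\dot u$, not $\dot u$,
\[
\|\n\dot u\|_{L^q}\le\|\n\dot u\|_{L^2}^{\frac{2(q-1)}{q^2-2}}\|\n\dot u\|_{L^{q^2}}^{\frac{q(q-2)}{q^2-2}}.
\]
At the low end use $\|\n\dot u\|_{L^2}\le C\|\sqrt\n\dot u\|_{L^2}$. At the high end use $\|\n\dot u\|_{L^{q^2}}\le C\|\dot u\|_{H^1}\le C(\|\sqrt\n\dot u\|_{L^2}+\|\na\dot u\|_{L^2})$, which follows from (\ref{gn11}) and (\ref{pti}).

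This keeps the positive power $\frac{2(q-1)}{q^2-2}$ of $\|\sqrt\n\dot u\|_{L^2}$, which is square integrable on $(0,T)$ by (\ref{2h11}), in front of $\|\na\dot u\|_{L^2}^{\frac{q(q-2)}{q^2-2}}$. Young's inequality then leaves only the weight $t^{-(q^3-q^2-2q)/(q^3-q^2-2q+2)}$, which is integrable, as in (\ref{1h110}).

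One further small correction: your BKM bound should also carry the term $\|\sqrt\n\dot u\|_{L^2}$, which enters through the $\|\na u\|_{L^q}$ part of (\ref{2bkj23}). It is harmless, since that quantity lies in $L^2(0,T)$. With these corrections the rest of your argument goes through.
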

\begin{proof}
First, adapting the argument in Lemma \ref{1hl1} and using (\ref{pti}) and (\ref{2bkj211}), we can obtain
\be\la{2h21}\ba
&\sup_{0\le t\le T} \left( \| \n - \tilde{\rho} \|_{H^1 \cap W^{1,q}} + \| u \|_{H^1} 
+ t \| \na^2 u \|^2_{L^2} \right) \\
& + \int_0^T \left( \| u \|^{2}_{H^2}+\|\nabla^2 u\|^{(q+1)/q}_{L^q}+t \|\nabla^2 u\|_{L^q}^2 \right) dt
\le C.
\ea\ee
By $(\ref{ns})_1$, (\ref{pti}), (\ref{2h21}), and H\"older's inequality, one has
\be\la{2h22}\ba
\| \n_t\|_{L^2}\le
C\|u\|_{L^{2q/(q-2)}}\|\nabla \n \|_{L^q}+C\|\n\|_{L^\infty} \|\nabla u\|_{L^2} \le C.
\ea\ee
Using (\ref{2h21}), (\ref{gn11}), and H\"older's inequality, we arrive at
\be\la{2h23}\ba 
\int\rho|u_t|^2dx 
&\le \int\rho| \dot u |^2dx+\int \n |u\cdot\na u|^2dx \\
&\le \int\rho| \dot u |^2dx+C \| u \|_{L^4}^2 \| \na u\|_{L^4}^2 \\ 
&\le \int\rho| \dot u |^2dx+C\| \na^2 u\|_{L^2}^2,
\ea\ee
and
\be\la{2h24}\ba 
\|\nabla u_t\|_{L^2}^2 
&\le \| \nabla \dot u \|_{L^2}^2+ \| \nabla(u\cdot\nabla u)\|_{L^2}^2  \\ 
&\le \|\nabla \dot u\|_{L^2}^2+ \|u\|_{L^{2q/(q-2)}}^2\|\nabla^2u \|_{L^q}^2+ \| \nabla u \|_{L^4}^4 \\ 
&\le \|\nabla \dot u\|_{L^2}^2+C\|\nabla^2u \|_{L^q}^2+ \| \nabla u \|_{L^4}^4.
\ea\ee
Combining (\ref{pti}), (\ref{2h01}), (\ref{2h21}), (\ref{2h23}), and (\ref{2h24}) leads to
\be\la{2h25}\ba
\sup_{0\le t\le T} t \| \sqrt{\n} u_t \|^2_{L^2} 
+ \int_0^T \| \sqrt{\n} u_t \|^2_{L^2} + t\|u_t\|_{H^1}^2 dt
\le C.
\ea\ee
This, together with (\ref{2h21}), gives (\ref{2h02}) and completes the proof of Lemma \ref{2hl2}.
\end{proof}

\section{Proofs of Theorems \ref{th0}--\ref{th3}}
With the a priori estimates established in Sections 3 and 4 at hand, we prove the main results of this paper in this section.

\noindent\textbf{Proof of Theorem \ref{th1}}.
By the local existence result in Lemma \ref{lct},
there exists a $T>0$ such that the problem (\ref{ns})--(\ref{bjtj2}) with $\tilde{\n}=0$ has a unique strong solution $(\n,u)$ on $\rr_+ \times (0,T]$.
From the a priori estimates in Proposition \ref{pro1} and Lemmas \ref{1hl1}--\ref{1hl3}, we deduce that the problem (\ref{ns})--(\ref{bjtj2}) with $\tilde{\n}=0$ has a global strong solution $(\n,u)$ satisfying the properties listed in Theorem \ref{th1}, provided $\nu \ge \nu_1$.
Moreover, the proof of uniqueness of $(\n,u)$ satisfying \eqref{wsol2}, \eqref{wsol3}, \eqref{wsol30}, \eqref{wsol300}, and (\ref{ssol4}) is similar to that in \cite{LLL}.

Similarly, using Proposition \ref{pro2}, Lemmas \ref{2hl1}, and \ref{2hl2}, we can derive that when $\nu \ge \nu_2$, the problem (\ref{ns})--(\ref{bjtj2}) with $\tilde{\n}>0$ has a unique global strong solution $(\n,u)$ satisfying (\ref{2wsol2}), (\ref{wsol4}), and (\ref{ssol5}).
It remains to prove the decay estimates (\ref{cpwsol4}).

It follows from (\ref{pro201}), (\ref{2bkj201}), (\ref{2bkj311}), (\ref{2bkj316}), and (\ref{2bkj411}) that
\be\la{qkjltb1}\ba
\sup_{0 \le t < \infty} \left( \| \n \|_{L^\infty} + \| \na u \|_{L^2} \right)
+ \int_1^\infty \left( \| \n - \tilde{\n} \|^4_{L^4} + \| \na u \|^2_{L^2} + \| \na u \|^4_{L^4} \right) dt \le C.
\ea\ee
Multiplying $(\ref{ns})_1$ by $4(\n-\tilde{\n})^3$, integrating by parts over $\rr_+$, and using (\ref{qkjltb1}) and H\"older's inequality, we derive
\be\la{qkjltb2}\ba
\frac{d}{dt} \left( \| \n - \tilde{\n} \|^4_{L^4} \right)
& = \int \left( (\n - \tilde{\n})^4 \div u - 4 (\n - \tilde{\n})^3 \n \div u \right) dx \\
& \le C \| \n - \tilde{\n} \|^4_{L^4} + C \| \na u \|^2_{L^2},
\ea\ee
which implies that for all $1 \le N \le s \le N+1 \le t \le N+2$,
\be\la{qkjltb3}\ba
\| \n(\cdot,t) - \tilde{\n} \|^4_{L^4}
\le \| \n(\cdot,s) - \tilde{\n} \|^4_{L^4} + C \int_N^{N+1} \left( \| \n - \tilde{\n} \|^4_{L^4} + C \| \na u \|^2_{L^2} \right) dt.
\ea\ee
Integrating (\ref{qkjltb3}) with respect to $s$ over $[N,N+1]$ leads to
\be\la{qkjltb4}\ba
\| \n(\cdot,t) - \tilde{\n} \|^4_{L^4}
\le C \int_N^{N+1} \left( \| \n - \tilde{\n} \|^4_{L^4} + C \| \na u \|^2_{L^2} \right) dt.
\ea\ee
This, combined with (\ref{qkjltb1}), yields
\be\la{qkjltb5}\ba
\lim_{t \to \infty} \| \n(\cdot,t) - \tilde{\n} \|_{L^4} = 0.
\ea\ee
Using (\ref{qkjltb1}) and H\"older's inequality, we obtain that for any $s \in (2,\infty)$,
\be\la{qkjltb6}\ba
\lim_{t \to \infty} \| \n(\cdot,t) - \tilde{\n} \|_{L^s} = 0.
\ea\ee
Moreover, a straightforward calculation gives
\be\la{qkjltb7}\ba
\int_1^\infty \left| \frac{d}{dt}(\| \na u \|^2_{L^2}) \right| dt
& = 2 \int_1^\infty \left| \int \p_i u^j \p_i u^j_t dx \right| dt \\
& = 2 \int_1^\infty \left| \int \p_i u^j \p_i \left( \dot{u}^j - u^k \p_k u^j \right) dx \right| dt \\
& = \int_1^\infty \left| \int \left( 2 \p_i u^j \p_i \dot{u}^j - 2 \p_i u^j \p_i u^k \p_k u^j + |\na u|^2 \div u \right) dx \right| dt \\
& \le C \int_1^\infty \left( \| \na \dot{u} \|^2_{L^2} + \| \na u \|^2_{L^2} + \| \na u \|^4_{L^4} \right) dt \le C,
\ea\ee
which together with (\ref{qkjltb1}) shows
\be\la{qkjltb8}\ba
\lim_{t \to \infty} \| \na u \|_{L^2} = 0.
\ea\ee
In addition, from (\ref{2bkj23}), (\ref{2bkj211}), and (\ref{qkjltb1}), we conclude that for any $2 \le r <\infty$,
\be\la{qkjltb9}\ba
\| \na u \|_{L^r}
& \le C \left( \| \div u \|_{L^r} + \| \o \|_{L^r} \right) \\
& \le C \left( \| G \|_{L^r} + \| P-P(\tilde{\n}) \|_{L^r} + \| \o \|_{L^2} + \| \na \o \|_{L^2} \right) \\
& \le C \left( 1 + \| \na u \|_{L^2} + \| \n \dot{u} \|_{L^2} \right) \le C,
\ea\ee
which together with (\ref{qkjltb8}) implies that
for any $2 \le r <\infty$,
\be\la{qkjltb11}\ba
\lim_{t \to \infty} \| \na u \|_{L^r} = 0.
\ea\ee
In view of (\ref{qkjltb6}) and (\ref{qkjltb11}), we arrive at (\ref{cpwsol4}) and complete the proof of Theorem~\ref{th1}.

\noindent\textbf{Proof of Theorem \ref{th0}}.
By employing standard compactness arguments in \cite{F,L2,LZZ}, the proof is similar to that of Theorem \ref{th1}, and thus is omitted.

\noindent\textbf{Proof of Theorem \ref{th01}}.
For any $0<T<\infty$, when $\nu>\nu_1$, from (\ref{wsol2}), (\ref{bkj01}), (\ref{bkj022}), (\ref{bkj211}), (\ref{bkj53}), and (\ref{pt1}), we conclude that $\{\n^{\nu}\}_\nu$ is bounded in $L^\infty(0,\infty;L^1) \cap L^\infty(\rr_+ \times (0,\infty))$, and for any $0<\tau<T$ and $0<R<\infty$, $\{u^{\nu}\}_\nu$ is bounded in $L^\infty(\tau,T;H^1(B^+_R))\cap L^2(0,T;H^1(B^+_R))$, and $\{ \na u^{\nu}\}_\nu$ is bounded in $L^2(\rr_+ \times (0,\infty)) \cap L^\infty(\tau,\infty;L^2)$.

Furthermore, by (\ref{pt1}), (\ref{gw}), (\ref{bkj24}), (\ref{gn11}),
and H\"older's inequality, we derive
\be\la{ins01}\ba
\| u^{\nu}_t \|_{L^2(B^+_R)}
& \le C \left( \| \dot{u^{\nu}} \|_{L^2(B^+_R)} + \| u^{\nu} \cdot \na u^{\nu} \|_{L^2(B^+_R)} \right) \\
& \le C \left( \| \sqrt{\n^{\nu}} \dot{u^{\nu}} \|_{L^2(B^+_R)} 
+ \| \na \dot{u^{\nu}} \|_{L^2(B^+_R)}
+ \| u^{\nu} \|_{L^4(B^+_R)} \| \na u^{\nu} \|_{L^4(B^+_R)} \right) \\
& \le C \| \sqrt{\n^{\nu}} \dot{u^{\nu}} \|_{L^2} + C \| \na \dot{u^{\nu}} \|_{L^2}
+ \frac{C}{\nu} \| u^{\nu} \|_{H^1(B^+_R)} \| G^\nu \|^{1/2}_{L^2} \| \n^\nu \dot{u^{\nu}} \|^{1/2}_{L^2} \\
& \quad + C \| u^{\nu} \|_{H^1(B^+_R)} 
\left( \| \o^\nu \|^{1/2}_{L^2} \| \n^\nu \dot{u^{\nu}} \|^{1/2}_{L^2}
+\| P^\nu \|_{L^4} \right),
\ea\ee
which, together with (\ref{bkj022}) and (\ref{bkj401}), 
implies that $\{u^{\nu}\}_\nu$ is bounded in $H^1(\tau,T;L^2(B^+_R))$ 
for any fixed constant $R$.

Thus, without loss of generality, we may assume that there exists a subsequence $(\n^n,u^n)$ of $(\n^{\nu},u^{\nu})$ such that
\be\la{ins1}\ba
\begin{cases}
\n^n \rightharpoonup \n  \mbox{ weakly * in } L^\infty(\rr_+ \times (0,T)),\\
u^n \rightharpoonup u  \mbox{ weakly * in } \ L^\infty(\tau,T;H^1(B^+_R))\cap L^2(0,T;H^1(B^+_R)), \\
u^n \to u  \mbox{ strongly  in } \ L^\infty(\tau,T;L^p(B^+_R)), \\
\na	u^n \rightharpoonup \na u  \mbox{ weakly * in } \ L^\infty(\tau,T;L^2(\rr_+))\cap L^2(\rr_+ \times (0,T)),
\end{cases}
\ea\ee 
for any $1 \le p < \infty$ and $0<R<\infty$.

In addition, we set $G^n \triangleq n \div u^n - P^n$ and $\o^n \triangleq \na^\bot \cdot u^n$.
Using (\ref{bkj28}), (\ref{bkj211}), (\ref{wsol2}), and H\"older's inequality, we obtain that for any $2<r<\infty$,
\be\la{ins03}\ba
\| G^n \|_{L^r} & \le C\| \n^n \dot{u^n} \|_{ L^{\frac{2r}{r+2}} }
\le C\| \sqrt{\n^n} \|_{ L^{2r} } \| \sqrt{\n^n} \dot{u^n} \|_{L^2}
\le C\| \n^n \|^{\frac{1}{2}}_{ L^{r} } \| \sqrt{\n^n} \dot{u^n} \|_{L^2}.
\ea\ee
It follows from (\ref{ins03}), (\ref{bkj24}), and (\ref{bkj022}) that for any $0<\tau<T$, $0<R<\infty$, and $2<r<\infty$, $\{G^n\}_n$ is bounded in $L^2(\tau,T;L^r) \cap L^2(\tau,T;H^1(B^+_R))$, $\{\na G^n\}_n$ and $\{\na \o^n\}_n$ are bounded in $L^2(\rr_+ \times (\tau,T))$.
Hence, without loss of generality, we can assume that there exist $\pi \in L^2(\tau,T;L^r) \cap L^2(\tau,T;H^1(B^+_R))$
and $\na \pi \in L^2(\rr_+ \times (\tau,T)) $ such that
\be\la{ins3}\ba
G^n \rightharpoonup - \pi \quad  \mbox{ weakly in } L^2(\tau,T;H^1(B^+_R)),
\ea\ee
for any $0<R<\infty$.
Recalling (\ref{ns}), we see that $(\n^n,u^n)$ satisfies
\be\la{ins4}\ba
\begin{cases}
(\n^n)_t+\div(\n^n u^n)=0, \\
(\n^n u^n)_t+\div(\n^n u^n\otimes u^n) -\na G^n -\mu \na^{\bot} w^n = 0.
\end{cases}
\ea\ee
Using (\ref{ins1}) and (\ref{ins3}) and taking the limit as $n \to \infty$, we deduce that $(\n,u)$ satisfies
\be\la{ins6}\ba
\begin{cases}
\n_t+\div(\n u)=0,\\
(\n u)_t+\div(\n u\otimes u) -\mu \na^{\bot} w + \na \pi = 0.
\end{cases} 
\ea\ee
Moreover, (\ref{bkj01}) and (\ref{bkj022}) imply that
\be\la{ins7}\ba
\div u^n \to 0  \  \mbox{ strongly in } L^2(\rr_+ \times (0,T))\cap L^\infty( (\tau,T); L^2),
\ea\ee
which together with (\ref{ins1}) gives (\ref{isol4}) and $\div u=0$.
This, combined with the equality $\Delta u = \na \div u + \na^{\bot} w $, yields $\na^{\bot} w=\Delta u$.
Therefore, $(\n,u)$ satisfies (\ref{isol2}) and (\ref{isol3}).

Furthermore, for any $0<T<\infty$, when $\nu \ge \nu_1$ and the initial data $(\n_0,u_0)$ satisfy (\ref{ws}), we deduce from (\ref{wsol2}), (\ref{bkj01}), (\ref{bkj021}), (\ref{bkj211}), (\ref{bkj53}), and (\ref{pt1}) that the sequence $\{\n^{\nu}\}_\nu$ is bounded in $L^\infty(0,T;L^1) \cap L^\infty(\rr_+ \times (0,T))$,
and for any $0<R<\infty$, $\{u^{\nu}\}_\nu$ is bounded in $L^\infty(0,T;H^1(B^+_R))$, $\{\na u^{\nu}\}_\nu$ is bounded in $L^2(\rr_+ \times (0,\infty)) \cap L^\infty(0,\infty;L^2)$,
and $\{\sqrt{\n^\nu} \du^{\nu}\}_\nu$ is bounded in $L^2(\rr_+ \times (0,\infty))$.
Then, multiplying (\ref{bkj417}) by $\si$, integrating the resulting equation over $(0,T)$, and using (\ref{wsol2}), (\ref{bkj021}), and (\ref{bkj418}), we derive that $\{\sqrt{t} \sqrt{\n^\nu} \dot{u^{\nu}}\}_\nu$ is bounded in $L^\infty(0,T;L^2)$ and $\{\sqrt{t} \na \dot{u^{\nu}}\}_\nu$ is bounded in $L^2(\rr_+ \times (0,T))$.
This, combined with (\ref{bkj23}), (\ref{bkj501}), and (\ref{ins03}), yields that for any $2 \le s <\infty$ and $2<r<\infty$, $\{\sqrt{t} \na G^{\nu}\}_\nu$ and $\{\sqrt{t} \na \o^{\nu}\}_\nu$ 
are bounded in $L^\infty(0,T;L^2) \cap L^2(0,T;L^s)$, and $\{ \sqrt{t} G^\nu \}_{\nu}$ is bounded in $L^\infty(0,T;L^r)$.
By arguments similar to those above, we can obtain that there exists a subsequence of $(\n^{\nu},u^{\nu})$ that converges to a global solution of (\ref{isol2}) satisfying (\ref{lws1}).
This completes the proof of Theorem \ref{th01}.

\noindent\textbf{Proof of Theorem \ref{th001}}.
For any $0<T<\infty$, when $\nu>\nu_2$, by (\ref{wsol2}), (\ref{bkj01}), (\ref{pro201}), and (\ref{pti}), we obtain that $\{\n^{\nu}\}_\nu$ is bounded in $L^\infty(\rr_+ \times (0,T))$, and for any $0<\tau<T$, $\{u^{\nu}\}_\nu$ is bounded in $L^\infty(\tau,T;H^1)\cap L^2(0,T;H^1)$.
Moreover, in view of (\ref{2wsol2}), (\ref{2bkj211}), (\ref{2bkj411}), and (\ref{pti}), we arrive at
\be\nonumber\ba
\| u^{\nu}_t \|_{L^2}
& \le C \left( \| \dot{u^{\nu}} \|_{L^2} + \| u^{\nu} \cdot \na u^{\nu} \|_{L^2} \right) \\
& \le C \left( \| \sqrt{\n^{\nu}} \dot{u^{\nu}} \|_{L^2} + \| \na \dot{u^{\nu}} \|_{L^2}
+ \| u^{\nu} \|_{L^4} \| \na u^{\nu} \|_{L^4} \right) \\
& \le C \| \sqrt{\n^{\nu}} \dot{u^{\nu}} \|_{L^2} + C \| \na \dot{u^{\nu}} \|_{L^2}
+ C \left( 1 + \| u^{\nu} \|^{2}_{H^1} \right) \| \sqrt{\n^{\nu}} \dot{u^{\nu}} \|^{1/2}_{L^2} + C \| u^{\nu} \|_{H^1},
\ea\ee
which together with (\ref{2bkj01}), (\ref{pti}), and (\ref{2bkj401}) yields $\{u^{\nu}\}_\nu$ is bounded in $H^1(\tau,T;L^2)$.

Consequently, without loss of generality, we can assume that there exists a subsequence $(\n^n,u^n)$ of $(\n^{\nu},u^{\nu})$ such that
\be\la{inss1}\ba
\begin{cases}
\n^n \rightharpoonup \n  \mbox{ weakly * in } L^\infty(\rr_+ \times (0,T)), \\
u^n \rightharpoonup u  \mbox{ weakly * in } \ L^2(0,T;H^1) \cap L^\infty(\tau,T;H^1), \\
u^n \to u  \mbox{ strongly  in } \ L^\infty(\tau,T;L^p(B^+_R)),
\end{cases}
\ea\ee
for any $1 \le p < \infty$ and $0<R<\infty$.

Define $G^n \triangleq n \div u^n-(P^n-P(\tilde{\n}))$ and $\o^n \triangleq \na^\bot \cdot u^n$, which together with (\ref{pro201}) and (\ref{2bkj24}) shows that $\{\na G^n\}_n$ and $\{\o^n\}_n$ are bounded in $L^2(\rr_+ \times (\tau,T))$.
Thus, without loss of generality, we may assume that there exists $ \Psi  \in L^2(\rr_+ \times (\tau,T))$ such that
\be\la{inss3}\ba
\na G^n \rightharpoonup \Psi \quad  \mbox{ weakly in } L^2(\rr_+ \times (\tau,T)).
\ea\ee
Moreover, for any $\phi \in \left( \mathcal{D}(\rr_+) \right)^2$ with $\div \phi=0$ and $\psi \in \mathcal{D}(\tau,T)$, we have
\be\la{inss6}\ba
\int_\tau^T \int \na G^n \cdot \phi dx \ \psi \ dt = 0.
\ea\ee
Letting $n \to \infty$ and using (\ref{inss3}), we arrive at
\be\la{inss7}\ba
\int_\tau^T \int \Psi \cdot \phi dx \ \psi \ dt = 0,
\ea\ee
which yields
\be\la{inss8}\ba
\int \Psi \cdot \phi dx =0, \ \ \mathrm{a.e.\ }t \in (\tau,T).
\ea\ee

By virtue of the De Rham Theorem \cite[Proposition 1.1]{TR}, we can deduce that there exists a distribution $ \pi $ such that $ \Psi = -\na \pi$.

Letting $n \to \infty$ and using (\ref{inss1}) and (\ref{inss3}), we derive that $(\n,u)$ satisfies
\be\la{inss9}\ba
\begin{cases}
\n_t+\div(\n u)=0,\\
(\n u)_t+\div(\n u\otimes u) -\mu \na^{\bot} w + \na \pi =0.
\end{cases} 
\ea\ee
In addition, (\ref{pro201}) and (\ref{2bkj01}) ensure that (\ref{lisol4}) holds and $\div u = 0$, thereby giving $\na^{\bot} w=\Delta u$.
Hence, $(\n,u)$ satisfies (\ref{isol2}) and (\ref{lisol3}).

Moreover, for any $0<T<\infty$, when $\nu \ge \nu_2$ and the initial data $(\n_0,u_0)$ satisfy (\ref{0lws}), by an argument analogous to the case of $\tilde{\n}=0$, we deduce that $(\n^{\nu},u^{\nu})$ has a subsequence converging to a global solution of (\ref{isol2}), and $(\n,u)$ satisfies (\ref{0lws1}).
The proof of Theorem \ref{th001} is completed.

\noindent\textbf{Proof of Theorem \ref{th3}}.
The proof of Theorem \ref{th3} is similar to that of \cite[Theorem 1.2]{LX}, hence we omit it here.

\bigskip

\noindent\textbf{Data availability.} No data was used for the research described in the article.

\bigskip

\noindent\textbf{Conflict of interest.} The authors declare that they have no conflict of interest.

\begin {thebibliography} {99}

\bibitem{AJ} J. Aramaki, 
$L^p$ theory for the div-curl system,
Int. J. Math. Anal. (Ruse) {\bf 8} (2014), no.~5-8, 259--271.

\bibitem{BKM} J.~T. Beale, T. Kato and A.~J. Majda,
Remarks on the breakdown of smooth solutions for the $3$-D Euler equations,
Comm. Math. Phys. {\bf 94} (1984), no.~1, 61--66.

\bibitem{CL} G.~C. Cai and J. Li,
Existence and exponential growth of global classical solutions to the compressible Navier-Stokes equations with slip boundary conditions in 3D bounded domains,
Indiana Univ. Math. J. {\bf 72} (2023), no.~6, 2491--2546.

\bibitem{CCK} Y. Cho, H.~J. Choe and H. Kim,
Unique solvability of the initial boundary value problems for compressible viscous fluids,
J. Math. Pures Appl. (9) {\bf 83} (2004), no.~2, 243--275.

\bibitem{CK} Y. Cho and H. Kim,
On classical solutions of the compressible Navier-Stokes equations with nonnegative initial densities,
Manuscripta Math. {\bf 120} (2006), no.~1, 91--129.

\bibitem{CK2} H.~J. Choe and H. Kim,
Strong solutions of the Navier-Stokes equations for isentropic compressible fluids,
J. Differential Equations {\bf 190} (2003), no.~2, 504--523.

\bibitem{CLMS} R.~R. Coifman, Lions, P. L, Meyer, Y, Semmes, S.,
Compensated compactness and Hardy spaces,
J. Math. Pures Appl. (9) {\bf 72} (1993), no.~3, 247--286.

\bibitem{D} R. Danchin,
Global existence in critical spaces for compressible Navier-Stokes equations,
Invent. Math. {\bf 141} (2000), no.~3, 579--614.

\bibitem{DM3} R. Danchin and P.~B. Mucha,
Compressible Navier-Stokes system: large solutions and incompressible limit,
Adv. Math. {\bf 320} (2017), 904--925.

\bibitem{DM} R. Danchin and P.~B. Mucha,
Compressible Navier-Stokes equations with ripped density,
Comm. Pure Appl. Math. {\bf 76} (2023), no.~11, 3437--3492.

\bibitem{DQ} Q. Duan,
Global well-posedness of classical solutions to the compressible Navier-Stokes equations in a half-space, J. Differential Equations {\bf 253} (2012), no.~1, 167--202.

\bibitem{FC} C.~L. Fefferman,
Characterizations of bounded mean oscillation,
Bull. Amer. Math. Soc. {\bf 77} (1971), 587--588.

\bibitem{F}  E. Feireisl,
Dynamics of Viscous Compressible Fluids,
Oxford Lecture Series in Mathematics and its Applications vol. 26, Oxford University Press, Oxford, 2004.

\bibitem{FNP} E. Feireisl, A. Novotn\'y{} and H. Petzeltov\'a,
On the existence of globally defined weak solutions to the Navier-Stokes equations,
J. Math. Fluid Mech. {\bf 3} (2001), no.~4, 358--392.

\bibitem{GT}  D. Gilbarg and N.~S. Trudinger,
Elliptic partial differential equations of second order, Springer, 2001.

\bibitem{H4} D. Hoff,
Global existence for 1D, compressible, isentropic Navier-Stokes equations with large initial data,
Trans. Amer. Math. Soc. {\bf 303} (1987), no.~1, 169--181.

\bibitem{H1} D. Hoff,
Global solutions of the Navier-Stokes equations for multidimensional compressible flow with discontinuous initial data,
J. Differential Equations {\bf 120} (1995), no.~1, 215--254.

\bibitem{H2} D. Hoff,
Strong convergence to global solutions for multidimensional flows of compressible, viscous fluids with polytropic equations of state and discontinuous initial data,
Arch. Rational Mech. Anal. {\bf 132} (1995), no.~1, 1--14.

\bibitem{H3} D. Hoff,
Compressible flow in a half-space with Navier boundary conditions,
J. Math. Fluid Mech. {\bf 7} (2005), no.~3, 315--338.

\bibitem{HL} X.-D. Huang and J. Li,
Global well-posedness of classical solutions to the Cauchy problem of two-dimensional barotropic compressible Navier-Stokes system with vacuum and large initial data,
SIAM J. Math. Anal. {\bf 54} (2022), no.~3, 3192--3214.

\bibitem{HLX3} X.-D. Huang, J. Li and Z. Xin,
Blowup criterion for viscous baratropic flows with vacuum states,
Comm. Math. Phys. {\bf 301} (2011), no.~1, 23--35.

\bibitem{HLX1} X.-D. Huang, J. Li and Z. Xin,
Serrin-type criterion for the three-dimensional viscous compressible flows,
SIAM J. Math. Anal. {\bf 43} (2011), no.~4, 1872--1886.

\bibitem{HLX2} X.-D. Huang, J. Li and Z. Xin,
Global well-posedness of classical solutions with large oscillations and vacuum to the three-dimensional isentropic compressible Navier-Stokes equations,
Comm. Pure Appl. Math. {\bf 65} (2012), no.~4, 549--585.

\bibitem{JK} D.~S. Jerison and C.~E. Kenig,
The Neumann problem on Lipschitz domains,
Bull. Amer. Math. Soc. (N.S.) {\bf 4} (1981), no.~2, 203--207.

\bibitem{K} T. Kato,
Remarks on the Euler and Navier-Stokes equations in ${\bf R}^2$,
Proc. Sympos. Pure Math., {\bf 45}, (1986),1--7.

\bibitem{KS} A.~V. Kazhikhov and V.~V. Shelukhin,
Unique global solution with respect to time of initial-boundary value problems for one-dimensional equations of a viscous gas,
Prikl. Mat. Meh. {\bf 41} (1977), no.~2J. Appl. Math. Mech. {\bf 41} (1977), no.~2.

\bibitem{Lei1} Q. Lei,
Global Existence and Incompressible Limit for the Three-Dimensional Axisymmetric Compressible Navier-Stokes Equations with Large Bulk Viscosity and Large Initial Data, arXiv:2509.12614.

\bibitem{LeXi1} Q. Lei and C. Xiong,
Global Existence and Incompressible Limit for Compressible Navier-Stokes Equations with Large Bulk Viscosity Coefficient and Large Initial Data, arXiv:2507.01432.

\bibitem{LeXi2} Q. Lei and C. Xiong,
Global Existence and Incompressible Limit for Compressible Navier-Stokes Equations in Bounded Domains with Large Bulk Viscosity Coefficient and Large Initial Data, arXiv:2507.02462.

\bibitem{LeXi3} Q. Lei and C. Xiong,
Global Existence and Incompressible Limit of the Cauchy Problem for 2D Compressible Navier-Stokes Equations with Large Bulk Viscosity and Large Initial Data, arXiv:2507.02497.

\bibitem{LLL} J. Li, Z. Liang,
On local classical solutions to the Cauchy problem of the two-dimensional barotropic compressible Navier-Stokes equations with vacuum,
J. Math. Pures Appl. (9) {\bf 102} (2014), no.~4, 640--671.

\bibitem{LX} J. Li and Z. Xin,
Some uniform estimates and blowup behavior of global strong solutions to the Stokes approximation equations for two-dimensional compressible flows,
J. Differential Equations {\bf 221} (2006), no.~2, 275--308.

\bibitem{LX2} J. Li and Z. Xin,
Global well-posedness and large time asymptotic behavior of classical solutions to the compressible Navier-Stokes equations with vacuum,
Ann. PDE {\bf 5} (2019), no.~1, Paper No. 7, 37 pp.

\bibitem{LZZ} J. Li, J.~W. Zhang and J.~N. Zhao,
On the global motion of viscous compressible barotropic flows subject to large external potential forces and vacuum,
SIAM J. Math. Anal. {\bf 47} (2015), no.~2, 1121--1153.

\bibitem{LZ2} X. Liao and S.M. Zodji,
Global-in-time well-posedness of the compressible Navier-Stokes equations with striated density, arXiv:2405.11900.

\bibitem{L1}  P.L. Lions,
Mathematical Topics in Fluid Mechanics. Vol. 1: Incompressible Models,
Oxford University Press, New York, 1996.

\bibitem{L2}  P.L. Lions,
Mathematical Topics in Fluid Mechanics. Vol. 2: Compressible Models,
Oxford University Press, New York, 1998.

\bibitem{LZ} Z. Luo,
Local existence of classical solutions to the two-dimensional viscous compressible flows with vacuum,
Commun. Math. Sci. {\bf 10} (2012), no.~2, 527--554.

\bibitem{LSZ}B. L\"u, X. Shi and X. Zhong,
Global existence and large time asymptotic behavior of strong solutions to the Cauchy problem of 2D density-dependent Navier-Stokes equations with vacuum,
Nonlinearity {\bf 31} (2018), no.~6, 2617--2632.

\bibitem{MN1} A. Matsumura, T. Nishida,
The initial value problem for the equations of motion of viscous and heat-conductive gases,
J. Math. Kyoto Univ. {\bf 20}(1) (1980), 67--104.

\bibitem{MD} D.~I.~R. Mitrea,
Integral equation methods for div-curl problems for planar vector fields in nonsmooth domains,
Differential Integral Equations {\bf 18} (2005), no.~9, 1039--1054.

\bibitem{N} J. Nash,
Le probl\`{e}me de Cauchy pour les \'{e}quations diff\'{e}rentielles d'un fluide g\'{e}n\'{e}ral,
Bull. Soc. Math. France {\bf 90} (1962), 487--497 (French).

\bibitem{NI} L. Nirenberg,
On elliptic partial differential equations,
Ann. Scuola Norm. Sup. Pisa Cl. Sci. (3) {\bf 13} (1959), 115--162.

\bibitem{SS} R. Salvi and I. Stra\v skraba,
Global existence for viscous compressible fluids and their behavior as $t\to\infty$,
J. Fac. Sci. Univ. Tokyo Sect. IA Math. {\bf 40} (1993), no.~1, 17--51.

\bibitem{S1} D. Serre,
Solutions faibles globales des \'equations de Navier-Stokes pour un fluide compressible,
C. R. Acad. Sci. Paris S\'er. I Math. {\bf 303} (1986), no.~13, 639--642.

\bibitem{S2} D. Serre,
Sur l'\'equation monodimensionnelle d'un fluide visqueux, compressible et conducteur de chaleur,
C. R. Acad. Sci. Paris S\'er. I Math. {\bf 303} (1986), no.~14, 703--706.

\bibitem{S} J. Serrin,
On the uniqueness of compressible fluid motions,
Arch. Rational Mech. Anal. {\bf 3} (1959), 271--288.

\bibitem{SE} E.~M. Stein,
Singular integrals and differentiability properties of functions,
Princeton Mathematical Series, No. 30, Princeton Univ. Press, Princeton, NJ, 1970.

\bibitem{SEM} E.~M. Stein,
Harmonic analysis: real-variable methods, orthogonality, and oscillatory integrals,
Princeton Univ. Press, Princeton, NJ, 1993.

\bibitem{TR}  R.~M. Temam,
Navier-Stokes equations, revised edition, Studies in Mathematics and its Applications, 2, North-Holland, Amsterdam-New York, 1979.

\bibitem{WWV} W. von~Wahl,
Estimating $\nabla u$ by ${\rm div}\, u$ and ${\rm curl}\, u$,
Math. Methods Appl. Sci. {\bf 15} (1992), no.~2, 123--143.

\bibitem{WWZ} S. Wang, G. Wu, X. Zhong,
Global weak solutions and incompressible limit to the isentropic compressible Navier-Stokes equations in the half-plane with ripped density and large initial data, arXiv:2507.03505.

\bibitem{ZAA} A.~A. Zlotnik,
Uniform estimates and the stabilization of symmetric solutions of a system of quasilinear equations,
Differ. Equ. {\bf 36} (2000), no.~5, 701--716.

\end {thebibliography}

\end{document}